\DeclareMathAlphabet\EuRoman{U}{eur}{m}{n}
\SetMathAlphabet\EuRoman{bold}{U}{eur}{b}{n}
\crefname{assumption}{Assumption}{Assumptions}
\crefname{claim}{Claim}{Claims}
\let\reftagform@=\tagform@
\def\tagform@#1{\maketag@@@{\ignorespaces\textcolor{gray}{(#1)}\unskip\@@italiccorr}}
\renewcommand{\eqref}[1]{\textup{\reftagform@{\ref{#1}}}}
\newcommand{\LATER}[1]{\error}
\newcommand{\fLATER}[1]{\error}
\newcommand{\TBD}[1]{\error}
\newcommand{\fTBD}[1]{}
\newcommand{\PROBLEM}[1]{\error}
\newcommand{\fPROBLEM}[1]{\error}
\newcommand{\COMMENTINGMSG}{}
\declaretheorem[style=plain,numberwithin=section,name=Theorem]{theorem}
\declaretheorem[style=plain,sibling=theorem,name=Lemma]{lemma}
\declaretheorem[style=plain,sibling=theorem,name=Corollary]{corollary}
\declaretheorem[style=plain,sibling=theorem,name=Claim]{claim}
\declaretheorem[style=definition,sibling=theorem,name=Definition]{definition}
\declaretheorem[style=definition,sibling=theorem,name=Example]{example}
\declaretheorem[style=remark,sibling=theorem,name=Remark]{remark}
\newenvironment{condition}[1]
 {\conditionINNER}
 {\endconditionINNER}
\declaretheoremstyle[
    spaceabove=-6pt, 
    spacebelow=6pt, 
    headfont=\normalfont\bfseries, 
    bodyfont = \normalfont,
    postheadspace=1em, 
    qed=$\square$, 
    headpunct={{}}]{myproofstyle}
\numberwithin{equation}{section}
\numberwithin{theorem}{section}
\renewenvironment{quote}
               {\list{}{\rightmargin\leftmargin}%
                \item\relax}
               {\endlist}
\def\[#1\]{\begin{align}#1\end{align}}
\def\*[#1\]{\begin{align*}#1\end{align*}}
\newcommand{\argdot}{\cdot}
\newcommand{\st}{\,:\,}
\newcommand{\Naturals}{\mathbb{N}}
\newcommand{\Reals}{\mathbb{R}}
\newcommand{\Nats}{\mathbb{N}}
\newcommand{\Ints}{\mathbb{Z}}
\newcommand{\NNReals}{\Reals_{\ge 0}}
\newcommand{\PosReals}{\Reals_{> 0}}
\newcommand{\conv}{\textrm{conv}}
\newcommand{\dee}{\mathrm{d}}
\DeclareMathOperator*{\newlim}{\mathrm{lim}\vphantom{\mathrm{infsup}}}
\DeclareMathOperator*{\newmin}{\mathrm{min}\vphantom{\mathrm{infsup}}}
\DeclareMathOperator*{\newinf}{\mathrm{inf}\vphantom{\mathrm{infsup}}}
\DeclareMathOperator*{\newsup}{\mathrm{sup}\vphantom{\mathrm{infsup}}}
\renewcommand{\lim}{\newlim}
\renewcommand{\min}{\newmin}
\renewcommand{\inf}{\newinf}
\renewcommand{\sup}{\newsup}
\newcommand{\defn}[1]{\emph{#1}}
\newcommand{\cF}{\mathcal F}
\newcommand{\cG}{\mathcal G}
\DeclareMathOperator*{\Bernoulli}{Bernoulli}
\newcommand{\BorelSets}[1]{\mathcal{B}[#1]}
\newcommand{\NSE}[1]{{^{*}#1}}
\newcommand{\ST}{\mathsf{st}}
\newcommand{\SP}[1]{{^{\circ}#1}}
\newcommand{\InternalSubsets}[1]{\mathcal I[#1]}
\newcommand{\AS}{\mathbb{A}}
\newcommand{\PowerSet}[1]{\mathscr{P}(#1)}
\newcommand{\HReals}{\NSE{\Reals}}
\newcommand{\Model}{P}
\newcommand{\Loss}{\ell}
\newcommand{\NS}[1]{\mathrm{NS}(#1)}
\newcommand{\Risk}{r}
\newcommand{\cS}{\mathcal{S}}
\newcommand{\cA}{\mathcal{A}}
\newcommand{\bV}{\mathbb{V}}
\newcommand{\cC}{\mathcal{C}}
\newcommand{\IFuncs}[2]{\mathrm{I}({#1}^{#2})}
\newcommand{\FiniteSubsets}[1]{{#1}^{[< \infty]}}
\newcommand{\vzero}{\mathbf{0}}
\newtheorem{open problem}{Open Problem}
\newcommand{\Loeb}[1]{\overline{#1}}
\newcommand{\LoebAlgebraX}[2]{\overline{#1}_{#2}}
\newcommand{\LoebAlgebra}[1]{\overline{#1}} %
\newcommand{\ProbMeasures}[1]{\mathcal{M}_1(#1)}
\newcommand{\gX}{X}
\newcommand{\gY}{Y}
\newcommand{\interior}[1]{%
  {\kern0pt#1}^{\mathrm{o}}%
}
\newcommand{\Language}{\mathcal L}
\newcommand{\FRRE}{\mathcal D}
\newcommand{\FRREFS}{\FRRE_{0,FC}}
\newcommand{\sFRREFS}{\NSE{\!\FRREFS}}
\newcommand{\FiniteRiskEstimators}{\FRRE_0}
\newcommand{\refproof}[1]{See \cref{#1} for \IfSubStr{#1}{,}{proofs}{a proof}. }
\newcommand{\sconv}{\NSE{\!\conv}}
\newcommand{\TTheta}{T_{\Theta}}
\newcommand{\ttheta}{t}
\newcommand{\STK}{J_{\Theta}}
\newcommand{\DRiskSpace}{\IFuncs{\HReals}{\STK}}
\newcommand{\ExtFiniteRiskEstimators}{{\FiniteRiskEstimators}^{\!\!\!*}}
\newcommand{\ExtFRRE}{\FRRE^{*}}
\newcommand{\FSE}{(\ExtFiniteRiskEstimators)_{FC}}
\newcommand{\sFiniteRiskEstimators}{\NSE{\!\FiniteRiskEstimators}}
\newcommand{\sFRRE}{\NSE{\!\FRRE}}
\newcommand{\sdelta}{\NSE{\!\delta}}
\newcommand{\sRisk}{\NSE{\!\Risk}}
\newcommand{\EEst}[1]{\mathbb{E}(#1)}
\newcommand{\IP}[2]{\langle #1, #2\rangle}
\newcommand{\Fincomb}[1]{(#1)_{FC}}
\newcommand{\spEssCompleteSubclass}[1]{$\SP{}$essentially complete subclass of $#1$}
\newcommand{\spCompleteSubclass}[1]{$\NSE{}$complete subclass of $#1$}
 \newcommand{\EpsDom}[1]{$#1$-dominated}
 \newcommand{\EpsAdmissible}[2]{$#1$-admissible among $#2$}
 \newcommand{\ExtAdmissible}[1]{extended admissible among $#1$}
 \newcommand{\Admissible}[1]{admissible among $#1$}
 \newcommand{\spp}{$\NSE{}$}
 \newcommand{\spDomIn}[2]{\spp{}dominated in $#1/#2$}
 \newcommand{\spDom}[1]{\spp{}dominated on $#1$}
 \newcommand{\spEpsDomIn}[3]{$#1$-\spp{}dominated in $#2/#3$}
 \newcommand{\spEpsDom}[2]{$#1$-\spp{}dominated on $#2$}
 \newcommand{\spEpsAdmissibleIn}[4]{$#1$-\spp{}admissible in $#2/#3$ among $#4$}
 \newcommand{\spEpsAdmissible}[3]{$#1$-\spp{}admissible on $#2$ among $#3$}
 \newcommand{\spExtAdmissible}[2]{\spp{}extended admissible on $#1$ among $#2$}
 \newcommand{\spAdmissibleIn}[3]{\spp{}admissible in $#1/#2$ among $#3$}
 \newcommand{\spAdmissible}[2]{\spp{}admissible on $#1$ among $#2$}
\newcommand{\eBayesP}[3]{$#1$-Bayes under $#2$ among $#3$}
\newcommand{\eBayes}[2]{$#1$-Bayes among $#2$}
\newcommand{\BayesP}[2]{Bayes under $#1$ among $#2$}
\newcommand{\Bayes}[1]{Bayes among $#1$}
\newcommand{\seBayesP}[3]{$#1$-$\NSE{}$Bayes under $#2$ among $#3$}
\newcommand{\seBayes}[2]{$#1$-$\NSE{}$Bayes among $#2$}
\newcommand{\NSBayesP}[2]{nonstandard Bayes under $#1$ among $#2$}
\newcommand{\NSBayes}[1]{nonstandard Bayes among $#1$}
\newcommand{\GENRS}{\cS^{\cC}}
\newcommand{\RS}[1]{\cS^{#1}}
\newcommand{\Ext}[1]{#1^{*}}
\newcommand{\FSC}{\cF}
\newcommand{\Normal}[2]{\mathcal N(#1,#2)}
\newcommand{\Identity}{I}
\newcommand{\bx}{\mathbf{x}}
\newcommand{\ULoeb}[1]{L_u(#1)}
\newcommand{\symdiff}{\bigtriangleup}
\newcommand{\sint}[1]{\sideset{^*\!\!\!\!}{_{#1}}\int}
\newcommand{\GSA}{\mathcal F}
\newcommand{\sGSA}{\NSE{\!\!\mathcal F}}
\newcommand{\makebib}{\bibliography{biblio}}
\begin{document}

\begin{frontmatter}
\title{On Extended Admissible Procedures \\ and their Nonstandard Bayes Risk}%
\runtitle{Nonstandard Bayes}

\begin{aug}
\author{\fnms{Haosui} \snm{Duanmu}\ead[label=e1]{haosui@utstat.toronto.edu}}
\and
\author{\fnms{Daniel~M.} \snm{Roy}\ead[label=e2]{droy@utstat.toronto.edu}}
\runauthor{Duanmu and Roy}

\affiliation{University of Toronto} %

\address{Dept.\ of Statistical Sciences\\
University of Toronto\\
100 St. George St\\
Toronto, ON M5S 3G3\\
\printead{e1}\\
\phantom{E-mail:\ }\printead*{e2}}

\end{aug}

\begin{abstract}

For finite parameter spaces under finite loss, every Bayes procedure derived from a prior with full support is admissible, and every admissible procedure is Bayes. This relationship already breaks down once we move to finite-dimensional Euclidean parameter spaces. Compactness and strong regularity conditions suffice to repair the relationship, but without these conditions, admissible procedures need not be Bayes. Under strong regularity conditions, admissible procedures can be shown to be the limits of Bayes procedures. Under even stricter conditions, they are generalized Bayes, i.e., they minimize the Bayes risk with respect to an improper prior. In both these cases, one must venture beyond the strict confines of Bayesian analysis. Using methods from mathematical logic and nonstandard analysis, we introduce the class of nonstandard Bayes decision procedures---namely, those whose Bayes risk with respect to some prior is within an infinitesimal of the optimal Bayes risk. Among procedures with finite risk functions, we show that a decision procedure is extended admissible if and only if its nonstandard extension is nonstandard Bayes. For problems with continuous risk functions defined on metric parameter spaces, we derive a nonstandard analogue of Blyth's method that can be used to establish the admissibility of a procedure. We also apply the nonstandard theory to derive a purely standard theorem: when risk functions are continuous on a compact Hausdorff parameter space, a procedure is extended admissible if and only if it is Bayes.

\end{abstract}

\begin{keyword}[class=MSC]
\kwd[Primary ]{62C07}
\kwd{62C10}
\kwd{62A01}
\kwd[; secondary ]{28E05}
\end{keyword}

\begin{keyword}
\kwd{decision theory}
\kwd{complete class theorems}
\kwd{nonstandard analysis}
\end{keyword}
\end{frontmatter}

\COMMENTINGMSG{}

\tableofcontents

\section{Introduction}

There is a long line of research, originating in the seminal work of \citet{Wald47a,Wald47},
connecting admissibility and Bayes optimality.
For finite parameters spaces, one can use intuitive geometric arguments to establish that
every admissible decision procedure is Bayes (see, e.g., \citep[][\S2.10 Thm.~1]{Ferguson}).
In the other direction, elementary arguments show that
every procedure that is Bayes with respect to a prior with full support is admissible \citep[][\S2.3 Thms.~2 and 3]{Ferguson}.
This close relationship between admissibility and Bayes optimality already breaks down for finite-dimensional parameter spaces:
Here, admissible decision procedures are, in general, not Bayes procedures (see, e.g., \citet[][\S4]{stein54}).
Under some regularity conditions, however, every admissible procedure is a \defn{limit} of Bayes procedures \citep{Wald49,LeCam55,Brown86},
although, limits of Bayes procedures are in general neither admissible nor Bayes,
as famously demonstrated by \citet{stein54} in the multivariate normal location model (see also \citep{stein61}).
Under more stringent conditions, admissible procedures are \defn{generalized Bayes} \citep{Sacks63,stone67,Brown71,Berger78}, i.e.,
procedures derived from the mechanical---also known as \emph{formal}---application of Bayes rule with respect to \defn{improper} priors.
In both cases, we must leave the strict confines of the Bayesian formalism to re\"establish the link between admissibility and Bayes optimality.
The price paid for abandoning the standard Bayesian framework---namely, nonconglomerability and its side effects, including marginalization paradoxes---is the subject of an extensive literature (see, e.g., \citep{dawid73,seid98,seid84}).

Here we take a different approach, working within the standard Bayesian theory but
carrying out that work in an unusual setting.
In particular, we rely on results in mathematical logic that
establish the existence of \defn{nonstandard} models of the reals satisfying three principles:
\defn{extension}, which associates every ordinary mathematical object with a nonstandard counterpart called its extension;
\defn{transfer}, which permits us to use first order logic to relate standard and nonstandard structures;
and \defn{saturation}, which gives us a powerful mechanism for proving the existence of nonstandard structures defined in terms of finitely satisfiable collections of first order formula.

Informally speaking,
the utility of nonstandard models for statistical decision theory stems from two sources:
First,
every nonstandard model possesses nonstandard reals numbers, including infinitesimal / infinite positive real numbers that are,
respectively, smaller than / larger than any standard positive real number.  Using such numbers, we can, e.g.,
construct uniform probability measures over infinite intervals that contain the entire standard real line,
or
construct probability measures on the positive real line concentrating all their mass on a positive infinitesimal.
As priors, these structures can be used as extreme statements of uncertainty that do not correspond to any standard prior.
Second,
standard real numbers look discrete in a nonstandard model. Indeed, in a suitably saturated model of the reals,
the standard reals are contained within a \defn{hyperfinite} set, i.e., an infinite set that nonetheless possesses all the first order
properties of a standard finite set.

Using nonstandard analysis and probability theory,
we are able to re\"establish the link between admissibility and Bayesian optimality \emph{without} regularity conditions.
In particular, using a separating hyperplane argument in concert with the three principles outline above,
we show that
a standard decision procedure $\delta$
is extended admissible if and only if, for some nonstandard prior, the Bayes risk of its extension $\sdelta$ is within an infinitesimal
of the minimum Bayes risk among all extensions. Such a decision procedure is said to be nonstandard Bayes.
Assuming $\Theta$ is a metric space and risk functions are continuous, we are able to show that a procedure is admissible if its extension is nonstandard Bayes with respect to a prior that assigns sufficient mass to every standard open ball. The result is a nonstandard variant of Blyth's method, but a single nonstandard prior witnesses the admissibility, rather than a sequence.
We also apply our nonstandard theory to give a standard result:
on compact Hausdorff spaces when risk functions are continuous,
a decision procedure is extended admissible if and only if is Bayes.

\subsection{Overview of the paper}

In \cref{sec: standardprelim},
we introduce basic notions and key results in standard statistical decision theory:
domination, admissibility, and its variants;
Bayes optimality; and basic complete class and essentially complete class results.
(Classic treatments can be found in
\citep{Ferguson} and \citep{BG54}, the latter emphasizing the connection with game theory, but restricting itself to finite discrete spaces.
A modern treatment can be found in \citep{LC98}.)
In \cref{sec:prior},
we follow this introduction of basic principles with a summary of the extensive literature on complete class theorems.

In \cref{sec: nonstandardadmis},
we define nonstandard counterparts of admissibility, extended admissibility, and essential completeness,
which we obtain by ignoring infinitesimal violations of the standard notions,
and then give key theorems relating standard and nonstandard notions for standard decision procedures and their nonstandard extensions, respectively.
For readers unfamiliar with nonstandard analysis and probability, we summarize basic notions and key results in \cref{intronsa,sec:inp}.

In \cref{sec: nonstandardbayes}, we define the nonstandard counterpart to Bayes optimality,
which we also obtain by ignoring infinitesimal violations of the standard notion.
Using saturation and a hyperfinite version of the classical separating hyperplane argument
on a hyperfinite discretization of the risk set,
we show that a decision procedure is extended admissible if and only if it its extension is nonstandard Bayes.

In \cref{sec: CCs under compactness and RC},
we apply the nonstandard theory to obtain a standard result:
assuming the parameter space is compact and risk functions are continuous,
a decision procedure is extended admissible if and only if it is Bayes.

In \cref{sec: admissibility}, we employ the results of the previous section to connect admissibility and nonstandard Bayes optimality under various regularity conditions on the space and the nonstandard prior.  In the process, we give a nonstandard variant of Blyth's method.

In \cref{sec: examples}, we study several simple statistical decision problems to highlight the nonstandard theory
and its connections to the standard theory.

In \cref{sec: remarks}, we list some open problems.

\section{Standard preliminaries}
\label{sec: standardprelim}

A (nonsequential) statistical decision problem is defined in terms of
a \emph{parameter} space $\Theta$,  each element of which represents a possible state of nature;
a set $\AS$ of \defn{actions} available to the statistician;
a function $\Loss : \Theta \times \AS \to \NNReals$ characterizing the \defn{loss} associated with taking action $a \in \AS$ in state $\theta \in \Theta$;
and finally, a family $P=(P_\theta)_{\theta \in \Theta}$ of probability measures on a
measurable \emph{sample} space $X$.
On the basis of an observation from $P_{\theta}$ for some unknown element $\theta \in \Theta$,
the statistician decides to take a (potentially randomized) action $a$, and then suffers the loss $\Loss(\theta,a)$.

Formally, having fixed a $\sigma$-algebra on the space $\AS$ of actions,
every possible response by the statistician is
captured by a \defn{(randomized) decision procedure},
i.e., a map $\delta$ from $X$ to the space $\ProbMeasures{\AS}$ of probability measures $\AS$.
As is customary, we will write $\delta(x,A)$ for $(\delta(x))(A)$.
The expected loss, or \defn{risk}, to the statistician in state $\theta$ associated with following a decision procedure $\delta$
is
\[
\Risk_{\delta}(\theta)
= \Risk(\theta,\delta)
= \int_{X} \Bigl [ \int_{\AS} \Loss(\theta,a) \delta(x,\dee a) \Bigr ] \Model_{\theta}(\dee x).
\]
For the risk function to be well-defined, the maps
$x \mapsto \int_{\AS} \Loss(\theta,a) \delta(x,\dee a)$, for $\theta \in \Theta$, must be measurable,
and so we will restrict our attention to those decision procedures satisfying this weak measurability criterion.
A decision procedure $\delta$ is said to have finite risk if
$\Risk_{\delta}(\theta) \in \Reals$ for all $\theta \in \Theta$.
Let $\FRRE$ denote the set of randomized decision procedures with finite risk.

The set $\FRRE$ may be viewed as a convex subset of a vector space.
In particular, for all $\delta_1,\dotsc,\delta_n \in \FRRE$ and $p_1,\dotsc,p_n \in \NNReals$ with $\sum_i p_i = 1$,
define $\sum_i p_i \delta_i : X \to \ProbMeasures{\AS}$ by $(\sum_i p_i \delta_i)(x) = \sum_i p_i \delta_i(x)$ for $x \in X$.
Then $\Risk(\theta, \sum_i p_i \delta_i) = \sum_i p_i \, \Risk(\theta,\delta_i) < \infty$, and so we see that $\sum_i p_i \delta_i \in \FRRE$ and $r(\theta,\argdot)$ is a linear function on $\FRRE$ for every $\theta \in \Theta$.
For a subset $D \subseteq \FRRE$, let $\conv(D)$ denote the set of all finite convex combinations of decision procedures $\delta \in D$.

A decision procedure $\delta \in \FRRE$ is called \defn{nonrandomized}
if, for all $x \in X$, there exists $d(x) \in \AS$ such that $\delta(x,A) = 1$ if and only if $ d(x) \in A$, for all measurable sets $A \subseteq \AS$.
Let $\FiniteRiskEstimators \subseteq \FRRE$ denote the subset of all nonrandomized decision procedures.
Under mild measurability assumptions, %
every $\delta \in \FiniteRiskEstimators$ can be associated with a map $x \mapsto d(x)$ from $X$ to $\AS$
for which the risk satisfies
\[
\Risk(\theta,\delta) = \int_{X} \Loss(\theta,d(x)) \Model_{\theta}(\dee x).
\]
Finally, writing $\FiniteSubsets{S}$ for the set of all finite subsets of a set $S$,
let
\[
\FRREFS = \bigcup_{D \in \FiniteSubsets{\FiniteRiskEstimators}} \conv(D)
\]
be the set of randomized decision procedures that are finite convex combinations of nonrandomized decision procedures.
Note that $\FiniteRiskEstimators \subset \FRREFS \subset \FRRE$ and $\FRREFS$ is convex.

\subsection{Admissibility}
In general, the risk functions of two decision procedures are incomparable,
as one procedure may present greater risk in one state, yet less risk in another.
Some cases, however, are clear cut:
the notion of domination induces a partial order on the space of decision procedures.

\begin{definition}
Let $\epsilon \ge 0$ and $\delta,\delta' \in \FRRE$.
Then $\delta$ is \EpsDom{\epsilon} by $\delta'$ if
\begin{enumerate}
\item $\forall \theta\in \Theta$ $\Risk(\theta,\delta')\leq \Risk(\theta,\delta) - \epsilon$, and
\item $\exists \theta \in \Theta$ $\Risk(\theta,\delta') \neq \Risk(\theta,\delta)$.
\end{enumerate}
\end{definition}

Note that $\delta$ is \emph{dominated} by $\delta'$ if $\delta$ is \EpsDom{0} by $\delta'$.
If a decision procedure $\delta$ is $\epsilon$-dominated by another decision procedure $\delta'$, then, computational issues notwithstanding,
$\delta$ should be eliminated from consideration.  This gives rise to the following definition:

\begin{definition}
Let $\epsilon \ge 0$, $\cC \subseteq \FRRE$, and $\delta \in \FRRE$.
\begin{enumerate}
\item $\delta$ is \defn{\EpsAdmissible{\epsilon}{\cC}} unless $\delta$ is \EpsDom{\epsilon} by some $\delta' \in \cC$.
\item $\delta$ is \defn{\ExtAdmissible{\cC}} if $\delta$ is \EpsAdmissible{\epsilon}{\cC} for all $\epsilon > 0$.
\end{enumerate}
\end{definition}

Again, note that
$\delta$ is \defn{\Admissible{\cC}} if $\delta$ is \EpsAdmissible{0}{\cC}.
Clearly admissibility implies extended admissibility.
In other words, the class of all extended admissible decision procedures contains the class of all admissible decision procedures.

Admissibility leads to the notion of a complete class.

\begin{definition}
Let $\cA,\cC \subseteq \FRRE$.
Then $\cA$ is a \defn{complete} subclass of $\cC$ if,
for all $\delta \in \cC \setminus \cA$, there exists $\delta_{0} \in \cA$ such that $\delta_{0}$ dominates $\delta$.
Similarly, $\cA$ is an \defn{essentially complete} subclass of $\cC$ if,
for all $\delta \in \cC \setminus \cA$, there exists $\delta_{0} \in \cA$ such that $r(\theta,\delta_{0}) \le r(\theta,\delta)$ for all $\theta \in \Theta$.
An \defn{essentially complete class} is an essentially complete subclass of $\FRRE$.
\end{definition}

If a decision procedure $\delta$ is \Admissible{\cC}, then every complete subclass of $\cC$ must contain $\delta$.
Note that the term \defn{complete class} is usually used to refer to a complete subclass of some essentially complete class
(such as $\FRRE$ itself or $\FiniteRiskEstimators$ under the conditions described in \cref{convexity}.)

The next lemma captures a key consequence of essential completeness:
\begin{lemma}\label{esscomplete}
Suppose $\cA$ is an essentially complete subclass of $\cC$,
then \ExtAdmissible{\cA} implies \ExtAdmissible{\cC}.
\end{lemma}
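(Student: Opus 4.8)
Fix a decision procedure $\delta \in \FRRE$. The plan is to argue by contraposition: I will show that if $\delta$ is not \ExtAdmissible{\cC}, then $\delta$ is not \ExtAdmissible{\cA} either. Unfolding the definition of extended admissibility, failure of $\delta$ being \ExtAdmissible{\cC} means there is some $\epsilon > 0$ for which $\delta$ is not \EpsAdmissible{\epsilon}{\cC}; that is, $\delta$ is \EpsDom{\epsilon} by some $\delta' \in \cC$. The goal is then to produce, from $\delta'$, a procedure $\delta_0 \in \cA$ such that $\delta$ is \EpsDom{\epsilon} by $\delta_0$, for then $\delta$ is not \EpsAdmissible{\epsilon}{\cA} and hence not \ExtAdmissible{\cA}.

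First I would dispose of the trivial case $\delta' \in \cA$, in which $\delta'$ itself serves as the desired witness. The substantive case is $\delta' \in \cC \setminus \cA$, where I would invoke the essential completeness of $\cA$ in $\cC$ to obtain $\delta_0 \in \cA$ with $\Risk(\theta,\delta_0) \le \Risk(\theta,\delta')$ for every $\theta \in \Theta$. Chaining this estimate against part (1) of the $\epsilon$-domination of $\delta$ by $\delta'$, namely $\Risk(\theta,\delta') \le \Risk(\theta,\delta) - \epsilon$, yields $\Risk(\theta,\delta_0) \le \Risk(\theta,\delta) - \epsilon$ for all $\theta$, which is exactly part (1) of the $\epsilon$-domination of $\delta$ by $\delta_0$.

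The only remaining point, and the one requiring a moment of care, is part (2) of the definition of domination: the existence of some $\theta$ at which the risks of $\delta_0$ and $\delta$ differ. I expect the strict positivity of $\epsilon$ to do the work here. Since $\epsilon > 0$, the inequality $\Risk(\theta,\delta_0) \le \Risk(\theta,\delta) - \epsilon$ is in fact strict at every $\theta \in \Theta$, so part (2) holds automatically once $\Theta \neq \emptyset$, and there is no need to transport the differing-risk witness that came with the $\epsilon$-domination of $\delta$ by $\delta'$. This is precisely the feature that lets extended admissibility, which quantifies over all $\epsilon > 0$, transfer across essentially complete subclasses more cleanly than plain admissibility would. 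We conclude that $\delta$ is \EpsDom{\epsilon} by $\delta_0 \in \cA$, so $\delta$ is not \EpsAdmissible{\epsilon}{\cA} and hence not \ExtAdmissible{\cA}, completing the contrapositive.
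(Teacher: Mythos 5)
Your proof is correct; the paper states \cref{esscomplete} without proof, but your contrapositive argument is exactly the one the authors use for its nonstandard analogue (\cref{eCCEA}): chain the essential-completeness inequality $\Risk(\theta,\delta_0)\le\Risk(\theta,\delta')$ against $\Risk(\theta,\delta')\le\Risk(\theta,\delta)-\epsilon$. Your observation that condition (2) of $\epsilon$-domination is automatic from $\epsilon>0$ and finiteness of the risk (so the witness $\theta$ need not be transported) is a valid and worthwhile point of care, as is the separate handling of the trivial case $\delta'\in\cA$.
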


The class of extended admissible estimators plays a central role in this paper.
It is not hard, however, to construct statistical decision problems for which the class is empty, and thus not a complete class.

\begin{example}
Consider a statistical decision problem with sample space $X=\{0\}$, parameter space $\Theta=\{0\}$, action space $\AS = (0,1]$,
and loss function $\Loss(0,d)=d$.
Then every decision procedure is a constant function, taking some value in $\AS$.
For all $c \in (0,1]$, the procedure  $\delta \equiv c$ is $c/2$-dominated by the decision procedure $\delta' \equiv c/2$.
Hence, there is no extended admissible estimator, hence the extended admissible procedures
do not form a complete class.
\end{example}

The following result gives conditions under which the class of extended admissible estimators are a complete class.
(See \citep[][\S5.4--5.6 and Thm.~5.6.3]{BG54} and \citep[][\S2.6 Cor.~1]{Ferguson} for related results for finite spaces.)

\begin{theorem}
Let $\cC \subseteq \FRRE$.
Suppose that, for all sequences $\delta, \delta_1, \delta_2, \dotsc \in \cC$ and
nondecreasing sequences $\epsilon_1,\epsilon_2,\dots \in\PosReals$ such that $\epsilon_{0} = \lim_i \epsilon_i$ exists
and $\delta$ is \EpsDom{\epsilon_{i}} by $\delta_{i}$ for all $i \in \Nats$,
there is a decision procedure $\delta_{0} \in \cC$ such that
$\delta$ is \EpsDom{\epsilon_{0}} by $\delta_{0}$.
Then the set of procedures that are \ExtAdmissible{\cC} form a complete subclass of $\cC$.
\end{theorem}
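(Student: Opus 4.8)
The plan is to verify the definition of a complete subclass directly. Writing $\cA$ for the set of procedures that are \ExtAdmissible{\cC}, I must show that every $\delta \in \cC \setminus \cA$ is dominated by some member of $\cA$. The organizing idea is to maximize the amount of domination: since such a $\delta$ fails to be \ExtAdmissible{\cC}, there is some $\epsilon > 0$ for which $\delta$ is \EpsDom{\epsilon} by a member of $\cC$, so
\[
\epsilon^{*} \defas \sup \Oof{\epsilon \ge 0 \st \delta \text{ is } \epsilon\text{-dominated by some } \delta' \in \cC}
\]
is strictly positive. If $\Theta = \emptyset$ the claim is vacuous, since then no procedure is dominated; so I fix some $\theta \in \Theta$. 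As the loss is nonnegative, $\Risk(\theta,\delta') \ge 0$, and condition~(1) of domination forces $\epsilon \le \Risk(\theta,\delta) < \infty$ for every $\epsilon$ in this set; hence $\epsilon^{*}$ is finite.

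I would next record the monotonicity of domination in its parameter: if $\delta$ is \EpsDom{\epsilon'} by $\delta'$ and $0 \le \epsilon \le \epsilon'$, then $\delta$ is also \EpsDom{\epsilon} by the same $\delta'$, because condition~(1) only weakens while condition~(2) is untouched. Combined with the definition of the supremum, this lets me choose a nondecreasing sequence $\epsilon_1,\epsilon_2,\dotsc \in \PosReals$ with $\lim_i \epsilon_i = \epsilon^{*}$ and, for each $i$, a witness $\delta_i \in \cC$ such that $\delta$ is \EpsDom{\epsilon_i} by $\delta_i$. The hypothesis of the theorem now applies verbatim and produces a single $\delta_0 \in \cC$ for which $\delta$ is \EpsDom{\epsilon^{*}} by $\delta_0$. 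Thus the supremal level of domination is in fact attained, and this is the only place the closure assumption is used.

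It remains to verify the two properties of $\delta_0$. That $\delta_0$ dominates $\delta$ is immediate from monotonicity with $0 \le \epsilon^{*}$, which gives that $\delta$ is \EpsDom{0} by $\delta_0$. For extended admissibility I argue by contradiction: if $\delta_0$ were not \ExtAdmissible{\cC}, there would exist $\eta > 0$ and $\delta_1 \in \cC$ such that $\delta_0$ is \EpsDom{\eta} by $\delta_1$, and chaining inequalities would give, for every $\theta$,
\[
\Risk(\theta,\delta_1) \le \Risk(\theta,\delta_0) - \eta \le \Risk(\theta,\delta) - (\epsilon^{*} + \eta).
\]
This verifies condition~(1) at level $\epsilon^{*} + \eta$, and since $\epsilon^{*} + \eta > 0$ the strict inequality at the fixed $\theta$ supplies condition~(2), so $\delta$ is \EpsDom{\epsilon^{*}+\eta} by $\delta_1 \in \cC$ --- contradicting the definition of $\epsilon^{*}$. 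Hence $\delta_0 \in \cA$, as required.

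The main obstacle I anticipate is bookkeeping rather than any deep step: one must confirm that $\epsilon^{*}$ is finite, so that an approximating sequence in $\PosReals$ with a genuine limit exists, and that each chosen $\epsilon_i$ has an honest witness in $\cC$, for which the monotonicity remark is essential. Once these are in place, the theorem's closure hypothesis does the real work of turning the supremum into an attained maximum, and the rest is just assembling the two domination inequalities.
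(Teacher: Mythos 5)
Your proof is correct and follows essentially the same route as the paper's: both take the supremum $\epsilon^{*}$ of the achievable domination levels, approximate it from below by a nondecreasing positive sequence with witnesses in $\cC$, invoke the closure hypothesis to attain $\epsilon^{*}$, and then use maximality of $\epsilon^{*}$ (via the chaining inequality) to conclude the dominating procedure is extended admissible. Your direct selection of a witness $\delta_i$ for each $\epsilon_i$ is a clean shortcut past the paper's more elaborate inductive construction of that sequence, but the argument is the same in substance.
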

\begin{proof}
Let $\cS = \{ x \in \Reals^{\Theta} : (\exists \delta \in \cC)\,(\forall \theta \in \Theta)\, x(\theta) = r(\theta,\delta) \}$
denote the risk set of $\cC$. Pick $\delta \in \cC$ and suppose $\delta$ is not \ExtAdmissible{\cC}.
Let
\[
Q_{\epsilon}(\delta)=\{x\in \Reals^{\Theta}: (\forall \theta\in \Theta)(x(\theta)\leq \Risk(\theta,\delta)-\epsilon)\}.
\]
Let $M$ be the set $\{\epsilon\in \PosReals: Q_{\epsilon}(\delta)\cap \cS\neq \emptyset\}$,
which is nonempty because
$\delta$ is not \ExtAdmissible{\cC}.
As the risk is nonnegative and finite, $M$ is also bounded above.
Hence there exists a least upper bound $\epsilon_0$ of $M$.
Pick a non-decreasing sequence $\epsilon_1,\epsilon_2,\dotsc \in M$ that converges to $\epsilon_0$.
We now construct a (potentially infinite) sequence of decision procedures inductively:
\begin{enumerate}
\item Choose $\delta_1 \in \cC$ such that $\delta$ is \EpsDom{\epsilon_{1}} by $\delta_1$. Because $M$ is nonempty, there must exist such a procedure.

\item Suppose we have chosen $\delta_1,\dots,\delta_i \in \cC$, and suppose there is an index $j \in \Nats$ such that
$\delta$ is \EpsDom{\epsilon_{j}} by $\delta_{i}$ but
$\delta$ is not \EpsDom{\epsilon_{j+1}} by $\delta_{i}$.
Then we choose $\delta_{i+1} \in \cC$ such that
$\delta$ is \EpsDom{\epsilon_{j+1}} by $\delta_{i+1}$.  Because $M$ contains $\epsilon_{j+1}$, there must exist such a procedure.
If no such index $j$ exist, the process halts at stage $i$.

\end{enumerate}

Suppose the process halts at some finite stage $i_0$.
Then for, all $j \in \Nats$,
$\delta$ is not \EpsDom{\epsilon_{j}} by $\delta_{i_{0}}$ or
$\delta$ is \EpsDom{\epsilon_{{j+1}}} by $\delta_{i_{0}}$.
But $\delta$ is \EpsDom{\epsilon_{1}} by $\delta_{i_{0}}$ and
so,
by induction,
$\delta$ is \EpsDom{\epsilon_{j}} by $\delta_{i_{0}}$ for all $j \in \Nats$.
As the sequence $\epsilon_1,\epsilon_2,\dots$ is non-decreasing and has a limit $\epsilon_{0}$,
it follows easily via a contrapositive argument that
$\delta$ is even \EpsDom{\epsilon_{0}} by $\delta_{i_{0}}$.
If $\delta_{i_{0}}$ were not \ExtAdmissible{\cC}, then this would contradict the fact that $\epsilon_{0}$ is a least upper bound on $M$.

Now suppose the process continues indefinitely.
Then the claim is that $\delta$ is \EpsDom{\epsilon_{i}} by $\delta_{i}$ for all $i \in \Nats$.
Clearly this holds for $i=1$. Supposing it holds for $i \le k$.
Then $\delta$ is \EpsDom{\epsilon_{i}} by $\delta_{k}$ for all $i \le k$
and there exists $j \in \Nats$
such that
$\delta$ is \EpsDom{\epsilon_{j}} by $\delta_{k}$ but
$\delta$ is not \EpsDom{\epsilon_{j+1}} by $\delta_{k}$.
It follows that $j \ge k$, hence
$\delta$ is \EpsDom{\epsilon_{k+1}} by $\delta_{k+1}$, as was to be shown.

Thus, by hypothesis,
there is a decision procedure $\delta' \in \cC$ such that
$\delta$ is \EpsDom{\epsilon_{0}} by $\delta'$.
As $\epsilon_0$ is the least upper bound of $M$, $\delta'$ is also \ExtAdmissible{\cC},
completing the proof.
\end{proof}

\subsection{Bayes optimality}
Consider now the Bayesian framework, in which one adopts a \defn{prior}, i.e., a probability measure $\pi$ defined on some $\sigma$-algebra on $\Theta$.
Irrespective of the interpretation of $\pi$,
we may define the \defn{Bayes} risk of a procedure
as the expected risk under a parameter chosen at random from $\pi$.\footnote{
We must now also assume that $r(\argdot,\delta)$ is a measurable function for every $\delta \in \FRRE$.
Normally, there is a natural choice of $\sigma$-algebra on $\Theta$ that satisfies this constraint.
Even if there is no natural choice, there is always a sufficiently rich $\sigma$-algebra that renders every risk function measurable.
In particular, the power set of $\Theta$ suffices.  Note that the $\sigma$-algebra determines the set of possible prior distributions.
In the extreme case where the $\sigma$-algebra on $\Theta$ is taken to be the entire power set,
the set of prior distributions contain the purely atomic distributions and these are the
only distributions 
if and only if 
there is no real-valued measurable cardinal less than or equal to the continuum \citep[Thm.~1D]{Fremlin}.
As we will see, the purely atomic distributions suffice to give our complete class theorems.
}

\begin{definition}\label{bayesdef}
Let $\delta \in \FRRE$, $\epsilon \ge 0$, and $\cC \subseteq \FRRE$, and let $\pi_0$ be a prior.
\begin{enumerate}
\item
The \defn{Bayes risk under $\pi_0$ of $\delta$} is
$\Risk(\pi_0,\delta)=\int_{\Theta}\Risk(\theta,\delta)\pi_0(\dee \theta)$.

\item
$\delta$ is \defn{\eBayesP{\epsilon}{\pi_0}{\cC}} if
$\Risk(\pi_0,\delta) < \infty$ and,
for all $\delta'\in \cC$, we have $\Risk(\pi_0,\delta) \leq \Risk(\pi_0,\delta') + \epsilon$.

\item
$\delta$ is \defn{\BayesP{\pi_0}{\cC}} if
$\delta$ is \eBayesP{0}{\pi_0}{\cC}.

\item
$\delta$ is \defn{extended \Bayes{\cC}} if, for all $\epsilon > 0$, there exists a prior $\pi$ such that
$\delta$ is \eBayesP{\epsilon}{\pi}{\cC}.

\item
$\delta$ is \defn{\eBayes{\epsilon}{\cC}} (resp., \defn{\Bayes{\cC}})
if there exists a prior $\pi$ such that $\delta$ is \eBayesP{\epsilon}{\pi}{\cC} (resp., \BayesP{\pi}{\cC}).

\end{enumerate}
\end{definition}

We will sometimes write \defn{\Bayes{\cC} with respect to $\pi_0$} to mean \BayesP{\pi_0}{\cC},
and similarly for \eBayes{\epsilon}{\cC}.

The following well-known result establishes a basic connection between Bayes optimality and admissibility (see, e.g., \citep[][Thm.~5.5.1]{BG54}).
We give a proof for completeness.
\begin{theorem}\label{BayesImpliesExtAdm}
If $\delta$ is \Bayes{\cC}, then
$\delta$ is extended \Bayes{\cC}, and then
$\delta$ is \ExtAdmissible{\cC}.
\end{theorem}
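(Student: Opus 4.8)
The plan is to establish the two implications in turn. The first, that \Bayes{\cC} implies extended \Bayes{\cC}, is essentially free: if a prior $\pi$ witnesses that $\delta$ is \BayesP{\pi}{\cC}, then $\Risk(\pi,\delta) < \infty$ and $\Risk(\pi,\delta) \le \Risk(\pi,\delta')$ for every $\delta' \in \cC$, so a fortiori $\Risk(\pi,\delta) \le \Risk(\pi,\delta') + \epsilon$ for every $\epsilon > 0$. Thus the \emph{same} prior $\pi$ witnesses \eBayesP{\epsilon}{\pi}{\cC} for all $\epsilon > 0$, which is precisely the definition of extended \Bayes{\cC}.

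For the second implication I would argue by contraposition. Suppose $\delta$ is not \ExtAdmissible{\cC}. Then, by definition, there is some $\epsilon > 0$ for which $\delta$ is not \EpsAdmissible{\epsilon}{\cC}, i.e., there exists $\delta' \in \cC$ such that $\delta$ is \EpsDom{\epsilon} by $\delta'$. The first clause of domination gives $\Risk(\theta,\delta') \le \Risk(\theta,\delta) - \epsilon$ for every $\theta \in \Theta$; the second clause is then automatic, since $\epsilon > 0$ and $\Theta \neq \emptyset$, though it plays no role below. For \emph{any} prior $\pi$, integrating this pointwise bound and using that $\pi$ is a probability measure yields
\[
\Risk(\pi,\delta') = \int_{\Theta} \Risk(\theta,\delta')\,\pi(\dee\theta) \le \int_{\Theta} \bigl(\Risk(\theta,\delta) - \epsilon\bigr)\,\pi(\dee\theta) = \Risk(\pi,\delta) - \epsilon.
\]

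To close the argument I would invoke the extended Bayes hypothesis at level $\epsilon/2$: this supplies a prior $\pi$ with $\Risk(\pi,\delta) < \infty$ and $\Risk(\pi,\delta) \le \Risk(\pi,\delta'') + \epsilon/2$ for all $\delta'' \in \cC$. Specializing to $\delta'' = \delta'$ and combining with the displayed inequality for this same $\pi$ gives
\[
\Risk(\pi,\delta') + \epsilon \le \Risk(\pi,\delta) \le \Risk(\pi,\delta') + \tfrac{\epsilon}{2}.
\]
Since the loss is nonnegative we have $0 \le \Risk(\pi,\delta') \le \Risk(\pi,\delta) < \infty$, so $\Risk(\pi,\delta')$ is finite and may be cancelled, leaving $\epsilon \le \epsilon/2$, a contradiction. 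Hence $\delta$ is \ExtAdmissible{\cC}. This argument is routine, with no deep obstacle; the one point demanding care is the finiteness bookkeeping---verifying that $\Risk(\pi,\delta')$ is finite so the cancellation is legitimate, and that the integral inequality is valid---both of which follow from nonnegativity of the loss together with the finiteness of $\Risk(\pi,\delta)$ built into the definition of \eBayesP{\epsilon}{\pi}{\cC}.
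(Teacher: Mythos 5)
Your proposal is correct and follows essentially the same route as the paper: the first implication is immediate from the definitions (the same prior works for every $\epsilon$), and the second is proved by contraposition, integrating the pointwise $\epsilon$-domination inequality against a prior and contradicting $\epsilon/2$-Bayes optimality. Your explicit finiteness bookkeeping is a slightly more careful rendering of the paper's ``either the inequality holds or both Bayes risks are infinite'' dichotomy, but the argument is the same.
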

\begin{proof}
That Bayes implies extended Bayes follows trivially from definitions.
Now assume $\delta$ is not \ExtAdmissible{\cC}. Then there exists $\epsilon > 0$ and $\delta' \in \cC$ such that
$\Risk(\theta,\delta') \le \Risk(\theta,\delta) - \epsilon$ for all $\theta \in \Theta$.
But then, for every prior $\pi$,
$\int \Risk(\theta,\delta')\pi(\dee \theta) \le \int \Risk(\theta,\delta)\pi(\dee \theta) - \epsilon$
or
$\int \Risk(\theta,\delta')\pi(\dee \theta) = \int \Risk(\theta,\delta)\pi(\dee \theta) = \infty$,
hence $\delta$ is not \eBayes{\epsilon/2}{\cC}, hence not extended \Bayes{\cC}.
\end{proof}

Note that neither extended admissibility nor admissibility imply Bayes optimality, in general.
E.g., the maximum likelihood estimator in a univariate normal-location problem is admissible, but not Bayes.

Essential completeness allows us to strengthen a Bayes optimality claim:

\begin{theorem}\label{EssCompleteBayes}
Suppose $\cA$ is an essentially complete subclass of $\cC$,
then \eBayes{\epsilon}{\cA} implies \eBayes{\epsilon}{\cC} for every $\epsilon \ge 0$.
\end{theorem}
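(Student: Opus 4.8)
The plan is to show that a single prior does all the work: the very prior that witnesses the $\epsilon$-Bayes property of $\delta$ among the smaller class $\cA$ continues to witness it among the larger class $\cC$, because integrating against a fixed prior preserves the pointwise risk domination guaranteed by essential completeness. Thus the only tool needed beyond unwinding definitions is monotonicity of the Bayes-risk integral.

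First I would unwind the hypothesis. Suppose $\delta$ is \eBayes{\epsilon}{\cA}. By \cref{bayesdef}, there is a prior $\pi$ with $\Risk(\pi,\delta) < \infty$ and $\Risk(\pi,\delta) \le \Risk(\pi,\delta') + \epsilon$ for every $\delta' \in \cA$. To conclude that $\delta$ is \eBayes{\epsilon}{\cC}, it suffices to keep this same $\pi$ and verify the inequality $\Risk(\pi,\delta) \le \Risk(\pi,\delta'') + \epsilon$ for every $\delta'' \in \cC$. Since $\Risk(\pi,\delta) < \infty$ already holds, establishing this family of inequalities yields exactly that $\delta$ is \eBayesP{\epsilon}{\pi}{\cC}, and hence \eBayes{\epsilon}{\cC}.

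I would then split into two cases according to whether $\delta''$ lies in $\cA$. If $\delta'' \in \cA$, the required inequality is immediate from the hypothesis. If $\delta'' \in \cC \setminus \cA$, essential completeness supplies some $\delta_0 \in \cA$ with $\Risk(\theta,\delta_0) \le \Risk(\theta,\delta'')$ for all $\theta \in \Theta$. Integrating this pointwise inequality against $\pi$ gives $\Risk(\pi,\delta_0) \le \Risk(\pi,\delta'')$, and since $\delta_0 \in \cA$ the hypothesis yields $\Risk(\pi,\delta) \le \Risk(\pi,\delta_0) + \epsilon \le \Risk(\pi,\delta'') + \epsilon$; note this remains valid even when $\Risk(\pi,\delta'') = \infty$, because $\Risk(\pi,\delta)$ is finite. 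Combining the two cases gives the claim for arbitrary $\epsilon \ge 0$.

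There is no substantive obstacle here; the single point requiring care is the monotonicity step $\Risk(\theta,\delta_0) \le \Risk(\theta,\delta'')$ implies $\Risk(\pi,\delta_0) \le \Risk(\pi,\delta'')$, which relies on risks being nonnegative (so the integrals are well-defined in $[0,\infty]$) together with the standing measurability assumption on risk functions recorded in the footnote to \cref{bayesdef}. Everything else is a routine case analysis driven directly by the definition of an essentially complete subclass.
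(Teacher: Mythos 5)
Your proof is correct and follows essentially the same route as the paper's: fix the prior $\pi$ witnessing the $\epsilon$-Bayes property among $\cA$, use essential completeness to obtain a dominating procedure in $\cA$, and integrate the pointwise risk inequality against $\pi$ to chain the Bayes risks. Your version is slightly more careful than the paper's (which writes out only the $\epsilon=0$ case and omits the trivial $\delta''\in\cA$ subcase), but these are refinements of the identical argument.
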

\begin{proof}
Let $\delta_{0}$ be \BayesP{\pi}{\cA} for some prior $\pi$.
Let $\delta \in \cC$.  Then there exists $\delta' \in \cA$ such that,
for all $\Risk(\theta,\delta') \le \Risk(\theta,\delta)$ for all $\theta \in \Theta$.
By hypothesis, $\Risk(\pi,\delta_{0}) \le \Risk(\pi,\delta')$, but
$\Risk(\pi,\delta') = \int \Risk(\theta,\delta') \pi(\dee\theta)
\le \int \Risk(\theta,\delta) \pi(\dee\theta)  = \Risk(\pi,\delta)$.
Hence $\Risk(\pi,\delta_{0}) \le \Risk(\pi,\delta)$ for all $\delta \in \cC$.
\end{proof}

\subsection{Convexity}
\label{convexity}

An important class of statistical decision problems are those in which the action space $\AS$ is itself a vector space over the field $\Reals$.
In that case, the mean estimate
 $\int_{\AS} a\, \delta(x,\dee a)$ is well defined for every $\delta \in \FRREFS$ and $x \in X$,
 which motivates the following definition.

\begin{definition}\label{averageEst}
For $\delta \in \FRREFS$,
define $\EEst{\delta} : X \to \ProbMeasures{\AS}$ by
$\EEst{\delta}(x,A) = 1$ if $ \int_{\AS} a\, \delta(x,\dee a) \in A $ and $0$ otherwise,
for every $x \in X$ and measurable subset $A \subseteq \AS$.
\end{definition}

When the loss function is assumed to be convex, it is well known that the mean action will be no worse on average than the original randomized one.
We formalize this condition below and prove several well-known results for completeness.

\begin{condition}{LC}[loss convexity]
\label{assumptioncv}
$\AS$ is a vector space over the field $\Reals$ and the loss function $\Loss$ is convex with respect to the second argument.
\end{condition}

\begin{lemma}\label{convexess}
Let $\delta$ and $\EEst{\delta}$ be as in \cref{averageEst},
and suppose \cref{assumptioncv} holds.
Then $\Risk(\argdot, \delta) \ge \Risk(\argdot, \EEst{\delta})$, hence $\EEst{\delta} \in \FiniteRiskEstimators$.
\end{lemma}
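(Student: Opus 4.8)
The plan is to reduce the inequality to a single application of the definition of convexity, applied pointwise in the sample, and then to integrate. First I would unfold both risks. By construction $\EEst{\delta}$ is the nonrandomized procedure placing unit mass at the mean action $d(x) \defas \int_{\AS} a\, \delta(x,\dee a)$, so that
\[
\Risk(\theta,\EEst{\delta}) = \int_X \Loss\ooof{\theta, \textstyle\int_{\AS} a\, \delta(x,\dee a)} \Model_\theta(\dee x),
\]
while $\Risk(\theta,\delta) = \int_X \oof{\int_{\AS}\Loss(\theta,a)\,\delta(x,\dee a)}\Model_\theta(\dee x)$. Thus it suffices to compare the two inner integrals for each fixed $\theta$ and $x$.

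Second, I would use that $\delta \in \FRREFS$, so $\delta = \sum_{i=1}^{n} p_i \delta_i$ for some $\delta_1,\dotsc,\delta_n \in \FiniteRiskEstimators$ and weights $p_i \ge 0$ with $\sum_i p_i = 1$. For each $x$ the measure $\delta(x,\argdot) = \sum_i p_i \delta_i(x,\argdot)$ is finitely supported, placing mass $p_i$ on the action $d_i(x) \in \AS$ of the nonrandomized $\delta_i$; hence the mean action $\int_{\AS} a\, \delta(x,\dee a) = \sum_i p_i d_i(x)$ is a genuine finite convex combination of points of the vector space $\AS$. Convexity of $\Loss(\theta,\argdot)$ from \Cref{assumptioncv} then gives, pointwise in $x$,
\[
\Loss\ooof{\theta, \textstyle\sum_i p_i d_i(x)} \le \sum_i p_i \Loss(\theta,d_i(x)) = \int_{\AS} \Loss(\theta,a)\,\delta(x,\dee a).
\]
Because $\delta(x,\argdot)$ is finitely supported, this is only the elementary (finite) form of Jensen's inequality, so no measure-theoretic version is needed.

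Third, integrating this pointwise inequality against $\Model_\theta$ preserves it, yielding $\Risk(\theta,\EEst{\delta}) \le \Risk(\theta,\delta)$ for every $\theta$, i.e.\ $\Risk(\argdot,\delta) \ge \Risk(\argdot,\EEst{\delta})$. Since $\delta \in \FRREFS \subset \FRRE$ has finite risk and the loss is nonnegative, this forces $\Risk(\theta,\EEst{\delta}) \in \NNReals$ for all $\theta$; as $\EEst{\delta}$ is nonrandomized by construction, it lies in $\FiniteRiskEstimators$, which is the remaining claim.

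The step I expect to be the only real obstacle is measurability: for $\Risk(\theta,\EEst{\delta})$ to be defined at all, the weak measurability criterion requires $x \mapsto \Loss(\theta, \sum_i p_i d_i(x))$ to be measurable. This is exactly where the standing mild measurability assumptions enter. The map $x \mapsto \sum_i p_i d_i(x)$ is measurable as a finite combination of the measurable action maps $d_i$, provided the vector operations on $\AS$ are measurable, and $\Loss(\theta,\argdot)$ is Borel measurable since finite convex functions are measurable under mild regularity. Granting this, the pointwise inequality above simultaneously certifies integrability by domination, and the conclusion follows.
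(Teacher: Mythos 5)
Your proof is correct and follows essentially the same route as the paper: both apply Jensen's inequality to the inner integral pointwise in $x$ and then integrate against $\Model_\theta$, with the paper citing a finite-dimensional version of Jensen from Ferguson and its accompanying remark noting---exactly as you do---that the finite-mixture structure of $\delta \in \FRREFS$ is what makes the elementary finite form suffice. Your explicit unfolding of $\delta(x,\argdot)$ as a finitely supported measure and your attention to the measurability of $\EEst{\delta}$ are slightly more detailed than the paper's proof but do not change the argument.
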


\begin{proof}
Let $\theta \in \Theta$.
By convexity of $\Loss$ in its second parameter and a finite-dimensional version of Jensen's inequality~\citep[][\S2.8 Lem.~1]{Ferguson},
we have
\[
\Risk(\theta, \delta)
&= \int_{X} \Bigl [ \int_{\AS} \Loss(\theta,a) \delta(x,\dee a) \Bigr ] \Model_{\theta}(\dee x)
\\&\ge \int_{X}  \Loss(\theta, {\textstyle \int_{\AS} a \,\delta(x,\dee a) } )  \Model_{\theta}(\dee x)
= \Risk(\theta, \EEst{\delta}).
\]
\end{proof}

\begin{remark}
Irrespective of the dimensionality of the action space $\AS$, we may use a finite-dimensional version of Jensen's inequality because the procedure $\delta \in \FRREFS$ is a finite mixture of nonrandomized procedures.  The proof for a general randomized procedure $\delta \in \FRRE$ and a general action space $\AS$, would require additional hypotheses  to account for the possible failure of Jensen's inequality (see \citep[][]{MR0362421}) and the
possible lack of measurability of $\EEst{\delta}$ (see \citep[][S2.8]{Ferguson}).
\end{remark}

\begin{lemma}\label{FREconvexECC}
Suppose \cref{assumptioncv} holds.
Then $\FiniteRiskEstimators$ is an essentially complete subclass of $\FRREFS$.
\end{lemma}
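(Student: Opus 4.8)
The plan is to verify the definition of an essentially complete subclass directly, using the averaging map $\EEst{\argdot}$ of \cref{averageEst} as the explicit witness. Fix an arbitrary $\delta \in \FRREFS \setminus \FiniteRiskEstimators$; I must exhibit a nonrandomized procedure $\delta_0 \in \FiniteRiskEstimators$ whose risk function is pointwise no larger than that of $\delta$, i.e.\ $\Risk(\theta,\delta_0) \le \Risk(\theta,\delta)$ for all $\theta \in \Theta$.

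The natural candidate is $\delta_0 = \EEst{\delta}$. First I would observe that $\EEst{\delta}$ is well-defined precisely because $\delta \in \FRREFS$: being a finite convex combination of nonrandomized procedures, the mean action $\int_{\AS} a\, \delta(x,\dee a)$ is a finite convex combination of points of the vector space $\AS$, hence a genuine element of $\AS$ for each $x \in X$. Thus $\EEst{\delta}(x,\argdot)$ is the point mass at this mean action, so $\EEst{\delta}$ is nonrandomized by construction. The key step is then to invoke \cref{convexess}: under \cref{assumptioncv}, that lemma yields both that $\EEst{\delta} \in \FiniteRiskEstimators$ (in particular it has finite risk) and that $\Risk(\theta, \EEst{\delta}) \le \Risk(\theta, \delta)$ for every $\theta \in \Theta$. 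Taking $\delta_0 = \EEst{\delta}$, these two facts are exactly what the definition of an essentially complete subclass demands, which completes the argument.

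There is no substantive obstacle remaining at this level, since the real work—the finite-dimensional Jensen inequality producing the pointwise risk bound—has already been discharged in \cref{convexess}. The only points requiring care are bookkeeping: that $\EEst{\delta}$ lands in $\FiniteRiskEstimators$ rather than merely in $\FRRE$ (guaranteed by the conclusion of \cref{convexess}, since finiteness of the risk is built into membership in $\FiniteRiskEstimators$), and that the relevant inequality is the pointwise-in-$\theta$ domination of risk functions required by essential completeness, not the stricter strict-domination used in the definition of a complete subclass. Both are immediate from the statement of \cref{convexess}, so the proposition follows as a direct corollary.
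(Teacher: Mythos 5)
Your proposal is correct and follows essentially the same route as the paper: the witness is $\EEst{\delta}$, and the pointwise risk inequality together with membership in $\FiniteRiskEstimators$ is obtained by invoking \cref{convexess}. Your additional remarks on why $\EEst{\delta}$ is well-defined and nonrandomized just make explicit what the paper leaves implicit.
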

\begin{proof}
Let $\delta\in \FRREFS$.
Then $\EEst{\delta} \in \FiniteRiskEstimators$.
By \cref{convexess}, $\EEst{\delta}$ is well defined and $\Risk(\theta,\delta_0) \ge \Risk(\theta,\EEst{\delta})$, completing the proof.
\end{proof}

\begin{remark}
See the remark following \citep[][\S2.8 Thm.~1]{Ferguson} for a discussion of additional hypotheses needed for establishing that $\FiniteRiskEstimators$ is an essentially complete subclass of $\FRRE$.
\end{remark}

\section{Prior work}
\label{sec:prior}

The first key results on admissibility and Bayes optimality are due to Abraham Wald,
who laid the foundation of sequential decision theory.
In \citep{Wald47},
working in the setting of sequential statistical decision problems
with compact parameter spaces,
Wald showed that
the Bayes decision procedures form an essentially complete class.
Sequential decision problems differ from the decision problems we will be discussing in this paper in the sense that it gives the statistician the freedom to look at a sequence of observations one at a time and to decide, after each observation, whether to stop and take an action or to continue, potentially at some cost.
The decision problems we will be discussing in this paper can be seen as special cases of sequential decision problems with only one observation.

In order to prove his results, Wald required a strong form of continuity for his risk and loss functions.
\begin{definition}\label{pintrin}
A sequence of parameters $\{\theta_i\}_{i\in \Nats}$ converges \defn{in risk} to a parameter $\theta$
when
$\sup_{\delta \in \FRRE}  | \Risk(\theta_i,\delta)  - \Risk(\theta,\delta)|  \to 0$  as $i \to \infty$,
and converges
\defn{in loss} when
$\sup_{a \in \AS} |\Loss(\theta_i,a) - \Loss(\theta,a) | \to 0$ as $ i \to \infty$.
Similarly, a sequence of decision procedures $\{\delta_i\}_{i\in \Nats}$ in $\FRRE$ converges \defn{in risk} to a decision procedure $\delta$
when
$\sup_{\theta \in \Theta} | \Risk(\theta,\delta_i) - \Risk(\theta,\delta) | \to 0$ as $i \to \infty$.
A sequence of actions $\{a_i\}_{i\in \Nats}$
converges
\defn{in loss} to an action $a \in \AS$ when
$\sup_{\theta \in \Theta} |\Loss(\theta,a_i) - \Loss(\theta,a) | \to 0$ as $i \to \infty$.
\end{definition}

Topologies on $\Theta$, $\AS$, and $\FRRE$ are generated by these notions of convergence.
In the following result and elsewhere, a model $P$ is said to admit (a measurable family of) densities $( f_{\theta})_{\theta \in \Theta}$ (with respect to a dominating
($\sigma$-finite) measure $\nu$) when
$P_\theta(A) = \int_A f_\theta(x)\, \nu(\dee x)$ for every $\theta \in \Theta$ and measurable $A \subseteq X$.
In terms of these densities,
there is a unique Bayes solution with respect to a prior $\pi$ on $\Theta$
when,
for every $x \in X$, except perhaps for a set of $\nu$-measure $0$,
there exists one and only one action $a^* \in \AS$ for which the expression
\[
\int_{\Theta} \Loss(\theta,a)f_\theta(x)\, \pi(\dee \theta)
\]
takes its minimum value with respect to $a \in \AS$. (Another notion of uniqueness used in the literature is to simply demand that the risk functions of two Bayes solutions agree.)
The main result can be stated in the special case of a nonsequential decision problem as follows:
\begin{theorem}[{\citep[][Thms.~4.11 and~4.14]{Wald47}}]
Assume $\Theta$ and $\FRRE$ are compact in risk, and that $\Theta$ and $\AS$ are compact in loss.
Assume further that $P$ admits densities $( f_{\theta})_{\theta \in \Theta}$ with respect to Lebesgue measure,
that these densities are strictly positive outside a Lebesgue measure zero set.
Then every extended admissible decision procedure is Bayes.
If the Bayes solution for every prior $\pi$ is unique,
the class of nonrandomized Bayes procedures form a complete class.
\end{theorem}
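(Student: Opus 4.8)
The plan is to realize the risk functions as points of the Banach space $C(\Theta)$ of continuous functions on $\Theta$ and to run the classical supporting-hyperplane argument there. Since $\Theta$ is compact in risk, every risk function $\Risk(\argdot,\delta)$ is continuous on $\Theta$, so the \emph{risk set} $\cS=\{\Risk(\argdot,\delta):\delta\in\FRRE\}$ is a subset of $C(\Theta)$. It is convex, because $\Risk(\theta,\argdot)$ is affine on the convex set $\FRRE$, and it is compact in sup-norm, because $\FRRE$ is compact in risk and $\delta\mapsto\Risk(\argdot,\delta)$ is, by the very definition of the risk topology, continuous from $\FRRE$ into $(C(\Theta),\norm{\argdot}_\infty)$. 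The key bookkeeping step is the geometric reformulation of extended admissibility: writing $x_0=\Risk(\argdot,\delta)$, one checks directly from the definitions that $\delta$ is \ExtAdmissible{\FRRE} if and only if $\cS$ is disjoint from the open convex cone $U=\{y\in C(\Theta):\sup_{\theta}(y-x_0)(\theta)<0\}$ of functions that improve on $x_0$ uniformly by a positive margin.

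First I would prove that every extended admissible $\delta$ is Bayes. With $\cS$ compact convex and $U$ open convex and disjoint, the Hahn--Banach separation theorem supplies a nonzero $\Lambda\in C(\Theta)^{\ast}$ and $\alpha\in\Reals$ with $\Lambda(u)\le\alpha\le\Lambda(s)$ for all $u\in U$ and $s\in\cS$; since $x_0\in\cS\cap\overline U$ one gets $\Lambda(x_0)=\alpha$, so $\Lambda$ supports $\cS$ at $x_0$. By the Riesz representation theorem, $\Lambda(y)=\int_\Theta y\,\dee\mu$ for a finite signed Radon measure $\mu$. Testing the inequality $\Lambda(x_0-w)\le\Lambda(x_0)$ against all $w$ with $\inf_\theta w>0$ and letting the margin shrink forces $\mu\ge 0$; since $\Lambda\neq 0$ we have $\mu(\Theta)>0$, so $\pi=\mu/\mu(\Theta)$ is a genuine prior. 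The support inequality then reads $\int\Risk(\theta,\delta)\,\dee\pi\le\int\Risk(\theta,\delta')\,\dee\pi$ for every $\delta'\in\FRRE$, and $\Risk(\pi,\delta)$ is finite because continuous functions on the compact space $\Theta$ are bounded; hence $\delta$ is \BayesP{\pi}{\FRRE}.

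For the complete-class statement I would first check that, under the present hypotheses, the extended admissible procedures already form a complete class. Compactness of $\FRRE$ in risk lets me verify the hypothesis of the earlier completeness theorem: given $\delta$ that is \EpsDom{\epsilon_i} by $\delta_i$ with $\epsilon_i\uparrow\epsilon_0$, I extract a risk-convergent subsequence $\delta_i\to\delta_0\in\FRRE$, and uniform risk convergence preserves $\epsilon_0$-domination in the limit. Combined with the previous paragraph, every member of this complete class is Bayes. Finally I would invoke the uniqueness hypothesis to upgrade ``Bayes'' to ``nonrandomized Bayes'': because the densities are strictly positive off a Lebesgue-null set, the $\pi$-marginal on $X$ is equivalent to $\nu$, so a Bayes procedure must, for $\nu$-a.e.\ $x$, place all its mass on the minimizer of $a\mapsto\int\Loss(\theta,a)f_\theta(x)\,\pi(\dee\theta)$; uniqueness makes this minimizer a single action $a^{\ast}(x)$, so the procedure is risk-equivalent to the nonrandomized Bayes rule $d(x)=a^{\ast}(x)$. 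This converts the complete class into one consisting of nonrandomized Bayes procedures.

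The main obstacle is the separation step and the extraction of a bona fide prior from it: one must ensure the two convex sets really can be separated (this is why $\cS$ must be shown \emph{compact}, not merely closed, and why the improvement cone $U$ must be taken \emph{open}), and one must argue that the separating functional corresponds to a \emph{nonnegative} measure of positive total mass rather than an arbitrary signed measure. A secondary subtlety, easy to overlook, is that ``complete'' demands \emph{strict} domination, so the passage from a Bayes procedure to a nonrandomized one can only be made up to risk equivalence; the strict positivity of the densities is exactly what guarantees that the a.e.\ modification leaves every risk value unchanged, so that no domination information is lost.
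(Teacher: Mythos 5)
This statement is quoted from \citet{Wald47} in the ``Prior work'' section and the paper supplies no proof of it, so there is nothing in-paper to compare against line by line; the closest internal analogue is \cref{stbayes}, which the paper proves by nonstandard means (hyperfinite discretization of the risk set, a transferred separating-hyperplane theorem, saturation, and pushdown of a nonstandard prior). Your argument is the classical standard-analysis counterpart: you embed the risk set in $C(\Theta)$, separate it from the open cone of uniform improvements by Hahn--Banach, and extract a prior via Riesz representation. The reduction of extended admissibility to $\cS\cap U=\emptyset$, the positivity argument for the separating functional, and the finiteness of the Bayes risk are all correct, and this route buys you a genuinely standard proof of the first claim under hypotheses close to (in fact weaker than) Wald's, at the price of needing $\Theta$ compact; the paper's nonstandard route is designed precisely to dispense with that compactness in \cref{fsecomplete}. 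One small repair: the risk pseudometric $\sup_{\delta}|\Risk(\theta,\delta)-\Risk(\theta',\delta)|$ need not separate points of $\Theta$, so to invoke Riesz you should pass to the Hausdorff quotient (harmless, since risk functions factor through it).

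Two steps in the second half are genuinely incomplete. First, the passage from ``Bayes'' to ``nonrandomized Bayes'' requires that the posterior expected loss $a\mapsto\int\Loss(\theta,a)f_\theta(x)\,\pi(\dee\theta)$ attain its minimum and that $x\mapsto a^{\ast}(x)$ admit a measurable selection; you assert this but it is exactly where the unused hypotheses (compactness of $\AS$ in loss, hence continuity of the posterior loss in $a$) must be brought in, together with a Fubini interchange to identify normal-form and extensive-form Bayes optimality. Second, and more seriously for the statement as written: the paper's definition of a complete class demands \emph{strict} domination, and a nonrandomized procedure that is merely risk-equivalent to an extended admissible $\delta$ does not dominate $\delta$ (clause (2) of the definition of domination fails). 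You flag this, but the resolution is not that ``no domination information is lost''; it is that one must either identify procedures agreeing $\nu$-a.e.\ (so that the uniqueness hypothesis forces every Bayes procedure to \emph{be} the nonrandomized rule, not merely to be risk-equivalent to it) or weaken the conclusion to essential completeness. As stated, your argument proves essential completeness of the nonrandomized Bayes procedures; closing the gap to completeness requires the a.e.-identification convention, which is implicit in Wald's framework but should be made explicit.
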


Wald's regularity conditions
are quite strong; he essentially requires equicontinuity in each variable for both the loss and risk functions.
For example, the standard normal-location problem under squared error does not satisfy these criteria.

A similar result is established in the nonsequential setting in \citep{Wald47a}:
\begin{theorem}[{\citep[][Thm.~3.1]{Wald47a}}]
\label{waldcompact}
Suppose
that $P$ admits densities $( f_{\theta})_{\theta \in \Theta}$,
that $\Theta$ is a compact subset of a Euclidean space,
that the map $(x,\theta) \mapsto f_\theta(x)$ is jointly continuous,
that the loss $\Loss(\theta,a)$ is a continuous function of $\theta$ for every action $a$,
that the space $\AS$ is compact in loss,
 and that there is a unique Bayes solution for every prior $\pi$ on $\Theta$.
Then every Bayes procedure is admissible and
 the collection of Bayes procedures form an essentially complete class.
 \end{theorem}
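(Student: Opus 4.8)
The statement bundles two claims, and I would prove them by different means: the first from the uniqueness hypothesis alone, the second by the classical Wald compactness-and-separation argument.

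\emph{Bayes implies admissible.} Fix a prior $\pi$ and a procedure $\delta$ that is Bayes with respect to $\pi$, so in particular $\Risk(\pi,\delta) < \infty$. Suppose toward a contradiction that $\delta$ is dominated by some $\delta'$. Integrating the inequality $\Risk(\theta,\delta') \le \Risk(\theta,\delta)$ against $\pi$ gives $\Risk(\pi,\delta') \le \Risk(\pi,\delta)$; since $\delta$ is Bayes, the reverse inequality also holds, so $\Risk(\pi,\delta') = \Risk(\pi,\delta)$ and $\delta'$ is Bayes with respect to $\pi$ as well. The uniqueness hypothesis says that for $\nu$-a.e.\ $x$ the map $a \mapsto \int_\Theta \Loss(\theta,a) f_\theta(x)\,\pi(\dee\theta)$ has a single minimizer $a^*(x)$; since the posterior expected loss of a (possibly randomized) procedure at $x$ is the average of this map against $\delta(x,\argdot)$, any Bayes procedure must place all its mass on $a^*(x)$ for $\nu$-a.e.\ $x$. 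Hence $\delta'$ and $\delta$ agree for $\nu$-a.e.\ $x$, so $\Risk(\argdot,\delta') = \Risk(\argdot,\delta)$, contradicting the strict inequality demanded by domination. Thus $\delta$ is admissible, and note this step needs nothing about the support of $\pi$.

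\emph{Essential completeness of the Bayes class.} Here I would work in $C(\Theta)$. First I would check that joint continuity of $(x,\theta)\mapsto f_\theta(x)$, continuity of $\Loss$ in $\theta$, and compactness of $\AS$ in loss together make each risk function $\Risk(\argdot,\delta)$ continuous on the compact set $\Theta$, so the risk set $\cS = \{\Risk(\argdot,\delta) : \delta \in \FRRE\}$ is a subset of $C(\Theta)$; it is convex because $\Risk(\theta,\argdot)$ is linear, and bounded below by $0$. The plan is then: (i) show that below any $\Risk(\argdot,\delta)$ there is a point of $\cS$ that is pointwise-minimal and attained by some procedure; (ii) separate such a minimal point from the open convex cone of functions lying strictly below it at every point of $\Theta$, by a hyperplane; and (iii) use the Riesz representation of $C(\Theta)^*$ to identify the separating functional with a finite nonnegative Radon measure, which after normalization is a prior $\pi$ witnessing that the minimal point minimizes $\int_\Theta \Risk(\theta,\argdot)\,\pi(\dee\theta)$—that is, it is Bayes with respect to $\pi$. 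Since every $\delta$ then has a Bayes risk point lying below it, the Bayes class is essentially complete; a final appeal to uniqueness lets me take the witnessing Bayes procedure to be the nonrandomized one.

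I expect the main obstacle to be step (i): showing the lower boundary of $\cS$ is \emph{attained} rather than merely approached. This is exactly where Wald's regularity conditions earn their keep. For a minimizing sequence of procedures I would, for $\nu$-a.e.\ $x$, extract a convergent subsequence of actions using compactness of $\AS$ in loss, pass the limit through the risk integral using the joint continuity of the densities together with continuity of the loss in $\theta$, and use compactness of $\Theta$ to control uniformity so that the limiting risk function again lies in $C(\Theta)$ and corresponds to a genuine procedure. A secondary point requiring care is verifying that the separating functional is nonnegative—so that it yields a prior rather than a signed measure—which holds because decreasing a function pointwise moves it to the strictly-below side of the hyperplane, forcing the representing measure to be nonnegative.
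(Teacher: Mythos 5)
There is nothing to compare against here: \cref{waldcompact} is quoted from \citet[Thm.~3.1]{Wald47a} in the ``Prior work'' section, and the paper gives no proof of it. The closest the paper comes is its own \cref{stbayes}, which under compactness and risk continuity identifies the extended admissible procedures with the Bayes procedures, but by an entirely different (nonstandard) route: hyperfinite discretization of the risk set, a transferred separating-hyperplane argument, and a pushdown of the resulting nonstandard prior. Your plan is instead the classical one --- Riesz representation of a separating functional on $C(\Theta)$ --- which is closer in spirit to Wald's original argument and to \citet{Berger85}.

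On its own terms your proposal is sound in outline. The ``Bayes implies admissible'' half is essentially complete: domination forces the dominating procedure to be Bayes for the same prior, uniqueness forces both to concentrate on the same pointwise minimizer $a^*(x)$ for $\nu$-a.e.\ $x$, and equal risk functions contradict clause (2) of domination. (You are implicitly using that the Bayes risk disintegrates as $\int_X \bigl[\int_{\AS}\rho(x,a)\,\delta(x,\dee a)\bigr]\nu(\dee x)$ with $\rho(x,a)=\int_\Theta \Loss(\theta,a)f_\theta(x)\,\pi(\dee \theta)$, and that the pointwise minimum is attained and measurably selectable; under compactness of $\AS$ in loss and uniqueness this is standard but should be said.) For the essential-completeness half, your step (i) --- attainment of the lower boundary of the risk set --- is indeed where all the work lies, and it is the step you have only sketched: you need compactness of the risk set in a topology for which pointwise lower boundaries are realized by genuine procedures, which is precisely what Wald's ``compact in risk/loss'' hypotheses are engineered to deliver. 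Your separation and nonnegativity arguments in $C(\Theta)$ are correct as stated (the strictly-below set is open in the sup norm because $\Theta$ is compact, so Hahn--Banach applies, and translating by nonnegative multiples of any $h\ge 0$ forces the representing measure to be nonnegative). So: no gap in the logic, but step (i) is a genuine unproved lemma rather than a routine verification, and the result as a whole remains a cited theorem in this paper rather than one it establishes.
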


In many classical statistical decision problems, one does not lose anything by assuming that
all risk functions are continuous. The following theorem, taken from \citep{LC98}, formalizes this intuition:
We will say that a model $P$ has a continuous likelihood function $( f_{\theta})_{\theta \in \Theta}$
when $P$
admits densities $( f_{\theta})_{\theta \in \Theta}$
such that $\theta \mapsto f_{\theta}(x)$ is continuous for every $x \in X$.

\begin{theorem}[{\citep[][\S5 Thm.~7.11]{LC98}}]\label{bdrisk}
Suppose $P$  has a continuous likelihood function $( f_{\theta})_{\theta \in \Theta}$
and a monotone likelihood ratio.
If the loss function $\Loss(\theta,\delta)$ satisfies
\begin{enumerate}
\item $\Loss(\theta,a)$ is continuous in $\theta$ for each action $a$;

\item $\Loss(\theta,a)$ is decreasing in $a$ for $a<\theta$ and increasing in $a$ for $a>\theta$; and

\item there exist functions $f$ and $g$, which are bounded on all bounded subsets of $\Theta \times \Theta$, such that for all $a$
\[
\Loss(\theta,a)\leq f(\theta,\theta')\Loss(\theta',a)+g(\theta,\theta'),
\]
\end{enumerate}
then the estimators with finite-valued, continuous risk functions form a complete class.
\end{theorem}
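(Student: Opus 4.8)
The plan is to produce, for every finite-risk procedure whose risk function fails to be continuous, a dominating procedure drawn from a distinguished subclass all of whose members automatically have continuous risk. The natural subclass is that of monotone nondecreasing nonrandomized rules, because the monotone likelihood ratio hypothesis together with the bowl-shaped loss (the monotonicity in condition~(2)) are exactly the ingredients of the classical Karlin--Rubin-type essential completeness theorem. So the first step is to invoke (or reprove) the fact that, for a family with monotone likelihood ratio in $x$ and a loss $\Loss(\theta,a)$ that is nonincreasing for $a<\theta$ and nondecreasing for $a>\theta$, the monotone nondecreasing rules form an essentially complete subclass of $\FRRE$. Applied to any $\delta\in\FRRE$, this yields a monotone rule $d$ with $\Risk(\theta,d)\le\Risk(\theta,\delta)$ for every $\theta$; in particular $d$ has finite risk, since $0\le\Risk(\theta,d)\le\Risk(\theta,\delta)<\infty$.

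The technical heart is a lemma asserting that every monotone nondecreasing rule $d$ of finite risk has a continuous risk function. Fix $\theta$, take $\theta_n\to\theta$, and write $\Risk(\theta_n,d)=\int_X h_n\,\dee\nu$ with $h_n(x)=\Loss(\theta_n,d(x))f_{\theta_n}(x)$, where $\nu$ is the dominating measure. By continuity of the loss in $\theta$ (condition~(1)) and of the likelihood, $h_n\to h\defas\Loss(\theta,d(\argdot))f_\theta$ pointwise, so it suffices to show $\{h_n\}$ is uniformly $\nu$-integrable and then invoke Vitali's convergence theorem (the easy lower bound $\liminf_n\Risk(\theta_n,d)\ge\Risk(\theta,d)$ is already Fatou's lemma). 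Three facts combine to give uniform integrability. First, Scheff\'e's lemma: the densities $f_{\theta_n}$ converge pointwise and integrate to one, hence converge in $L^1(\nu)$ and are uniformly integrable. Second, the growth condition~(3), with constants $F,G$ bounding $f(\theta_n,\theta)$ and $g(\theta_n,\theta)$ for $\theta_n$ near $\theta$, dominates $h_n$ by $F\,\Loss(\theta,d(\argdot))f_{\theta_n}+G\,f_{\theta_n}$, reducing matters to the uniform integrability of $x\mapsto\Loss(\theta,d(x))$ against $\{P_{\theta_n}\}$. Third---and this is where the ordering hypotheses enter---because $d$ is monotone and $\Loss(\theta,\argdot)$ is bowl-shaped, the map $x\mapsto\Loss(\theta,d(x))$ is bowl-shaped in $x$, so each superlevel set splits into an upper and a lower tail; the monotone likelihood ratio hypothesis supplies a stochastic ordering bounding the tail contribution under $P_{\theta_n}$ by the contribution under the extreme measures $P_{\theta^+},P_{\theta^-}$ of a bracketing interval, and these are finite by condition~(3) together with $\Risk(\theta^\pm,d)<\infty$.

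With both pieces in hand, the conclusion is short. Fix $\delta\in\FRRE$ whose risk is not continuous, and let $d$ be the dominating monotone rule from the first step. Then $d$ has finite continuous risk, so $d$ lies in the purported class, while $\Risk(\argdot,d)\le\Risk(\argdot,\delta)$ everywhere. Since $\Risk(\argdot,d)$ is continuous and $\Risk(\argdot,\delta)$ is not, the two cannot coincide identically, so there is some $\theta_0$ with $\Risk(\theta_0,d)<\Risk(\theta_0,\delta)$; hence $d$ dominates $\delta$. As every finite-risk procedure outside the class is thereby dominated by a member of it, the estimators with finite continuous risk form a complete subclass of $\FRRE$.

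I expect the main obstacle to be the upper half of the risk-continuity lemma, and specifically the uniform-integrability estimate. The growth condition~(3) controls only the dependence of the loss on its first argument and says nothing about the densities, while Scheff\'e's lemma controls only the densities and not the unbounded loss weight; the monotone likelihood ratio structure is what must bridge the two, via the bowl-shaped-in-$x$ form of $\Loss(\theta,d(\argdot))$ and the likelihood-ratio stochastic ordering, to dominate the family $\{P_{\theta_n}\}$ near $\theta$ by its two bracketing endpoints. Verifying that this tail control is genuinely uniform in $n$, interacts correctly with the $L^1$ convergence of the densities, and supplies the tightness Vitali's theorem requires when $\nu$ is infinite, is where the real work lies; everything else reduces to Fatou's lemma and the definition of domination.
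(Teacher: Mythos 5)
The paper does not prove this theorem: it appears in the prior-work section (\cref{sec:prior}) as a background result quoted from Lehmann and Casella, so there is no internal proof to compare your attempt against. Judged on its own terms, your reconstruction follows the route of the cited source rather than anything in this paper: reduce to monotone nonrandomized rules via the Karlin--Rubin essential completeness theorem (which uses exactly the MLR hypothesis and condition (2)), prove that a monotone rule of finite risk has continuous risk, and upgrade essential completeness to completeness by noting that a continuous risk function lying pointwise below a discontinuous one cannot coincide with it. That closing observation is correct, and your decomposition of the continuity lemma is the right one: Fatou gives the lower bound for free, condition (3) with $f,g$ bounded near the diagonal reduces the upper bound to uniform integrability of the fixed function $x \mapsto \Loss(\theta,d(x))$ against the family $\{P_{\theta_n}\}$, and bracketing this bowl-shaped-in-$x$ integrand by its expectations under the endpoints of an interval containing the $\theta_n$ is exactly how the MLR stochastic ordering closes the gap (those endpoint expectations are finite by one more application of condition (3) together with $\Risk(\theta^{\pm},d)<\infty$). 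Two points should be made explicit to complete the uniform-integrability step you flag as the main obstacle: first, that the upper-tail (resp.\ lower-tail) truncation of $\Loss(\theta,d(\argdot))$ is genuinely nondecreasing (resp.\ nonincreasing) in $x$, which is what licenses the MLR comparison of expectations; second, that the superlevel sets $\{\Loss(\theta,d(\argdot))>c\}$ decrease to the empty set as $c\to\infty$ because the loss is finite-valued, so the tail contributions under the two bracketing measures vanish by dominated convergence. With those remarks the argument goes through.
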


If we assume the loss function is bounded, then all decision procedures have finite risk. The following theorem gives a characterization of continuous risk assuming boundedness of the loss.

\begin{theorem}[{\citep[][\S3.7 Thm.~1]{Ferguson}}]\label{bdlossbdr}
Suppose $P$ admits densities $(f_{\theta})_{\theta\in \Theta}$ with respect to a dominating measure $\nu$.
Assume
\begin{enumerate}
\item $\Loss$ is bounded;
\item $\Loss(\theta,a)$ is continuous in $\theta$, uniformly in $a$;
\item for every bounded measurable $\phi$, $\int \phi(x)f_{\theta}(x)\nu(\dee x)$ is continuous in $\theta$.
\end{enumerate}
Then the risk $\Risk(\theta,\delta)$ is continuous in $\theta$ for every $\delta$.
\end{theorem}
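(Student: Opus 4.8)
The plan is to reduce the risk to an integral against the dominating measure and then separate the two ways in which $\theta$ enters — through the loss and through the density — handling each with exactly one hypothesis. Since $P$ admits densities, the risk is
$\Risk(\theta,\delta) = \int_X \phi_\theta(x)\, f_\theta(x)\, \nu(\dee x)$,
where $\phi_\theta(x) = \int_\AS \Loss(\theta,a)\,\delta(x,\dee a)$ is the expected loss incurred at the observation $x$ under $\delta$. This inner function is measurable by the standing weak-measurability requirement imposed on every decision procedure, and it is bounded by $M = \sup_{\theta',a}\Loss(\theta',a) < \infty$, which is finite by hypothesis~(1).

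First I would fix $\theta$ and take a convergent net $\theta_n \to \theta$ (a sequence suffices when $\Theta$ is metrizable). Adding and subtracting $\int_X \phi_\theta(x) f_{\theta_n}(x)\,\nu(\dee x)$ gives the decomposition $\Risk(\theta_n,\delta) - \Risk(\theta,\delta) = T_1 + T_2$, with $T_1 = \int_X (\phi_{\theta_n}(x) - \phi_\theta(x)) f_{\theta_n}(x)\,\nu(\dee x)$ and $T_2 = \int_X \phi_\theta(x)(f_{\theta_n}(x) - f_\theta(x))\,\nu(\dee x)$, and the goal is to show each term vanishes. For $T_1$, the triangle inequality for integrals (using that $\delta(x,\argdot)$ is a probability measure) yields $|\phi_{\theta_n}(x) - \phi_\theta(x)| \le \sup_{a}|\Loss(\theta_n,a) - \Loss(\theta,a)| =: \varepsilon_n$ uniformly in $x$, and hypothesis~(2) forces $\varepsilon_n \to 0$. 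Since $f_{\theta_n}$ is a density, $\int_X f_{\theta_n}(x)\,\nu(\dee x) = 1$, so $|T_1| \le \varepsilon_n \to 0$.

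For $T_2$ the function $\phi_\theta$ is now a single fixed, bounded, measurable function, so hypothesis~(3) applies verbatim with $\phi = \phi_\theta$: the map $\theta' \mapsto \int_X \phi_\theta(x) f_{\theta'}(x)\,\nu(\dee x)$ is continuous at $\theta$, which is precisely the statement that $T_2 \to 0$. Combining the two estimates gives $\Risk(\theta_n,\delta) \to \Risk(\theta,\delta)$, and since the net was arbitrary, $\Risk(\argdot,\delta)$ is continuous.

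The only step requiring care — effectively the crux of the argument — is $T_2$: one must notice that after the add-and-subtract step the loss factor in the integrand is frozen at the base point $\theta$, so that $\phi_\theta$ is exactly the kind of bounded measurable test function that hypothesis~(3) controls. Verifying that $\phi_\theta$ really is bounded (hypothesis~(1)) and measurable (the weak-measurability assumption) is what licenses that appeal; the $T_1$ term, which isolates all the $\theta$-dependence of the loss and is tamed by the uniform continuity in hypothesis~(2), is then routine. A secondary point worth flagging is the topology on $\Theta$: phrasing the argument for a convergent net makes it deliver continuity in whatever topology renders hypotheses~(2) and~(3) meaningful, and it specializes to the sequential statement when $\Theta$ is metrizable.
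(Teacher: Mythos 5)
The paper states this result as a quotation from Ferguson (\S 3.7, Thm.~1) and gives no proof of its own, so there is nothing internal to compare against; your argument is, in any case, the standard one. The add-and-subtract decomposition into $T_1$ (loss variation, controlled by the uniform continuity of $\Loss$ in $\theta$ together with $\int f_{\theta_n}\,\dee\nu = 1$) and $T_2$ (density variation, controlled by hypothesis~(3) applied to the fixed bounded measurable function $\phi_\theta$) is exactly the intended proof, and your checks that $\phi_\theta$ is bounded by hypothesis~(1) and measurable by the standing weak-measurability assumption are the right points to verify before invoking hypothesis~(3). The proof is correct as written.
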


If we assume continuity of the risk function with respect to the parameter and restrict ourselves to Euclidean parameter spaces, we have the following theorem from \citep[][Sec.~8.8, Thm.~12]{Berger85}.

\begin{theorem}\label{bergercompact}
Assume that $\AS$ and $\Theta$ are compact subsets of Euclidean spaces and
that the model $P$ admits densities $(f_{\theta})_{\theta\in \Theta}$ with respect to either Lebesgue or counting measure
such that the map $(x, \theta) \mapsto f_{\theta}(x)$ is jointly continuous.
Assume further that the loss $\Loss(\theta,a)$ is a continuous function of $a \in \AS$ for each $\theta$,
and that all decision procedures have continuous risk functions.
Then the collection of Bayes procedures form a complete class.
\end{theorem}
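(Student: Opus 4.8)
The plan is to run the classical risk-set and supporting-hyperplane argument, but carried out in the Banach space $C(\Theta)$ of continuous functions on the compact parameter space rather than in a finite-dimensional Euclidean space. Since $\Theta$ is compact and, by hypothesis, every decision procedure has a continuous (hence bounded, hence finite) risk function, all procedures lie in $\FRRE$ and the risk set
\[
\cS = \Oof{\, s \in C(\Theta) : s = r(\argdot,\delta) \text{ for some } \delta \in \FRRE \,}
\]
is a subset of $C(\Theta)$; it is convex because $\FRRE$ is convex and $r(\theta,\argdot)$ is linear for each $\theta$. First I would establish that $\cS$ is compact in the sup-norm topology. This is where every hypothesis is spent: compactness of $\AS$ together with continuity of $\Loss(\theta,\argdot)$ realizes randomized rules as transition kernels valued in the compact space $\ProbMeasures{\AS}$; joint continuity of $(x,\theta)\mapsto f_\theta(x)$ controls the variation of the risk integral in $\theta$; and compactness of $\Theta$ upgrades pointwise control to uniform (sup-norm) control. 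Concretely I would exhibit the procedure space as compact in a suitable weak topology, show $\delta \mapsto r(\argdot,\delta)$ is continuous into $C(\Theta)$, and conclude that $\cS$ is the continuous image of a compact set, hence compact; in particular $\cS$ is closed, so each of its points is attained by a genuine procedure.

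Next I would work with the lower boundary $\partial^-\cS$, consisting of those $s_0 \in \cS$ for which no $s \in \cS$ satisfies $s \le s_0$ pointwise with $s \neq s_0$. Fixing any reference prior $\mu_0$ of full support on $\Theta$, for each $s \in \cS$ the set $\{s' \in \cS : s' \le s\} = \cS \cap (s - C(\Theta)_{+})$ is compact, so $s' \mapsto \int s'\,\dee\mu_0$ attains its minimum there; full support of $\mu_0$ forces any minimizer to lie in $\partial^-\cS$ while still satisfying $s' \le s$. Thus every procedure is dominated by, or has identical risk to, a procedure whose risk function lies on the lower boundary, and it remains only to show that lower-boundary procedures are Bayes.

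For that I would invoke the separating-hyperplane theorem in $C(\Theta)$. Because $\Theta$ is compact the nonnegative cone $C(\Theta)_{+}$ has nonempty interior (the uniformly positive functions), so at a lower-boundary point $s_0$ the open convex set $s_0 - \intr(C(\Theta)_{+})$ is nonempty and disjoint from $\cS$; Hahn--Banach then yields a nonzero continuous linear functional $\Lambda$ with $\Lambda(s_0) \le \Lambda(s)$ for all $s \in \cS$. By the Riesz representation theorem $\Lambda$ is integration against a finite signed measure on $\Theta$, and since $\Lambda(s_0 - h) \le \Lambda(s_0)$ for every $h \in \intr(C(\Theta)_{+})$ we obtain $\Lambda(h) \ge 0$ for all $h \ge 0$, so $\Lambda$ is a nonzero nonnegative measure, which normalizes to a prior $\pi$. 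Then $s_0$ minimizes $\int r(\theta,\argdot)\,\pi(\dee\theta)$ over $\cS$, i.e. its procedure is \Bayes{\FRRE}. Combining with the previous paragraph, every procedure is either Bayes or dominated by a Bayes procedure, so the Bayes procedures form a complete class.

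The main obstacle I anticipate is the compactness (equivalently closedness) of $\cS$ in $C(\Theta)$: in this infinite-dimensional setting one cannot simply invoke the coordinatewise Bolzano--Weierstrass argument available for finite $\Theta$, and the equicontinuity needed to promote pointwise limits to sup-norm limits must be extracted carefully from joint continuity of the densities together with compactness of $\AS$ and $\Theta$. A secondary subtlety is guaranteeing that the supporting functional corresponds to a genuine countably additive, nonnegative, nonzero measure rather than a degenerate or merely finitely additive functional; the Riesz representation on the compact metric space $\Theta$ and the nonnegativity argument above are precisely what rule this out.
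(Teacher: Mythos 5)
You should first be aware that the paper does not prove this statement: \cref{bergercompact} appears in the prior-work section and is quoted from Berger (1985, Sec.~8.8, Thm.~12) without proof. The closest result the paper actually proves is \cref{stbayes}, which identifies the extended admissible procedures with the Bayes procedures under compactness of $\Theta$ and risk continuity, and it does so by an entirely different route (hyperfinite discretization of the risk set, a transferred separating-hyperplane theorem, saturation, and pushdown of a nonstandard prior). Your Hahn--Banach/Riesz argument in $C(\Theta)$ is a legitimate classical counterpart to that half of the story: separating the convex risk set $\cS$ from $s_0 - \intr(C(\Theta)_{+})$ (which is disjoint from $\cS$ precisely when the procedure is \ExtAdmissible{\FRRE}), extracting a nonnegative nonzero functional, and representing it as a prior is correct as sketched and establishes that every extended admissible (in particular every admissible) procedure is \Bayes{\FRRE}. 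Note that this step needs no compactness of $\cS$ at all, only convexity and the nonemptiness of the interior of the positive cone.

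The genuine gap is in the completeness half, and it sits exactly where you flagged it. Sup-norm compactness of $\cS$ does not follow from the hypotheses: the assumption is that each individual risk function is continuous, not that the family $\{r(\argdot,\delta) : \delta \in \FRRE\}$ is equicontinuous, and since $\Loss$ is assumed continuous only in $a$ (not in $\theta$) and $X$ need not be compact, there is no way to extract the uniform-in-$\delta$ modulus of continuity that Arzel\`a--Ascoli would require; a convex, pointwise-bounded family of continuous functions on $[0,1]$ can easily fail to be sup-norm precompact. Consequently your argument that $\cS \cap (s - C(\Theta)_{+})$ is compact, hence that every procedure is dominated by a lower-boundary (hence Bayes) procedure, is unsupported. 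The standard repair --- and the place where Berger's hypotheses on the densities and on $\AS$ are actually spent --- is to prove compactness of the \emph{procedure} space in a weak topology in which each $\delta \mapsto r(\theta,\delta)$ is lower semicontinuous, minimize $\int r(\theta,\argdot)\,\mu_0(\dee\theta)$ over the compact lower section $\{\delta' : r(\argdot,\delta') \le r(\argdot,\delta)\}$ there, and only then use the hypothesis that all risk functions are continuous to return the minimizer to $C(\Theta)$ and apply your separation step. The paper itself remarks that identifying extended admissibility with Bayes optimality is separate from, and easier than, showing these procedures form a complete class; your proposal proves the former cleanly but leaves the latter open.
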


In the noncompact setting, Bayes procedures generally do not form a complete class.
With a view to generalizing the notion of a Bayes procedure and recovering a complete class,
Wald \citep{Wald49} introduced the notion of
``Bayes in the wide sense", which we now call extended Bayes (see \cref{bayesdef}).
The formal statement of the following theorem is adapted from \citep{Ferguson}:

\begin{theorem}
Suppose that there exists a topology on $\FRRE$ such that
$\FRRE$ is compact and
$\Risk(\theta,\delta)$ is lower semicontinuous in $\delta\in \FRRE$ for all $\theta\in \Theta$.
Then the set of extended Bayes procedures form an essentially complete class.
\end{theorem}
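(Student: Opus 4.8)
The plan is to prove the stronger statement that \emph{every} $\delta \in \FRRE$ is weakly dominated by some extended \Bayes{\FRRE} procedure, i.e.\ that there is an extended Bayes $\delta_0$ with $\Risk(\theta,\delta_0) \le \Risk(\theta,\delta)$ for all $\theta \in \Theta$; essential completeness is then immediate, since any $\delta$ that is not already extended Bayes has such a $\delta_0$ as its witness. I would obtain $\delta_0$ in two stages: first extract a procedure that is \Admissible{\FRRE} and dominates $\delta$, and then argue that, under the hypotheses, admissibility forces it to be extended Bayes.

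For the first stage, put $C_\delta = \{\delta' \in \FRRE : \Risk(\theta,\delta') \le \Risk(\theta,\delta)\ \text{for all}\ \theta \in \Theta\}$. Lower semicontinuity of each $\Risk(\theta,\argdot)$ makes every set $\{\delta' : \Risk(\theta,\delta') \le \Risk(\theta,\delta)\}$ closed, so $C_\delta$ is closed in the compact space $\FRRE$, hence compact, and it contains $\delta$. Partially ordering $C_\delta$ by $\delta' \preceq \delta''$ iff $\Risk(\argdot,\delta') \le \Risk(\argdot,\delta'')$ pointwise, I note that every chain has a lower bound: the corresponding family of sets $C_{\delta'}$ is a nested family of nonempty compacta, so has nonempty intersection by the finite intersection property, and any element of the intersection bounds the chain from below. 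Zorn's lemma then produces a $\preceq$-minimal $\delta_0 \in C_\delta$, and minimality is precisely \Admissible{\FRRE}: a procedure dominating $\delta_0$ would again lie in $C_\delta$ and sit strictly below $\delta_0$. This stage uses only compactness and lower semicontinuity and makes no countability assumption on the topology.

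For the second stage, I exploit that $\Risk(\theta,\argdot)$ is affine on the convex set $\FRRE$. Suppose $\delta_0$ were not extended \Bayes{\FRRE}; then there is an $\epsilon_0 > 0$ such that no prior makes $\delta_0$ $\epsilon_0$-Bayes, so in particular for every finitely supported prior $\pi$ (for which $\Risk(\pi,\delta_0) < \infty$) there is a $\delta'$ with $\Risk(\pi,\delta_0) > \Risk(\pi,\delta') + \epsilon_0$. Fixing a finite $F \subseteq \Theta$ and writing $h(\pi,\delta') = \Risk(\pi,\delta_0) - \Risk(\pi,\delta')$ for priors $\pi$ supported on $F$, this gives $\inf_{\pi} \sup_{\delta' \in \FRRE} h(\pi,\delta') \ge \epsilon_0$. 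Here $h$ is affine in $\pi$ over the compact finite-dimensional simplex on $F$ and affine in $\delta'$ over the compact convex set $\FRRE$, and the relevant semicontinuity holds because $\Risk(\pi,\argdot)$ is a finite nonnegative combination of lower semicontinuous functions; a minimax theorem of Sion type therefore lets me interchange the two operations, yielding a $\delta_F^{*} \in \FRRE$ with $\Risk(\theta,\delta_F^{*}) \le \Risk(\theta,\delta_0) - \epsilon_0$ for all $\theta \in F$ (test $h(\argdot,\delta_F^{*})$ against point masses). Directing the finite sets $F$ by inclusion and passing to a cluster point $\delta^{*}$ of the net $(\delta_F^{*})$ in the compact space $\FRRE$, lower semicontinuity gives $\Risk(\theta,\delta^{*}) \le \Risk(\theta,\delta_0) - \epsilon_0$ for every $\theta$; thus $\delta_0$ is \EpsDom{\epsilon_0} by $\delta^{*}$. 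But $\delta_0$ is \Admissible{\FRRE}, hence \ExtAdmissible{\FRRE}, a contradiction.

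I expect the minimax interchange to be the main obstacle, and it is also where the word \emph{extended} earns its place. The difficulty is that nature's full strategy space, the set of all priors, is in general not compact in any topology rendering $h(\argdot,\delta')$ semicontinuous, precisely because $\Theta$ need not be compact and the risk functions need not be bounded; this is exactly what prevents a single prior from witnessing \emph{exact} Bayes optimality. Restricting first to priors supported on a finite $F$ makes the minimax finite-dimensional and unconditional, and then re-using compactness and lower semicontinuity to pass from the $\delta_F^{*}$ to a single dominating $\delta^{*}$ is what salvages the argument; an alternative would be a direct supporting-hyperplane argument for the convex risk set $\cS = \{\Risk(\argdot,\delta') : \delta' \in \FRRE\}$ at its boundary point $\Risk(\argdot,\delta_0)$, normalizing the nonnegative separating functional to a prior. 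With $\delta_0$ shown to be extended \Bayes{\FRRE} and to dominate $\delta$, the essential completeness of the extended Bayes procedures follows.
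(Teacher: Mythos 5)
This is one of the results the paper imports from \citet{Wald49} (in the formulation of \citep{Ferguson}) and states without proof, so there is no in-paper argument to compare against; judged on its own terms, your proof is essentially correct and follows the classical route. Stage 1 --- Zorn's lemma on the closed, hence compact, set $C_\delta$, with chains bounded below via the finite intersection property --- is the standard argument that the admissible procedures form a complete class under compactness and lower semicontinuity; the only cosmetic issue is that your $\preceq$ is a preorder rather than a partial order, which Zorn tolerates once ``minimal'' is read as ``no strictly smaller element''. Stage 2 correctly reduces to finitely supported priors, for which the finiteness clause in the definition of $\epsilon$-Bayes is automatic, and the final gluing over the directed family of finite $F$ is sound: the sets $\{\delta' \in \FRRE : \Risk(\theta,\delta')\le\Risk(\theta,\delta_0)-\epsilon_0 \text{ for all } \theta\in F\}$ are closed, nonempty, and nested, so compactness yields a common element $\delta^{*}$ that \EpsDom{\epsilon_0}s $\delta_0$. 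The one step I would press you on is the appeal to Sion: that theorem is stated for convex subsets of \emph{linear topological} spaces, whereas the hypothesis supplies only \emph{some} compact topology on $\FRRE$ rendering each $\Risk(\theta,\argdot)$ lower semicontinuous, with no guarantee that convex combination is continuous in it. This is repaired either by invoking Ky Fan's convexlike/concavelike minimax theorem (which needs only compactness and upper semicontinuity on the maximizing side, plus the exact affine identities you already have), or, more elementarily, by replacing the minimax step with a separating-hyperplane argument in $\Reals^{F}$: the projected risk set $\{(\Risk(\theta,\delta'))_{\theta\in F} : \delta'\in\FRRE\}$ is convex, and if it misses the quadrant $\{x : x_\theta\le\Risk(\theta,\delta_0)-\epsilon_0\}$, the separating functional has nonnegative coordinates and normalizes to a prior on $F$ under which $\delta_0$ is $\epsilon_0$-Bayes, contradicting your standing assumption. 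That second repair is precisely the finite-dimensional shadow of the hyperfinite separation argument the paper runs in \cref{hyperplanetoprior,extCC}, with your compactness-of-$\FRRE$ gluing playing the role that saturation plays there.
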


Wald also studied taking the ``closure" (in a suitable sense) of the collection of all Bayes procedures,
and showed that every admissible procedure was contained in this new class.
The first result of this form appears in \citep{Wald49} and is extended later in \citep{LeCam55}.
\citet[][App.~4A]{Brown86} extended these results and gave a modern treatment.
The following statement of Brown's version is adapted from \citep[][\S5 Thm.~7.15]{LC98}.

\begin{theorem}
Assume $P$ admits strictly positive densities $(f_{\theta})_{\theta\in \Theta}$ with respect to a $\sigma$-finite measure $\nu$.
Assume the action space $\AS$ is a closed convex subset of Euclidean space.
Assume the loss $\Loss(\theta,a)$ is lower semicontinuous and strictly convex in $a$ for every $\theta$, and satisfies
\[
\lim_{|a|\to \infty}\Loss(\theta,a)=\infty \text{ for all $\theta\in \Theta$}.
\]
Then every admissible decision procedure $\delta$ is an a.e.\ limit of Bayes procedures, i.e.,
there exists a sequence $\pi_n$ of priors with support on a finite set,
such that
\[
\delta^{\pi_n}(x)\to \delta(x) \text{ as $n \to \infty$ for $\nu$-almost all $x$},
\]
where $\delta^{\pi_n}$ is a Bayes procedure with respect to $\pi_n$.
\end{theorem}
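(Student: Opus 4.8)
The plan is to follow the classical risk-set and separating-hyperplane strategy, localizing the infinite-dimensional problem to finite parameter sets and passing to a limit. First I would record two consequences of the hypotheses. Because each $f_\theta$ is strictly positive and $\Loss(\theta,\argdot)$ is lower semicontinuous, strictly convex, and coercive on the closed convex set $\AS$, for any finitely supported prior $\pi$ putting mass $w_i > 0$ at $\theta_i$, $i=1,\dots,k$, the posterior expected loss $a \mapsto \sum_{i=1}^{k} w_i\, f_{\theta_i}(x)\,\Loss(\theta_i,a)$ is, for $\nu$-almost every $x$, again strictly convex, coercive, and lower semicontinuous, hence attains its infimum over $\AS$ at a single point. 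This yields a \emph{unique} nonrandomized Bayes procedure $\delta^{\pi}$ defined by the pointwise minimizer, with measurability of $x \mapsto \delta^{\pi}(x)$ following from uniqueness of the argmin; so the ``unique Bayes solution'' hypothesis of the earlier Wald-type theorems comes for free here. Second, since $\AS$ is convex and $\Loss(\theta,\argdot)$ is convex, the Jensen-type argument behind \cref{convexess} shows that the mean procedure $\EEst{\delta}$ has risk no larger than $\delta$, with strict improvement on a positive-measure set unless $\delta$ is already nonrandomized; as $\delta$ is admissible it must agree $\nu$-a.e.\ with a nonrandomized procedure, so I may take $\delta$ to be given by a measurable map $d : X \to \AS$.

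Next I would exhaust $\Theta$ by an increasing sequence of finite sets $F_1 \subseteq F_2 \subseteq \cdots$ whose union is dense in $\Theta$ (possible as a Euclidean parameter space is separable), and for each $n$ consider the finite-dimensional risk set $S_n = \{ (\Risk(\theta,\delta'))_{\theta \in F_n} : \delta' \in \FiniteRiskEstimators \}$, which is convex since risk is linear in $\delta'$. Let $\epsilon_n = \sup\{\epsilon \ge 0 : \exists\, \delta' \text{ with } \Risk(\theta,\delta') \le \Risk(\theta,\delta)-\epsilon \text{ for all } \theta \in F_n\}$ be the largest margin by which some procedure beats $\delta$ uniformly over $F_n$; this sequence is nonincreasing. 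The crux is to show $\epsilon_n \to 0$: if instead $\epsilon_n \ge \epsilon_* > 0$ for all $n$, I would extract a single procedure $\delta'_*$ beating $\delta$ by $\epsilon_*/2$ on the dense set $\bigcup_n F_n$, whereupon continuity of the risk functions forces $\Risk(\argdot,\delta'_*) \le \Risk(\argdot,\delta) - \epsilon_*/2$ everywhere, contradicting admissibility.

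Granting $\epsilon_n \to 0$, the point $(\Risk(\theta,\delta))_{\theta \in F_n}$ lies within $\epsilon_n$ of the lower boundary of $S_n$, so the finite-dimensional supporting-hyperplane theorem supplies a prior $\pi_n$ supported on $F_n$ for which $\delta$ is $\epsilon_n$-Bayes over $F_n$; the associated unique procedure $\delta^{\pi_n}$ then has $\Risk(\theta,\delta^{\pi_n})$ close to $\Risk(\theta,\delta)$ for $\theta \in F_n$. I would then convert this into a.e.\ convergence of the procedures themselves: coercivity of the loss confines each minimizer $\delta^{\pi_n}(x)$ to a bounded subset of $\AS$ for $\nu$-a.e.\ $x$, so along a subsequence $\delta^{\pi_n}(x)$ converges, and strict convexity identifies the limit as the minimizer of the limiting posterior loss, which---because $\delta$ is admissible and the supports of $\pi_n$ become dense---must equal $d(x) = \delta(x)$. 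A diagonal argument over $n$ upgrades subsequential convergence to $\delta^{\pi_n}(x) \to \delta(x)$ for $\nu$-almost all $x$.

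I expect the main obstacle to be precisely this passage from finite-dimensional control to genuine a.e.\ convergence. Both the extraction of $\delta'_*$ used to prove $\epsilon_n \to 0$ and the identification of the a.e.\ limit rely on a compactness input on the space of procedures (or of their risk points) that is not listed among the hypotheses and must instead be manufactured from the coercivity and lower-semicontinuity of the loss together with the strict positivity of the densities. The delicate bookkeeping is to coordinate the finite sets $F_n$, the priors $\pi_n$, and the $\nu$-null sets discarded at each stage so that one diagonal subsequence works simultaneously for all of them, pinning the limit to $\delta$ exactly rather than merely to some risk-equivalent or dominating procedure.
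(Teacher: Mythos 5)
A preliminary remark: the paper does not prove this theorem. It is quoted from the prior literature (Brown's Appendix~4A, in the form given by Lehmann--Casella, Theorem~5.7.15), so there is no in-paper proof to compare against and your proposal must stand on its own. Its overall architecture---finite subsets of $\Theta$, a finite-dimensional separating-hyperplane step producing finitely supported priors, then a limit---is the right one, but two load-bearing steps fail under the stated hypotheses. Your argument that $\epsilon_n \to 0$ invokes ``continuity of the risk functions'' to upgrade domination on the dense set $\bigcup_n F_n$ to domination on all of $\Theta$; risk continuity is not among the hypotheses (the loss is only lower semicontinuous in $a$, and nothing is assumed about dependence on $\theta$), and the extraction of a single $\delta'_*$ from the sequence $\delta'_n$ needs a compactness input you concede you do not have. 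Moreover, ``$\delta$ is $\epsilon_n$-Bayes under $\pi_n$'' is the wrong quantity: an $\epsilon_n$-optimal \emph{integrated} Bayes risk controls nothing pointwise at individual $\theta \in F_n$ (the prior may put negligible mass there), and closeness of risks does not by itself give closeness of procedures. The classical route avoids $\epsilon_n \to 0$ altogether: in the subproblem on $F_n$ one takes a minimal point of the intersection of the (lower-compact, by coercivity and lower semicontinuity) risk set with the lower quantant of $\delta$'s risk vector, obtaining a Bayes procedure $\delta^{\pi_n}$ with the \emph{exact} inequality $\Risk(\theta,\delta^{\pi_n}) \le \Risk(\theta,\delta)$ for all $\theta \in F_n$.

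More seriously, the identification of the a.e.\ limit is missing. You appeal to ``the minimizer of the limiting posterior loss,'' but the priors $\pi_n$ need not converge to anything, so there is no limiting posterior loss. The mechanism that actually pins the limit down is a strict-convexity averaging argument: if a subsequence of $\delta^{\pi_n}(x)$ converges $\nu$-a.e.\ (coercivity plus a diagonal argument) to some $\delta'$ with $\Risk(\theta,\delta') \le \Risk(\theta,\delta)$ where needed (Fatou plus lower semicontinuity), and $\delta' \ne \delta$ on a set of positive $\nu$-measure, then the nonrandomized average $\tfrac12(\delta+\delta')$ has risk strictly below that of $\delta$ at \emph{every} $\theta$, by strict convexity of $\Loss(\theta,\argdot)$ together with strict positivity of $f_\theta$; this contradicts admissibility and forces $\delta' = \delta$ a.e. This is precisely where the hypotheses of strict positivity and strict convexity earn their keep, and it is absent from your sketch; until it is supplied, the proposal does not close.
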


In the normal-location model under squared error loss, the sample mean, while not a Bayes estimator in the strict sense,
can be seen as a limit of Bayes estimators, e.g., with respect to normal priors of variance $K$ as $K \to \infty$ or uniform priors on $[-K,K]$ as $K \to \infty$. (We revisit this problem in \cref{normallocprob}.)
In his seminal paper, Sacks~\citep{Sacks63} observes that the sample mean is also the Bayes solution if the notion of prior distribution is relaxed to include Lebesgue measure on the real line.
Sacks~\citep{Sacks63} raised the natural question: if $\delta$ is a limit of Bayes estimators, is there a measure $m$ on the real line such that $\delta$ is ``Bayes" with respect to this measure? A solution in this latter form was termed a \emph{generalized Bayes solution} by \citet{Sacks63}.
The following definition is adapted from \citep{stone67}:

\begin{definition}
A decision procedure $\delta_0$ is a \defn{normal-form generalized Bayes procedure} with respect to a $\sigma$-finite measure $\pi$ on $\Theta$
when $\delta_m$ minimizes
$\Risk(\pi, \delta) = \int \Risk(\theta,\delta)\pi(\dee \theta)$, subject to the restriction that $\Risk(\pi,\delta_m) < \infty$.
If $P$ admits densities $(f_{\theta})_{\theta\in \Theta}$ with respect to a $\sigma$-finite measure $\nu$
and $\delta_0$ minimizes the unnormalized posterior risk $\int \Loss(\theta, \delta_0(x))\,f_\theta(x)\, \pi(\dee \theta)$ for $\nu$-a.e.\ $x$,
then $\delta_0$ is a \defn{(extensive-form) generalized Bayes procedure} with respect to $\pi$.
\end{definition}

When a model admits densities, \citet{stone67} showed that every normal-form generalized Bayes procedure is also extensive-form. (\citeauthor{Sacks63} defined generalized Bayes in extensive form, but demanded also that $\int f_\theta(\argdot)\, \pi(\dee \theta)$ be finite $\nu$-a.e.
The notion of normal- and extensive-form definitions of Bayes optimality were introduced by \citet{RaiffaSchlaifer61}.)
For exponential families, under suitable conditions, one can show that every admissible estimator is generalized Bayes.
The first such result was developed by \citet{Sacks63} in his original paper: he proved that,
for statistical decision problems where the model admits a density of the form $e^{x\theta} / Z_\theta$ with $Z_\theta =\int e^{x\theta}\nu(\dee \theta)$, every admissible estimator is generalized Bayes.
\citet{stone67} extended this result to estimation of the mean
in one-dimensional exponential families under squared error loss.
These results were further generalized in similar ways by \citet[Sec.~3.1]{Brown71} and \citet{Berger78}.
The following theorem is given in \citep{Berger78}. We adapt the statement of this theorem from \citep{LC98}.

\begin{theorem}[{\citep[\S5 Thm.~7.17]{LC98}}] \label{berger}
Assume the model is a finite-dimensional exponential family, 
and that the loss $\Loss(\theta,a)$ is jointly continuous, strictly convex in $a$ for every $\theta$, and satisfies
\[
\lim_{|a| \to \infty} \Loss(\theta,a)=\infty\  \text{ for all $\theta\in \Theta$}.
\]
Then every admissible estimator is generalized Bayes.
\end{theorem}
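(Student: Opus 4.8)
The plan is to strengthen the ``limit of Bayes'' conclusion already available for such families into a genuine generalized Bayes statement, exploiting the analytic structure of the exponential likelihood together with the strict convexity and coercivity of the loss. First I would put the family in its natural parametrization, writing, after reduction to the natural sufficient statistic, $f_\theta(x) = e^{\langle \theta, x\rangle - \psi(\theta)} h(x)$, so that for any $\sigma$-finite prior $\pi$ the unnormalized posterior risk at action $a$ is $\int_\Theta \Loss(\theta,a)\, e^{\langle\theta,x\rangle-\psi(\theta)}\,\pi(\dee\theta)$. By strict convexity together with the hypothesis $\lim_{|a|\to\infty}\Loss(\theta,a)=\infty$, whenever this integral is finite and not identically $+\infty$ it has a unique minimizer $a^*_\pi(x)$, namely the extensive-form generalized Bayes action. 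The goal is then to produce a single $\sigma$-finite $\pi$ for which $\delta(x)=a^*_\pi(x)$ for $\nu$-almost every $x$.

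Second, I would invoke the preceding theorem (Brown's): its hypotheses are implied by the exponential-family and loss assumptions here (joint continuity gives lower semicontinuity, and the family has strictly positive densities), so because $\delta$ is admissible there is a sequence of finite-support priors $\pi_n$ whose Bayes procedures $\delta^{\pi_n}$ satisfy $\delta^{\pi_n}(x)\to\delta(x)$ for $\nu$-almost every $x$. The substance of the argument is to extract a limiting measure from the $\pi_n$. The natural device is to renormalize---dividing each $\pi_n$ by the mass it assigns to a fixed compact set, or by the value of its marginal at a reference point---to obtain measures $\tilde\pi_n$ of comparable local size. Using the exponential tilting structure one shows the $\tilde\pi_n$ are vaguely precompact, so that along a subsequence $\tilde\pi_n\to\pi$ vaguely for some nonzero $\sigma$-finite $\pi$; the Bayes-risk comparison underlying \cref{BayesImpliesExtAdm}, applied to the admissible $\delta$, is what prevents the normalization from collapsing and hence guarantees $\pi$ is nontrivial.

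Third, I would pass to the limit inside the pointwise minimization. For fixed $x$ the weighting measures $e^{\langle\theta,x\rangle-\psi(\theta)}\,\tilde\pi_n(\dee\theta)$ converge vaguely to $e^{\langle\theta,x\rangle-\psi(\theta)}\,\pi(\dee\theta)$; joint continuity of $\Loss$ yields convergence of the objective on compact sets, while coercivity of $\Loss$ in $a$ and the finiteness of the exponential-family integrals supply the tightness needed to keep the minimizing action from escaping to infinity. Strict convexity then upgrades convergence of objectives to convergence of the unique minimizers, so $a^*_{\pi_n}(x)\to a^*_\pi(x)$; comparing this with $\delta^{\pi_n}(x)\to\delta(x)$ identifies $\delta(x)=a^*_\pi(x)$ almost everywhere, which is exactly the assertion that $\delta$ is generalized Bayes with respect to $\pi$.

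The hard part will be the second and third steps in tandem: establishing that the renormalized priors possess a nondegenerate $\sigma$-finite vague limit and that no mass leaks to infinity, either in the parameter (so that $\int f_\theta(x)\,\pi(\dee\theta)<\infty$ for almost every $x$ and the posterior risk is well defined) or in the action. This is precisely where the finite-dimensional exponential-family hypothesis---controlling the tails of $e^{\langle\theta,x\rangle-\psi(\theta)}$---and the coercive, strictly convex loss---forcing unique and stable minimizers---are indispensable. Without them the limiting object can fail to be $\sigma$-finite, or the minimizer can run off to infinity, and the conclusion genuinely breaks down.
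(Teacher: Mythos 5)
This theorem is not proved in the paper at all: it appears in the survey section (\cref{sec:prior}) and is quoted verbatim from \citet{Berger78} via \citep[\S5 Thm.~7.17]{LC98}, so there is no in-paper argument to compare yours against. Judged on its own terms, your outline is the standard route taken in the cited literature (Sacks, Stone, Brown, Berger): start from the limit-of-Bayes conclusion for admissible procedures, renormalize the finite-support priors $\pi_n$, extract a vague subsequential limit $\pi$, and pass to the limit inside the pointwise (extensive-form) minimization using strict convexity and coercivity of the loss to stabilize the minimizers. That is the right skeleton, and your identification of where the exponential-family structure enters (controlling the tails of $e^{\langle\theta,x\rangle-\psi(\theta)}$) is accurate.

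The gap is that the two steps you yourself flag as ``the hard part'' are the entire content of the theorem, and your proposal does not supply the mechanism for either. First, nondegeneracy of the vague limit: your appeal to ``the Bayes-risk comparison underlying \cref{BayesImpliesExtAdm}'' does not prevent collapse, because vague convergence to the zero measure is perfectly compatible with every finite-$n$ Bayes-risk inequality; the actual argument normalizes by the marginal at a reference point $x_0$ (so that $\int f_\theta(x_0)\,\tilde\pi_n(\dee\theta)=1$), uses log-convexity of $x\mapsto\int e^{\langle\theta,x\rangle-\psi(\theta)}\,\tilde\pi_n(\dee\theta)$ to get locally uniform two-sided bounds on the marginals, and then shows that escape of posterior mass to infinity in $\theta$ would force the Bayes actions $\delta^{\pi_n}(x)$ to diverge, contradicting their a.e.\ convergence to the finite $\delta(x)$. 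Second, $\sigma$-finiteness of $\pi$ and finiteness of $\int f_\theta(x)\,\pi(\dee\theta)$ for a.e.\ $x$ require a separate uniform-integrability argument for the tilted densities along the subsequence; vague convergence alone gives only $\liminf$ inequalities by Fatou and portmanteau, not the equality of minimizers you need. As written, then, this is a correct roadmap to the published proof rather than a proof: the decisive estimates are named but not carried out.
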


Other generalized notions of Bayes procedures have been proposed.
\citet{Heath78} study statistical decision problems in the setting of finitely additive probability spaces.
The following theorem is their main result:

\begin{theorem}[{\citep[][Thm.~2]{Heath78}}]
Fix a class $\FRRE$ of decision procedures.
Every finitely additive Bayes decision procedure
is extended admissible.
If the loss function is bounded and the class $\FRRE$ is convex,
then every extended admissible decision procedure in $\FRRE$ is finitely additive Bayes in $\FRRE$.
\end{theorem}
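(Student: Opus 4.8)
The plan is to handle the two implications separately. The first---that every finitely additive Bayes procedure is extended admissible---is a direct analogue of \cref{BayesImpliesExtAdm}. Suppose $\delta$ minimizes the Bayes risk $\Risk(\pi,\argdot)$ over $\FRRE$ with respect to some finitely additive prior $\pi$, yet $\delta$ is not \ExtAdmissible{\FRRE}. Then there are $\epsilon > 0$ and $\delta' \in \FRRE$ with $\Risk(\theta,\delta') \le \Risk(\theta,\delta) - \epsilon$ for every $\theta \in \Theta$. Because the loss, and hence every risk function, is bounded, and the finitely additive integral is monotone on bounded functions, integrating this inequality against $\pi$ gives $\Risk(\pi,\delta') \le \Risk(\pi,\delta) - \epsilon < \Risk(\pi,\delta)$, contradicting minimality of $\delta$. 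So $\delta$ is extended admissible; this direction needs neither convexity of $\FRRE$ nor boundedness beyond what is required to integrate.

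For the converse, fix an extended admissible $\delta \in \FRRE$ and work in the Banach space $\ell^\infty(\Theta)$ of bounded real functions on $\Theta$ under the supremum norm, noting that boundedness of the loss places every $\Risk(\argdot,\delta')$ in $\ell^\infty(\Theta)$. I would form the difference set
\[
G = \{ \Risk(\argdot,\delta) - \Risk(\argdot,\delta') : \delta' \in \FRRE \} \subseteq \ell^\infty(\Theta),
\]
which is convex because risk is affine in the procedure and $\FRRE$ is convex, and which contains $0$ (take $\delta' = \delta$). Extended admissibility says exactly that no $\delta'$ $\epsilon$-dominates $\delta$ for any $\epsilon > 0$, i.e.\ that $\inf_\theta g(\theta) \le 0$ for every $g \in G$; equivalently, $G$ is disjoint from the open convex set $C = \{ f \in \ell^\infty(\Theta) : \inf_\theta f(\theta) > 0 \}$.

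Next I would separate the disjoint convex sets $G$ and $C$ by the geometric Hahn--Banach theorem, obtaining a nonzero continuous functional $\Lambda$ on $\ell^\infty(\Theta)$ and a scalar $c$ with $\Lambda(g) \le c \le \Lambda(f)$ for all $g \in G$ and $f \in C$. Scaling members of $C$ forces $c \le 0$ and $\Lambda \ge 0$ on $C$, while $0 \in G$ forces $c \ge 0$; hence $c = 0$ and $\Lambda \le 0$ on $G$. Writing $f + \epsilon\bone \in C$ for nonnegative $f$ and letting $\epsilon \downto 0$ shows $\Lambda$ is positive, and a positive nonzero functional satisfies $\Lambda(\bone) = \norm{\Lambda} > 0$. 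Identifying $(\ell^\infty(\Theta))^*$ with the bounded finitely additive measures on the power set of $\Theta$, $\Lambda$ is integration against a nonnegative finitely additive measure $\mu$ of total mass $\mu(\Theta) = \Lambda(\bone) > 0$. Normalising to $\pi = \mu/\mu(\Theta)$, the inequality $\Lambda(g) \le 0$ on $G$ becomes $\int \Risk(\theta,\delta)\,\pi(\dee\theta) \le \int \Risk(\theta,\delta')\,\pi(\dee\theta)$ for every $\delta' \in \FRRE$, with the left side finite by boundedness---so $\delta$ is \BayesP{\pi}{\FRRE} in the finitely additive sense.

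The main obstacle is this converse, and within it the passage from the abstract separating functional to a genuine finitely additive probability measure: one must verify that $\Lambda$ may be taken positive and nonzero, so that its representing set function is nonnegative with strictly positive total mass and can be normalised, and one must invoke the representation of the dual of $\ell^\infty$ by finitely additive measures on the power set. The surrounding steps---convexity of $G$, the reformulation of extended admissibility as disjointness from $C$, and the bookkeeping pinning $c$ to $0$---are routine once the separation is in place.
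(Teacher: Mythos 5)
The paper never proves this statement: it is quoted from Heath and Sudderth as background in \cref{sec:prior}, and the authors' own contribution (\cref{fsecomplete}, via \cref{extCC}) is the nonstandard analogue that removes the boundedness hypothesis. So there is no in-paper proof to match yours against; what you have written is, in essence, a correct reconstruction of Heath and Sudderth's original argument. The easy direction mirrors \cref{BayesImpliesExtAdm} exactly, and your caveat that it needs only integrability (not convexity) is right. The converse is also sound: $G$ is convex because $\Risk(\theta,\argdot)$ is affine and $\FRRE$ is convex, extended admissibility is precisely disjointness of $G$ from the open convex cone $C=\{f:\inf_\theta f>0\}$, geometric Hahn--Banach gives a nonzero continuous $\Lambda$ with separating constant forced to $0$, positivity of $\Lambda$ follows from $f+\epsilon\bone\in C$, $\norm{\Lambda}=\Lambda(\bone)>0$ permits normalisation, and the identification of $(\ell^\infty(\Theta))^*$ with bounded finitely additive set functions on $\PowerSet{\Theta}$ delivers the prior. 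It is worth seeing clearly where boundedness is load-bearing in your argument, because that is exactly the pivot of the paper: you need $\Risk(\argdot,\delta')\in\ell^\infty(\Theta)$ for every $\delta'$ just to place $G$ in the Banach space whose dual you understand. The paper's route replaces the infinite-dimensional duality step by a hyperfinite-dimensional separating hyperplane (\cref{hyperplanethm}) on the risk set restricted to a hyperfinite $\TTheta\supseteq\Theta$, glued across finite subfamilies by saturation; the price is that the resulting ``prior'' is an internal measure and the optimality holds only up to an infinitesimal, but the payoff is that no bound on the loss is ever needed. Two small points of hygiene for your write-up: state explicitly that for $\epsilon>0$ condition (1) of $\epsilon$-domination already implies condition (2), so your reformulation of extended admissibility as $\inf_\theta g\le 0$ matches the paper's definition; and note that ``finitely additive Bayes'' must include finiteness of the Bayes risk, which under bounded loss is automatic.
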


The simplicity of this statement is remarkable. However,
the assumption of boundedness is very strong, and rule out many standard estimation problems on unbounded spaces.
We will succeed in removing the boundedness assumption by moving to a sufficiently saturated nonstandard model.

\section{Nonstandard admissibility}
\label{sec: nonstandardadmis}

As we have seen in the previous section,
strong regularity appears to be necessary to align Bayes optimality and admissibility.
In noncompact parameter spaces, the statistician must apparently abandon the strict use of probability measures in order to represent certain extreme states of uncertainty that correspond with admissible procedures. 
Even then, strong regularity conditions are required (such as domination of the model and strict positiveness of densities, ruling out estimation in infinite-dimensional contexts).
In the remainder of the paper, we describe a new approach using nonstandard analysis, in which the statistician
retains the use of probability measures, but has access to a much richer collection of real numbers to express their beliefs.

Let
$(\Theta,\AS,\Loss,X,\Model)$
be a standard statistical decision problem.

We will assume the reader is familiar with basic concepts and key results in nonstandard analysis.
(See \cref{intronsa,sec:inp} for a review tailored to this paper.)
For a set $S$, let $\PowerSet{S}$ denote its power set.
We assume that we are working within a nonstandard model containing
$V \supseteq \Reals \cup \Theta \cup \AS \cup X$,
$\PowerSet{V}, \PowerSet{V \cup \PowerSet{V}}, \dotsc$, and we assume the model is as saturated as necessary.
We use $\NSE{}$ to denote the nonstandard extension map taking elements, sets, functions, relations, etc., to their nonstandard counterparts.
In particular, $\HReals$ and $\NSE{\Nats}$ denote the nonstandard extensions of the reals and natural numbers, respectively.
Given a topological space $(Y,T)$ and a subset $X \subseteq \NSE{Y}$,
let $\NS{X} \subseteq X$  denote the subset of near-standard elements (defined by the monadic structure induced by $T$)
and let $\ST : \NS{Y} \to Y$ denote the standard part map taking near-standard elements to their standard parts.
In both cases, the notation elides the underlying space $Y$ and the topology $T$, because the space and topology will always be clear from context.
As an abbreviation,
we will write $^\circ x$ for $\ST(x)$
for atomic elements $x$.
For functions $f$, we will write $^\circ f$ for the composition $x \mapsto \ST(f(x))$.
Finally, given an internal (hyperfinitely additive) probability space $(\Omega,\cF,P)$,
we will write $(\Omega,\Loeb{\cF},\Loeb{P})$ to denote the corresponding Loeb space, i.e., the completion of the unique extension of $P$ to $\sigma(\cF)$.

\subsection{Nonstandard extension of a statistical decision problem}

We will assume that $\Theta$ is a Hausdorff space and adopt its Borel $\sigma$-algebra $\BorelSets{\Theta}$.\footnote{
In one sense,  this is a mild assumption, which we use to ensure that the standard part map $\ST{} : \NS{\NSE{\Theta}} \to \Theta$ is well-defined.
In another sense,
$\Theta$ can always be made Hausdorff by, e.g., adopting the discrete topology.
The topology determines the Borel sets and thus determines the set of available probability measures on $\Theta$ (and on $\NSE{\Theta}$, by extension).
Topological considerations arise again in \cref{sec: CCs under compactness and RC},
\cref{remtopoad}, and
\cref{remarktopo}.
}

One should view the model $\Model$ as a function from $\Theta$ to the space $\ProbMeasures{X}$ of probability measures on $X$.  Write $\NSE{\Model}_y$ for $(\NSE{\Model})_y$.
For every $y \in \NSE{\Theta}$, the transfer principle implies that $\NSE{\Model}_y$ is an internal probability measure on $\NSE{X}$ (defined on the extension of its $\sigma$-algebra).
By \cref{nsfamilyresult}, we know that $\NSE{(\Model_\theta)}  = \NSE{\Model}_{\theta}$ for $\theta\in \Theta$, as one would expect from the notation.
Recall that standard decision procedures $\delta \in \FRRE$ have finite risk functions.
Therefore, the risk map $(\theta,\delta)\mapsto \Risk(\theta,\delta)$ is a function from $\Theta \times \FRRE$ to $\Reals$.
By the extension and transfer principles,
the nonstandard extension $\sRisk$ is an internal function from $\NSE{\Theta}\times \sFRRE$ to $\HReals$.
and $\sdelta \in \sFRRE$ if $\delta \in \FRRE$.
The transfer principle also implies that every $\Delta\in \sFRRE$ is an internal function
from $\NSE{X}$ to $\NSE{\!\!\ProbMeasures{\AS}}$.
The $\NSE{}$risk function of $\Delta \in \sFRRE$ is the function $\sRisk(\argdot, \Delta)$ from $\NSE{\Theta}$ to $\HReals$.
By the transfer of the equation defining risk, the following statement holds:
\[
(\forall \theta\in \NSE{\Theta})\ (\forall \Delta \in \sFRRE)\
(\sRisk(\theta,\Delta)
= \sint{\NSE{X}} \Bigl [ \sint{\NSE{\AS}} \NSE{\Loss}(\theta,a) \Delta(x,\dee a) \Bigr ] \NSE{\Model}_{\theta}(\dee x).
\]
As is customary, we will simply write $\int$ for $\NSE{\!\!\int}$, provided the context is clear. (We will also drop $\NSE{}$ from the extensions of common functions and relations like addition, multiplication, less-than-or-equal-to, etc.)

\subsection{Nonstandard admissibility}

Let $\delta_{0},\delta \in \FRRE$,
let $\epsilon \in \NNReals$,
and assume
$\delta_{0}$ is \EpsDom{\epsilon} by $\delta$.
Then there exists $\theta_{0} \in \Theta$
such that
\[
(\forall \theta\in \Theta)(\Risk(\theta,\delta)\leq \Risk(\theta,\delta_0)-\epsilon)\land (\Risk(\theta_0,\delta)\neq \Risk(\theta_0,\delta_0)).
\]
By the transfer principle,
\[
(\forall \theta\in \NSE{\Theta})(\sRisk(\theta,\sdelta)\leq \sRisk(\theta,\sdelta_{0})-\epsilon) \land
(\sRisk(\theta_{0},\sdelta)\neq \sRisk(\theta_{0},\sdelta_0)).
\]
Because $\sRisk(\theta_{0},\sdelta) = \Risk(\theta_{0},\delta)$ and similarly for $\sRisk(\theta_{0},\sdelta_{0})$,
\cref{standardnums}.1 implies that $\sRisk(\theta_{0},\sdelta) \not\approx \sRisk(\theta_{0},\sdelta_0)$.
These results motivate the following nonstandard version of domination.

\begin{definition}\label{epsdef}
Let $\Delta,\Delta' \in \sFRRE$ be internal decision procedures,
let $\epsilon \in \NNReals$,
and $R , S \subseteq \NSE{\Theta}$.
Then $\Delta$ is \defn{\spEpsDomIn{\epsilon}{R}{S}} by $\Delta'$ when 

\begin{enumerate}
\item $\forall \theta \in S$ \ $\sRisk(\theta,\Delta') \le \sRisk(\theta,\Delta) - \epsilon$, and
\item $\exists \theta \in R$ \ $\sRisk(\theta,\Delta') \not\approx \sRisk(\theta,\Delta)$.
\end{enumerate}

Write
\defn{\spDomIn{R}{S}}
for
\spEpsDomIn{0}{R}{S},
and write \defn{\spEpsDom{\epsilon}{S}}
for
\defn{\spEpsDomIn{\epsilon}{S}{S}}.
\end{definition}

The following results are immediate upon inspection of the definition above, and the fact
that (1) implies (2) for $R \subseteq S$ when $\epsilon > 0$.
\begin{lemma}\label{domlattice}
Let $\epsilon \le \epsilon'$,
$R \subseteq R'$, and
$S \subseteq S'$.
Then
\spEpsDomIn{\epsilon'}{R}{S'}
implies
\spEpsDomIn{\epsilon}{R'}{S}.
If $\epsilon > 0$,
then
\spEpsDomIn{\epsilon}{S}{S'}
if and only if
\spEpsDom{\epsilon}{S'},
and
\spEpsDom{\epsilon'}{S'}
implies
\spEpsDom{\epsilon}{S}.
\end{lemma}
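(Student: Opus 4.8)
The plan is to reduce all three assertions to the elementary monotonicity built into \cref{epsdef} together with one pointwise observation. Condition~(1) there is a universally quantified inequality with a lower threshold, so it is preserved when the universally quantified set is shrunk and when the threshold is lowered; condition~(2) is an existential statement, so it is preserved when the existentially quantified set is enlarged. Before touching the three claims I would record the role of the hypothesis $\epsilon > 0$: if $\epsilon$ is a standard positive real and $\sRisk(\theta,\Delta') \le \sRisk(\theta,\Delta) - \epsilon$, then $\sRisk(\theta,\Delta) - \sRisk(\theta,\Delta')$ is bounded below by the standard positive quantity $\epsilon$ and is therefore not infinitesimal, whence $\sRisk(\theta,\Delta') \not\approx \sRisk(\theta,\Delta)$. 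In other words, once $\epsilon > 0$, condition~(1) evaluated at any single point already supplies a witness for condition~(2) at that point.

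For the first implication I would simply unfold \spEpsDomIn{\epsilon'}{R}{S'} by some $\Delta'$ and check \spEpsDomIn{\epsilon}{R'}{S} by the same $\Delta'$. Condition~(1) holds over $S \subseteq S'$ at threshold $\epsilon \le \epsilon'$ since $\sRisk(\theta,\Delta') \le \sRisk(\theta,\Delta) - \epsilon' \le \sRisk(\theta,\Delta) - \epsilon$ for each $\theta \in S$, and condition~(2) holds over $R' \supseteq R$ because the witness furnished in $R$ lies in $R'$. This part uses neither the pointwise fact nor $\epsilon > 0$.

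For the equivalence of \spEpsDomIn{\epsilon}{S}{S'} with \spEpsDom{\epsilon}{S'}, I would first note that the two share the identical condition~(1) (the universally quantified set is $S'$ in both), so only condition~(2) differs, ranging over $S$ on the left and over $S' \supseteq S$ on the right. The left-to-right direction is immediate from enlarging the existential set. For right-to-left I would invoke the pointwise fact: since $\epsilon > 0$ and condition~(1) already holds over $S'$, every $\theta$ in the (nonempty) subset $S$ satisfies $\sRisk(\theta,\Delta') \not\approx \sRisk(\theta,\Delta)$, which is exactly condition~(2) over $S$. The remaining implication, that \spEpsDom{\epsilon'}{S'} yields \spEpsDom{\epsilon}{S}, is handled the same way: condition~(1) transfers from $(\epsilon',S')$ to $(\epsilon,S)$ exactly as in the first implication, and, because shrinking the existential set from $S'$ to $S$ is not covered by the monotonicity of condition~(2), the pointwise fact is what resupplies a condition~(2) witness inside $S$.

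I do not anticipate a genuine obstacle, since the authors flag the statement as immediate; the only thing demanding care is to keep the two defining conditions' opposite monotonicities straight and to recognize that the hypothesis $\epsilon > 0$ is precisely what converts the quantitative gap of condition~(1) into the qualitative, non-infinitesimal separation required by condition~(2). One should also keep the existentially quantified set nonempty in the second and third parts so that the resupplied witness actually exists.
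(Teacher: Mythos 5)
Your proposal is correct and follows exactly the route the paper intends: the paper offers no written proof beyond the remark that the lemma is ``immediate upon inspection of the definition above, and the fact that (1) implies (2) for $R \subseteq S$ when $\epsilon > 0$,'' and your pointwise observation (a standard positive gap $\epsilon$ forces $\sRisk(\theta,\Delta') \not\approx \sRisk(\theta,\Delta)$) is precisely that fact, combined with the opposite monotonicities of the two defining conditions. Your added caveat about nonemptiness of the existential set is a legitimate fine point the paper leaves implicit.
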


The following result connects standard and nonstandard domination.

\begin{theorem}\label{domthm}
Let $\epsilon \in \NNReals$ and $\delta_{0},\delta \in \FRRE$.
The following statements are equivalent:
\begin{enumerate}
\item $\delta_{0}$ is \EpsDom{\epsilon} by $\delta$.
\item $\sdelta_{0}$ is \spEpsDomIn{\epsilon}{\Theta}{\NSE{\Theta}} by $\sdelta$.
\item $\sdelta_{0}$ is \spEpsDom{\epsilon}{\Theta} by $\sdelta$.
\end{enumerate}
If $\epsilon > 0$, then the following statement is also equivalent:
\begin{enumerate}[start=4]
\item $\sdelta_{0}$ is \spEpsDom{\epsilon}{\NSE{\Theta}} by $\sdelta$.
\end{enumerate}
\end{theorem}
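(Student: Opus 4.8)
The plan is to prove the cycle $(1) \Rightarrow (2) \Rightarrow (3) \Rightarrow (1)$, which yields the equivalence of the first three statements for every $\epsilon \in \NNReals$ (including $\epsilon = 0$), and then to treat statement~(4) separately under the extra hypothesis $\epsilon > 0$.

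The implication $(1) \Rightarrow (2)$ is exactly the transfer computation displayed just before \cref{epsdef}: if $\delta_0$ is \EpsDom{\epsilon} by $\delta$, the defining formula together with its witness $\theta_0 \in \Theta$ for the strict-inequality clause transfers to $\NSE{\Theta}$, giving $\sRisk(\theta,\sdelta) \le \sRisk(\theta,\sdelta_0) - \epsilon$ for all $\theta \in \NSE{\Theta}$ and $\sRisk(\theta_0,\sdelta) \ne \sRisk(\theta_0,\sdelta_0)$; since $\theta_0$ is standard these risks are standard reals, so \cref{standardnums} promotes the latter inequality to $\not\approx$, and \spEpsDomIn{\epsilon}{\Theta}{\NSE{\Theta}} holds. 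For $(2) \Rightarrow (3)$ I would simply apply \cref{domlattice}, holding the existential index set fixed at $\Theta$ and shrinking the universal index set from $\NSE{\Theta}$ to $\Theta$: its monotonicity clause gives that \spEpsDomIn{\epsilon}{\Theta}{\NSE{\Theta}} implies \spEpsDom{\epsilon}{\Theta}.

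The step that requires the most care, and the conceptual heart of the theorem, is the \emph{return trip} $(3) \Rightarrow (1)$. Here I would use that for standard $\theta \in \Theta$ one has $\sRisk(\theta,\sdelta) = \Risk(\theta,\delta)$ and $\sRisk(\theta,\sdelta_0) = \Risk(\theta,\delta_0)$, since the nonstandard extension of a real-valued map agrees with the map on standard arguments. The first defining clause of \spEpsDom{\epsilon}{\Theta} then becomes an inequality $\Risk(\theta,\delta) \le \Risk(\theta,\delta_0) - \epsilon$ between standard reals for every $\theta \in \Theta$, recovering the first clause of standard domination; and the second clause supplies some $\theta \in \Theta$ with $\Risk(\theta,\delta) \not\approx \Risk(\theta,\delta_0)$. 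The delicate point is converting this infinitesimal-closeness statement back into a genuine inequality: because both values are standard reals, \cref{standardnums} guarantees that $\not\approx$ forces $\ne$, which is precisely the second clause. Everything else in this direction is routine unwinding of definitions.

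For the final claim, assume $\epsilon > 0$. I would derive the equivalence of $(2)$ and $(4)$ directly from the second part of \cref{domlattice}, whose positive-$\epsilon$ biconditional, instantiated with $S = \Theta$ and $S' = \NSE{\Theta}$, states exactly that \spEpsDomIn{\epsilon}{\Theta}{\NSE{\Theta}} holds if and only if \spEpsDom{\epsilon}{\NSE{\Theta}} does. Positivity is what makes this work: a universal gap of at least the standard positive quantity $\epsilon$ is automatically non-infinitesimal, so the first clause over all of $\NSE{\Theta}$ already forces the second clause over any nonempty subset, in particular over $\Theta$, which is why the existential set may be freely enlarged to $\NSE{\Theta}$. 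Combined with the cycle above, this completes the equivalence of all four statements when $\epsilon > 0$.
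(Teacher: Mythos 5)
Your proposal is correct and follows essentially the same route as the paper: transfer for $(1)\Rightarrow(2)$, \cref{domlattice} for $(2)\Rightarrow(3)$, agreement of $\sRisk$ with $\Risk$ on standard arguments plus \cref{standardnums} for $(3)\Rightarrow(1)$, and \cref{domlattice} again for statement (4). The only cosmetic difference is that you invoke the positive-$\epsilon$ biconditional of \cref{domlattice} to get $(2)\iff(4)$ directly, whereas the paper chains $(2)\Rightarrow(4)\Rightarrow(3)$; both are valid readings of the same lemma.
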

\begin{proof}
$(1 \implies 2$) Follows from logic above \cref{epsdef}.
($2 \implies 3$) Follows from \cref{domlattice}.
($3 \implies 1$) By hypothesis,
\[
(\forall \theta\in \Theta)(\sRisk(\theta,\sdelta) \le \sRisk(\theta,\sdelta_{0})-\epsilon)
\land (\exists \theta_{0} \in \Theta)(\sRisk(\theta_{0},\sdelta) \not\approx \sRisk(\theta_{0},\sdelta_0)).
\]
Because
$\sRisk(\theta_{0},\sdelta) = \Risk(\theta_{0},\delta)$, and likewise for $\delta_{0}$,
it follows that
\[
(\forall \theta\in \Theta)(\Risk(\theta,\delta) \le \Risk(\theta,\delta_{0})-\epsilon).
\]
Similarly, $\SP{(\sRisk(\theta_{0},\sdelta))} = \Risk(\theta_{0},\delta)$, and likewise for $\delta_{0}$,
hence \cref{standardnums}.1 implies
\[
 (\exists \theta_{0} \in \Theta) (\Risk(\theta_{0},\delta) \neq \Risk(\theta_{0},\delta_0)).
\]
($2 \implies 4 \implies 3$) Follow from \cref{domlattice}.
\end{proof}

\begin{definition}
Let $\epsilon \in \NNReals$, $R,S \subseteq \NSE{\Theta}$, and $\cC \subseteq \sFRRE$,
and $\Delta \in \sFRRE$.
\begin{enumerate}

\item
$\Delta$ is \defn{\spEpsAdmissibleIn{\epsilon}{R}{S}{\cC}}
unless $\Delta$ is \spEpsDomIn{\epsilon}{R}{S} by some $\Delta' \in \cC$.

\item
$\Delta$ is \defn{\spAdmissibleIn{R}{S}{\cC}}
if $\Delta$ is \spEpsAdmissibleIn{0}{R}{S}{\cC}.

\item
$\Delta$ is \defn{\spEpsAdmissible{\epsilon}{S}{\cC}}
if $\Delta$ is \spEpsAdmissibleIn{\epsilon}{S}{S}{\cC}.

\item
$\Delta$ is \defn{\spExtAdmissible{S}{\cC}}
if $\Delta$ is \spEpsAdmissible{\epsilon}{S}{\cC} for every $\epsilon \in \PosReals$.

\end{enumerate}
\end{definition}

The following result is immediate upon inspection of the definitions above.
\begin{lemma}\label{lattice}
Let $\epsilon \le \epsilon'$,
$R \subseteq R'$,
$S \subseteq S'$, and
$\cA \subseteq \cC$.
Then
\spEpsAdmissibleIn{\epsilon}{R'}{S}{\cC}
implies
\spEpsAdmissibleIn{\epsilon'}{R}{S'}{\cA}.
For $\epsilon > 0$,
\spEpsAdmissible{\epsilon}{S}{\cC}
implies
\spEpsAdmissible{\epsilon'}{S'}{\cA}.
\end{lemma}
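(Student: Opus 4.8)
The plan is to prove both implications by contraposition, in each case converting the statement about (non)admissibility into a statement about $\NSE{}$domination and then applying \cref{domlattice}. Fix the internal procedure $\Delta$ whose admissibility is at issue. By the definition immediately preceding the lemma, $\Delta$ fails to be admissible in a given sense precisely when some procedure $\Delta'$ in the relevant class $\NSE{}$dominates $\Delta$ in the corresponding sense; and the inclusion $\cA \subseteq \cC$ enters only through the trivial observation that a dominator lying in $\cA$ also lies in $\cC$. So in both parts the work is entirely delegated to \cref{domlattice}, and the lemma is indeed ``immediate'' once one chases the right assertion of that lemma.

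For the first implication, suppose $\Delta$ is not \spEpsAdmissibleIn{\epsilon'}{R}{S'}{\cA}, so that some $\Delta' \in \cA$ has $\Delta$ \spEpsDomIn{\epsilon'}{R}{S'} by $\Delta'$. Because $\epsilon \le \epsilon'$, $R \subseteq R'$, and $S \subseteq S'$, the first assertion of \cref{domlattice} upgrades this to: $\Delta$ is \spEpsDomIn{\epsilon}{R'}{S} by $\Delta'$. Since $\Delta' \in \cA \subseteq \cC$, this witnesses that $\Delta$ is not \spEpsAdmissibleIn{\epsilon}{R'}{S}{\cC}, and the contrapositive is the desired implication. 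Note this half needs no positivity of $\epsilon$.

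For the second implication, assume $\epsilon > 0$ and suppose $\Delta$ is not \spEpsAdmissible{\epsilon'}{S'}{\cA}, so some $\Delta' \in \cA$ has $\Delta$ \spEpsDom{\epsilon'}{S'} by $\Delta'$; the corresponding assertion of \cref{domlattice} then gives $\Delta$ \spEpsDom{\epsilon}{S} by $\Delta'$, and $\Delta' \in \cC$ closes the contrapositive. The one point that is genuinely \emph{not} a special case of the first implication---and where I expect the only friction---is the shrinking of the witness region from $S'$ down to $S$ in passing from \spEpsDom{\epsilon'}{S'} to \spEpsDom{\epsilon}{S}: since $S \subseteq S'$, condition (2) of \cref{epsdef} is being checked over \emph{fewer} points, which in isolation could fail. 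The hypothesis $\epsilon > 0$ is exactly what rescues this, because then condition (1)---a uniform, non-infinitesimal gap $\sRisk(\theta,\Delta') \le \sRisk(\theta,\Delta) - \epsilon$---already forces $\sRisk(\theta,\Delta') \not\approx \sRisk(\theta,\Delta)$ at every $\theta$ of the smaller region, so condition (2) comes for free. This is precisely the remark recorded just before \cref{domlattice}, and it explains why the $\epsilon > 0$ hypothesis cannot be dropped from the second half of the statement.
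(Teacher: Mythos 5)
Your proposal is correct and matches the paper's intended reasoning: the paper simply declares \cref{lattice} ``immediate upon inspection of the definitions,'' and your write-up is exactly the contrapositive chase through \cref{domlattice} (with $\cA \subseteq \cC$ supplying the dominator) that makes this immediacy explicit. Your observation about why $\epsilon > 0$ is needed in the second half---the witness region for condition (2) of \cref{epsdef} shrinks from $S'$ to $S$, and only a non-infinitesimal uniform gap rescues it---is precisely the point the paper records in the sentence preceding \cref{domlattice}.
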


The analogous results for \spAdmissibleIn{R}{S}{\cC} and \spExtAdmissible{S}{\cC} then follow immediately.
The following result connects standard and nonstandard admissibility.

\begin{theorem}\label{admissibilitythm}
Let $\epsilon \in \NNReals$,
$\delta_{0} \in \FRRE$,
and $\cC \subseteq \FRRE$.
Define $\Ext{\cC} = \{\sdelta : \delta \in \cC\}$.
Then the following statements are equivalent:
\begin{enumerate}
\item $\delta_{0}$ is \EpsAdmissible{\epsilon}{\cC}.
\item $\sdelta_{0}$ is \spEpsAdmissibleIn{\epsilon}{\Theta}{\NSE{\Theta}}{\Ext{\cC}}.
\item $\sdelta_{0}$ is \spEpsAdmissible{\epsilon}{\Theta}{\Ext{\cC}}.
\end{enumerate}
If $\epsilon > 0$, then the following statement is also equivalent:
\begin{enumerate}[start=4]
\item $\sdelta_{0}$ is \spEpsAdmissible{\epsilon}{\NSE{\Theta}}{\Ext{\cC}}.
\item $\sdelta_{0}$ is \spEpsAdmissible{\epsilon}{\NSE{\Theta}}{\NSE{\cC}}.
\end{enumerate}
\end{theorem}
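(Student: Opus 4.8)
The plan is to reduce the entire theorem to the single-procedure domination result \cref{domthm}, and then to isolate the one genuinely new feature of statement (5)---namely that the dominating procedure is allowed to range over the full internal class $\NSE{\cC}$ rather than only over $\Ext{\cC}$---and dispatch it by a direct appeal to transfer.

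First I would settle the equivalence of (1)--(3), and of (4) when $\epsilon > 0$. By definition $\delta_{0}$ is \EpsAdmissible{\epsilon}{\cC} precisely when it is \emph{not} \EpsDom{\epsilon} by any $\delta \in \cC$, and $\sdelta_{0}$ is \spEpsAdmissibleIn{\epsilon}{\Theta}{\NSE{\Theta}}{\Ext{\cC}} precisely when it is not \spEpsDomIn{\epsilon}{\Theta}{\NSE{\Theta}} by any member of $\Ext{\cC} = \{\sdelta : \delta \in \cC\}$; the analogous reformulations hold for (3) and (4). Now \cref{domthm} asserts, for each fixed $\delta \in \cC$, the equivalence of standard $\epsilon$-domination of $\delta_{0}$ by $\delta$ with its nonstandard counterparts (statements (2),(3) for every $\epsilon$, and (4) when $\epsilon > 0$). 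Negating these equivalences and quantifying over all $\delta \in \cC$ transports each admissibility statement back to (1), with no further work required.

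The substantive content is statement (5), and the difficulty is exactly that $\Ext{\cC} \subsetneq \NSE{\cC}$: ruling out domination by \emph{every} internal member of $\NSE{\cC}$ is a priori stronger than ruling out domination only by the extensions $\sdelta$. The easy direction, $(5) \Rightarrow (4)$, is immediate from monotonicity in the class (\cref{lattice}), since $\Ext{\cC} \subseteq \NSE{\cC}$. For the reverse I would use transfer. Fix $\epsilon > 0$. Because $\Theta \neq \emptyset$, clause (2) of \cref{epsdef} (and its standard analogue) is automatic whenever the inequality clause holds, so I may work with the inequality clause alone. Consider the standard sentence
\[
(\exists \delta \in \cC)\,(\forall \theta \in \Theta)\ \ \Risk(\theta,\delta) \le \Risk(\theta,\delta_{0}) - \epsilon,
\]
which is true exactly when (1) fails. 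Its transfer reads
\[
(\exists \Delta \in \NSE{\cC})\,(\forall \theta \in \NSE{\Theta})\ \ \sRisk(\theta,\Delta) \le \sRisk(\theta,\sdelta_{0}) - \epsilon,
\]
and, since $\epsilon > 0$, this is true exactly when $\sdelta_{0}$ is \spEpsDom{\epsilon}{\NSE{\Theta}} by some $\Delta \in \NSE{\cC}$, i.e.\ exactly when (5) fails. Transfer equates the truth values of the two displays, so (1) and (5) are equivalent, closing the chain.

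I expect the only delicate point to be the bookkeeping around clause (2) of the domination definition. The sentence I transfer carries only the inequality, so I must invoke $\epsilon > 0$ on both the standard and nonstandard sides to guarantee that the suppressed ``$\not\approx$''/``$\neq$'' clause is implied; this is precisely why (5) is claimed only for $\epsilon > 0$ and why it does not follow formally from \cref{domthm} the way (2)--(4) do.
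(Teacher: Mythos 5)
Your proposal is correct and follows essentially the same route as the paper's proof: (1)--(4) are obtained by negating and quantifying the single-procedure equivalences of \cref{domthm} (with \cref{lattice} giving $(5)\Rightarrow(4)$), and $(1)\Leftrightarrow(5)$ is handled by transferring the pure-inequality sentence and using $\epsilon>0$ to recover the suppressed ``$\not\approx$'' clause. The bookkeeping point you flag at the end is exactly the one the paper addresses.
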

\begin{proof}
Statement (1) is equivalent to
\[
\neg (\exists \delta \in \cC) \text{ $\delta_{0}$ is \EpsDom{\epsilon} by $\delta$.}
\]
By \cref{domthm} and the definition of $\Ext{\cC}$, this is equivalent to both
\[
\neg (\exists \sdelta \in \Ext{\cC}) \text{ $\sdelta_{0}$ is \spEpsDomIn{\epsilon}{\Theta}{\NSE{\Theta}} by $\sdelta$}
\]
and
\[
\neg (\exists \sdelta \in \Ext{\cC}) \text{ $\sdelta_{0}$ \spEpsDom{\epsilon}{\Theta} by $\sdelta$},
\]
hence $(1 \iff 2 \iff 3)$.

Now let $\epsilon >0$. Then the above statements are also equivalent to
\[
\neg (\exists \sdelta \in \Ext{\cC}) \text{ $\sdelta_{0}$ is \spEpsDom{\epsilon}{\NSE{\Theta}} by $\sdelta$},
\]
hence $(1 \iff 4)$. From \cref{lattice}, we see that (5) implies (4).
To see that (1) implies (5), note that,
because $\epsilon$ is standard and $\epsilon > 0$,  (1) is equivalent to
\[
\neg (\exists \delta \in \cC) (\forall \theta\in \Theta)(\Risk(\theta,\delta) \leq \Risk(\theta,\delta_{0})-\epsilon).
\]
By transfer, this statement holds if and only if the following statement holds:
\[
\neg (\exists \Delta \in \NSE{\cC}) (\forall \theta\in \NSE{\Theta})(\sRisk(\theta,\Delta) \leq \sRisk(\theta,\sdelta_{0})-\epsilon).
\]
Again, $\epsilon > 0$ implies
$\sRisk(\theta,\Delta) \not\approx \sRisk(\theta,\sdelta_{0})$ for all $\theta \in \NSE{\Theta}$, hence (5) holds.
\end{proof}

The following corollary for extended admissibility follows immediately.

\begin{theorem}\label{extadEAdm}
Let $\delta_{0} \in \FRRE$
and $\cC \subseteq \FRRE$.
Define $\Ext{\cC} = \{\sdelta : \delta \in \cC\}$.
Then the following statements are equivalent:
\begin{enumerate}
\item $\delta_{0}$ is \ExtAdmissible{\cC}.
\item $\sdelta_{0}$ is \spExtAdmissible{\Theta}{\Ext{\cC}}.
\item $\sdelta_{0}$ is \spExtAdmissible{\NSE{\Theta}}{\Ext{\cC}}.
\item $\sdelta_{0}$ is \spExtAdmissible{\NSE{\Theta}}{\NSE{\cC}}.
\end{enumerate}
\end{theorem}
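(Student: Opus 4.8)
The plan is to read this statement off from \cref{admissibilitythm} by quantifying over $\epsilon \in \PosReals$, since extended admissibility is by definition a universal statement in $\epsilon$. First I would unfold both notions of extended admissibility: $\delta_{0}$ is \ExtAdmissible{\cC} precisely when $\delta_{0}$ is \EpsAdmissible{\epsilon}{\cC} for every $\epsilon \in \PosReals$, and $\sdelta_{0}$ is \spExtAdmissible{S}{\cA} precisely when $\sdelta_{0}$ is \spEpsAdmissible{\epsilon}{S}{\cA} for every $\epsilon \in \PosReals$, as $(S,\cA)$ runs over the pairs $(\Theta,\Ext{\cC})$, $(\NSE{\Theta},\Ext{\cC})$, and $(\NSE{\Theta},\NSE{\cC})$ that appear in statements (2)--(4).

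Next I would line up each of the four statements here against \cref{admissibilitythm} term by term: statement (1) is the universal-in-$\epsilon$ form of \cref{admissibilitythm}.1, statement (2) of \cref{admissibilitythm}.3, statement (3) of \cref{admissibilitythm}.4, and statement (4) of \cref{admissibilitythm}.5. With this dictionary fixed, the argument is to hold an arbitrary $\epsilon \in \PosReals$ and apply \cref{admissibilitythm}, which gives the equivalence of its statements (1), (3), (4), and (5) for that $\epsilon$; since a finite web of equivalences that holds for every $\epsilon$ is preserved by the universal quantifier, the four extended notions coincide, as claimed.

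The single point requiring care---and the only place the argument could break---is the restriction to strictly positive $\epsilon$. \cref{admissibilitythm} licenses the equivalence of its statements (4) and (5), the two phrased on all of $\NSE{\Theta}$, only under the hypothesis $\epsilon > 0$; at $\epsilon = 0$ the ``on $\NSE{\Theta}$'' variants need not match the ``on $\Theta$'' ones. This causes no trouble, because both definitions of extended admissibility quantify only over $\epsilon \in \PosReals$, so the stronger conclusions of \cref{admissibilitythm} are available at every $\epsilon$ in the relevant range and we never invoke the $\epsilon = 0$ instance. I would therefore make explicit at the outset that each universal quantifier ranges over $\PosReals$, after which the corollary follows with no further work.
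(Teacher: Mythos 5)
Your proposal is correct and matches the paper exactly: the paper gives no explicit proof, stating only that the result ``follows immediately'' from \cref{admissibilitythm}, and your argument---instantiating that theorem at each $\epsilon \in \PosReals$ and noting that the $\epsilon > 0$ hypothesis for its statements (4) and (5) is automatically met since extended admissibility quantifies only over positive $\epsilon$---is precisely the intended derivation, spelled out.
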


As in the standard universe, the notion of $\NSE{}$admissibility lead to notions of complete classes.

\begin{definition}\label{nsecomplete}
Let $\cA,\cC \subseteq \sFRRE$.
\begin{enumerate}

\item
$\cA$ is a \defn{\spCompleteSubclass{\cC}} if
for all $\Delta \in \cC \setminus \cA$,
there exists $\Delta' \in \cA$ such that $\Delta$ is \spDom{\Theta} by $\Delta'$.

\item
$\cA$ is an \defn{\spEssCompleteSubclass{\cC}} if
for all $\Delta \in \cC \setminus \cA$,
there exists $\Delta' \in \cA$ such that
$\sRisk(\theta,\Delta') \lessapprox \sRisk(\theta,\Delta)$ for all $\theta\in \Theta$.

\end{enumerate}
\end{definition}

Near-standard essential completeness allows us to enlarge the set of decision procedures amongst which a decision procedure is extended admissible.

\begin{lemma}\label{eCCEA}
Suppose $\cA$ is an \spEssCompleteSubclass{\cC \subseteq \FRRE}.
Then
\spExtAdmissible{\Theta}{\cA} implies \spExtAdmissible{\Theta}{\cC}.
\end{lemma}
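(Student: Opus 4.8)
The plan is to follow the contrapositive argument used for the standard analogue \cref{esscomplete}, the only genuinely nonstandard step being that the infinitesimal slack permitted by $\lessapprox$ in the definition of $^{\circ}$essential completeness (\cref{nsecomplete}) can be absorbed into a standard decrement of the domination margin.

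First I would fix an internal procedure $\Delta \in \sFRRE$ that is \spExtAdmissible{\Theta}{\cA} and suppose, toward a contradiction, that $\Delta$ is not \spExtAdmissible{\Theta}{\cC}. By the definition of $^*$extended admissibility there is then a \emph{standard} $\epsilon \in \PosReals$ for which $\Delta$ fails to be \spEpsAdmissible{\epsilon}{\Theta}{\cC}, i.e.\ there is a $\Delta' \in \cC$ such that $\Delta$ is \spEpsDom{\epsilon}{\Theta} by $\Delta'$ (\cref{epsdef}); in particular $\sRisk(\theta,\Delta') \le \sRisk(\theta,\Delta) - \epsilon$ for every standard $\theta \in \Theta$. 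I would then pull the dominator back into $\cA$: if $\Delta' \in \cA$ set $\Delta'' = \Delta'$, and otherwise $^{\circ}$essential completeness supplies a $\Delta'' \in \cA$ with $\sRisk(\theta,\Delta'') \lessapprox \sRisk(\theta,\Delta')$ for all $\theta \in \Theta$. In either case, for each standard $\theta$ there is an infinitesimal $\eta_\theta \ge 0$ with $\sRisk(\theta,\Delta'') \le \sRisk(\theta,\Delta') + \eta_\theta$.

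The crux is the passage from domination-up-to-an-infinitesimal to genuine standard-margin domination. Combining the two inequalities gives $\sRisk(\theta,\Delta'') \le \sRisk(\theta,\Delta) - \epsilon + \eta_\theta$, and since $\eta_\theta$ is infinitesimal while $\epsilon/2$ is a fixed standard positive real, $\eta_\theta < \epsilon/2$ for every standard $\theta$; hence $\sRisk(\theta,\Delta'') \le \sRisk(\theta,\Delta) - \epsilon/2$ for all $\theta \in \Theta$. This is clause (1) of $(\epsilon/2)$-$^*$domination on $\Theta$, and clause (2) then comes for free, since the resulting gap is at least the standard positive $\epsilon/2$ and is therefore non-infinitesimal, giving $\sRisk(\theta,\Delta'') \not\approx \sRisk(\theta,\Delta)$ (this is the observation recorded after \cref{epsdef} that clause (1) implies clause (2) when $\epsilon > 0$). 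Thus $\Delta$ is \spEpsDom{\epsilon/2}{\Theta} by $\Delta'' \in \cA$, so $\Delta$ is not \spEpsAdmissible{\epsilon/2}{\Theta}{\cA}, contradicting \spExtAdmissible{\Theta}{\cA}.

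I do not expect a serious obstacle: the argument is short and structurally identical to \cref{esscomplete}. The only points demanding care are (i) that the slack $\eta_\theta$ is a true infinitesimal, so that against the fixed standard $\epsilon$ it is uniformly negligible and halving $\epsilon$ absorbs it at every standard $\theta$ simultaneously; and (ii) that clause (2) of domination is recovered precisely because $\epsilon$ is standard and positive, so the risk gap survives the $\approx$-identification rather than being washed out by it.
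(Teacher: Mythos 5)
Your proof is correct and follows essentially the same route as the paper: a contrapositive argument that extracts a standard $\epsilon>0$ from the failure of $\NSE{}$extended admissibility among $\cC$, pulls the dominating procedure back into $\cA$ via $\NSE{}$essential completeness, and absorbs the infinitesimal slack from $\lessapprox$ by passing to the margin $\epsilon/2$. The only differences are cosmetic — you handle the trivial case $\Delta'\in\cA$ explicitly and spell out why clause (2) of $\NSE{}$domination is automatic for a standard positive margin, both of which the paper leaves implicit.
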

\begin{proof}
Let $\Delta_{0} \in \cA$ and suppose $\Delta_{0}$ is not \spExtAdmissible{\Theta}{\cC}.
Then there exists $\Delta \in \cC$ and $\epsilon \in \PosReals$
such that $\sRisk(\theta,\Delta) \le \sRisk(\theta,\Delta_{0}) - \epsilon$ for all $\theta \in \Theta$.
But then by the $\NSE{}$essential completeness of $\cA$,
there exists some $\Delta' \in \cA$,
such that $\sRisk(\theta,\Delta') \lessapprox \sRisk(\theta,\Delta)$ for all $\theta \in \Theta$,
hence $\sRisk(\theta,\Delta') \lessapprox \sRisk(\theta,\Delta_{0}) - \epsilon$ for all $\theta \in \Theta$.
But then $\Delta_{0}$ is not \spEpsAdmissible{\epsilon/2}{\Theta}{\cA} hence not
\spExtAdmissible{\Theta}{\cA}.
\end{proof}

\section{Nonstandard Bayes}
\label{sec: nonstandardbayes}

We now define the nonstandard counterparts to Bayes risk and optimality for the class $\sFRRE$ of internal decision procedures:

\begin{definition}\label{defnhbayes}
Let $\Delta \in \sFRRE$, $\epsilon \in \NSE{\NNReals}$,
and $\cC \subseteq \sFRRE$, and let $\Pi_{0}$ be
a \defn{nonstandard prior}, i.e., an internal probability measure on $(\NSE{\Theta},\NSE{\BorelSets{\Theta}})$.
\begin{enumerate}
\item
The \defn{internal Bayes risk} under $\Pi_{0}$ of $\Delta$
is $\sRisk(\Pi_{0},\Delta)= \int \sRisk(\theta,\Delta) \Pi_{0}(\dee \theta)$.

\item
$\Delta$ is \defn{\seBayesP{\epsilon}{\Pi_{0}}{\cC}} if
$\sRisk(\Pi_{0},\Delta)$ is hyperfinite
and, for all $ \Delta'\in \cC$, we have $\sRisk(\Pi_{0},\Delta) \le \sRisk(\Pi_{0},\Delta') + \epsilon$.

\item
$\Delta$ is \defn{\NSBayesP{\Pi_{0}}{\cC}} if there exists an infinitesimal $\epsilon \in \NSE{\NNReals}$ such that
$\Delta$ is \seBayesP{\epsilon}{\Pi_{0}}{\cC}.

\item
$\Delta$ is \defn{\seBayes{\epsilon}{\cC}} (resp., \defn{\NSBayes{\cC}}) if there exists a nonstandard prior $\Pi$
such that $\Delta$ is \seBayesP{\epsilon}{\Pi}{\cC} (resp., \NSBayesP{\Pi}{\cC}).

\end{enumerate}
\end{definition}

We will sometimes write \defn{\NSBayes{\cC} with respect to $\Pi_{0}$} to mean \NSBayesP{\Pi_{0}}{\cC},
and similarly for \seBayes{\epsilon}{\cC}.
Note that the internal Bayes risk is precisely the extension of the standard Bayes risk. 
Similarly, if we consider the relation $\{ (\delta, \epsilon, \cC) \in \FRRE \times \NNReals \times \PowerSet{\FRRE} : \text{$\delta$ is \eBayes{\epsilon}{\cC}} \}$, then its extension corresponds to
$\{ (\Delta, \epsilon, \cC) \in \sFRRE \times \NSE{\NNReals} \times \NSE{\PowerSet{\FRRE}} : \text{$\Delta$ is \seBayes{\epsilon}{\cC}} \}$.
Note, however, that our definition of ``\seBayes{\epsilon}{\cC}'' allows the set $\cC \subseteq \sFRRE$ to be external, and so it is not simply the transfer of the definition.

Transfer remains a powerful tool for relating the optimality of standard procedures with that of their extensions.
For example, by transfer, $\delta$ is \eBayesP{\epsilon}{\pi}{\cC} if and only if $\sdelta$ is \seBayesP{\epsilon}{\NSE{\pi}}{\NSE{\cC}}. (Recall that $\NSE{\epsilon} = \epsilon$ for a real $\epsilon$, by extension.)
Transfer also yields the following result:

\begin{theorem}\label{eximns}
Let $\cC\subset \FRRE$.
If $\delta_0$ is extended \Bayes{\cC},
then $\sdelta_0$ is \NSBayes{\NSE{\cC}}.
\end{theorem}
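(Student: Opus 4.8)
The plan is to prove this directly from the transfer principle, with no appeal to saturation: the point is that transfer promotes the standard quantifier ``$\forall \epsilon > 0$'' appearing in the definition of extended Bayes into a nonstandard quantifier ``$\forall \epsilon$'' whose range is all of $\NSE{\PosReals}$, and hence includes positive infinitesimals. Instantiating at such an infinitesimal will deliver exactly the infinitesimal near-optimality that defines nonstandard Bayes.

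First I would unfold the hypothesis using \cref{bayesdef}: $\delta_0$ being extended \Bayes{\cC} means precisely that $\delta_0$ is \eBayes{\epsilon}{\cC} for every standard $\epsilon \in \PosReals$. The crucial observation is that the existential quantifier over priors sits \emph{inside} the ternary relation ``\eBayes{\epsilon}{\cC}'' on $\FRRE \times \NNReals \times \PowerSet{\FRRE}$, whose nonstandard extension has already been identified in the text as ``\seBayes{\epsilon}{\cC}'' (with the existential now ranging over internal priors and the arguments replaced by their extensions). Thus the first-order statement $(\forall \epsilon \in \PosReals)$ ``$\delta_0$ is \eBayes{\epsilon}{\cC}'', viewed with $\cC$ carried along as a set parameter, transfers to $(\forall \epsilon \in \NSE{\PosReals})$ ``$\sdelta_0$ is \seBayes{\epsilon}{\NSE{\cC}}''. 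Next I would instantiate this transferred universal at a positive infinitesimal $\eta$: this yields that $\sdelta_0$ is \seBayes{\eta}{\NSE{\cC}}, so by \cref{defnhbayes} there is an internal prior $\Pi$ with $\sRisk(\Pi,\sdelta_0)$ hyperfinite and $\sRisk(\Pi,\sdelta_0) \le \sRisk(\Pi,\Delta') + \eta$ for all $\Delta' \in \NSE{\cC}$. Because $\eta$ is infinitesimal, this is verbatim the assertion that $\sdelta_0$ is \NSBayesP{\Pi}{\NSE{\cC}}, and therefore \NSBayes{\NSE{\cC}}.

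The argument is short, so the only real care lies in the bookkeeping of the transfer. The main thing to verify is that ``\eBayes{\epsilon}{\cC}'' is a genuine first-order relation with $\cC$ entering as a set parameter, so that transferring its extension yields, for each (standard or infinitesimal) $\epsilon$, a single \emph{internal} prior $\Pi$ rather than a standard-indexed family of priors that would then have to be amalgamated. This is exactly the relation-extension identity recorded just before the theorem, so no additional work---and in particular no saturation---is needed; the whole content is that transfer makes the infinitesimal instance available for free.
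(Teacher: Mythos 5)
Your proposal is correct and follows essentially the same route as the paper: write the hypothesis as a single first-order sentence with a universal quantifier over $\epsilon\in\PosReals$, transfer it so that $\epsilon$ ranges over $\NSE{\PosReals}$, and instantiate at a positive infinitesimal to extract a single internal prior witnessing nonstandard Bayes optimality among $\NSE{\cC}$. Your extra remark that the existential over priors lives inside the transferred relation (so no saturation or amalgamation is needed) is exactly the bookkeeping the paper performs implicitly by displaying the sentence with the quantifier block $(\forall\epsilon)(\exists\pi)(\forall\delta)$.
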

\begin{proof}
By hypothesis, the following sentence holds:
\[
(\forall \epsilon \in \PosReals)(\exists \pi \in \ProbMeasures{\Theta})(\forall \delta\in \cC)(\Risk(\pi,\delta_0)\leq \Risk(\pi,\delta)+\epsilon).
\]
By the transfer principle,
\[
(\forall \epsilon \in \NSE{\PosReals})(\exists \pi \in \NSE{\!\!\!\ProbMeasures{\Theta}})(\forall \delta\in \NSE{\cC})(\sRisk(\pi,\sdelta_0)\leq \sRisk(\pi,\sdelta)+\epsilon).
\]
Taking $\epsilon$ to be infinitesimal implies that the internal Bayes risk of $\sdelta_{0}$ is within an infinitesimal of the minimum Bayes risk among $\NSE{\cC}$ with respect to an internal probability measure on $\NSE{\Theta}$,
hence $\sdelta_{0}$ is \NSBayes{\NSE{\cC}}.
\end{proof}

In general,
we would not expect an extension $\sdelta$ to be \seBayesP{0}{\Pi}{\cC}
for a generic nonstandard prior $\Pi$ and class $\cC \subseteq \sFRRE$.
The definition of nonstandard Bayes provides infinitesimal slack, which suffices to yield a precise characterization of extended admissible procedures. The follow result shows that, as in the standard universe,
nonstandard Bayes optimality implies nonstandard extended admissibility.

\begin{theorem}\label{nsBayesImpliesExtAdmissible}
Let $\Delta_{0} \in \sFRRE$, let $\cC \subseteq \sFRRE$, and suppose that $\Delta_{0}$ is \NSBayes{\cC}.
Then $\Delta_{0}$ is \spExtAdmissible{\NSE{\Theta}}{\cC}. 
\end{theorem}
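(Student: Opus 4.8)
The plan is to argue by contradiction, lifting the standard argument of \cref{BayesImpliesExtAdm} to the internal setting. First I would unpack the hypothesis: since $\Delta_{0}$ is \NSBayes{\cC}, \cref{defnhbayes} supplies a nonstandard prior $\Pi$ and an infinitesimal $\epsilon_{0} \in \NSE{\NNReals}$ such that $\Delta_{0}$ is \seBayesP{\epsilon_{0}}{\Pi}{\cC}; in particular $\sRisk(\Pi,\Delta_{0})$ is hyperfinite and $\sRisk(\Pi,\Delta_{0}) \le \sRisk(\Pi,\Delta') + \epsilon_{0}$ for every $\Delta' \in \cC$. To establish \spExtAdmissible{\NSE{\Theta}}{\cC}, it then suffices to rule out $\epsilon$-$\NSE{}$domination on $\NSE{\Theta}$ for each \emph{standard} $\epsilon > 0$.

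So I would fix a standard $\epsilon \in \PosReals$ and suppose toward a contradiction that $\Delta_{0}$ is \spEpsDom{\epsilon}{\NSE{\Theta}} by some $\Delta' \in \cC$. Unwinding \cref{epsdef} with $R = S = \NSE{\Theta}$, the first clause gives the internal pointwise bound $\sRisk(\theta,\Delta') \le \sRisk(\theta,\Delta_{0}) - \epsilon$ for all $\theta \in \NSE{\Theta}$ (this is the only clause we need). I would then integrate this bound against $\Pi$. Because $\Pi$ is an internal probability measure and both risk functions are internal, monotonicity and linearity of the integral transfer verbatim from the standard universe, and $\int \epsilon\, \Pi(\dee\theta) = \epsilon$ since $\Pi(\NSE{\Theta}) = 1$; hence $\sRisk(\Pi,\Delta') \le \sRisk(\Pi,\Delta_{0}) - \epsilon$. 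Combining with the Bayes inequality applied to this same $\Delta'$ yields $\sRisk(\Pi,\Delta_{0}) \le \sRisk(\Pi,\Delta_{0}) - \epsilon + \epsilon_{0}$, and cancelling the common term leaves $\epsilon \le \epsilon_{0}$, contradicting that $\epsilon$ is a standard positive real while $\epsilon_{0}$ is infinitesimal. Thus no such $\Delta'$ exists, so $\Delta_{0}$ is \spEpsAdmissible{\epsilon}{\NSE{\Theta}}{\cC} for every standard $\epsilon > 0$, which is exactly \spExtAdmissible{\NSE{\Theta}}{\cC}.

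The delicate point is the final cancellation: it is legitimate only because $\sRisk(\Pi,\Delta_{0})$ is a genuine (hyperfinite) hyperreal rather than $+\infty$, which is precisely what the hyperfiniteness clause in \cref{defnhbayes} guarantees. I would also want to confirm that $\sRisk(\Pi,\Delta')$ is itself finite, so that the Bayes inequality relates two honest hyperreals; this follows because $0 \le \sRisk(\theta,\Delta') \le \sRisk(\theta,\Delta_{0})$ pointwise forces $\sRisk(\Pi,\Delta') \le \sRisk(\Pi,\Delta_{0})$, which is hyperfinite. The integration step itself is routine once one observes that every object in sight is internal, so no Loeb-measure machinery is required---plain transfer of the elementary properties of the integral suffices.
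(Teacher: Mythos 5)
Your proof is correct and follows essentially the same route as the paper's: integrate the pointwise $\epsilon$-$\NSE{}$domination bound against the prior to get $\sRisk(\Pi,\Delta_{0}) \ge \sRisk(\Pi,\Delta') + \epsilon$, then observe this is incompatible with near-optimality up to an infinitesimal since $\epsilon$ is a standard positive real. The only cosmetic difference is that the paper phrases it as a contrapositive quantified over all priors $\Pi$, whereas you fix the witnessing prior and derive $\epsilon \le \epsilon_{0}$ directly; your added remarks on hyperfiniteness of $\sRisk(\Pi,\Delta')$ are a welcome bit of extra care.
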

\begin{proof}
Suppose $\Delta_{0}$ is not \spExtAdmissible{\NSE{\Theta}}{\cC}.
Then for some standard $\epsilon \in \PosReals$, $\Delta_{0}$ is \spEpsDom{\epsilon}{\NSE{\Theta}} by some $\Delta \in \cC$, i.e.,
\[
(\forall \theta \in \NSE{\Theta})(\sRisk(\theta,\Delta) \le \sRisk(\theta,\Delta_{0})-\epsilon).
\]
Hence, for every nonstandard prior $\Pi$,
if $\sRisk(\Pi,\Delta)$ is not hyperfinite, then neither is $\sRisk(\Pi,\Delta_{0})$,
and if $\sRisk(\Pi,\Delta)$ is hyperfinite, then
\[
\sRisk(\Pi,\Delta_{0})
&= \int \sRisk(\theta,\Delta_{0}) \Pi(\dee \theta) \\
&\ge \int \sRisk(\theta,\Delta) \Pi(\dee \theta) + \epsilon
= \sRisk(\Pi,\Delta) + \epsilon.
\]
As $\epsilon \in \PosReals$, we conclude that $\Delta_{0}$ cannot be \NSBayesP{\Pi}{\cC}.
As $\Pi$ was arbitrary, $\Delta_{0}$ is not \NSBayes{\cC}.
\end{proof}

\cref{nsBayesImpliesExtAdmissible,extadEAdm} immediately yield the following corollary.
\begin{corollary}
Let $\delta \in \FRRE$ and $\cC \subseteq \FRRE$.
If $\sdelta$ is \NSBayes{\Ext{\cC}}, then $\delta$ is \ExtAdmissible{\cC}.
\end{corollary}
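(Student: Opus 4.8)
The plan is simply to chain the two immediately preceding results together; no genuinely new argument is needed. First I would note that, because $\delta' \in \FRRE$ implies $\NSE{\delta'} \in \sFRRE$, the class $\Ext{\cC}$ is a subset of $\sFRRE$. Hence \cref{nsBayesImpliesExtAdmissible} applies with $\Delta_{0} = \sdelta$ and class $\Ext{\cC}$: the hypothesis that $\sdelta$ is \NSBayes{\Ext{\cC}} yields that $\sdelta$ is \spExtAdmissible{\NSE{\Theta}}{\Ext{\cC}}.

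Next I would invoke \cref{extadEAdm} with $\delta_{0} = \delta$ and the same class $\cC$. The statement just derived is exactly condition~(3) of that theorem, which is equivalent to condition~(1), namely that $\delta$ is \ExtAdmissible{\cC}. This is the desired conclusion, so the proof closes in two lines.

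The argument presents essentially no obstacle; the only points meriting a moment's attention are bookkeeping ones. One must confirm that $\Ext{\cC} \subseteq \sFRRE$, so that \cref{nsBayesImpliesExtAdmissible} is genuinely applicable, and must check that the quantifier set appearing in the notion of $\NSE{}$extended admissibility agrees across the two theorems---both the conclusion of \cref{nsBayesImpliesExtAdmissible} and condition~(3) of \cref{extadEAdm} are phrased in terms of \spExtAdmissible{\NSE{\Theta}}{\Ext{\cC}}, quantifying over all of $\NSE{\Theta}$ rather than merely over $\Theta$, so they align verbatim and the composition is immediate.
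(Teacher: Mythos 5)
Your proof is correct and is exactly the paper's argument: the paper states that \cref{nsBayesImpliesExtAdmissible,extadEAdm} "immediately yield" the corollary, which is precisely your two-step chain through \spExtAdmissible{\NSE{\Theta}}{\Ext{\cC}} as condition~(3) of \cref{extadEAdm}. The bookkeeping points you flag (that $\Ext{\cC} \subseteq \sFRRE$ and that the quantifier sets match) are the right ones and both check out.
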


The above result raises several questions:
Are extended admissible decision procedures also nonstandard Bayes?
What is the relationship with admissibility and its nonstandard counterparts?

In this section, we prove that a decision procedure $\delta$ is
extended admissible if and only if $\sdelta$ is nonstandard Bayes.
In later sections, we give several application of this equivalence,
and then consider the relationship with admissibility,
which is far from settled.
It is easy, however, to show that only nonstandard Bayes procedures can
$\NSE{}$dominate other nonstandard Bayes procedures:
To see this, suppose that
$\Delta$ is \NSBayes{\cC \subseteq \sFRRE} with respect to some nonstandard prior $\Pi$
and $\Delta$ is not \spAdmissible{\NSE{\Theta}}{\cC}.

Then $\Delta$ is \spDom{\NSE{\Theta}} by some $\Delta' \in \cC$.
Thus we have $\sRisk(\theta,\Delta') \le \sRisk(\theta,\Delta)$ for all $\theta \in \NSE{\Theta}$.
By \cref{defnhbayes}, we have $\sRisk(\Pi,\Delta)=\int \sRisk(\theta,\Delta)\Pi(\dee\theta)$ hyperfinite.
But then,
$\sRisk(\Pi,\Delta) \lessapprox \sRisk(\Pi,\Delta') = \int \sRisk(\theta,\Delta')\Pi(\dee\theta) \le \sRisk(\Pi,\Delta)$,
hence
$\sRisk(\Pi,\Delta) \approx \sRisk(\Pi,\Delta')$,
hence $\Delta'$ is \NSBayesP{\Pi}{\cC}.
This proves a nonstandard version of a
well-known standard result stating that every unique Bayes procedure is admissible \citep[][\S2.3 Thm.~1]{Ferguson}:
\begin{theorem}\label{domisbayes}
Suppose $\Delta$ is \NSBayes{\cC \subseteq \sFRRE} with respect to a nonstandard prior $\Pi$.
If $\Delta$ is \spDom{\NSE{\Theta}} by $\Delta' \in \cC$,
then $\Delta'$ is \NSBayesP{\Pi}{\cC}.
Therefore, if $\sRisk(\theta,\Delta') \approx \sRisk(\theta,\Delta)$
for all $\theta \in \NSE{\Theta}$ and
for all $\Delta' \in \cC$ such that $\Delta'$ is \NSBayesP{\Pi}{\cC},
then $\Delta$ is \spAdmissible{\NSE{\Theta}}{\cC}.
\end{theorem}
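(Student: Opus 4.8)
The plan is to prove the first implication by a direct integration argument and then obtain the admissibility conclusion as its contrapositive consequence. The whole statement is the nonstandard transcription of the classical fact that any procedure dominating a Bayes procedure is itself Bayes against the same prior, and the infinitesimal slack built into \cref{defnhbayes} is exactly what lets the argument go through verbatim in $\sFRRE$.

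First I would unpack the two hypotheses. Since $\Delta$ is \NSBayes{\cC} with respect to $\Pi$, \cref{defnhbayes} supplies an infinitesimal $\eta \in \NSE{\NNReals}$ for which $\sRisk(\Pi,\Delta)$ is hyperfinite and $\sRisk(\Pi,\Delta) \le \sRisk(\Pi,\Delta'') + \eta$ for every $\Delta'' \in \cC$. Since $\Delta$ is \spDom{\NSE{\Theta}} by $\Delta'$, clause (1) of \cref{epsdef} gives the pointwise bound $\sRisk(\theta,\Delta') \le \sRisk(\theta,\Delta)$ for all $\theta \in \NSE{\Theta}$. The key step is then to integrate this bound against the internal prior $\Pi$: because both risk functions are internal maps on $\NSE{\Theta}$ and $\Pi$ is internal, transfer of the monotonicity of the integral yields $\sRisk(\Pi,\Delta') \le \sRisk(\Pi,\Delta)$, so in particular $\sRisk(\Pi,\Delta')$ is hyperfinite. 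Chaining this with the Bayes bound gives, for every $\Delta'' \in \cC$,
\[
\sRisk(\Pi,\Delta') \le \sRisk(\Pi,\Delta) \le \sRisk(\Pi,\Delta'') + \eta ,
\]
so $\Delta'$ is \seBayesP{\eta}{\Pi}{\cC} with the \emph{same} infinitesimal $\eta$, and hence \NSBayesP{\Pi}{\cC}. This establishes the first claim.

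For the ``Therefore'' clause I would argue by contraposition. Suppose $\Delta$ is not \spAdmissible{\NSE{\Theta}}{\cC}; then it is \spDom{\NSE{\Theta}} by some $\Delta' \in \cC$. The first part shows that this $\Delta'$ is \NSBayesP{\Pi}{\cC}, so the standing hypothesis applies to it and forces $\sRisk(\theta,\Delta') \approx \sRisk(\theta,\Delta)$ for every $\theta \in \NSE{\Theta}$. But \spDom{\NSE{\Theta}} requires, via clause (2) of \cref{epsdef}, the existence of some $\theta \in \NSE{\Theta}$ with $\sRisk(\theta,\Delta') \not\approx \sRisk(\theta,\Delta)$, a contradiction. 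Therefore $\Delta$ is \spAdmissible{\NSE{\Theta}}{\cC}.

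The only point demanding genuine care is the chaining of the infinitesimal slack: I must verify that the single infinitesimal $\eta$ witnessing the near-optimality of $\Delta$ carries over to $\Delta'$, rather than degrading into some larger (but still infinitesimal) quantity. This is immediate here because domination can only lower the integrated risk, so no extra slack is introduced and the identical $\eta$ suffices; the accompanying hyperfiniteness requirement of \cref{defnhbayes} for $\Delta'$ likewise follows at once from $\sRisk(\Pi,\Delta') \le \sRisk(\Pi,\Delta)$. Everything else is a routine bookkeeping of the definitions.
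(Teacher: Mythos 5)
Your proof is correct and follows essentially the same route as the paper's: integrate the pointwise domination bound against $\Pi$ to get $\sRisk(\Pi,\Delta') \le \sRisk(\Pi,\Delta)$, chain with the near-optimality of $\Delta$ to conclude $\Delta'$ is \NSBayesP{\Pi}{\cC}, and then derive the admissibility claim by contradiction with clause (2) of the $\NSE{}$domination definition. The only cosmetic difference is that you track the single infinitesimal $\eta$ explicitly, while the paper phrases the same chain as $\sRisk(\Pi,\Delta) \approx \sRisk(\Pi,\Delta')$; these are equivalent.
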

\begin{proof}
The first statement follows from the logic in the preceding paragraph.
Now suppose that $\Delta$ is \spDom{\NSE{\Theta}} by some $\Delta' \in \cC$.
Then $\Delta'$ is \NSBayesP{\Pi}{\cC}.
But then, by hypothesis, its risk function is equivalent, up to an infinitesimal, to that of $\Delta$, a contradiction.
\end{proof}

\subsection{Hyperdiscretized risk set}

In a statistical decision problem with a finite parameter space,
one can use a separating hyperplane argument to show that every admissible decision procedure is Bayes (see, e.g., \citep[][\S2.10 Thm.~1]{Ferguson}).
In order to prove our main theorem, we will proceed along similar lines, but with the aid of extension, transfer, and saturation.

When relating extended admissibility and Bayes optimality for a subclass $\cC \subseteq \FRRE$,
the set of all risk functions $\Risk_\delta$, for $\delta \in \cC$, is a key structure.
On a finite parameter space, the risk set for $\FRRE$ is a convex subset of a finite-dimensional vector space over $\Reals$.
When the parameter space is not finite, one must grapple with infinite dimensional function spaces.
However, in a sufficiently saturated nonstandard model,
there exists an internal set $\TTheta \subset \NSE{\Theta}$ that is hyperfinite and contains $\Theta$.
While the risk at all points in $\TTheta$ does not suffice to characterize an arbitrary element of $\sFRRE$,
it suffices to study the optimality of extensions of standard
decision procedure \emph{relative to other extensions}.
Because $\TTheta$ is hyperfinite,
the corresponding risk set is a convex subset of a hyperfinite-dimensional vector space over $\HReals$.

Let $\STK \in \NSE{\Nats}$ be the internal cardinality of $\TTheta$
and let $\TTheta = \{\ttheta_1,\dots,\ttheta_{\STK}\}$. Recall that $\DRiskSpace$ denotes
the set of (internal) functions from $\TTheta$ to $\HReals$.  For an element $x \in \DRiskSpace$,
we will write $x_k$ for $x(k)$.

\begin{definition}\label{disriskset}
The \defn{hyperdiscretized risk set induced by $D \subseteq \sFRRE$ }
is the set
\[
\RS{D}=\{ x \in \DRiskSpace: (\exists \Delta \in D)\,(\forall k \leq \STK ) \, x_{k} = \sRisk(\ttheta_{k},\Delta) \} \subset \DRiskSpace.
\]
\end{definition}

\begin{lemma}\label{rsconvex}
Let $D \subseteq \sFRRE$ be an internal convex set.
Then $\RS{D}$ is an internal convex set.
\end{lemma}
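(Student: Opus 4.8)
The plan is to realize $\RS{D}$ as the image of $D$ under a single internal map and then transfer two elementary facts: that images of internal sets are internal, and that an affine image of a convex set is convex.

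First I would define the internal map $\Phi : \sFRRE \to \DRiskSpace$ by setting $\Phi(\Delta)_k = \sRisk(\ttheta_k,\Delta)$ for each $k \le \STK$. Since $\sRisk$ is an internal function and $\TTheta = \{\ttheta_1,\dots,\ttheta_{\STK}\}$ is hyperfinite, $\Phi$ is internal, and by \cref{disriskset} we have exactly $\RS{D} = \Phi(D) = \{\Phi(\Delta) : \Delta \in D\}$. Internality of $\RS{D}$ is then immediate: $D$ is internal by hypothesis, and the image of an internal set under an internal function is internal (the transfer of the standard fact that the image of a set under a function is a set, applied with internal parameters).

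For convexity, the key input is the linearity of the risk functional recorded in \cref{sec: standardprelim}: for $\delta,\delta' \in \FRRE$ and $p \in [0,1]$ one has $p\delta + (1-p)\delta' \in \FRRE$ and $r(\theta, p\delta + (1-p)\delta') = p\,r(\theta,\delta) + (1-p)\,r(\theta,\delta')$ for every $\theta \in \Theta$. Transferring this sentence shows that $\sFRRE$ is closed under internal convex combinations and that, for all $\Delta,\Delta' \in \sFRRE$, all $p \in \NSE{[0,1]}$, and all $\theta \in \NSE{\Theta}$,
\[
\sRisk(\theta, p\Delta + (1-p)\Delta') = p\,\sRisk(\theta,\Delta) + (1-p)\,\sRisk(\theta,\Delta').
\]
Evaluating at the coordinates $\theta = \ttheta_k$ gives $\Phi(p\Delta + (1-p)\Delta') = p\,\Phi(\Delta) + (1-p)\,\Phi(\Delta')$, so $\Phi$ is $\HReals$-affine. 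Now I would take $x,y \in \RS{D}$ witnessed by $\Delta,\Delta' \in D$ and fix $p \in \NSE{[0,1]}$; because $D$ is internal and convex, $p\Delta + (1-p)\Delta' \in D$, and the displayed identity gives $px + (1-p)y = \Phi(p\Delta+(1-p)\Delta') \in \RS{D}$.

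The only point requiring care — and the main obstacle — is keeping the notion of convexity consistent between the standard and nonstandard settings: \emph{convex} for an internal subset of the hyperfinite-dimensional space $\DRiskSpace$ must mean closure under $p x + (1-p)y$ for all \emph{hyperreal} $p \in \NSE{[0,1]}$ (the transfer of ordinary convexity), and correspondingly the transferred linearity of $\sRisk$ must be invoked for hyperreal coefficients rather than merely standard ones. Once this is set up, both internality and convexity fall out of transfer, with no estimates required.
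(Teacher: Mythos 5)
Your proof is correct and follows essentially the same route as the paper's: internality comes from the internal definition principle (your image-of-an-internal-set-under-an-internal-map phrasing is an equivalent packaging), and convexity comes from the transferred linearity of $\sRisk(\ttheta_k,\argdot)$ together with closure of $D$ under $\NSE{[0,1]}$-convex combinations. Your closing remark about insisting on hyperreal coefficients $p \in \NSE{[0,1]}$ is exactly the point the paper's proof also relies on.
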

\begin{proof}
$\RS{D}$ is internal by the internal definition principle and the fact that $D$ is internal.
In order to demonstrate convexity, pick $p\in \NSE{[0,1]}$, and let $x,y\in \RS{D}$.
Then there exist $\Delta_1,\Delta_2\in D$ such that
$x_{k}=\sRisk(\ttheta_{k},\Delta_1)$ and $y_{k}=\sRisk(\ttheta_{k},\Delta_2)$ for all $k\leq \STK$.
Because $D$ is convex, $p\Delta_1+(1-p)\Delta_2\in D$.
But $px_k+(1-p)y_k=\sRisk(\ttheta_{k},p\Delta_1+(1-p)\Delta_2)$ for all $k\leq \STK$,
and so $\RS{D}$ is convex.
\end{proof}

\begin{definition}\label{nrest}
For every $\cA\subset \FRRE$, define $\Ext{\cA}=\{\sdelta: \delta\in \cA\}$,
and, for every $\cC \subseteq \sFRRE$,
let
\[
\Fincomb{\cC}=\bigcup_{D \in \FiniteSubsets{\cC}} \sconv(D)
\]
be the set of all finite $\NSE{}$convex combinations of $\sdelta \in \cC$.
\end{definition}

Note that $\Ext{\cA}\subset \NSE{\cA}$ is an external set unless $\cA$ is finite.
Let $\delta_1,\delta_2\in \FiniteRiskEstimators$ and let $p\in \NSE{[0,1]}$.
Then $p\sdelta_1+(1-p)\sdelta_2\in \Ext{\FRREFS}$ if $p\in [0,1]$.
However, $p\sdelta_1+(1-p)\sdelta_2\in \FSE$ for all $p\in \NSE{[0,1]}$.
It is easy to see that $\Fincomb{\Ext{\FRREFS}}=\FSE$.
Thus, we have $\ExtFiniteRiskEstimators \subset \Ext{\FRREFS} \subset \Fincomb{\Ext{\FRREFS}}=\FSE \subset \sFRREFS$.

\begin{lemma}
For any $\cC \subseteq \sFRRE$, $\Fincomb{\cC}$ is a convex set containing $\cC$.
\end{lemma}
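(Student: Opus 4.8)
The plan is to verify the two assertions separately, relying throughout on transfer to lift the elementary properties of the standard convex-hull operator $\conv$ to its extension $\sconv$. The three facts I will use are that $\sconv$ of a singleton is that singleton, that $\sconv$ is monotone under set inclusion, and that $\sconv(D)$ is closed under $\NSE{}$convex combinations (that is, combinations with coefficients drawn from $\NSE{[0,1]}$ rather than $[0,1]$); each of these is the transfer of a routine standard identity for $\conv$, and each applies because every $D \in \FiniteSubsets{\cC}$ is finite and hence internal.

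For containment, I fix $\Delta \in \cC$ and observe that $\{\Delta\}$ is a finite subset of $\cC$, so $\{\Delta\} \in \FiniteSubsets{\cC}$. Since $\sconv(\{\Delta\}) = \{\Delta\}$ by transfer, it follows that $\Delta \in \sconv(\{\Delta\}) \subseteq \Fincomb{\cC}$, which establishes $\cC \subseteq \Fincomb{\cC}$.

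For convexity, I take $\Delta_1, \Delta_2 \in \Fincomb{\cC}$ and $p \in \NSE{[0,1]}$, aiming to show $p\Delta_1 + (1-p)\Delta_2 \in \Fincomb{\cC}$. By the definition of $\Fincomb{\cC}$ there are finite subsets $D_1, D_2 \in \FiniteSubsets{\cC}$ with $\Delta_1 \in \sconv(D_1)$ and $\Delta_2 \in \sconv(D_2)$. Setting $D = D_1 \cup D_2$, which is again a finite subset of $\cC$, monotonicity of $\sconv$ yields $\Delta_1, \Delta_2 \in \sconv(D)$. Because $\sconv(D)$ is closed under $\NSE{}$convex combinations, we get $p\Delta_1 + (1-p)\Delta_2 \in \sconv(D) \subseteq \Fincomb{\cC}$, as required.

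The only point demanding care --- and it is hardly an obstacle --- is to confirm that \emph{convex} is to be read in the internal sense, with mixing coefficients ranging over $\NSE{[0,1]}$ and not merely $[0,1]$. This is precisely what the transferred convexity of $\sconv(D)$ supplies, and it is also exactly the strength the surrounding discussion of $\FSE = \Fincomb{\Ext{\FRREFS}}$ requires, since that set must absorb nonstandard convex combinations that $\Ext{\FRREFS}$ does not.
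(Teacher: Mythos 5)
Your proof is correct and follows essentially the same route as the paper's: containment via singletons, and convexity by passing to $D_1 \cup D_2$ and using that $\sconv(D_1\cup D_2)$ absorbs $\NSE{}$convex combinations. You merely spell out the monotonicity and internal-convexity facts that the paper dismisses as ``easy to see,'' which is a harmless (indeed helpful) elaboration.
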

\begin{proof}
Pick an $\cC\subseteq \sFRRE$.
Clearly $\Fincomb{\cC}\supset \cC$.
It remains to show that $\Fincomb{\cC}$ is a convex set.
Pick two elements $\Delta_1,\Delta_2\in \Fincomb{\cC}$.
Then there exist $D_1,D_2\in \FiniteSubsets{\cC}$ such that
$\Delta_1\in \sconv(D_1)$ and $\Delta_2\in \sconv(D_2)$.
Let $p\in \NSE{[0,1]}$.
It is easy to see that $p\Delta_1+(1-p)\Delta_2\in \sconv(D_1\cup D_2)$.
\end{proof}

\begin{lemma}\label{newLecomplete3}
$\Ext{\FRREFS}$ is an \spEssCompleteSubclass{\FSE}.
\end{lemma}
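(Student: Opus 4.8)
The plan is to use the explicit description $\FSE = \Fincomb{\Ext{\FRREFS}}$ recorded immediately above the statement, round the (nonstandard) mixing weights of an element of $\FSE$ to their standard parts, and check that this perturbs the risk function only infinitesimally at each standard parameter.

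First I would fix $\Delta \in \FSE$ (it suffices to treat $\Delta \in \FSE \setminus \Ext{\FRREFS}$, but the construction works verbatim for any $\Delta$). By the identity $\Fincomb{\Ext{\FRREFS}}=\FSE$, there is a finite family $\sdelta_1,\dots,\sdelta_n \in \Ext{\FRREFS}$, with $n \in \Nats$ and each $\delta_i \in \FRREFS$, together with weights $p_1,\dots,p_n \in \NSE{[0,1]}$ satisfying $\sum_{i=1}^n p_i = 1$, such that $\Delta = \sum_{i=1}^n p_i\, \sdelta_i$. I would set $q_i = \SP{p_i} \in [0,1]$ and $\delta' = \sum_{i=1}^n q_i\, \delta_i$. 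Because $n \in \Nats$, the standard part map distributes over the sum, so $\sum_{i=1}^n q_i = \SP{\sum_{i=1}^n p_i} = 1$; since $\FRREFS$ is convex this yields $\delta' \in \FRREFS$, and hence $\sdelta' \in \Ext{\FRREFS}$. This $\sdelta'$ is the candidate witnessing procedure required by \cref{nsecomplete}.

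Next I would verify the infinitesimal-domination inequality at an arbitrary standard $\theta \in \Theta$. Using the $\NSE{}$linearity of the internal risk in its second argument (the transfer of the linearity of $\Risk(\theta,\argdot)$ on $\FRRE$, specialized to a combination of $n \in \Nats$ terms), together with $\sRisk(\theta,\sdelta_i) = \Risk(\theta,\delta_i)$ for standard $\theta$ and $\delta_i$, one obtains
\[
\sRisk(\theta,\Delta) - \sRisk(\theta,\sdelta')
= \sum_{i=1}^n (p_i - q_i)\,\Risk(\theta,\delta_i).
\]
Each $p_i - q_i$ is infinitesimal and each $\Risk(\theta,\delta_i)$ is a standard (hence finite) real, so every summand is infinitesimal; as there are only $n \in \Nats$ of them, the whole sum is infinitesimal. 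Thus $\sRisk(\theta,\sdelta') \approx \sRisk(\theta,\Delta)$, and in particular $\sRisk(\theta,\sdelta') \lessapprox \sRisk(\theta,\Delta)$, for every $\theta \in \Theta$. Since $\Delta \in \FSE$ was arbitrary, this exhibits $\Ext{\FRREFS}$ as an \spEssCompleteSubclass{\FSE}.

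The calculation itself is routine; the one point demanding care is that the representation supplied by $\Fincomb{\argdot}$ involves only finitely many summands in the external sense, i.e.\ $n \in \Nats$. This external finiteness is precisely what lets the standard part pass through both the normalization $\sum_i p_i = 1$ and the risk sum, turning a sum of infinitesimals into an infinitesimal. Were $n$ permitted to be unlimited, neither step would survive, so I would take that as the crux of the argument and make sure the decomposition is invoked with a standard number of terms.
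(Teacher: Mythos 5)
Your proof is correct and follows essentially the same route as the paper's: decompose $\Delta \in \FSE$ as a finite (externally finite, $n \in \Nats$) $\NSE{}$convex combination, round the weights to their standard parts, and use finiteness of the standard risks $\Risk(\theta,\delta_i)$ to conclude the risk changes only infinitesimally at each $\theta \in \Theta$. The paper's version is terser (it decomposes over $\FiniteRiskEstimators$ rather than $\FRREFS$ and leaves the normalization $\sum_i \SP{p_i}=1$ implicit), but the argument is the same, and your emphasis on the external finiteness of $n$ correctly identifies the crux.
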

\begin{proof}
Let $\Delta \in \FSE$.
Then $\Delta=\sum_{i=1}^{n} p_i \sdelta_i$
for some $n \in \Nats$, $\delta_1,\dotsc,\delta_n \in \FiniteRiskEstimators$, and
 $p_1,\dotsc,p_n \in \NSE{\NNReals}$, $\sum_{i=1}^{n} p_i = 1$.
Define $\Delta_0 = \sum_{i=1}^{n} \SP{\!p_i} \sdelta_i$
and let $\theta \in \Theta$.
For all $i \le n$,
$p_i \sRisk(\theta,\sdelta_{i}) \approx \SP{\!p_i} \sRisk(\theta,\sdelta_{i})$
because $\sRisk(\theta,\sdelta_{i})$ is finite,
so  $\sRisk(\theta,\Delta) \approx \sRisk(\theta,\Delta_0)$.
By \cref{nsecomplete}, $\Ext{\FRREFS}$ is an \spEssCompleteSubclass{\FSE}.
\end{proof}{

Having defined the (hyperdiscretized) risk set, we now describe a set whose intersection with the risk set captures the notion of $\frac 1 n$-$\NSE{}$domination, for some standard $n \in \Nats$.
In that vein, for $\Delta \in \sFRRE$, define the \defn{$\frac 1 n$-quantant}
\[
Q(\Delta)_{n}=\{ x \in \DRiskSpace: (\forall k\leq \STK )(x_k \leq \sRisk(\ttheta_k,\Delta)-\frac{1}{n})\}, \qquad n \in \NSE{\Nats}.
\]

\begin{lemma}\label{Qconvex}
Fix $\Delta\in \sFRRE$. The set $Q(\Delta)_{n}$ is internal and convex and
$Q(\Delta)_{m}\subset Q(\Delta)_{n}$ for every $m<n$.
\end{lemma}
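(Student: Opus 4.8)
The plan is to verify the three assertions in turn, each by an elementary argument that mirrors the proof of \cref{rsconvex}. For internality, I would appeal to the internal definition principle. The set $Q(\Delta)_{n}$ is carved out of the internal set $\DRiskSpace$ by the condition $(\forall k \le \STK)\,(x_k \le \sRisk(\ttheta_k,\Delta) - \frac 1 n)$, and the only parameters appearing in this formula are the internal objects $\Delta$, $\sRisk$, $\TTheta$, $\STK$, together with the (possibly nonstandard) real $\frac 1 n$. Since every parameter is internal, the internal definition principle yields that $Q(\Delta)_{n}$ is internal, exactly as in the opening line of the proof of \cref{rsconvex}.

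For convexity, I would argue coordinatewise. Fix $x,y \in Q(\Delta)_{n}$ and $p \in \NSE{[0,1]}$. For each $k \le \STK$ we have $x_k \le \sRisk(\ttheta_k,\Delta) - \frac 1 n$ and $y_k \le \sRisk(\ttheta_k,\Delta) - \frac 1 n$, so the convex combination satisfies
\[
p x_k + (1-p) y_k \le p\Bigl(\sRisk(\ttheta_k,\Delta) - \tfrac 1 n\Bigr) + (1-p)\Bigl(\sRisk(\ttheta_k,\Delta) - \tfrac 1 n\Bigr) = \sRisk(\ttheta_k,\Delta) - \tfrac 1 n.
\]
As this holds for every $k \le \STK$, we obtain $p x + (1-p) y \in Q(\Delta)_{n}$, establishing convexity.

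Finally, for the nesting I would use the monotonicity of $m \mapsto \frac 1 m$: since $\HReals$ is an ordered field, $m < n$ forces $\frac 1 m \ge \frac 1 n$ and hence $\sRisk(\ttheta_k,\Delta) - \frac 1 m \le \sRisk(\ttheta_k,\Delta) - \frac 1 n$ for every $k \le \STK$. Thus any $x \in Q(\Delta)_{m}$, which by definition satisfies $x_k \le \sRisk(\ttheta_k,\Delta) - \frac 1 m$, also satisfies $x_k \le \sRisk(\ttheta_k,\Delta) - \frac 1 n$, giving $Q(\Delta)_{m} \subset Q(\Delta)_{n}$. I do not anticipate any real obstacle; the only points deserving a moment of care are confirming that the defining formula has exclusively internal parameters so that the internal definition principle applies, and that the inequality $\frac 1 m \ge \frac 1 n$ for positive $m < n$ holds in $\HReals$ (which it does, directly from the ordered-field axioms, with no transfer needed).
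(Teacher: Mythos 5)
Your proposal is correct and matches the paper's argument essentially verbatim: internality via the internal definition principle, convexity by the same coordinatewise computation, and the nesting from monotonicity of $m \mapsto \frac{1}{m}$ (which the paper dismisses as obvious but you spell out correctly).
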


\begin{proof}
By the internal definition principle, $Q(\Delta)_{n}$ is internal.
Let $x,y$ be two points in $Q(\Delta)_{n}$, let $p\in \NSE{[0,1]}$, and pick a coordinate $k$. Then
\[
px_{k}+(1-p)y_{k}\leq p(\sRisk(\ttheta_k,\Delta)-\frac{1}{n})+(1-p)(\sRisk(\ttheta_k,\Delta)-\frac{1}{n})=(\sRisk(\ttheta_k,\Delta)-\frac{1}{n}).
\]
Thus the set is convex.
The second statement is obvious.
\end{proof}

The following is then immediate from definitions.

\begin{lemma}\label{exempty}
Let $\cC \subseteq \sFRRE$ and $n \in \Nats$. Then
$\Delta$ is \spEpsAdmissible{\frac 1 n}{\TTheta}{\cC}
if and only if
$Q(\Delta)_{n}\cap \GENRS=\emptyset$.
\end{lemma}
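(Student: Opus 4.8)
The plan is to simply unwind the three relevant definitions and observe that they line up, the only real content being that the second clause in the definition of domination is automatic for this choice of parameters.

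First I would expand \spEpsAdmissible{\frac 1 n}{\TTheta}{\cC}. Chasing through the admissibility definition (item 3, then item 1), this asserts precisely that there is \emph{no} $\Delta' \in \cC$ for which $\Delta$ is \spEpsDomIn{\frac 1 n}{\TTheta}{\TTheta} by $\Delta'$. So it suffices to show that the existence of such a dominating $\Delta'$ is equivalent to $Q(\Delta)_{n} \cap \GENRS \neq \emptyset$; negating both sides then gives the claim.

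Next I would expand \spEpsDomIn{\frac 1 n}{\TTheta}{\TTheta} using \cref{epsdef} (with $\epsilon = \frac 1 n$ and $R = S = \TTheta$): it requires (1) $\sRisk(\theta,\Delta') \le \sRisk(\theta,\Delta) - \frac 1 n$ for all $\theta \in \TTheta$, and (2) $\sRisk(\theta,\Delta') \not\approx \sRisk(\theta,\Delta)$ for some $\theta \in \TTheta$. The key (and only) observation is that, because $\frac 1 n$ is a \emph{standard} positive real and $R = S = \TTheta$, clause (1) forces the two risks to differ by at least the noninfinitesimal amount $\frac 1 n$ at every $\theta \in \TTheta$, so (1) already implies (2) --- this is exactly the remark recorded just before \cref{domlattice}. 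Hence, for this choice of parameters, $\Delta$ is \spEpsDomIn{\frac 1 n}{\TTheta}{\TTheta} by $\Delta'$ if and only if clause (1) alone holds.

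Finally I would translate clause (1) into the risk set. Writing $\TTheta = \{\ttheta_1,\dots,\ttheta_{\STK}\}$, clause (1) says exactly that the point $x \in \DRiskSpace$ with coordinates $x_k = \sRisk(\ttheta_k,\Delta')$ satisfies $x_k \le \sRisk(\ttheta_k,\Delta) - \frac 1 n$ for all $k \le \STK$, i.e.\ $x \in Q(\Delta)_{n}$. By \cref{disriskset}, a point of this form with $\Delta' \in \cC$ is precisely a generic element of $\GENRS = \RS{\cC}$. Thus ``some $\Delta' \in \cC$ dominates $\Delta$ in the above sense'' is equivalent to ``$Q(\Delta)_{n} \cap \GENRS \neq \emptyset$'', and taking the contrapositive yields the lemma. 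I expect no genuine obstacle here; the only points deserving a sentence of care are the quantifier bookkeeping --- that ranging over $k \le \STK$ is the same as ranging over $\theta \in \TTheta$ --- together with the (1)$\imp$(2) implication that licenses encoding $\frac 1 n$-$\NSE{}$domination by membership in $Q(\Delta)_{n}$ alone.
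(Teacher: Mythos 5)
Your proof is correct and matches the paper's treatment: the paper gives no explicit argument, stating only that the lemma is ``immediate from definitions,'' and your unwinding --- admissibility as nonexistence of a dominating $\Delta'$, the observation that clause (1) implies clause (2) since $\frac 1 n$ is standard positive and $R=S=\TTheta$, and the identification of clause (1) with membership of the risk vector in $Q(\Delta)_n$ --- is exactly the intended chain of equivalences.
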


\subsection{Nonstandard complete class theorems}

\begin{lemma}
\label{hyperplanetoprior}
Let $\Delta \in \sFRRE$ and nonempty $D \subseteq \sFRRE$,
and suppose there exists a nonzero vector $\Pi \in \DRiskSpace$
such that $\IP{\Pi}{x} \leq \IP{\Pi}{s}$
for all $x\in \bigcup_{n\in \Nats}Q(\Delta)_{n}$
and $s\in \cS^{D}$.
Then the normalized vector $\Pi / \|\Pi\|_1$ induces an internal probability measure $\pi$ on $\NSE{\Theta}$ concentrating on
$\TTheta$, and $\Delta$ is \NSBayesP{\pi}{D}.
\end{lemma}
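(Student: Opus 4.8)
The plan is to read the prior off the separating functional $\Pi$, verify that $\Pi$ must be coordinatewise nonnegative so that its normalization is a genuine internal probability vector, and then convert the separation inequality into approximate Bayes optimality, upgrading the appreciable slack coming from the standard union $\bigcup_{n\in\Nats}Q(\Delta)_{n}$ into a single infinitesimal by an overspill argument.

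First I would show $\Pi_{k}\ge 0$ for every $k\le\STK$. Fix a coordinate $k_{0}$ and the point $x^{0}\in\DRiskSpace$ with $x^{0}_{k}=\sRisk(\ttheta_{k},\Delta)-1$, which lies in $Q(\Delta)_{1}\subseteq\bigcup_{n\in\Nats}Q(\Delta)_{n}$. For any $c\le 0$ in $\HReals$, let $x$ be $x^{0}$ with its $k_{0}$-th coordinate replaced by $x^{0}_{k_{0}}+c$; lowering a coordinate keeps the point in $Q(\Delta)_{1}$. Since $D$ is nonempty we may fix some $s\in\RS{D}$, and the hypothesis gives $\IP{\Pi}{x^{0}}+\Pi_{k_{0}}c=\IP{\Pi}{x}\le\IP{\Pi}{s}$. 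Were $\Pi_{k_{0}}<0$, taking $c$ to be negative infinite would make the left-hand side positive infinite, a contradiction; hence $\Pi_{k_{0}}\ge 0$. As $\Pi\neq 0$, this forces $\norm{\Pi}_{1}=\sum_{k\le\STK}\Pi_{k}>0$, so $\pi_{k}\defas\Pi_{k}/\norm{\Pi}_{1}$ is a nonnegative internal vector with $\sum_{k}\pi_{k}=1$; assigning mass $\pi_{k}$ to $\ttheta_{k}$ and transferring the elementary fact that a finite weighted sum of point masses is a probability measure yields an internal probability measure $\pi$ on $(\NSE{\Theta},\NSE{\BorelSets{\Theta}})$ concentrated on $\TTheta$.

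Next I would record the Bayes-risk bookkeeping. For $\Delta'\in D$ let $s^{\Delta'}\in\RS{D}$ be its discretized risk vector, so that $\sRisk(\pi,\Delta')=\sum_{k\le\STK}\pi_{k}\sRisk(\ttheta_{k},\Delta')=\IP{\Pi}{s^{\Delta'}}/\norm{\Pi}_{1}$ by transfer of integration against a discrete measure; likewise, writing $s^{\Delta}_{k}=\sRisk(\ttheta_{k},\Delta)$, we have $\sRisk(\pi,\Delta)=\IP{\Pi}{s^{\Delta}}/\norm{\Pi}_{1}$, which is a genuine element of $\HReals$ (a hyperfinite sum of hyperreals), so the hyperfiniteness requirement of \cref{defnhbayes} is met. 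For each standard $n$ the point $x^{(n)}$ with $x^{(n)}_{k}=\sRisk(\ttheta_{k},\Delta)-\tfrac{1}{n}$ lies in $Q(\Delta)_{n}$, and, using $\sum_{k}\Pi_{k}=\norm{\Pi}_{1}$, the separation hypothesis gives $\IP{\Pi}{s^{\Delta}}-\tfrac{1}{n}\norm{\Pi}_{1}=\IP{\Pi}{x^{(n)}}\le\IP{\Pi}{s^{\Delta'}}$; dividing by $\norm{\Pi}_{1}>0$ yields $\sRisk(\pi,\Delta)\le\sRisk(\pi,\Delta')+\tfrac{1}{n}$ for every $\Delta'\in D$ and every standard $n$.

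The one delicate point, and the main obstacle, is passing from this family of appreciable bounds (one per standard $n$) to a single infinitesimal slack valid uniformly over $D$, as the definition of \NSBayesP{\pi}{D} demands; because the hypothesis only separates $\RS{D}$ from the standard union $\bigcup_{n\in\Nats}Q(\Delta)_{n}$, the slack $1/n$ is appreciable, and the supremum over $\Delta'$ of $\sRisk(\pi,\Delta)-\sRisk(\pi,\Delta')$ need not be controlled by any single infinitesimal unless the internal structure of $\RS{D}$ is used. I would therefore invoke overspill: the predicate $\phi(N)\equiv(\forall s\in\RS{D})\,(\IP{\Pi}{s^{\Delta}}-\tfrac{1}{N}\norm{\Pi}_{1}\le\IP{\Pi}{s})$ is internal, here using that $\RS{D}$ is internal, as it is in the setting of \cref{rsconvex} where such a separating $\Pi$ is produced, and it holds for every standard $N$ by the previous paragraph; hence it holds for some infinite $N_{0}$. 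Setting $\epsilon\defas 1/N_{0}$, an infinitesimal, gives $\sRisk(\pi,\Delta)\le\sRisk(\pi,\Delta')+\epsilon$ for all $\Delta'\in D$, so $\Delta$ is \seBayesP{\epsilon}{\pi}{D} and therefore \NSBayesP{\pi}{D}. Equivalently one can take $\epsilon\defas\max\{0,(\IP{\Pi}{s^{\Delta}}-\beta)/\norm{\Pi}_{1}\}$ with $\beta=\inf_{s\in\RS{D}}\IP{\Pi}{s}$ the internal infimum; either route hinges on the internality of $\RS{D}$.
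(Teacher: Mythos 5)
Your first two paragraphs track the paper's proof essentially step for step: the nonnegativity of $\Pi$ is obtained by the same unboundedness argument (a negative coordinate would let a point of $Q(\Delta)_{1}$ drive $\IP{\Pi}{x}$ above the fixed value $\IP{\Pi}{s}$), the normalization and the identification of $\IP{\pi}{\argdot}$ with the internal Bayes risk of the induced measure concentrating on $\TTheta$ are the same, and your inequality $\sRisk(\pi,\Delta)\le\IP{\pi}{s}+\tfrac{1}{n}$ for all $s\in\RS{D}$ and all standard $n$ is exactly the paper's inequality before it takes standard parts. You have also put your finger on a genuine subtlety: \cref{defnhbayes} asks for a \emph{single} infinitesimal $\epsilon$ valid for all $\Delta'\in D$, whereas the separation hypothesis only yields, for each fixed $s$, that $\sRisk(\pi,\Delta)\lessapprox\IP{\pi}{s}$. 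The paper's own proof fixes $s$, takes standard parts, lets $n\to\infty$, and stops at this pointwise conclusion; it does not explicitly produce the uniform $\epsilon$.

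The difficulty is that your fix assumes $\RS{D}$ is internal, which is neither a hypothesis of the lemma (only ``nonempty $D\subseteq\sFRRE$'' is assumed) nor true where the lemma is actually used. In \cref{extCC} it is invoked with $D=\Fincomb{\cC}$, whose risk set $\RS{\Fincomb{\cC}}=\bigcup_{D'\in\FiniteSubsets{\cC}}\cS^{\sconv(D')}$ is an external union of internal sets; the separating $\Pi$ there is produced by saturation across the finite sub-families, not by \cref{rsconvex} applied to one internal convex set. Consequently your predicate $\phi(N)$ is not internal, overspill does not apply, and the internal infimum $\beta$ need not exist. The repair in that setting goes through saturation rather than overspill: for each $D'\in\FiniteSubsets{\cC}$ the internal infimum $\beta_{D'}=\inf\{\IP{\pi}{s}:s\in\cS^{\sconv(D')}\}$ does exist and satisfies $\sRisk(\pi,\Delta)-\beta_{D'}\le\tfrac{1}{n}$ for every standard $n$, so each $\max\{0,\sRisk(\pi,\Delta)-\beta_{D'}\}$ is infinitesimal, and $\kappa$-saturation supplies one infinitesimal dominating all fewer-than-$\kappa$ of them simultaneously. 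As written, your final paragraph establishes the lemma only under an internality hypothesis the statement does not grant; either add that hypothesis (and note it fails in the intended application) or replace the overspill step with the saturation argument over the internal pieces.
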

\begin{proof}
We first establish that $\Pi(k) \geq 0$ for all $k$.
Suppose otherwise, i.e., $\Pi(k_0)<0$ for some $k_{0}$.
Then we can pick a point $x_{0}$ in $\bigcup_{n\in \Nats}Q(\Delta)_{n}$ whose $k_{0}$-th coordinate is arbitrarily large and negative,
causing $\IP{\Pi}{x_{0}}$ to be arbitrary large, a contradiction because $\IP{\Pi}{s}$ is hyperfinite for all $s \in \cS^{D}$.
Hence, all coordinates of $\Pi$ must be nonnegative.

Define $\pi \in \DRiskSpace$
 by $\pi = \Pi / \| \Pi \|_1 $.
Because $\Pi \neq 0$ and $\Pi \ge 0$, we have $\pi \ge 0$ and $\| \pi \|_1 = 1$.
Therefore, $\pi$ specifies an internal probability measure on $(\NSE{\Theta},\NSE{\BorelSets{\Theta}})$, concentrating on $\TTheta$, 
and assigning probability $\pi(k)$ to $\ttheta_k$ for every $k \leq J_{\Theta}$.
Because $\| \Pi \|_1 > 0$,
it still holds that $\IP{\pi}{x} \leq \IP{\pi}{s}$ for all $x\in \bigcup_{n\in \Nats}Q(\Delta)_{n}$ and $s\in \cS^{D}$.

Let $s\in \cS^{D}$.
Then
$\SP{(\sum_{k\in \STK}\pi_{k}(\sRisk(\ttheta_k,\Delta)-\frac{1}{n}))} \leq \SP{(\sum_{k\in \STK}\pi_{k}s_{k})}$
for every $n\in \Nats$.
The l.h.s.\ is simply
$\SP{(-\frac {1}{n} + \sum_{k\in \STK}\pi_{k} \sRisk(\ttheta_k,\Delta))}$,
and the limit of this expression as $n \to \infty$ is
$\SP{(\sum_{k\in \STK}\pi_{k} \sRisk(\ttheta_k,\Delta))}$.
Hence,
$\sum_{k\in \STK}\pi_{k}(\sRisk(\ttheta_k,\Delta) \lessapprox \sum_{k\in \STK}\pi_{k} s_{k} $.
This shows that $\Delta$ is \NSBayesP{\pi}{D}.
\end{proof}

The previous result shows that if a nontrivial hyperplane separates the risk set from every $\frac 1 n$-quantant, for $n \in \Nats$,
then the corresponding procedure is nonstandard Bayes.
In order to prove our main theorem, we require a nonstandard version of the hyperplane separation theorem, which we give here.
For $a,b \in \Reals^k$ for some finite $k$,
let $\IP{a}{b}$ denote the inner product.
We begin by stating the standard hyperplane separation theorem:

\begin{theorem}[Hyperplane separation theorem]
For any $k\in \Naturals$, let $S_{1}$ and $S_{2}$ be two disjoint convex subsets of $\Reals^{k}$, then there exists $w\in \Reals^{k}\setminus \{\vzero\}$ such that, for all $p_1\in S_{1}$ and $p_{2}\in S_{2}$, we have $\IP{w}{p_{1}} \geq \IP{w}{p_{2}}$.
\end{theorem}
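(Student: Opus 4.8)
The plan is to reduce the two-set problem to separating the origin from a single convex set, and then to establish that separation by a projection argument together with a limiting (supporting-hyperplane) construction. I may assume both $S_1$ and $S_2$ are nonempty, since otherwise any $w \in \Reals^{k} \setminus \{\vzero\}$ satisfies the conclusion vacuously. First I would form the difference set $C = \{p_1 - p_2 : p_1 \in S_1,\ p_2 \in S_2\}$. Convexity of $S_1$ and $S_2$ makes $C$ convex, and the disjointness $S_1 \cap S_2 = \emptyset$ is precisely the statement $\vzero \notin C$. Since the desired inequality $\IP{w}{p_1} \ge \IP{w}{p_2}$ is equivalent to $\IP{w}{p_1 - p_2} \ge 0$, it suffices to produce a nonzero $w$ with $\IP{w}{c} \ge 0$ for every $c \in C$.

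Next I would split into two cases according to whether the origin lies in the closure $\overline{C}$. If $\vzero \notin \overline{C}$, then, because $\overline{C}$ is a nonempty closed convex set, the metric projection of the origin onto $\overline{C}$ exists and is a unique closest point $c_0 \in \overline{C}$: existence follows by minimizing the (continuous) distance over the compact set $\overline{C} \cap \overline{B}(\vzero,\rho)$ for $\rho$ large, and uniqueness from the strict convexity of the Euclidean norm. The variational characterization of the projection, $\IP{\vzero - c_0}{c - c_0} \le 0$ for all $c \in \overline{C}$, rearranges to $\IP{c_0}{c} \ge \norm{c_0}^2 > 0$, so $w = c_0 \ne \vzero$ already yields (even strict) separation.

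The remaining case, $\vzero \in \overline{C}$ (so $\vzero \in \partial\overline{C}$, since $\vzero \notin C$), is the main obstacle and requires the supporting-hyperplane construction. Here I would first argue that $\vzero \notin \intr(\overline{C})$: using the convex-geometry identity $\intr(\overline{C}) = \intr(C)$, valid whenever $C$ has nonempty interior, membership of $\vzero$ in $\intr(\overline{C})$ would force $\vzero \in C$, a contradiction; the degenerate subcase $\intr(C) = \emptyset$ (where $C$ lies in a proper affine subspace) is dispatched separately by taking $w$ normal to that subspace. Having placed $\vzero$ on the boundary, I would choose a sequence $z_m \to \vzero$ with $z_m \notin \overline{C}$, apply the projection argument of the previous case to each $z_m$ to obtain closest points $c_m \in \overline{C}$ and unit vectors $u_m = (z_m - c_m)/\norm{z_m - c_m}$ satisfying $\IP{u_m}{c} \le \IP{u_m}{c_m}$ for all $c \in \overline{C}$. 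Since projection onto $\overline{C}$ is nonexpansive, hence continuous, and fixes the origin (as $\vzero \in \overline{C}$), we get $c_m \to \vzero$ and therefore $\IP{u_m}{c_m} \to 0$. Compactness of the unit sphere in $\Reals^{k}$ lets me extract a subsequence along which $u_m \to w$ with $\norm{w} = 1$, and passing to the limit in $\IP{u_m}{c} \le \IP{u_m}{c_m}$ gives $\IP{w}{c} \le 0$ for every $c \in \overline{C} \supseteq C$. Replacing $w$ by $-w$ produces the required $\IP{w}{c} \ge 0$ on $C$ with $w \ne \vzero$, completing the argument. The only genuinely delicate points are the interior/closure identity in the boundary case and the compactness extraction of the limiting direction; everything else is routine manipulation of the projection inequality.
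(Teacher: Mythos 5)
Your proof is correct, but it plays a different role from anything in the paper: the paper does not actually prove this statement. It quotes the finite-dimensional separation theorem as a known classical fact, and the only argument it supplies (in the appendix, for \cref{hyperplanethm}) is an encoding of the statement in the first-order language of $\bV(\Reals)$---viewing $\Reals^{k}$ as the finitely supported elements of $\Reals^{\Naturals}$---followed by an application of the transfer principle to obtain the hyperfinite-dimensional version. Your argument, by contrast, is the standard analytic proof of the classical theorem itself: reduce to separating $\vzero$ from the convex difference set $C=S_{1}-S_{2}$, use the metric projection to get strict separation when $\vzero\notin\overline{C}$, and otherwise obtain a supporting hyperplane at $\vzero\in\partial\overline{C}$ as a limit of unit normals $u_{m}$ arising from projections of exterior points $z_{m}\to\vzero$, using nonexpansiveness of the projection and compactness of the unit sphere. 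The steps check out: the variational inequality rearranges correctly to $\IP{c_{0}}{c}\ge\norm{c_{0}}^{2}>0$; the identity $\intr(\overline{C})=\intr(C)$, valid for convex $C$ with nonempty interior, does rule out $\vzero\in\intr(\overline{C})$; and the limit passage $\IP{w}{c}\le\lim_{m}\IP{u_{m}}{c_{m}}=0$ is sound. Two small points to tighten, neither a genuine gap: the parenthetical ``so $\vzero\in\partial\overline{C}$'' is justified only after the interior argument, not by $\vzero\notin C$ alone; and in the degenerate subcase where $C$ lies in a proper affine subspace $a+V$, a normal $w\perp V$ makes $\IP{w}{\argdot}$ constantly equal to $\IP{w}{a}$ on $C$, so you still need to choose the sign of $w$ so that this constant is nonnegative.
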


Using a suitable encoding of this theorem in first-order logic,
the transfer principle yields a hyperfinite version:

\begin{theorem}\label{hyperplanethm}
Fix any $K\in \NSE{\Naturals}$. If $S_{1},S_{2}$ are two disjoint internal convex subsets of $I({\HReals}^{K})$, then there exists $W\in \IFuncs{\HReals}{K} \setminus \{\vzero\}$ such that, for all $P_1\in S_{1}$ and $P_{2}\in S_{2}$, we have
$\IP{W}{P_{1}} \geq \IP{W}{P_{2}}$.
\end{theorem}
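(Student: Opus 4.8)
The plan is to derive \cref{hyperplanethm} directly from the standard Hyperplane separation theorem by the transfer principle, exactly as the surrounding text suggests. The one thing that must be done carefully is to recognize that the standard theorem is a \emph{single} first-order sentence: although it quantifies over the dimension $k$, the objects it refers to---the space $\Reals^k$, the convexity predicate, and the inner product---all depend on $k$ in a uniform, first-order way. Writing $\IFuncs{\Reals}{k}$ for $\Reals^k$ (the functions from an index set of size $k$ to $\Reals$) to match the notation of the hyperfinite statement, the ingredients are: the standard function $k \mapsto \IFuncs{\Reals}{k}$ on $\Naturals$; the standard inner-product function sending $(k,a,b)$ with $a,b \in \IFuncs{\Reals}{k}$ to $\sum_{j \le k} a_j b_j$; and the first-order predicates ``$S \subseteq \IFuncs{\Reals}{k}$ is convex'' and ``$S_1 \cap S_2 = \emptyset$''.

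With these in hand, I would write the standard theorem as the sentence
\[
(\forall k \in \Naturals)(\forall S_1,S_2 \in \PowerSet{\IFuncs{\Reals}{k}})\big[\Phi(k,S_1,S_2) \implies (\exists w \in \IFuncs{\Reals}{k}\setminus\{\vzero\})\,\Psi(k,w,S_1,S_2)\big],
\]
where $\Phi(k,S_1,S_2)$ asserts that $S_1,S_2$ are convex and disjoint, and $\Psi(k,w,S_1,S_2)$ asserts that $\IP{w}{p_1} \ge \IP{w}{p_2}$ for all $p_1 \in S_1$ and $p_2 \in S_2$. This is a legitimate bounded sentence in the superstructure over $\Reals$ (the inner bound $\PowerSet{\IFuncs{\Reals}{k}}$ may depend on the earlier-quantified $k$, which transfer accommodates).

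Next I would apply transfer term by term. The outer quantifier $\forall k \in \Naturals$ becomes $\forall K \in \NSE{\Naturals}$; the function $k \mapsto \IFuncs{\Reals}{k}$ becomes $K \mapsto \IFuncs{\HReals}{K}$; the bounded quantifier $\forall S_i \in \PowerSet{\IFuncs{\Reals}{k}}$ becomes a quantifier over the internal power set of $\IFuncs{\HReals}{K}$, i.e., over internal subsets; the inner-product function becomes the internal map $(K,A,B)\mapsto \sum_{j\le K} A_j B_j = \IP{A}{B}$; and $\Phi,\Psi$ become their internal counterparts. The transferred sentence thus says precisely that for every $K \in \NSE{\Naturals}$ and every pair of disjoint internal convex subsets $S_1,S_2$ of $\IFuncs{\HReals}{K}$, there is a nonzero $W \in \IFuncs{\HReals}{K}$ with $\IP{W}{P_1} \ge \IP{W}{P_2}$ for all $P_1 \in S_1$, $P_2 \in S_2$. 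Specializing to the given $K$, $S_1$, $S_2$ (internal, convex, and disjoint by hypothesis) yields the desired $W$ and completes the proof.

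The main obstacle is not the transfer itself, which is mechanical, but confirming that the transferred predicates are exactly the notions named in \cref{hyperplanethm}. Specifically, one must check that the transfer of $\PowerSet{\IFuncs{\Reals}{k}}$ consists of precisely the internal subsets of $\IFuncs{\HReals}{K}$; that the transfer of the convexity predicate is internal convexity, i.e., closure under convex combinations with coefficients in $\NSE{[0,1]}$ (the same notion used in \cref{rsconvex,Qconvex}); and that the transfer of the inner product restricts on $\IFuncs{\HReals}{K}$ to the hyperfinite sum $\IP{W}{P}=\sum_{j\le K} W_j P_j$. Each follows from the standard behaviour of $\NSE{}$ on power sets and uniformly defined families, but they are what must be verified to be sure the transferred sentence is literally the statement of the theorem.
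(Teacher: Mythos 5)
Your proof is correct and takes essentially the same route as the paper: both formalize the standard separation theorem as a single bounded first-order sentence and obtain the hyperfinite version by transfer. The only difference is the encoding of the quantification over the dimension---the paper embeds every dimension into $\Reals^{\Naturals}$ and restricts to sequences vanishing beyond some coordinate $k$, then projects, whereas you quantify over $k$ directly using the uniformly defined families $k \mapsto \IFuncs{\Reals}{k}$ and $k \mapsto \PowerSet{\IFuncs{\Reals}{k}}$; both encodings are legitimate and yield the same transferred statement.
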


See \cref{vectorspacessec} for a proof.

Recall that our nonstandard model is $\kappa$-saturated for some infinite $\kappa$.

\begin{theorem}\label{extCC}
Let $\cC \subseteq \Ext{\FRRE}$ be a (necessarily finite or external) set with cardinality less than $\kappa$,
and suppose that $\cC$ is a \spEssCompleteSubclass{\Fincomb{\cC}}.
Let $\Delta_{0} \in \sFRRE$ and suppose $\Delta_{0}$ is \spExtAdmissible{\Theta}{\cC}.
Then, for every hyperfinite set $T \subseteq \NSE{\Theta}$ containing $\Theta$,
$\Delta_{0}$ is \NSBayes{\Fincomb{\cC}} with respect to some nonstandard prior concentrating on $T$.
\end{theorem}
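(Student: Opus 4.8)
The plan is to fix $\TTheta = T$ throughout, to separate the (external) union of quadrants $\bigcup_{n \in \Nats} Q(\Delta_0)_n$ from the risk set $\RS{\Fincomb{\cC}}$ by a single nonzero functional, and then to invoke \cref{hyperplanetoprior} to turn that functional into the desired nonstandard prior concentrating on $T$.

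Before separating, I would put the admissibility hypothesis into the form demanded by \cref{exempty}. Because $\cC$ is an \spEssCompleteSubclass{\Fincomb{\cC}} and $\Delta_0$ is \spExtAdmissible{\Theta}{\cC}, \cref{eCCEA} gives that $\Delta_0$ is \spExtAdmissible{\Theta}{\Fincomb{\cC}}; in particular $\Delta_0$ is \spEpsAdmissible{\frac 1 n}{\Theta}{\Fincomb{\cC}} for every $n \in \Nats$. Since $\Theta \subseteq T$ and $\frac 1 n > 0$, \cref{lattice} promotes this to \spEpsAdmissible{\frac 1 n}{T}{\Fincomb{\cC}}, which by \cref{exempty} (applied with the class $\Fincomb{\cC}$ and $\TTheta = T$) says exactly that $Q(\Delta_0)_n \cap \RS{\Fincomb{\cC}} = \emptyset$ for every $n \in \Nats$.

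The main obstacle is that the hyperfinite separation theorem \cref{hyperplanethm} applies only to internal convex sets, whereas both $\bigcup_{n} Q(\Delta_0)_n$ (a union indexed by the standard naturals) and $\RS{\Fincomb{\cC}}$ (external, as $\cC$ is external) fail to be internal. I would overcome this by decomposing $\RS{\Fincomb{\cC}} = \bigcup_{D \in \FiniteSubsets{\cC}} \RS{\sconv(D)}$ into internal pieces and separating each piece from each quadrant individually. For finite $D \subseteq \cC$, the set $\sconv(D)$ is internal and convex, so $\RS{\sconv(D)}$ is internal and convex by \cref{rsconvex}, while $Q(\Delta_0)_n$ is internal and convex by \cref{Qconvex}. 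Since $\RS{\sconv(D)} \subseteq \RS{\Fincomb{\cC}}$, the sets $Q(\Delta_0)_n$ and $\RS{\sconv(D)}$ are disjoint, so \cref{hyperplanethm} (with $K = \STK$) furnishes a nonempty internal set
\[
H_{n,D} = \Of{ W \in \DRiskSpace \st W \neq \vzero \ \text{and}\ \IP{W}{x} \le \IP{W}{s}\ \text{for all}\ x \in Q(\Delta_0)_n,\ s \in \RS{\sconv(D)} }
\]
of separating functionals.

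It then remains to find a single functional lying in every $H_{n,D}$, which I would extract by saturation. The family $\Of{ H_{n,D} \st n \in \Nats,\ D \in \FiniteSubsets{\cC} }$ has the finite intersection property: given finitely many pairs, set $n^* = \max_i n_i$ and $D^* = \bigcup_i D_i \in \FiniteSubsets{\cC}$; the nestings $Q(\Delta_0)_{n_i} \subseteq Q(\Delta_0)_{n^*}$ and $\RS{\sconv(D_i)} \subseteq \RS{\sconv(D^*)}$ show that every $W \in H_{n^*,D^*}$ lies in each $H_{n_i,D_i}$, and $H_{n^*,D^*} \neq \emptyset$. As the family consists of internal sets and has cardinality at most $\aleph_0 \cdot \abs{\cC} < \kappa$, $\kappa$-saturation produces some $\Pi \in \bigcap_{n,D} H_{n,D}$. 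This $\Pi \neq \vzero$ satisfies $\IP{\Pi}{x} \le \IP{\Pi}{s}$ for all $x \in \bigcup_{n \in \Nats} Q(\Delta_0)_n$ and all $s \in \RS{\Fincomb{\cC}}$, so \cref{hyperplanetoprior} (applied with $D = \Fincomb{\cC}$ and $\Delta = \Delta_0$) shows that $\Pi/\norm{\Pi}_1$ induces an internal prior concentrating on $\TTheta = T$ under which $\Delta_0$ is \NSBayes{\Fincomb{\cC}}, as required.
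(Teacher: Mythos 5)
Your proposal is correct and follows essentially the same route as the paper's proof: reduce to $\NSE{}$extended admissibility among $\Fincomb{\cC}$ via \cref{eCCEA}, translate to emptiness of $Q(\Delta_0)_n \cap \RS{\Fincomb{\cC}}$ via \cref{exempty}, separate each internal pair $Q(\Delta_0)_n$, $\RS{\sconv(D)}$ with \cref{hyperplanethm}, glue the separating functionals by $\kappa$-saturation, and finish with \cref{hyperplanetoprior}. The only (immaterial) difference is that you phrase the saturation step as the finite intersection property of the internal sets $H_{n,D}$ of separators, whereas the paper phrases it as finite satisfiability of a family of formulas $\phi_n^D(\Pi)$ --- equivalent formulations of the same principle.
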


\begin{proof}
Without loss of generality we may take $T = \TTheta$.
By \cref{eCCEA} and the fact that $\cC$ is an \spEssCompleteSubclass{\Fincomb{\cC}}, $\Delta_0$ is \spExtAdmissible{\Theta}{\Fincomb{\cC}}.
By \cref{lattice}, $\Delta_{0}$ is \spEpsAdmissible{\frac 1 n}{\TTheta}{\Fincomb{\cC}} for every $n\in \Nats$.
Hence, by \cref{exempty}, $Q(\Delta_{0})_{n}\cap \RS{\Fincomb{\cC}} = \emptyset$ for all $n\in \Nats$.

By the definition of $\Fincomb{\cC}$,
 we have $Q(\Delta_{0})_{n}\cap \cS^{\sconv(D)}=\emptyset$ for every $D \in \FiniteSubsets{\cC}$.
By \cref{Qconvex,rsconvex},
$\cS^{\sconv(D)}$ and $Q(\Delta_{0})_{n}$ are both internal convex sets,
hence, by \cref{hyperplanethm}, there is a nontrivial hyperplane $\Pi_{n}^{D} \in \DRiskSpace$ that separates them.

For every $D \in \FiniteSubsets{\cC}$
and $n \in \Naturals$,
let $\phi_{n}^{D}(\Pi)$ be the formula
\[
( \Pi \in \DRiskSpace)\land \,
(\Pi \neq \vzero \land
(\forall x\in Q(\Delta_{0})_{n}) \,
(\forall s\in \cS^{\sconv(D)}) \,
\IP{\Pi}{x} \leq \IP{\Pi}{s} ),
\]
and let $\FSC=\{\phi_{n}^{D}(\Pi): n \in \Naturals,\, D \in \FiniteSubsets{\cC} \}$.
By the above argument
and the fact that $\FiniteSubsets{\cC}$ is closed under taking finite unions
and the sets $Q(\Delta_{0})_n$, for $n \in \Nats$, are nested,
$\FSC$ is finitely satisfiable.
Note that $\FSC$ has cardinality no more than $\kappa$,
yet our nonstandard extension is $\kappa$-saturated by hypothesis.
Therefore, by the saturation principle,
there exists a nontrivial hyperplane $\Pi$ satisfying every sentence in $\FSC$ simultaneously.
That is, there exists $\Pi \in \DRiskSpace$ such that $\Pi \neq \vzero$ and,
for all $x \in \bigcup_{n\in \Nats}Q(\Delta_{0})_{n}$
and for all $s\in \bigcup_{D \in \FiniteSubsets{\cC} } \cS^{\sconv(D)} = \RS{\Fincomb{\cC}}$,
we have  $\IP{\Pi}{x} \leq \IP{\Pi}{s}$.

Hence, by \cref{hyperplanetoprior},
the normalized vector $\Pi / \|\Pi\|_1$ is well-defined and induces a probability measure $\pi$ on $\NSE{\Theta}$ concentrating on
$\TTheta$, and $\Delta_{0}$ is \NSBayesP{\pi}{\Fincomb{\cC}}.
\end{proof}

\begin{theorem}\label{fsecomplete}
For $\delta_{0} \in \FRRE$, the following are equivalent statements:
\begin{enumerate}
\item $\delta_{0}$ is \ExtAdmissible{\FRREFS}.
\item $\sdelta_{0}$ is \NSBayes{\Ext{\FRREFS}}.
\item $\sdelta_{0}$ is \NSBayes{\FSE}.
\end{enumerate}
If \cref{assumptioncv} also holds, then the following statements are also equivalent:
\begin{enumerate}[start=4]
\item $\delta_{0}$ is \ExtAdmissible{\FiniteRiskEstimators}.
\item $\sdelta_{0}$ is \NSBayes{\Ext{\FiniteRiskEstimators}}.
\end{enumerate}
Moreover,
statements (2), (3), and (5) can be taken to assert that,
for all hyperfinite sets $T \subseteq \NSE{\Theta}$ containing $\Theta$,
Bayes optimality holds with respect to some nonstandard prior concentrating on $T$.
\end{theorem}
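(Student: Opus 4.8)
The plan is to assemble all five equivalences from the structural results already proved, tracking carefully how extended admissibility and nonstandard Bayes optimality each behave as the competing class grows or shrinks. The two monotonicity facts that drive everything are: nonstandard Bayes optimality \emph{with respect to a fixed prior} only becomes harder to satisfy as the competing class enlarges, so it restricts down to any subclass while retaining the same prior; and, dually, extended admissibility only strengthens as the competing class enlarges. Consequently I intend to prove the one genuinely hard existence statement exactly once, for the largest relevant class $\FSE$, and deduce every other Bayes assertion by restriction.

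First I would establish $(1) \iff (2) \iff (3)$, which needs no convexity hypothesis. For $(1) \implies (3)$, \cref{extadEAdm} (with $\cC = \FRREFS$) rewrites (1) as the statement that $\sdelta_{0}$ is \spExtAdmissible{\Theta}{\Ext{\FRREFS}}. I would then apply \cref{extCC} with $\cC = \Ext{\FRREFS}$: its hypotheses hold because $\Ext{\FRREFS} \subseteq \Ext{\FRRE}$ has cardinality below the saturation level, and \cref{newLecomplete3} supplies that $\Ext{\FRREFS}$ is an \spEssCompleteSubclass{\FSE}, where $\FSE = \Fincomb{\Ext{\FRREFS}}$ as noted in the text. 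This yields (3) together with the ``moreover'' clause, namely that $\sdelta_{0}$ is \NSBayes{\FSE} with a prior concentrating on an arbitrary hyperfinite $T \supseteq \Theta$. Then $(3) \implies (2)$ is immediate: restricting the competing class from $\FSE$ down to $\Ext{\FRREFS} \subseteq \FSE$ keeps the same witnessing prior, which also transports the ``moreover'' clause to (2). Finally $(2) \implies (1)$ follows by feeding (2) into \cref{nsBayesImpliesExtAdmissible} to obtain \spExtAdmissible{\NSE{\Theta}}{\Ext{\FRREFS}}, and then reading off (1) through the equivalence of statements (3) and (1) in \cref{extadEAdm}.

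Next, assuming \cref{assumptioncv}, I would fold in (4) and (5). The equivalence $(1) \iff (4)$ is purely standard: $(1) \implies (4)$ holds because $\FiniteRiskEstimators \subseteq \FRREFS$ and admissibility against the larger class is the stronger property, while $(4) \implies (1)$ is exactly \cref{esscomplete} applied to the essentially complete inclusion of $\FiniteRiskEstimators$ in $\FRREFS$ furnished by \cref{FREconvexECC}. For (5) I would deliberately avoid re-running the separation argument on $\Ext{\FiniteRiskEstimators}$: instead $(3) \implies (5)$ is obtained by once more restricting the competing class, now from $\FSE$ down to $\Ext{\FiniteRiskEstimators} \subseteq \FSE$, which again transports the ``moreover'' clause to (5); and $(5) \implies (4)$ runs exactly as $(2) \implies (1)$ did, via \cref{nsBayesImpliesExtAdmissible} and \cref{extadEAdm} with $\cC = \FiniteRiskEstimators$. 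Closing these implications links (4) and (5) into the chain already established for (1)--(3).

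I expect the only substantive step to be the single invocation of \cref{extCC} in the second paragraph; everything else is bookkeeping of class inclusions. The point that most requires care is the \emph{direction} of those inclusions—using that enlarging the competitor class strengthens both extended admissibility and prior-fixed nonstandard Bayes optimality, so that (5) and (2) are recovered from (3) by restriction rather than by fresh existence arguments, and so that the ``moreover'' clause rides along unchanged. A secondary subtlety is checking the cardinality precondition of \cref{extCC}, which is discharged by the standing assumption that the ambient model is as saturated as necessary.
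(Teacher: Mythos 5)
Your proposal is correct and follows essentially the same route as the paper's own proof: one invocation of \cref{extCC} (via \cref{extadEAdm} and \cref{newLecomplete3}) to get (3) with the ``moreover'' clause, \cref{nsBayesImpliesExtAdmissible} plus \cref{extadEAdm} for the return directions, and \cref{FREconvexECC} with \cref{esscomplete} for $(4)\Rightarrow(1)$. The only cosmetic difference is that you obtain (5) by restricting from $\FSE$ rather than from $\Ext{\FRREFS}$, which is the same class-inclusion bookkeeping the paper uses.
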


\begin{proof}
From (1) and \cref{extadEAdm},
$\sdelta_{0}$ is \spExtAdmissible{\Theta}{\Ext{\FRREFS}}.
It follows from \cref{newLecomplete3,extCC} that,
for all hyperfinite sets $T \subseteq \NSE{\Theta}$ containing $\Theta$,
$\sdelta_{0}$ is \NSBayes{\FSE} with respect to some nonstandard prior $\pi$ concentrating on $T$.
Hence (3) holds and (2) follows trivially.

From (2) and \cref{nsBayesImpliesExtAdmissible},
it follows that
$\sdelta_{0}$ is \spExtAdmissible{\NSE{\Theta}}{\Ext{\FRREFS}}.
Then (1) follows from \cref{extadEAdm}.

It is the case that (1) implies (4) by \cref{lattice}, and the other direction follows from \cref{assumptioncv}, \cref{FREconvexECC}, and \cref{esscomplete}.
Similarly, (2) implies (5).
Finally, from (5) and \cref{nsBayesImpliesExtAdmissible},
it follows that
$\sdelta_{0}$ is \spExtAdmissible{\NSE{\Theta}}{\Ext{\FiniteRiskEstimators}}.
Then (4) follows from \cref{extadEAdm}.
\end{proof}

It follows immediately that the class of extended admissible procedures is a complete class if and only if the class of procedures whose extensions are nonstandard Bayes are a complete class.

\section{Applications to compact statistical decision problems}
\label{sec: CCs under compactness and RC}

In this section,
we use our nonstandard theory to prove
that,
under the additional hypotheses that $\Theta$ is compact (and thus normal) and all risk functions are continuous,
the class of extended admissible procedures is precisely the class of Bayes procedures. The strength of our result lies in the absence of any additional assumptions on the loss or model.\footnote{
In \cref{sec: nonstandardbayes}, the Hausdorff condition can be sidestepped by adopting the discrete topology.
Unless $\Theta$ is finite, however, $\Theta$ will not be compact under the discrete topology.
Thus, the topological hypotheses in this section
not only determine the space of priors, but also
restrict the set of decision problems to which the theory applies.
}

Assume $\sdelta$ is nonstandard Bayes with respect to some nonstandard prior $\pi$ on $\NSE{\Theta}$.
In this section, we will construct a standard probability measure $\pi_{p}$ on $\Theta$ from $\pi$ in such a way that the internal risk of $\sdelta$ under $\pi$ is infinitesimally close to the risk of $\delta$ under $\pi_{p}$.  This then implies that $\pi$ is Bayes with respect to $\pi_{p}$, and yields a standard characterization of extended admissible procedures.

Extension allows us to associate an internal probability measure $\NSE{\pi}$ to every standard probability measure $\pi$.
The next theorem describes a reverse process via Loeb measures.

\begin{lemma}[{\citep[][Thm.~13.4.1]{NDV}}]\label{pushdown}
Let $\gY$ be a compact Hausdorff space equipped with Borel $\sigma$-algebra $\BorelSets {\gY}$,
let $\nu$ be an internal probability measure defined on $(\NSE{\gY}, \NSE{\BorelSets{\gY}})$,
and let $\cC=\{C\subset \gY: \ST^{-1}(C) \in \LoebAlgebraX{\NSE{\BorelSets{\gY}}}{\nu} \}$.
Define a probability measure $\nu_{p}$ on the sets $\cC$ by $\nu_{p}(C)=\Loeb{\nu}(\ST^{-1}(C))$. Then $(\gY,\cC,\nu_{p})$ is the completion of a regular Borel probability space.
\end{lemma}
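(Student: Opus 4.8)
The plan is to check, in turn, that $\cC$ is a $\sigma$-algebra carrying the probability measure $\nu_{p}$, that $\cC$ contains $\BorelSets{\gY}$, that the restriction of $\nu_{p}$ to $\BorelSets{\gY}$ is regular, and that $\cC$ is exactly the $\nu_{p}$-completion of $\BorelSets{\gY}$. The first point is essentially free: because $\gY$ is compact, the nonstandard characterization of compactness gives $\NS{\NSE{\gY}} = \NSE{\gY}$, and Hausdorffness makes $\ST : \NSE{\gY} \to \gY$ a well-defined, single-valued, total map. Hence $\ST^{-1}$ commutes with complementation and countable unions and $\ST^{-1}(\gY) = \NSE{\gY}$, so $\cC$ --- being the $\ST^{-1}$-preimage of the Loeb $\sigma$-algebra $\LoebAlgebraX{\NSE{\BorelSets{\gY}}}{\nu}$ --- is automatically a $\sigma$-algebra, and $\nu_{p} = \Loeb{\nu}\circ \ST^{-1}$ is automatically a measure with $\nu_{p}(\gY) = \Loeb{\nu}(\NSE{\gY}) = 1$. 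Completeness of $(\gY,\cC,\nu_{p})$ is then inherited from the completeness of the Loeb measure: if $\nu_{p}(N)=0$ and $M\subseteq N$, then $\ST^{-1}(M)\subseteq \ST^{-1}(N)$ is contained in a $\Loeb{\nu}$-null set, hence is $\Loeb{\nu}$-measurable, so $M\in\cC$ and $\nu_{p}(M)=0$.

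The substantive step is showing $\BorelSets{\gY}\subseteq\cC$; since $\cC$ is a $\sigma$-algebra and the Borel sets are generated by the closed sets, it suffices to prove that $\ST^{-1}(F)$ is $\Loeb{\nu}$-measurable for every closed $F\subseteq\gY$. The geometric input is the monadic identity $\ST^{-1}(F) = \bigcap\{\NSE{U} : U \text{ open},\, F\subseteq U\}$, which I would prove by noting that $y\in\NSE{U}$ for some open $U\supseteq F$ whenever $\ST(y)\in F$, while if $\ST(y)=x\notin F$ then normality of the compact Hausdorff space $\gY$ separates $x$ from $F$ by disjoint opens $V\ni x$ and $U\supseteq F$, forcing $y\in\NSE{V}$ and $y\notin\NSE{U}$. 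Writing $\alpha=\inf\{\SP{(\nu(\NSE{U}))} : F\subseteq U \text{ open}\}$, a short saturation/overspill argument shows that every internal superset of $\ST^{-1}(F)$ already contains some $\NSE{U}$, so the Loeb outer measure of $\ST^{-1}(F)$ equals $\alpha$. The matching inner bound is the crux: the downward-directed internal family $\{\NSE{U}\}_{F\subseteq U}$ together with the conditions $\nu(B)>\alpha-\tfrac1n$ is finitely satisfiable, so --- provided the model is saturated beyond the cardinality of the neighborhood filter of $F$, which is what ``as saturated as necessary'' buys us --- there is an internal $B\subseteq\bigcap_U\NSE{U}=\ST^{-1}(F)$ with $\SP{(\nu(B))}=\alpha$. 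Thus the inner and outer Loeb measures of $\ST^{-1}(F)$ coincide, $\ST^{-1}(F)$ is Loeb measurable, and $\nu_{p}(F)=\alpha$. I expect this construction of an internal measurable kernel inside an uncountable intersection of monads to be the main obstacle, as it is exactly here that both the strong saturation hypothesis and the normality of $\gY$ are indispensable.

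Regularity of $\nu_{p}$ restricted to $\BorelSets{\gY}$ then falls out of the same computation. For open $U$ one has $\ST^{-1}(U)\subseteq\NSE{U}$, whence $\nu_{p}(U)\le \SP{(\nu(\NSE{U}))}$, while monotonicity gives $\nu_{p}(U)\ge\nu_{p}(F)$ for $U\supseteq F$; combined with $\nu_{p}(F)=\alpha=\inf_{U\supseteq F}\SP{(\nu(\NSE{U}))}$ this squeezes out outer regularity $\nu_{p}(F)=\inf\{\nu_{p}(U):F\subseteq U\text{ open}\}$, and inner regularity by closed (hence compact) sets follows by complementation. Finally, to identify $(\gY,\cC,\nu_{p})$ as the completion of the regular Borel space $(\gY,\BorelSets{\gY},\nu_{p}|_{\BorelSets{\gY}})$, note that the completion is contained in $\cC$ because $\cC$ is a complete $\sigma$-algebra containing $\BorelSets{\gY}$; for the reverse inclusion, given $C\in\cC$ I would sandwich the Loeb-measurable set $\ST^{-1}(C)$ between internal sets of infinitesimally close measure and push these down through $\ST$, using the regularity just established, to produce a closed $F_{0}\subseteq C$ and open $U_{0}\supseteq C$ with $\nu_{p}(U_{0}\setminus F_{0})$ arbitrarily small, exhibiting $C$ as a Borel set altered on a $\nu_{p}$-null set and completing the argument.
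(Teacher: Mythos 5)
The paper does not actually prove \cref{pushdown}; it imports the result wholesale from the cited reference (Thm.~13.4.1 of \citep{NDV}), so there is no in-paper argument to compare against. Your proof is the standard construction and is essentially correct: the identification of $\cC$ as a preimage $\sigma$-algebra and the inheritance of completeness from $\Loeb{\nu}$ are right; the monadic identity $\ST^{-1}(F)=\bigcap\{\NSE{U}: F\subseteq U \text{ open}\}$ for closed $F$ is correctly reduced to regularity of the compact Hausdorff space $\gY$; and the two saturation arguments (every internal superset of $\ST^{-1}(F)$ absorbs some $\NSE{U}$, and the finitely satisfiable family producing an internal kernel $B\subseteq\ST^{-1}(F)$ with $\SP{(\nu(B))}=\alpha$) are exactly the crux and are carried out correctly, matching the Loeb measurability criterion of \cref{loeb75}. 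Two points in the final step deserve more care. First, ``pushing down'' the inner internal set $A_i\subseteq\ST^{-1}(C)$ to the closed set $\ST(A_i)\subseteq C$ requires the fact that the standard part of an internal set is closed in a Hausdorff space --- itself a saturation argument (if $x\in\overline{\ST(A_i)}$ then $\{A_i\cap\NSE{V}: x\in V \text{ open}\}$ has the finite intersection property, so $A_i$ meets $\mu(x)$); you should state and prove this, since without it $\ST(A_i)$ is not known to be Borel. Second, pushing down the \emph{outer} internal set $A_o\supseteq\ST^{-1}(C)$ does not directly yield a good open superset of $C$ (one only gets $\ST^{-1}(\ST(A_o))\supseteq A_o$, with no control on the measure from above); the clean route is to apply the inner closed approximation to $C^{c}\in\cC$ and complement, which sandwiches $C$ between Borel sets differing by a $\nu_p$-null set. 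With those two repairs the argument is complete.
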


Note that $\ST^{-1}(E)$ is Loeb measurable for all $E\in \BorelSets{\gY}$ by \cref{LRunivLoeb,nscompact}.

\begin{definition}\label{defpushdown}
The probability measure $\nu_{p} : \cC \to [0,1]$ in \cref{pushdown} is called the \defn{pushdown} of the internal probability measure $\nu$.
\end{definition}

\begin{example}
If a nonstandard prior concentrates on finitely many points in $\NS{\NSE{\Theta}}$,
then its pushdown concentrates on the standard parts of those points, hence is a standard measure with support on a finite set.
\end{example}

\begin{example}
Suppose  $S=[K^{-1},2 K^{-1}, \dots, 1-K^{-1}, 1]$ for some nonstandard natural $K \in \NSE{\Nats} \setminus \Nats $.
Define an internal probability measure $\pi$ on $\NSE{[0,1]}$ by $\pi \{s\}=K^{-1}$ for all $s\in S$,
and let $\pi_{p}$ be its pushdown.
Then $\pi_{p}$ is Lebesgue measure on $[0,1]$.
\end{example}

The following lemma establishes a close link between Loeb integration and integration with respect to the pushdown measure.

\begin{lemma}\label{newpushdownint}
Let $\gY$ be a compact Hausdorff space equipped with Borel $\sigma$-algebra $\BorelSets {\gY}$,
let $\nu$ be an internal probability measure on $(\NSE{\gY},\NSE{\BorelSets {\gY}})$,
let $\nu_{p}$ be the pushdown of $\nu$, and
let $f : \gY \to \Reals$ be a bounded measurable function. Define $g : \NSE{\gY} \to \Reals$ by $g(s) = f(\SP{s})$.
Then we have $\int f \dee \nu_{p}=\int g \,\dee \Loeb{\nu}$.
\end{lemma}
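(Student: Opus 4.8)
The plan is to recognize the claimed identity as the abstract change-of-variables (image measure) formula for the standard part map, and then to prove it by the usual measure-theoretic bootstrapping from indicators up to bounded measurable functions. First I would observe that, because $\gY$ is compact Hausdorff, every point of $\NSE{\gY}$ is near-standard (\cref{nscompact}), so the standard part map $\ST : \NSE{\gY} \to \gY$ is defined on all of $\NSE{\gY}$ and $g = f \circ \ST$. By \cref{pushdown}, the pushdown satisfies $\nu_{p}(C) = \Loeb{\nu}(\ST^{-1}(C))$ for every $C \in \cC$; in other words, $\nu_{p}$ is exactly the image measure $\ST_{*}\Loeb{\nu}$ of the Loeb measure under $\ST$. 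The lemma is therefore the statement that $\int f \, \dee(\ST_{*}\Loeb{\nu}) = \int f \circ \ST \, \dee\Loeb{\nu}$, and it suffices to run the standard proof of this formula, checking at each stage that the relevant preimages are Loeb measurable.

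Next I would dispatch the building-block cases. For an indicator $f = \Ind_{E}$ with $E \in \BorelSets{\gY}$, the note following \cref{pushdown} (via \cref{LRunivLoeb,nscompact}) guarantees that $\ST^{-1}(E)$ is Loeb measurable, so $E \in \cC$, the function $g = \Ind_{E}\circ\ST = \Ind_{\ST^{-1}(E)}$ is $\Loeb{\nu}$-measurable, and
\[
\int g \, \dee\Loeb{\nu} = \Loeb{\nu}(\ST^{-1}(E)) = \nu_{p}(E) = \int f \, \dee\nu_{p}.
\]
By linearity of both integrals this extends immediately to every simple $\BorelSets{\gY}$-measurable function. The same computation shows in passing that $g$ is Loeb measurable for an arbitrary Borel $f$, since $g^{-1}(B) = \ST^{-1}(f^{-1}(B))$ with $f^{-1}(B)\in\BorelSets{\gY}$, so that the right-hand integral makes sense.

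Finally I would pass to a general bounded measurable $f$ by approximation: choose simple functions $f_{n}$ with $f_{n} \to f$ pointwise and $\sup_{n}\|f_{n}\|_{\infty} \le \|f\|_{\infty} < \infty$ (e.g.\ the usual dyadic truncations). Then $g_{n} = f_{n}\circ\ST \to g$ pointwise on $\NSE{\gY}$, and, since $\Loeb{\nu}$ and $\nu_{p}$ are probability measures and the integrands are uniformly bounded, the bounded convergence theorem applies on both sides; taking $n \to \infty$ in the identity $\int f_{n}\,\dee\nu_{p} = \int g_{n}\,\dee\Loeb{\nu}$ yields the claim. I expect the only genuine subtlety, and hence the main thing to get right, to be the measurability bookkeeping---specifically, ensuring that $\ST^{-1}$ carries Borel sets to Loeb-measurable sets so that $g$ is truly $\Loeb{\nu}$-integrable and $\BorelSets{\gY}\subseteq\cC$---rather than any hard analytic estimate; once the pushforward identification $\nu_{p}=\ST_{*}\Loeb{\nu}$ is in place, the remainder is routine.
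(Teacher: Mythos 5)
Your proof is correct and follows essentially the same route as the paper's: both reduce the claim to the identity $\Loeb{\nu}(\ST^{-1}(E)) = \nu_{p}(E)$ on level sets of $f$ and pass from simple functions to bounded measurable $f$ by approximation, with the key measurability input being that $\ST^{-1}$ carries Borel sets to Loeb-measurable sets (via \cref{LRunivLoeb,nscompact}). The only immaterial difference is that the paper works with the explicit approximants $\hat f_{n} \le f < \hat f_{n} + \tfrac{1}{n}$ built from the level sets $f^{-1}([\tfrac{k}{n},\tfrac{k+1}{n}))$, so the limit is taken by uniform convergence of the integrals rather than by the bounded convergence theorem.
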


\begin{proof}
For every $n\in \Nats$ and $k \in \Ints$,
define $F_{n,k} = f^{-1}([\frac{k}{n},\frac{k+1}{n}))$ and $G_{n,k}=g^{-1}(\NSE{[\frac{k}{n},\frac{k+1}{n})})$.
As $f$ is bounded, the collection $\cF_{n} = \{F_{n,k}: k\in \Ints\} \setminus \{\emptyset\}$ forms a finite partition of $\gY$, and similarly for
$\cG_{n} = \{G_{n,k}:k\in \Ints\} \setminus \{\emptyset\}$ and $\NSE{\gY}$.
For every $n \in \Nats$,
define $\hat{f}_{n} : \gY \to \Reals$ and $\hat{g}_{n} : \NSE{\gY} \to \Reals$ by putting
$\hat{f}_{n} = \frac{k}{n}$ on $F_{n,k}$ and
$\hat{g}_{n} = \frac{k}{n}$ on $G_{n,k}$ for every $k \in \Ints$.
Thus $\hat{f}_{n}$ (resp., $\hat{g}_{n}$) is a simple (resp., $\NSE{}$simple) function on the partition $\cF_{n}$ (resp., $\cG_{n}$).
By construction $\hat f_{n} \le f < \hat{f}_{n} + \frac 1 n$ and $\hat{g}_{n} \le g < \hat{g}_{n} + \frac 1 n$.
Note that
$
G_{n,k}=\ST^{-1}(F_{n,k})
$
for every $n\in \Nats$ and $k\in \Ints$.
Moreover, $\gY$ is even regular Hausdorff,
hence \cref{LRunivLoeb} implies that
$G_{n,k}$ is $\Loeb{\nu}$-measurable.
It follows that $\int f \dee \nu_{p} = \lim_{n\to\infty} \int \hat{f}_{n} \dee \nu_{p}$ and $\int g \dee \Loeb{\nu} = \lim_{n \to \infty} \int \hat{g}_{n} \dee \Loeb{\nu}$.
Moreover, by \cref{pushdown}, we have $\Loeb{\nu}(G_{n,k})=\nu_{p}(F_{n,k})$ for every $n\in \Nats$ and $k\in \Ints$.
Thus, for every $n\in \Nats$ and $k\in \Ints$, we have $\int \hat f_n \dee \nu_{p} = \int \hat g_n \dee \Loeb{\nu}$. Hence we have $\int g \,\dee \Loeb{\nu}=\int f \dee \nu_{p}$, completing the proof.
\end{proof}

In order to control the difference between the internal and standard Bayes risks under a nonstandard prior $\pi$ and its pushdown $\pi_{p}$,
it will suffice to require that risk functions be continuous. (Recall that we quoted results listing natural conditions that imply continuous risk in \cref{bdrisk,bdlossbdr}.)

\begin{condition}{RC}[risk continuity]
\label{assumptionrc}
$\Risk(\argdot,\delta)$ is continuous on $\Theta$, for all $\delta\in \FRRE$.
\end{condition}

In order to understand the nonstandard implications of this regularity condition,
we introduce the following definition from nonstandard analysis.

\begin{definition}\label{Scontinuous}
Let $\gX$ and $\gY$ be topological spaces.
A function $f: \NSE{\gX} \to \NSE{\gY}$ is S-continuous at $x \in \NSE{\gX}$ if $f(y) \approx f(x)$ for all $y \approx x$.
\end{definition}

A fundamental result in nonstandard analysis links continuity and S-continuity:

\begin{lemma}\label{contScont}
Let $\gX$ and $\gY$ be Hausdorff spaces, where $\gY$ is also locally compact, and let $D \subseteq \gX$.
If a function $f : \gX \to \gY$ is continuous on $D$
then  its extension $\NSE{\!f}$ is $\NS{\NSE{\gY}}$-valued and S-continuous on $\NS{\NSE{D}}$.
\end{lemma}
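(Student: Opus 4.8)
The plan is to reduce everything to the nonstandard characterization of continuity at a point (Robinson's criterion): for a standard point $a \in \gX$, the map $f$ is continuous at $a$ if and only if $\NSE{\!f}(y) \approx f(a)$ for every $y \approx a$; equivalently, $\NSE{\!f}$ carries the monad $\mu(a)$ into the monad $\mu(f(a))$. I would first recall this equivalence (see \cref{intronsa,sec:inp}); the direction I need---continuity implies monad containment---is routine. Given any open $V \ni f(a)$, continuity yields an open $U \ni a$ with $f(U) \subseteq V$, whence by transfer $\NSE{\!f}(\NSE U) \subseteq \NSE V$, and since $\mu(a) \subseteq \NSE U$ we get $\NSE{\!f}(\mu(a)) \subseteq \NSE V$; intersecting over all such $V$ gives $\NSE{\!f}(\mu(a)) \subseteq \mu(f(a))$.

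For the first conclusion, fix $x \in \NS{\NSE D}$ and set $a = \ST(x) \in D$, so that $x \approx a$ and, since $f$ is continuous on $D$, $f$ is continuous at $a$. Here is where local compactness of $\gY$ enters: choose a compact neighborhood $K$ of $f(a)$, say $f(a) \in V \subseteq K$ with $V$ open. By continuity at $a$ there is an open $U \ni a$ with $f(U) \subseteq V \subseteq K$, so $\NSE{\!f}(\NSE U) \subseteq \NSE K$ by transfer. Since $x \in \mu(a) \subseteq \NSE U$, we obtain $\NSE{\!f}(x) \in \NSE K$; and because $K$ is compact, the nonstandard characterization of compactness gives $\NSE K \subseteq \NS{\NSE\gY}$. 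Hence $\NSE{\!f}(x)$ is near-standard, and in fact $\ST(\NSE{\!f}(x)) = f(a)$ by the monad containment above. This shows $\NSE{\!f}$ is $\NS{\NSE\gY}$-valued on $\NS{\NSE D}$.

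For S-continuity, fix the same $x \in \NS{\NSE D}$ with $a = \ST(x) \in D$, and let $y \approx x$. Since $x$ is near-standard with $\ST(x) = a$, the relation $y \approx x$ is precisely $y \in \mu(a)$, i.e., $y \approx a$. Applying Robinson's criterion at $a$ twice yields $\NSE{\!f}(y) \approx f(a)$ and $\NSE{\!f}(x) \approx f(a)$; as both values lie in the single monad $\mu(f(a))$, we conclude $\NSE{\!f}(y) \approx \NSE{\!f}(x)$, which is exactly S-continuity of $\NSE{\!f}$ at $x$ in the sense of \cref{Scontinuous}.

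The two points I would treat carefully rather than as routine are the following. First, the identification $\ST(x) \in D$ for $x \in \NS{\NSE D}$: near-standardness relative to $D$ must be read so that standard parts land in $D$, not merely in $\overline D$, since continuity on $D$ exerts no control over $f$ at boundary points of $D$ and S-continuity would genuinely fail there. Second, the passage from local compactness to the near-standard-valued conclusion via $\NSE K \subseteq \NS{\NSE\gY}$ for compact $K$ is the precise place the local compactness hypothesis on $\gY$ is used; without a compact neighborhood of $f(a)$ one cannot, in general, certify that the image $\NSE{\!f}(x)$ is near-standard in the required sense.
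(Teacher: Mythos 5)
Your proof is correct and follows essentially the same route as the paper's: local compactness supplies a compact neighborhood $K$ of $f(\ST(x))$, transfer of the pulled-back open set places $\NSE{\!f}(x)$ in $\NSE{K}=\NS{\NSE{K}}$, and S-continuity then comes from both images lying in the monad of $f(\ST(x))$ (the paper phrases this last step contrapositively via Hausdorff separation, but it is the same argument). Your explicit caveat that $\ST(x)$ must be read as landing in $D$ itself is well taken --- the paper's proof asserts $x_0\in D$ without comment, and the lemma does require that reading.
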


See \cref{sec:hytopo} for a proof of this classical result.
We are now at the place to establish the correspondence between internal Bayes risk and standard Bayes risk.
The proof relies on the following technical lemma.

\begin{lemma}[{\citep[][Cor.~4.6.1]{NSAA97}}]\label{bintegration}
Suppose $(\Omega,\cF,P)$ is an internal probability space, and $F:\Omega\to {\HReals}$ is an internal $P$-integrable function such that $^{\circ}F$ exists everywhere.
Then ${^{\circ}F}$ is integrable with respect to $\Loeb{P}$ and $\int F \dee P \approx \int {^{\circ}F} \dee\Loeb{P}$.
\end{lemma}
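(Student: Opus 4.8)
The plan is to show that the hypothesis forces $F$ to be bounded by a limited constant, after which the result reduces to the classical simple-function argument underlying Loeb integration. The crux is that, although ``$\SP F$ exists everywhere'' looks like a merely pointwise condition, for an \emph{internal} $F$ it is equivalent --- under countable saturation --- to uniform limitedness of $F$. Indeed, suppose the internal supremum $\sup_{\omega \in \Omega} \abs{F(\omega)}$ were infinite. Then for each $n \in \Nats$ the set $S_{n} = \event{\omega \in \Omega : \abs{F(\omega)} > n}$ is internal and nonempty, and the family $\oof{S_{n}}_{n \in \Nats}$ is decreasing. By countable saturation $\bigcap_{n \in \Nats} S_{n} \neq \emptyset$, and any $\omega^{\ast}$ in this intersection satisfies $\abs{F(\omega^{\ast})} > n$ for every standard $n$, so $F(\omega^{\ast})$ is infinite and $\SP{F(\omega^{\ast})}$ does not exist, contradicting the hypothesis. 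Hence $\abs{F} \le C$ on $\Omega$ for some limited $C \in \HReals$, and $\SP F$ is a bounded, real-valued function.

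First I would record that $\SP F$ is $\Loeb{P}$-integrable. Given boundedness, integrability is automatic once measurability is in hand, and measurability holds because, for every $r \in \Reals$, the sublevel set $\event{\SP F < r} = \bigcup_{m \in \Nats} \event{F \le r - \tfrac 1 m}$ is a countable union of internal (hence $\sigma(\cF)$-measurable) sets and therefore lies in the Loeb $\sigma$-algebra.

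Finally, I would establish the estimate $\int F \dee P \approx \int \SP F \dee \Loeb{P}$ by simple-function approximation. For each standard $n$ set $F_{n} = \tfrac 1 n \floor{n F}$, an internal simple function with $\abs{F - F_{n}} \le \tfrac 1 n$, taking values $a_{j} = j/n$ on an internal partition $\event{A_{j}}$; because $\abs{a_{j}} \le C$ with $C$ limited and the step is $1/n$ with $n$ standard, only a limited number of the pieces are nonempty. Then $\abs{\int F \dee P - \int F_{n} \dee P} \le \tfrac 1 n$ and $\abs{\int \SP F \dee \Loeb{P} - \int \SP{F_{n}} \dee \Loeb{P}} \le \tfrac 1 n$, while $\int F_{n} \dee P = \sum_{j} a_{j} P(A_{j})$ and $\int \SP{F_{n}} \dee \Loeb{P} = \sum_{j} \SP{a_{j}}\, \Loeb{P}(A_{j}) = \sum_{j} \SP{a_{j}}\, \SP{P(A_{j})}$ by the definition of the Loeb measure. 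As this is a sum of a limited number of limited, termwise infinitely-close summands, $\int F_{n} \dee P \approx \int \SP{F_{n}} \dee \Loeb{P}$. Combining the three comparisons yields $\abs{\int F \dee P - \int \SP F \dee \Loeb{P}} \le \tfrac 2 n + \eta_{n}$ with $\eta_{n} \approx 0$, for every standard $n$; being smaller than every positive standard rational, the left-hand side is infinitesimal, which is the claim. The one genuinely nonroutine step is the saturation reduction of the first paragraph --- equivalently, the fact that a pointwise near-standard internal function is automatically S-integrable; granting that, the conclusion is the classical Loeb integration theorem.
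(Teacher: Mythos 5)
Your proof is correct. Note that the paper does not actually prove this lemma --- it imports it verbatim from the nonstandard-analysis literature (\citep[Cor.~4.6.1]{NSAA97}) --- and your argument is essentially the standard textbook proof of that result: use saturation (or overspill applied to the internal sets $S_n=\{\omega: |F(\omega)|>n\}$) to upgrade pointwise near-standardness of an internal function to a uniform standard bound, deduce Loeb measurability of $\SP{F}$ from $\{\SP{F}<r\}=\bigcup_m\{F\le r-\tfrac1m\}$, and then compare the two integrals through the internal simple functions $F_n=\tfrac1n\lfloor nF\rfloor$, whose level sets are standard-finitely many internal sets on which $P$ and $\Loeb{P}$ agree up to infinitesimals. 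The only cosmetic point is that you should phrase the first step as ``suppose no standard $n$ bounds $|F|$'' rather than ``suppose the internal supremum is infinite,'' since the latter presumes the supremum exists; the saturation argument itself needs only that each $S_n$ is internal and nonempty.
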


\begin{lemma}\label{newpushdownthm}
Suppose $\Theta$ is compact Hausdorff and \cref{assumptionrc} holds.
Let $\pi$ be an internal distribution on $\NSE{\Theta}$ and let $\pi_{p} : \cC \to [0,1]$ be its pushdown.
Let $\delta_0 \in \FRRE$ be a standard decision procedure.
If $\sRisk(\argdot, \sdelta_0)$ is $\pi$-integrable then $\Risk(\argdot,\delta_{0})$ is a $\pi_p$-integrable function and $\Risk(\pi_p,\delta_0)\approx \sRisk(\pi,\sdelta_0)$,
i.e., the Bayes risk under $\pi_{p}$ of $\delta_{0}$ is within an infinitesimal of the nonstandard Bayes risk under $\pi$ of $\sdelta_{0}$.
\end{lemma}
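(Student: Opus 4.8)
The plan is to recognize the internal integrand as the extension of a bounded continuous standard function, push the internal Bayes risk down to the Loeb space using \cref{bintegration}, and then identify that Loeb integral with the pushdown integral using \cref{newpushdownint}. First I would set $f = \Risk(\argdot,\delta_0) : \Theta \to \Reals$ and $F = \sRisk(\argdot,\sdelta_0)$, observing that $F = \NSE{f}$ because $\sRisk$ and $\sdelta_0$ are the extensions of $\Risk$ and $\delta_0$. By \cref{assumptionrc}, $f$ is continuous on the compact Hausdorff space $\Theta$ and is therefore bounded, and by \cref{nscompact} every point of $\NSE{\Theta}$ is near-standard, so that $\NS{\NSE{\Theta}} = \NSE{\Theta}$ and the standard part map $\ST$ is defined on all of $\NSE{\Theta}$.

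The key step is to show that $\SP{F}$ exists everywhere and coincides with the function $g = f \circ \ST$. Applying \cref{contScont}, with $\Theta$ as the domain and the locally compact Hausdorff space $\Reals$ as the codomain, shows that $F = \NSE{f}$ is $\NS{\NSE{\Reals}}$-valued and S-continuous on $\NS{\NSE{\Theta}} = \NSE{\Theta}$. Being $\NS{\NSE{\Reals}}$-valued means $F(s)$ is finite for every $s$, so $\SP{F(s)}$ exists throughout $\NSE{\Theta}$; and since $s \approx \SP{s}$ with $\SP{s} \in \Theta$, S-continuity gives $F(s) \approx F(\SP{s}) = \NSE{f}(\SP{s}) = f(\SP{s})$, whence $\SP{F} = g$. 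This is the step that uses both hypotheses at once: compactness makes $\ST$ total and $F$ finite everywhere, while risk continuity, via S-continuity, pins the standard part of $F$ to $f \circ \ST$. I expect this identification to be the only delicate point of the argument.

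With $\SP{F} = g$ established, I would invoke \cref{bintegration} on the internal probability space $(\NSE{\Theta},\NSE{\BorelSets{\Theta}},\pi)$ and the internal function $F$, which is $\pi$-integrable by hypothesis (indeed automatically, as $F$ is bounded by transfer of the bound on $f$) and has $\SP{F}$ everywhere defined; this yields that $g$ is $\Loeb{\pi}$-integrable and $\sRisk(\pi,\sdelta_0) = \int F \dee\pi \approx \int g \dee\Loeb{\pi}$. Finally, since $f$ is bounded and Borel measurable, \cref{newpushdownint} applies and gives $\int g \dee\Loeb{\pi} = \int f \dee\pi_p = \Risk(\pi_p,\delta_0)$, which in particular certifies that $\Risk(\argdot,\delta_0)$ is $\pi_p$-integrable. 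Chaining these relations delivers $\Risk(\pi_p,\delta_0) \approx \sRisk(\pi,\sdelta_0)$, completing the argument.
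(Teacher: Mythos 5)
Your proposal is correct and follows essentially the same route as the paper's own proof: compactness makes the standard part map total on $\NSE{\Theta}$, risk continuity plus \cref{contScont} identifies $\SP{(\sRisk(\argdot,\sdelta_0))}$ with $\Risk(\SP{(\argdot)},\delta_0)$, and then \cref{bintegration} and \cref{newpushdownint} convert the internal integral to the Loeb integral and then to the pushdown integral. Your parenthetical that $\pi$-integrability is automatic from boundedness by transfer is a correct bonus observation the paper does not make.
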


\begin{proof}
Because $\Theta$ is compact Hausdorff,
$\SP{t}$ exists for all $t \in \NSE{\Theta}$
and
\cref{pushdown} implies $\pi_{p}$ is a probability measure on $(\Theta,\cC)$, where $\cC$ is the $\pi_{p}$-completion of $\BorelSets{\Theta}$.
By \cref{assumptionrc,contScont},
for all $t \in \NSE{\Theta}$,
we have
\[
 \sRisk(t,\sdelta_0) \approx \sRisk(\SP{t}, \sdelta_0) = \Risk(\SP{t},\delta_0).
\]
Hence $\SP({\sRisk(t,\delta_0)}) = \Risk(\SP{t},\delta_0)$ exists for all $t \in \NSE{\Theta}$.
As $\sRisk(\argdot, \sdelta_0)$ is $\pi$-integrable, by \cref{bintegration},
we know that $\SP{(\sRisk(\argdot,\sdelta_0))}$ is $\Loeb{\pi}$-integrable and
\[\label{riskintegral}
\int \sRisk(t,\sdelta_0) \pi(\dee t)
\approx \int \SP{(\sRisk(t,\sdelta_{0}))} \Loeb{\pi}(\dee t)
= \int \sRisk(\SP{t},\sdelta_0) \Loeb{\pi}(\dee t)
.
\]
By \cref{assumptionrc} and the fact that $\Theta$ is compact,
it follows that $\Risk(\argdot,\delta_{0})$ is bounded.
Thus, by \cref{newpushdownint},
$\int \sRisk(\SP{t},\sdelta_0) \Loeb{\pi}(\dee t) = \int \Risk(\theta,\delta_{0}) \pi_{p}(\dee \theta)$,
completing the proof.
\end{proof}

\begin{lemma}\label{nsToBayes}
Suppose $\Theta$ is compact Hausdorff and \cref{assumptionrc} holds.
Let $\delta_{0} \in \FRRE$ and $\cC \subseteq \FRRE$.
If $\sdelta_{0}$ is \NSBayes{\Ext{\cC}},
then $\delta_{0}$ is \Bayes{\cC}.
\end{lemma}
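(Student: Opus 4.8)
The plan is to produce the witnessing standard prior by pushing down the nonstandard prior supplied by the hypothesis. First I would use the assumption that $\sdelta_{0}$ is \NSBayes{\Ext{\cC}} to fix a nonstandard prior $\pi$ on $\NSE{\Theta}$ and an infinitesimal $\epsilon \in \NSE{\NNReals}$ with $\sRisk(\pi,\sdelta_{0})$ hyperfinite and $\sRisk(\pi,\sdelta_{0}) \le \sRisk(\pi,\sdelta') + \epsilon$ for every $\delta' \in \cC$ (recall $\Ext{\cC} = \{\sdelta' : \delta' \in \cC\}$). Then, since $\Theta$ is compact Hausdorff, I would form the pushdown $\pi_{p}$ via \cref{pushdown}, a standard probability measure on $\Theta$ defined on the $\pi_{p}$-completion of $\BorelSets{\Theta}$. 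Because \cref{assumptionrc} makes every $\Risk(\argdot,\delta)$ continuous, hence Borel and thus $\pi_{p}$-measurable, $\pi_{p}$ is a legitimate prior and the Bayes risks below are well-defined; the claim to establish is that $\delta_{0}$ is \BayesP{\pi_{p}}{\cC}.

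The crucial point, and the only hypothesis of \cref{newpushdownthm} that is not automatic, is the $\pi$-integrability of $\sRisk(\argdot,\sdelta)$; this is exactly where compactness and continuity enter. For any $\delta \in \FRRE$, \cref{assumptionrc} makes $\Risk(\argdot,\delta)$ continuous on the compact space $\Theta$, hence bounded by some standard $M_{\delta}$. By \cref{contScont}, $\sRisk(\argdot,\sdelta)$ is then S-continuous and $\NS{\NSE{\Reals}}$-valued on $\NS{\NSE{\Theta}}$, and since every point of $\NSE{\Theta}$ is near-standard by \cref{nscompact}, we get $\sRisk(t,\sdelta) \approx \Risk(\SP t,\delta) \le M_{\delta}$ for all $t \in \NSE{\Theta}$. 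Thus the internal function $\sRisk(\argdot,\sdelta)$ is bounded by the standard constant $M_{\delta}+1$ and so is $\pi$-integrable. I would apply this both to $\delta_{0}$ and, separately, to each $\delta' \in \cC$; note the boundedness is genuinely needed for the $\delta'$, since the witnessing inequality only bounds $\sRisk(\pi,\sdelta')$ from below and so cannot by itself exclude an unlimited value.

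With integrability established, \cref{newpushdownthm} gives $\Risk(\pi_{p},\delta_{0}) \approx \sRisk(\pi,\sdelta_{0})$ with $\Risk(\argdot,\delta_{0})$ $\pi_{p}$-integrable (in particular $\Risk(\pi_{p},\delta_{0}) < \infty$), and likewise $\Risk(\pi_{p},\delta') \approx \sRisk(\pi,\sdelta')$ for every $\delta' \in \cC$. Chaining these equivalences with the witnessing inequality yields, for each $\delta' \in \cC$,
\[
\Risk(\pi_{p},\delta_{0}) \approx \sRisk(\pi,\sdelta_{0}) \le \sRisk(\pi,\sdelta') + \epsilon \approx \Risk(\pi_{p},\delta').
\]
Hence $\Risk(\pi_{p},\delta_{0}) \le \Risk(\pi_{p},\delta') + \iota$ for some infinitesimal $\iota$, and because both risks are standard reals, taking standard parts gives $\Risk(\pi_{p},\delta_{0}) \le \Risk(\pi_{p},\delta')$. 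As $\delta'$ was arbitrary and $\Risk(\pi_{p},\delta_{0})$ is finite, $\delta_{0}$ is \BayesP{\pi_{p}}{\cC}, and therefore \Bayes{\cC}.

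The main obstacle will be precisely the integrability verification of the second paragraph: without it \cref{newpushdownthm} is unavailable, and it is there that compactness (forcing every parameter to be near-standard) and risk continuity (transporting boundedness across the standard part map via S-continuity) are both indispensable. Everything after that is the bookkeeping of the displayed chain together with the elementary observation that a standard real dominated by another up to an infinitesimal is dominated outright.
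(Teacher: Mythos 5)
Your proposal is correct, and it diverges from the paper's proof in one substantive place: how the $\pi$-integrability hypothesis of \cref{newpushdownthm} is secured. The paper first invokes \cref{fsecomplete} to replace the given nonstandard prior by one concentrating on a hyperfinite set $T$, so that the internal Bayes risks become hyperfinite sums and integrability is automatic; only then does it push down and apply \cref{standardnums}.2 exactly as you do. You instead keep the original prior $\pi$ and observe that compactness of $\Theta$ plus \cref{assumptionrc} makes each $\Risk(\argdot,\delta)$ bounded by a standard constant, whence (by transfer of that bound, or by your S-continuity route through \cref{contScont,nscompact}) $\sRisk(\argdot,\sdelta)$ is a bounded internal $\NSE{}$measurable function and hence $\pi$-integrable for \emph{any} internal prior. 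This is arguably the cleaner path: it is self-contained, it works verbatim for an arbitrary $\cC \subseteq \FRRE$ without routing through the hyperfinite-support machinery of \cref{extCC,fsecomplete} (whose statements are tailored to $\FRREFS$), and your remark that boundedness is genuinely needed for the competitors $\delta'$ --- since the Bayes inequality only controls $\sRisk(\pi,\sdelta')$ from below --- is exactly the right thing to flag. What the paper's route buys in exchange is the stronger "moreover" information that the witnessing prior can be taken to concentrate on any prescribed hyperfinite $T \supseteq \Theta$, which it reuses elsewhere; your argument does not need and does not produce that refinement. The concluding chain of infinitesimal inequalities and the passage to standard parts match the paper's.
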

\begin{proof}
By \cref{fsecomplete}, we may assume that $\sdelta_{0}$ is \NSBayes{\Ext{\cC}}
with respect to a nonstandard prior $\pi$ that concentrates on some hyperfinite set $T$.
Let $\delta \in \cC$. Then $\sdelta \in \Ext{\cC}$, hence
$\sRisk(\pi,\sdelta_{0}) \lessapprox \sRisk(\pi,\sdelta)$.
Let $\pi_{p}$ denote the pushdown of $\pi$.
As $\Theta$ is compact Hausdorff, we know that $\pi_{p}$ is a probability measure.
As $\pi$ concentrates on the hyperfinite set $T$,
we know that $\sRisk(\argdot, \sdelta_0)$ and $\sRisk(\argdot, \sdelta)$ are $\pi$-integrable.
By \cref{newpushdownthm}, we have $\Risk(\pi_{p},\delta_{0}) \approx \sRisk(\pi,\sdelta_0)$ and $\Risk(\pi_{p},\delta) \approx \sRisk(\pi,\sdelta)$.
Then \cref{standardnums}.2 implies that $\Risk(\pi_{p},\delta_{0}) \leq \Risk(\pi_{p},\delta)$.
As our choice of $\delta$ was arbitrary, $\delta_{0}$ is \BayesP{\pi_{p}}{\cC}.
\end{proof}

\begin{theorem}\label{stbayes}
Suppose $\Theta$ is compact Hausdorff and \cref{assumptionrc} holds.
For $\delta_{0} \in \FRRE$, the following statements are equivalent:
\begin{enumerate}
\item $\delta_{0}$ is \ExtAdmissible{\FRREFS}.
\item $\delta_{0}$ is extended \Bayes{\FRREFS}.
\item $\delta_{0}$ is \Bayes{\FRREFS}.
\end{enumerate}
If \cref{assumptioncv} also holds, then the equivalence extends to these statements with $\FiniteRiskEstimators$ in place of $\FRREFS$.
\end{theorem}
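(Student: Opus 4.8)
The plan is to close a three-way cycle of implications, relying entirely on machinery already assembled. The two easy implications, $(3)\implies(2)\implies(1)$, follow immediately from \cref{BayesImpliesExtAdm} applied with $\cC = \FRREFS$: any procedure that is \Bayes{\FRREFS} is automatically extended \Bayes{\FRREFS}, and any extended Bayes procedure is \ExtAdmissible{\FRREFS}. Neither of these directions uses compactness or \cref{assumptionrc}.

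The substantive direction is $(1)\implies(3)$, and here I would route through the nonstandard theory. Starting from the assumption that $\delta_{0}$ is \ExtAdmissible{\FRREFS}, the equivalence of statements (1) and (2) in \cref{fsecomplete} shows that $\sdelta_{0}$ is \NSBayes{\Ext{\FRREFS}}, with the witnessing nonstandard prior concentrating on any prescribed hyperfinite $T \subseteq \NSE{\Theta}$ containing $\Theta$. Now invoke \cref{nsToBayes} with $\cC = \FRREFS$: because $\Theta$ is compact Hausdorff and \cref{assumptionrc} holds, nonstandard Bayes optimality of $\sdelta_{0}$ among $\Ext{\FRREFS}$ pushes down to ordinary Bayes optimality of $\delta_{0}$ among $\FRREFS$. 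This yields (3) and closes the cycle.

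For the final sentence of the theorem, I would repeat the same cycle verbatim with $\FiniteRiskEstimators$ in place of $\FRREFS$, now also assuming \cref{assumptioncv}. The downward implications again come from \cref{BayesImpliesExtAdm}. For the upward implication, the equivalence of statements (4) and (5) in \cref{fsecomplete}, valid under \cref{assumptioncv}, gives that \ExtAdmissible{\FiniteRiskEstimators} implies that $\sdelta_{0}$ is \NSBayes{\Ext{\FiniteRiskEstimators}}; a second application of \cref{nsToBayes}, this time with $\cC = \FiniteRiskEstimators$, then delivers \Bayes{\FiniteRiskEstimators}.

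The genuine work has already been carried out in the two cited lemmas, so I do not anticipate a real obstacle at this stage; the only care required is bookkeeping, namely citing the correct numbered statements inside \cref{fsecomplete} (its (1)$\Leftrightarrow$(2) for the $\FRREFS$ case and its (4)$\Leftrightarrow$(5) for the $\FiniteRiskEstimators$ case) and confirming that the hypotheses of \cref{nsToBayes} coincide exactly with the standing assumptions of the theorem. If one insists on naming a conceptual hard part, it is the pushdown step embodied in \cref{nsToBayes}: converting the internal prior $\pi$ into a standard prior $\pi_{p}$ whose Bayes risk tracks the internal Bayes risk up to an infinitesimal. That is precisely where compactness (guaranteeing that standard parts exist and that the risk is bounded) and risk continuity (via S-continuity) are indispensable.
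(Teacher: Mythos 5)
Your treatment of the three statements for $\FRREFS$ is exactly the paper's proof: the downward implications $(3)\Rightarrow(2)\Rightarrow(1)$ are \cref{BayesImpliesExtAdm}, and $(1)\Rightarrow(3)$ goes through \cref{fsecomplete} followed by \cref{nsToBayes}. Where you diverge is the final sentence about $\FiniteRiskEstimators$. You rerun the whole nonstandard cycle with $\FiniteRiskEstimators$ in place of $\FRREFS$, using the $(4)\Leftrightarrow(5)$ part of \cref{fsecomplete} and a second application of \cref{nsToBayes}; this is valid (the ``moreover'' clause of \cref{fsecomplete} guarantees that the witnessing prior for its statement (5) concentrates on a hyperfinite set, so the integrability step inside \cref{nsToBayes} goes through), and it establishes that the three $\FiniteRiskEstimators$ statements are mutually equivalent. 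The paper instead links the two triples directly: the $\FRREFS$ statements imply their $\FiniteRiskEstimators$ counterparts trivially because $\FiniteRiskEstimators\subseteq\FRREFS$, and under \cref{assumptioncv} the reverse implications follow from the essential completeness of $\FiniteRiskEstimators$ in $\FRREFS$ (\cref{FREconvexECC}) together with \cref{esscomplete} and \cref{EssCompleteBayes}. The one thing your version leaves unproved is precisely this cross-class linkage: ``the equivalence extends'' is most naturally read (and is proved in the paper) as saying that all six statements are equivalent, and your two separate three-way cycles do not by themselves supply any implication from a $\FiniteRiskEstimators$ statement back to an $\FRREFS$ statement. That step genuinely needs \cref{assumptioncv} via essential completeness; it is a one-line addition, but you should include it.
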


\begin{proof}
Suppose (1) holds. Then by \cref{fsecomplete},
$\sdelta_0$ is \NSBayes{\Ext{\FRREFS}}.
Then (3) follows from \cref{nsToBayes}.
The reverse implications follows from \cref{BayesImpliesExtAdm}.

The statements with $\FRREFS$ imply those for $\FiniteRiskEstimators \subseteq \FRREFS$ trivially.
When \cref{assumptioncv} holds, we have \cref{FREconvexECC}.
Hence, the reverse implications follows from \cref{esscomplete} and \cref{EssCompleteBayes}.
\end{proof}

We conclude this section with a strengthening of \cref{fsecomplete}, showing that infinitesimal $\NSE{}$Bayes risk yields zero $\NSE{}$Bayes risk, and that a procedure is optimal among all extensions if and only if it optimal among all internal estimators:

\begin{corollary}\label{fsestronger}
Suppose $\Theta$ is compact Hausdorff and \cref{assumptionrc} holds.
For $\delta_{0} \in \FRRE$, the following statements are equivalent:
\begin{enumerate}
\item $\delta_{0}$ is \ExtAdmissible{\FRREFS}.
\item $\sdelta_{0}$ is \NSBayes{\sFRREFS}.
\item $\sdelta_{0}$ is \seBayes{0}{\sFRREFS}.
\end{enumerate}
Moreover, the equivalence extends to these statements with $\Ext{\FRREFS}$ in place of $\sFRREFS$.
If \cref{assumptioncv} also holds, the equivalence extends to
these statement with $\FiniteRiskEstimators/\Ext{\FiniteRiskEstimators}/\sFiniteRiskEstimators$
in place of $\FRREFS/\Ext{\FRREFS}/\sFRREFS$.

\end{corollary}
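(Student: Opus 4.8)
The plan is to establish the cycle $(1) \Rightarrow (3) \Rightarrow (2) \Rightarrow (1)$, and then to read off the two ``moreover'' claims from the same argument together with the containments $\Ext{\FRREFS} \subseteq \sFRREFS$ and (under \cref{assumptioncv}) $\FiniteRiskEstimators \subseteq \FRREFS$. The key observation is that, under compactness and \cref{assumptionrc}, \cref{stbayes} already produces a \emph{standard} prior witnessing ordinary Bayes optimality, and the extension of that single prior does both jobs at once: it widens the competitor class from the extensions $\Ext{\FRREFS}$ to all internal procedures $\sFRREFS$, and it sharpens the infinitesimal slack of \NSBayes{\sFRREFS} into the exact slack of \seBayes{0}{\sFRREFS}.

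For $(1) \Rightarrow (3)$ I would suppose $\delta_{0}$ is \ExtAdmissible{\FRREFS}. By \cref{stbayes}, $\delta_{0}$ is then \Bayes{\FRREFS}, i.e.\ \BayesP{\pi_{0}}{\FRREFS} for some standard prior $\pi_{0}$; equivalently $\delta_{0}$ is \eBayesP{0}{\pi_{0}}{\FRREFS}. The transfer principle (as recorded immediately after \cref{defnhbayes}) then gives that $\sdelta_{0}$ is \seBayesP{0}{\NSE{\pi_{0}}}{\NSE{\FRREFS}}, and since $\NSE{\FRREFS} = \sFRREFS$ this is precisely statement (3). Concretely, transfer carries the finiteness clause $\Risk(\pi_{0},\delta_{0}) < \infty$ to hyperfiniteness of $\sRisk(\NSE{\pi_{0}},\sdelta_{0})$ and carries $(\forall \delta \in \FRREFS)\ \Risk(\pi_{0},\delta_{0}) \le \Risk(\pi_{0},\delta)$ to $(\forall \Delta \in \sFRREFS)\ \sRisk(\NSE{\pi_{0}},\sdelta_{0}) \le \sRisk(\NSE{\pi_{0}},\Delta)$.

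The remaining steps are bookkeeping. For $(3) \Rightarrow (2)$, the same prior witnesses \NSBayes{\sFRREFS} because $0$ is infinitesimal. For $(2) \Rightarrow (1)$, optimality over $\sFRREFS$ restricts (same prior, fewer competitors) to optimality over the subclass $\Ext{\FRREFS}$, so $\sdelta_{0}$ is \NSBayes{\Ext{\FRREFS}}; this is statement (2) of \cref{fsecomplete}, which returns statement (1). The ``$\Ext{\FRREFS}$ in place of $\sFRREFS$'' claim follows the identical pattern: $(1) \Rightarrow (3)$ yields \seBayes{0}{\sFRREFS}, which restricts to \seBayes{0}{\Ext{\FRREFS}}, which trivially gives \NSBayes{\Ext{\FRREFS}} and hence statement (1) via \cref{fsecomplete}. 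Finally, when \cref{assumptioncv} holds, \cref{stbayes} and \cref{fsecomplete} both extend to the $\FiniteRiskEstimators$ classes, so the three moves---standard Bayes among $\FiniteRiskEstimators$, transfer to \seBayes{0}{\sFiniteRiskEstimators}, and statement (5) of \cref{fsecomplete} to recover extended admissibility among $\FiniteRiskEstimators$---go through verbatim with $\FiniteRiskEstimators/\Ext{\FiniteRiskEstimators}/\sFiniteRiskEstimators$ replacing $\FRREFS/\Ext{\FRREFS}/\sFRREFS$.

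I do not anticipate a real obstacle, since the analytic content---the pushdown construction of $\pi_{0}$ from a nonstandard prior via \cref{newpushdownthm}, using compactness and risk continuity---is already packaged inside \cref{stbayes}. The points needing care are purely formal: the transfer in $(1)\Rightarrow(3)$ must carry the finiteness clause as well as the inequality, since hyperfiniteness of $\sRisk(\NSE{\pi_{0}},\sdelta_{0})$ is built into the definition of \seBayesP{0}{\NSE{\pi_{0}}}{\sFRREFS}; and one must use throughout that a zero-slack (or infinitesimal-slack) nonstandard Bayes statement over a class descends, with the same witnessing prior, to every subclass---which is exactly what lets a single prior serve the $\sFRREFS$, $\Ext{\FRREFS}$, and $\FiniteRiskEstimators$ variants at once.
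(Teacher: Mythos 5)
Your proposal is correct and follows essentially the same route as the paper: $(1)\Rightarrow(3)$ via \cref{stbayes} plus transfer of the standard Bayes statement (including the finiteness clause) to $\NSE{\FRREFS}=\sFRREFS$, $(3)\Rightarrow(2)$ because $0$ is infinitesimal, and $(2)\Rightarrow(1)$ by restricting the competitor class to $\Ext{\FRREFS}$ and invoking \cref{fsecomplete}, with the two ``moreover'' claims handled by the same downward-restriction and the convexity clauses of \cref{stbayes} and \cref{fsecomplete}. No gaps.
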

\begin{proof}
Statement (1) implies that $\delta_{0}$ is \Bayes{\FRREFS} by \cref{stbayes}.
This implies (3) by transfer, (3) implies (2) by definition, and (2) implies (1) by \cref{fsecomplete}.

Statements (2) and (3) with $\sFRREFS$ imply their counterparts with $\Ext{\FRREFS}$ in place of $\sFRREFS$, trivially.
Statement (3) with $\Ext{\FRREFS}$ implies (2) with $\Ext{\FRREFS}$ which implies (1) by \cref{fsecomplete}.

The additional equivalences under \cref{assumptioncv} follow by the same logic as above and in the proof of \cref{fsecomplete}.
\end{proof}

\section{Admissibility of nonstandard Bayes procedures}
\label{sec: admissibility}

Heretofore, we have focused on the connection between extended admissibility and nonstandard Bayes optimality.
In this section, we shift our focus to the admissibility of decision procedures whose extensions are nonstandard Bayes.
In all but the final result of this section, \emph{we will assume that $\Theta$ is a metric space} and write $d$ for the metric.

On finite parameter spaces with bounded loss, it is known that Bayes procedures with respect to priors assigning positive mass to every state are admissible. Similarly, when risk functions are continuous, Bayes procedures with respect to priors with full support are admissible. We can establish analogues of these result on general parameter spaces by a suitable nonstandard relaxation of a standard prior having full support.

\begin{definition}
For $x,y \in \HReals$, write $x \gg y$
when $\gamma \, x > y$ for all $\gamma \in \PosReals$.
\end{definition}

\begin{definition}
Let $X$ be a metric space with metric $d$, and let $\epsilon \in \NSE{\PosReals}$.
An internal probability measure $\pi$ on $\NSE{\Theta}$ is \defn{$\epsilon$-regular}
if, for every $\theta_{0} \in \Theta$ and non-infinitesimal $r>0$,
we have $\pi(\{ t \in \NSE{\Theta}: \NSE{d}(t,\theta_0) < r \}) \gg \epsilon$.
\end{definition}

The following result establishes $\NSE{}$admissibility from $\NSE{}$Bayes optimality under conditions analogues to full support and continuity of the risk function.

\begin{lemma} \label{ensblyth}
Suppose $\Theta$ is a metric space. Let $\epsilon\in \NSE{\PosReals}$, $\Delta_{0}\in \sFRRE$, and $\cC\subseteq \sFRRE$,
and suppose $\sRisk(\argdot,\Delta)$ is S-continuous on $\NS{\NSE{\Theta}}$ for all $\Delta \in \cC\cup\{\Delta_0\}$.
If $\Delta_{0}$ is \seBayes{\epsilon}{\cC}
with respect to an $\epsilon$-regular nonstandard prior $\pi$,
then $\Delta_{0}$ is \spAdmissibleIn{\Theta}{\NSE{\Theta}}{\cC}.
\end{lemma}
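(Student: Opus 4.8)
The plan is a nonstandard Blyth-type argument, run by contradiction. Suppose $\Delta_{0}$ is \emph{not} \spAdmissibleIn{\Theta}{\NSE{\Theta}}{\cC}. Then by \cref{epsdef} there is some $\Delta' \in \cC$ such that $\Delta_{0}$ is \spDomIn{\Theta}{\NSE{\Theta}} by $\Delta'$; that is, (i)~$\sRisk(\theta,\Delta') \le \sRisk(\theta,\Delta_{0})$ for all $\theta \in \NSE{\Theta}$, and (ii)~$\sRisk(\theta^{*},\Delta') \not\approx \sRisk(\theta^{*},\Delta_{0})$ for some $\theta^{*} \in \Theta$. Combining (i) and (ii) at $\theta^{*}$, the quantity $\sRisk(\theta^{*},\Delta_{0}) - \sRisk(\theta^{*},\Delta')$ is positive and non-infinitesimal, so there is a standard $c \in \PosReals$ with $\sRisk(\theta^{*},\Delta_{0}) - \sRisk(\theta^{*},\Delta') \ge c$.

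Next I would upgrade this pointwise gap to a gap over an internal ball of non-infinitesimal radius. Since $\theta^{*} \in \Theta \subseteq \NS{\NSE{\Theta}}$ and both $\sRisk(\argdot,\Delta_{0})$ and $\sRisk(\argdot,\Delta')$ are S-continuous at $\theta^{*}$ (\cref{Scontinuous}), for every $t \approx \theta^{*}$ we have $\sRisk(t,\Delta_{0}) - \sRisk(t,\Delta') \approx \sRisk(\theta^{*},\Delta_{0}) - \sRisk(\theta^{*},\Delta')$, whence $\sRisk(t,\Delta_{0}) - \sRisk(t,\Delta') \ge c/2$. Thus the internal set $G = \{ t \in \NSE{\Theta} : \sRisk(t,\Delta_{0}) - \sRisk(t,\Delta') \ge c/2 \}$ contains the entire monad of $\theta^{*}$. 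Consequently the internal set $\{ r \in \NSE{\PosReals} : \{ t : \NSE{d}(t,\theta^{*}) < r \} \subseteq G \}$ contains every positive infinitesimal, so by overspill it contains some non-infinitesimal $r_{0} > 0$. Writing $B = \{ t \in \NSE{\Theta} : \NSE{d}(t,\theta^{*}) < r_{0} \}$, we then have $\sRisk(t,\Delta_{0}) - \sRisk(t,\Delta') \ge c/2$ for all $t \in B$.

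Finally I would integrate against $\pi$ and invoke regularity. By (i) the integrand $\sRisk(\argdot,\Delta_{0}) - \sRisk(\argdot,\Delta')$ is nonnegative on all of $\NSE{\Theta}$, and it is at least $c/2$ on $B$; since $\sRisk(\pi,\Delta_{0})$ is hyperfinite and risk is nonnegative, $\sRisk(\pi,\Delta')$ is hyperfinite as well and $\sRisk(\pi,\Delta_{0}) - \sRisk(\pi,\Delta') \ge (c/2)\,\pi(B)$. Because $\theta^{*} \in \Theta$ and $r_{0}$ is non-infinitesimal, $\epsilon$-regularity of $\pi$ yields $\pi(B) \gg \epsilon$, and scaling by the standard constant $c/2$ preserves this, so $(c/2)\,\pi(B) \gg \epsilon$ and in particular $(c/2)\,\pi(B) > \epsilon$. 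Hence $\sRisk(\pi,\Delta_{0}) > \sRisk(\pi,\Delta') + \epsilon$, contradicting the assumption that $\Delta_{0}$ is \seBayesP{\epsilon}{\pi}{\cC} (\cref{defnhbayes}), since $\Delta' \in \cC$.

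I expect the middle step---passing from the monad, where S-continuity furnishes control, to an internal ball of non-infinitesimal radius, where $\epsilon$-regularity applies---to be the main obstacle. The key is the observation that the monad of $\theta^{*}$ sits inside the internal set $G$, so that overspill produces a genuine non-infinitesimal radius $r_{0}$. A secondary but essential point is that condition (i) makes the integrand nonnegative \emph{everywhere}, so the $\pi$-mass outside $B$ can only increase the risk difference; this is precisely what allows a single $\epsilon$-regular prior, rather than a Blyth sequence, to witness admissibility.
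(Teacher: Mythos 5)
Your proof is correct and follows essentially the same route as the paper's: negate admissibility to get a non-infinitesimal risk gap at a standard $\theta^{*}$, use S-continuity to propagate the gap over the monad, apply overspill to an internal set of radii to obtain a ball of non-infinitesimal radius on which the gap persists, and then integrate against $\pi$ and invoke $\epsilon$-regularity to contradict $\epsilon$-$\NSE{}$Bayes optimality. The only cosmetic difference is that the paper applies overspill directly to the internal set of radii $a$ for which the implication $\NSE{d}(t,\theta^{*})<a \implies \sRisk(t,\Delta_{0})-\sRisk(t,\Delta')>\gamma$ holds, rather than first introducing the set $G$; the two formulations are interchangeable.
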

\begin{proof}
Suppose $\Delta_0$ is not \spAdmissibleIn{\Theta}{\NSE{\Theta}}{\cC}.
Then, for some $\Delta \in \cC$ and $\theta_{0} \in \Theta$,
it holds that
\begin{gather}
\label{aoeuforall}
(\forall \theta\in \NSE{\Theta})(\sRisk(\theta,\Delta) \le \sRisk(\theta,\Delta_{0}))
\\
\text{ and }\quad
\label{aoeuexists}
\sRisk(\theta_{0},\Delta)  \not\approx \sRisk(\theta_{0},\Delta_{0}).
\end{gather}
From \cref{aoeuexists}, $\sRisk(\theta_{0},\Delta_{0})-\sRisk(\theta_{0},\Delta) > 2\gamma$ for some positive $\gamma\in \Reals$.
Let $A$ be the set of all $a\in \NSE{\PosReals}$ such that
\[
 (\forall t\in \NSE{\Theta})\ (\NSE{d}(t,\theta_0)<a \implies \sRisk(t,\Delta_{0})-\sRisk(t,\Delta) > \gamma).
\]
By the S-continuity of $\sRisk$ on $\NS{\NSE{\Theta}}$, the set $A$ contains all infinitesimals.
By \cref{spillover} and the fact that $A$ is an internal set,
$A$ must contain some positive $a_0\in \Reals$.
In summary,
\[
(\forall t\in \NSE{\Theta})\ (\NSE{d}(t,\theta_0)<a_0 \implies \sRisk(t,\Delta_{0})-\sRisk(t,\Delta) > \gamma).
\]

Let $M=\{t\in \NSE{\Theta}: \NSE{d}(t,\theta_0)<a_0\}$.
By the internal definition principle, $M$ is an internal set.
By \cref{aoeuforall} and the definition and internality of $M$,
the difference in internal Bayes risk between $\Delta_{0}$ and $\Delta$ satisfies
\[
\sRisk(\pi,\Delta_{0})-\sRisk(\pi,\Delta)
&=\int_{\NSE{\Theta}}(\sRisk(t,\Delta_{0})-\sRisk(t,\Delta)) \pi(\dee t)\\
&\ge \int_{M}(\sRisk(t,\Delta_{0})-\sRisk(t,\Delta)) \pi(\dee t)
> \gamma \, \pi(M).
\]

But
$\gamma \, \pi(M)>\epsilon$
because $\pi$ is $\epsilon$-regular,
hence $\Delta_{0}$ is not \seBayes{\epsilon}{\cC} with respect to $\pi$.
\end{proof}

The following theorem is an immediate consequence of \cref{ensblyth}
and is a nonstandard analogue of Blyth's Method \citep[][\S5 Thm.~7.13]{LC98} (see also \citep[][\S5 Thm.~8.7]{LC98}). In Blyth's method, a sequence of (potentially improper) priors with sufficient support is used to establish the admissibility of a decision procedure. In contrast, a single nonstandard prior witnesses the nonstandard admissibility of a nonstandard Bayes procedure.

\begin{theorem}\label{eextrpimplyea}
Suppose $\Theta$ is a metric space and \cref{assumptionrc} holds.
Let $\delta_{0} \in \FRRE$ and $\cC \subset \FRRE$.
If there exists $\epsilon\in \NSE{\PosReals}$
such that $\sdelta_0$ is \seBayes{\epsilon}{\Ext{\cC} = \{ \sdelta : \delta \in \cC \}}
with respect to an $\epsilon$-regular nonstandard prior $\pi$,
then $\sdelta_{0}$ is \spAdmissibleIn{\Theta}{\NSE{\Theta}}{\Ext{\cC}}.
\end{theorem}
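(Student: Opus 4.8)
The plan is to obtain this theorem as a direct application of \cref{ensblyth}, instantiated with the class $\Ext{\cC}$ in the role of $\cC$ and $\sdelta_0$ in the role of $\Delta_0$. First I would check the hypotheses of \cref{ensblyth} that transfer immediately: $\Theta$ is a metric space by assumption; $\epsilon \in \NSE{\PosReals}$ and the $\epsilon$-regular prior $\pi$ are given; $\sdelta_0 \in \sFRRE$ and $\Ext{\cC} \subseteq \sFRRE$ since the extension of any member of $\FRRE$ lies in $\sFRRE$; and $\sdelta_0$ is \seBayes{\epsilon}{\Ext{\cC}} with respect to $\pi$ by hypothesis. The one hypothesis of \cref{ensblyth} not already present in the statement is the S-continuity of the internal risk functions $\sRisk(\argdot, \Delta)$ on $\NS{\NSE{\Theta}}$ for $\Delta \in \Ext{\cC} \cup \{\sdelta_0\}$, and establishing this is the entire substance of the proof.

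To supply the missing S-continuity, I would fix $\delta \in \cC \cup \{\delta_0\}$ and note that the single-variable risk map $\Risk(\argdot, \delta) : \Theta \to \Reals$ is continuous by \cref{assumptionrc}. Its nonstandard extension is exactly the internal risk function $\sRisk(\argdot, \sdelta)$. Since $\Theta$, being metric, is Hausdorff, and $\Reals$ is locally compact Hausdorff, \cref{contScont} applies with $\gX = \Theta$, $\gY = \Reals$, and $D = \Theta$, yielding that $\sRisk(\argdot, \sdelta)$ is $\NS{\NSE{\Reals}}$-valued and S-continuous on $\NS{\NSE{\Theta}}$. Letting $\delta$ range over $\cC \cup \{\delta_0\}$ establishes S-continuity for every $\Delta \in \Ext{\cC} \cup \{\sdelta_0\}$.

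With all hypotheses in force, the lemma gives directly that $\sdelta_0$ is \spAdmissibleIn{\Theta}{\NSE{\Theta}}{\Ext{\cC}}. I do not anticipate any genuine obstacle: the theorem is an immediate corollary of \cref{ensblyth}, and the sole piece of work---converting the ordinary continuity furnished by \cref{assumptionrc} into the S-continuity demanded by the lemma---is discharged entirely by the classical correspondence \cref{contScont}.
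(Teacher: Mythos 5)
Your proposal is correct and matches the paper's proof exactly: the paper likewise invokes \cref{assumptionrc} together with \cref{contScont} to obtain S-continuity of $\sRisk(\argdot,\sdelta)$ at all near-standard points of $\NSE{\Theta}$ for every $\delta \in \FRRE$, and then concludes by applying \cref{ensblyth} to the class $\Ext{\cC}$. Your write-up is simply a more explicit verification of the same hypotheses.
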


\begin{proof}
By \cref{assumptionrc}
and \cref{contScont},
for all $\delta \in \FRRE$, $\theta_0\in \Theta$, and $t \approx \theta_0$, we have
$\sRisk(t,\sdelta)\approx \sRisk(\theta_{0},\sdelta)$.
By \cref{ensblyth}, $\sdelta_{0}$ is \spAdmissibleIn{\Theta}{\NSE{\Theta}}{\Ext{\cC}}.
\end{proof}

These theorems have the following consequence for standard decision procedures:

\begin{theorem}\label{eblythresult}
Suppose $\Theta$ is a metric space and \cref{assumptionrc} holds,
and let $\delta_0\in \FRRE$ and $\cC \subseteq \FRRE$.
If there exists $\epsilon\in \NSE{\PosReals}$
such that $\sdelta_0$ is \seBayes{\epsilon}{\Ext{\cC} = \{\sdelta : \delta \in \cC \}}
with respect to an $\epsilon$-regular nonstandard prior,
then $\delta_0$ is \Admissible{\cC}.
\end{theorem}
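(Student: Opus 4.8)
The plan is to obtain this statement as a direct corollary of the preceding \cref{eextrpimplyea} together with the standard--nonstandard admissibility dictionary of \cref{admissibilitythm}. The hypotheses stated here are verbatim those of \cref{eextrpimplyea}, so the first step is simply to invoke that result and conclude that $\sdelta_{0}$ is \spAdmissibleIn{\Theta}{\NSE{\Theta}}{\Ext{\cC}}. All that then remains is to transport this nonstandard admissibility of $\sdelta_{0}$ into the desired standard admissibility of $\delta_{0}$.

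For that transport I would appeal to \cref{admissibilitythm} in the case $\epsilon = 0$. Observe that \Admissible{\cC} is by definition \EpsAdmissible{0}{\cC}, while \spAdmissibleIn{\Theta}{\NSE{\Theta}}{\Ext{\cC}} is by definition \spEpsAdmissibleIn{0}{\Theta}{\NSE{\Theta}}{\Ext{\cC}}; hence the equivalence of statements (1) and (2) of \cref{admissibilitythm}, specialized to $\epsilon = 0$, says precisely that $\delta_{0}$ is \Admissible{\cC} if and only if $\sdelta_{0}$ is \spAdmissibleIn{\Theta}{\NSE{\Theta}}{\Ext{\cC}}. Combining this equivalence with the conclusion of the first step yields that $\delta_{0}$ is \Admissible{\cC}, as required. (One could equally run the transport by hand via \cref{domthm} at $\epsilon=0$, arguing by contraposition: if $\delta_0$ were \EpsDom{0} by some $\delta \in \cC$, then $\sdelta_0$ would be \spDomIn{\Theta}{\NSE{\Theta}} by $\sdelta \in \Ext{\cC}$, contradicting the first step.)

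There is no genuine analytic obstacle at this stage. The substantive work---promoting the infinitesimal-radius risk gap supplied by S-continuity to a gap over a standard-radius ball via the spillover principle, and then bounding the Bayes-risk difference from below using the $\epsilon$-regularity of the prior---was already carried out in \cref{ensblyth} and inherited by \cref{eextrpimplyea}. The only point deserving attention is purely notational: one must apply \cref{admissibilitythm} at $\epsilon = 0$, where it furnishes the equivalence (1) $\iff$ (2) in the ``$\Theta/\NSE{\Theta}$'' formulation, rather than in the strictly positive regime where the additional equivalents (4) and (5) appear. This is exactly the instance we need, so the argument closes immediately.
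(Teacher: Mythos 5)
Your proposal is correct and follows exactly the paper's own route: the paper's proof likewise consists of invoking \cref{eextrpimplyea} to obtain that $\sdelta_{0}$ is \spAdmissibleIn{\Theta}{\NSE{\Theta}}{\Ext{\cC}} and then applying \cref{admissibilitythm} (in the $\epsilon=0$ case, equivalence of statements (1) and (2)) to conclude that $\delta_{0}$ is \Admissible{\cC}. Your added remark about why only the $\Theta/\NSE{\Theta}$ formulation is available at $\epsilon=0$ is a correct and worthwhile clarification, but does not change the argument.
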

\begin{proof}
The result follows from \cref{admissibilitythm}
and
\cref{eextrpimplyea}.
\end{proof}

\cref{eblythresult} implies the well-known result that Bayes procedures with respect to priors with full support are admissible \citep[][\S2.3 Thm.~3]{Ferguson} (see also \citep[][\S5 Thm.~7.9]{LC98}).

\begin{theorem}\label{regadmissible}
Suppose $\Theta$ is a metric space and \cref{assumptionrc} holds and let $\delta_{0} \in \FRRE$.
If $\delta_0$ is \Bayes{\FRRE} with respect to a prior $\pi$ with full support,
then $\delta_0$ is \Admissible{\FRRE}.
\end{theorem}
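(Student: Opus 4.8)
The plan is to derive this directly from \cref{eblythresult} by taking $\cC = \FRRE$ and exhibiting a suitable nonstandard prior, namely the extension $\NSE{\pi}$ of the given full-support prior $\pi$. First I would record what Bayes optimality gives after transfer. Since $\delta_0$ is \Bayes{\FRRE} with respect to $\pi$, we have $\Risk(\pi,\delta_0) < \infty$ together with $\Risk(\pi,\delta_0) \le \Risk(\pi,\delta)$ for every $\delta \in \FRRE$. By the transfer principle (as noted before \cref{eximns}), $\sdelta_0$ is then \seBayesP{0}{\NSE{\pi}}{\NSE{\FRRE}}; in particular $\sRisk(\NSE{\pi},\sdelta_0)$ is hyperfinite and $\sRisk(\NSE{\pi},\sdelta_0) \le \sRisk(\NSE{\pi},\sdelta)$ for every $\sdelta \in \Ext{\FRRE} \subseteq \NSE{\FRRE}$. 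Fixing any positive infinitesimal $\epsilon \in \NSE{\PosReals}$, this shows that $\sdelta_0$ is \seBayes{\epsilon}{\Ext{\FRRE}} with respect to $\NSE{\pi}$.

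The one substantive point is to verify that $\NSE{\pi}$ is $\epsilon$-regular for this (indeed any) positive infinitesimal $\epsilon$. Fix a standard $\theta_0 \in \Theta$ and a non-infinitesimal $r > 0$; I must bound $\NSE{\pi}(\{t \in \NSE{\Theta} : \NSE{d}(t,\theta_0) < r\})$ from below. Since $r$ is a positive non-infinitesimal, there is a standard real $r'$ with $0 < r' < r$, and then $\{t : \NSE{d}(t,\theta_0) < r\} \supseteq \{t : \NSE{d}(t,\theta_0) < r'\} = \NSE{B}$, where $B = \{\theta \in \Theta : d(\theta,\theta_0) < r'\}$ is the standard open ball of radius $r'$ about $\theta_0$. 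Because $\pi$ has full support and $B$ is a nonempty open set, $c \defas \pi(B)$ is a strictly positive standard real, and by transfer $\NSE{\pi}(\NSE{B}) = c$, whence $\NSE{\pi}(\{t : \NSE{d}(t,\theta_0) < r\}) \ge c$. A standard positive real dominates every infinitesimal in the sense of $\gg$: for each standard $\gamma \in \PosReals$ the product $\gamma c$ is a positive non-infinitesimal real, so $\gamma c > \epsilon$ and hence $\gamma\,\NSE{\pi}(\{t : \NSE{d}(t,\theta_0) < r\}) \ge \gamma c > \epsilon$. Thus $\NSE{\pi}(\{t : \NSE{d}(t,\theta_0) < r\}) \gg \epsilon$, and $\NSE{\pi}$ is $\epsilon$-regular.

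With these two facts in hand, \cref{eblythresult}, applied with $\cC = \FRRE$ and the $\epsilon$-regular prior $\NSE{\pi}$, yields that $\delta_0$ is \Admissible{\FRRE}, as required. I expect no real obstacle beyond the regularity check: the point to get right is that full support of $\pi$ only guarantees each standard ball positive---but possibly arbitrarily small---standard mass, so one cannot fix a non-infinitesimal lower bound uniformly in $\theta_0$ and $r$. The definition of $\epsilon$-regularity is precisely tailored to this situation, since any standard positive mass, however small, dominates a fixed infinitesimal $\epsilon$ in the $\gg$ ordering.
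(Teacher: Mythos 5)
Your proposal is correct and follows essentially the same route as the paper: transfer Bayes optimality of $\delta_0$ under $\pi$ to nonstandard Bayes optimality of $\sdelta_0$ under $\NSE{\pi}$, observe that full support makes $\NSE{\pi}$ $\epsilon$-regular for any infinitesimal $\epsilon$, and invoke \cref{eblythresult}. The paper asserts the $\epsilon$-regularity of $\NSE{\pi}$ without proof, whereas you supply the (correct) verification via a standard ball of slightly smaller radius and transfer of its positive standard mass.
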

\begin{proof}
Note that $\delta_0$ is \BayesP{\pi}{\FRRE} if and only if $\sdelta_0$ is \NSBayesP{\NSE{\pi}}{\ExtFRRE}.
As $\pi$ has full support, $\NSE{\pi}$ is $\epsilon$-regular for every infinitesimal $\epsilon \in \NSE{\PosReals}$.
By \cref{eblythresult}, we have the desired result.
\end{proof}

We close with an admissibility result requiring no additional regularity:

\begin{theorem}\label{eBayesfullsupport}
Let $\delta_{0} \in \FRRE$
and $\cC \subseteq \FRRE$.
If there exists $\epsilon \in \NSE{\PosReals}$ such that
 $\sdelta_{0}$ is \seBayes{\epsilon}{\NSE{\cC}}
with respect to a nonstandard prior $\pi$ satisfying $\pi\{\theta\} \gg \epsilon$ for all $\theta \in \Theta$,
then $\delta_{0}$ is \Admissible{\cC}.
\end{theorem}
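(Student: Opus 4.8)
The plan is to prove the contrapositive inside the nonstandard framework: assuming that $\delta_{0}$ fails to be \Admissible{\cC} while the stated hypothesis holds, I will produce an element of $\NSE{\cC}$ whose internal Bayes risk under $\pi$ undercuts that of $\sdelta_{0}$ by strictly more than $\epsilon$, contradicting \seBayes{\epsilon}{\NSE{\cC}}. First I would use the hypothesis that $\delta_{0}$ is not \Admissible{\cC}, i.e., that $\delta_{0}$ is dominated by some $\delta' \in \cC$. By \cref{domthm} (taken with $\epsilon = 0$), this is equivalent to $\sdelta_{0}$ being \spDomIn{\Theta}{\NSE{\Theta}} by $\sdelta'$, which by \cref{epsdef} unpacks into the two conditions $\sRisk(\theta,\sdelta') \le \sRisk(\theta,\sdelta_{0})$ for every $\theta \in \NSE{\Theta}$, together with a standard witness $\theta_{0} \in \Theta$ for which $\sRisk(\theta_{0},\sdelta') \not\approx \sRisk(\theta_{0},\sdelta_{0})$. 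Since $\theta_{0}$, $\delta_{0}$, and $\delta'$ are all standard, these two risks are the standard reals $\Risk(\theta_{0},\delta')$ and $\Risk(\theta_{0},\delta_{0})$; being ordered by the first condition yet separated by a noninfinitesimal amount (\cref{standardnums}), they satisfy $c := \Risk(\theta_{0},\delta_{0}) - \Risk(\theta_{0},\delta') > 0$ for a \emph{standard} $c$.

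The crux of the argument is then a lower bound on the internal Bayes-risk gap, obtained by integrating the nonnegative internal difference $g(\theta) := \sRisk(\theta,\sdelta_{0}) - \sRisk(\theta,\sdelta') \ge 0$ over the singleton $\{\theta_{0}\}$ alone. By transfer of the monotonicity of integration for nonnegative functions,
\[
\int g(\theta)\, \pi(\dee \theta) \ge g(\theta_{0})\, \pi\{\theta_{0}\} = c\, \pi\{\theta_{0}\}.
\]
Because risk is nonnegative, $g(\theta) \le \sRisk(\theta,\sdelta_{0})$ pointwise, so both $\sRisk(\pi,\sdelta')$ and $\int g\, \dee\pi$ are bounded by $\sRisk(\pi,\sdelta_{0})$, which is hyperfinite by hypothesis; with all three internal integrals hyperfinite, linearity of the internal integral gives $\int g\, \dee\pi = \sRisk(\pi,\sdelta_{0}) - \sRisk(\pi,\sdelta')$, and hence
\[
\sRisk(\pi,\sdelta_{0}) - \sRisk(\pi,\sdelta') \ge c\, \pi\{\theta_{0}\}.
\]

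Finally I would invoke the regularity of the prior. Because $c$ is a standard positive real and $\pi\{\theta_{0}\} \gg \epsilon$, the definition of $\gg$ applied with $\gamma = c$ yields $c\, \pi\{\theta_{0}\} > \epsilon$, whence $\sRisk(\pi,\sdelta_{0}) > \sRisk(\pi,\sdelta') + \epsilon$. Since $\delta' \in \cC$ forces $\sdelta' \in \NSE{\cC}$ by extension, this contradicts $\sdelta_{0}$ being \seBayesP{\epsilon}{\pi}{\NSE{\cC}}, completing the proof. I expect the only delicate point to be the bookkeeping around the integral---confirming that the dominated risk $\sRisk(\pi,\sdelta')$ is itself hyperfinite so that the difference of integrals is legitimate---rather than any substantial difficulty. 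Conceptually, the essential observation, and what makes this result free of the continuity hypothesis needed in \cref{ensblyth}, is that the point mass $\pi\{\theta_{0}\}$, though possibly infinitesimal on an absolute scale, is noninfinitesimal \emph{relative to} $\epsilon$; the strict standard loss $c$ incurred at the single state $\theta_{0}$ therefore already overwhelms the infinitesimal Bayes slack, so a ball and S-continuity are unnecessary.
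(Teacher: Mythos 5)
Your proof is correct and follows essentially the same route as the paper's: pass from failure of standard admissibility to nonstandard domination with a standard witness $\theta_{0}$, lower-bound the internal Bayes-risk gap by $c\,\pi\{\theta_{0}\}$, and invoke $\pi\{\theta_{0}\} \gg \epsilon$ with $\gamma = c$ to contradict \seBayesP{\epsilon}{\pi}{\NSE{\cC}}. The only difference is that you spell out the integral bookkeeping (hyperfiniteness of $\sRisk(\pi,\sdelta')$ and the legitimacy of subtracting the integrals) that the paper leaves implicit, which is a harmless refinement rather than a change of approach.
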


\begin{proof}
Suppose $\delta_{0}$ is not \Admissible{\cC}.
Then by \cref{admissibilitythm},
$\sdelta_{0}$ is not \spAdmissibleIn{\Theta}{\NSE{\Theta}}{\Ext{\cC}}.
Thus there exists $\delta \in \cC$  and $\theta_{0} \in \Theta$ such that $\sRisk(\theta,\sdelta) \le \sRisk(\theta,\sdelta_{0})$ for all $\theta \in \NSE{\Theta}$ and $\sRisk(\theta_{0},\sdelta_{0}) - \sRisk(\theta_{0},\sdelta) > \gamma$ for some $\gamma \in \PosReals$.
Then
$
\sRisk(\pi,\sdelta_{0}) - \sRisk(\pi,\sdelta) \ge \pi\{\theta_{0}\} \gamma > \epsilon.
$
But this implies that $\sdelta_{0}$ is not \seBayesP{\epsilon}{\pi}{\cC}.
\end{proof}

\begin{remark}\label{remtopoad}
The astute reader may notice that \cref{eBayesfullsupport}
is actually a corollary of \cref{eblythresult} provided we adopt the discrete topology/metric on $\Theta$.
Changing the metric changes the set of available prior distributions and also changes the set of $\epsilon$-regular nonstandard priors.  See also \cref{remarktopo}.
\end{remark}

\section{Some Examples}\label{sec: examples}

The following examples serve to highlight some of the interesting properties of our nonstandard theory and its consequences for classical problems.

\begin{example}
Consider any standard statistical decision problem with a finite discrete (hence compact) parameter space.
\cref{assumptionrc} holds trivially, and so
\cref{stbayes,fsestronger} imply that a decision procedure is extended admissible if and only if it is extended Bayes if and only if it is Bayes if and only if its extension is nonstandard Bayes among all internal decision procedures.
By \cref{regadmissible}, we obtain another classical result: if a procedure is Bayes with respect to a prior with full support, it is admissible.
\end{example}

\begin{example}\label{normallocprob}
Consider the classical problem of estimating the mean of a multivariate normal distribution in $d$ dimensions under squared error when the covariance matrix is known to be the identity matrix.
By the convexity of the squared error loss function,
\cref{FREconvexECC} implies the nonrandomized procedures form an essentially complete class (indeed, the loss is strictly convex and so the nonrandomized procedures are actually a complete class).
\cref{fsecomplete} implies that every extended admissible estimator among $\FiniteRiskEstimators$ is nonstandard Bayes among $\Ext{\FRREFS}$.
We can derive further results by noting that risk functions are continuous, which follows from a general theorem on exponential families:

\begin{theorem}[{\citep[][\S5 Ex.~7.10]{LC98}}]
Assume $\Model$ is an exponential family.
Then, for any loss function $\Loss$ such that the risk is always finite,
the risk function is continuous.
\end{theorem}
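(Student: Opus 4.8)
The plan is to pass to the canonical (natural) parametrization, reduce the claim to continuity at a single arbitrary natural parameter, and then combine the analyticity of the cumulant function with a dominated-convergence argument powered by exponential-family moment control. Throughout, fix a procedure $\delta \in \FRRE$ and write $\rho(x,\theta) = \int_{\AS} \Loss(\theta,a)\,\delta(x,\dee a) \ge 0$ for the randomization-averaged expected loss, so that $\Risk(\theta,\delta) = \int_X \rho(x,\theta)\, f_\theta(x)\,\nu(\dee x)$, where $f_\theta(x) = h(x)\exp(\IP{\eta(\theta)}{T(x)} - A(\eta(\theta)))$ is the exponential-family density with natural parameter $\eta(\theta) \in \Xi \subseteq \Reals^k$, sufficient statistic $T$, and cumulant $A$. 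Since $\theta \mapsto \eta(\theta)$ is continuous, it suffices to prove continuity of the risk as a function of $\eta$ on $\intr \Xi$, at an arbitrary fixed $\eta_0$.

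The model-side input is the standard lemma: for any measurable $g \ge 0$ with $\int g f_\eta\,\nu(\dee x) < \infty$ for all $\eta$ near $\eta_0$, the map $\eta \mapsto \int g f_\eta\,\nu(\dee x)$ is continuous at $\eta_0$. Indeed $\psi(\eta) = \int g(x) h(x)\exp(\IP{\eta}{T(x)})\,\nu(\dee x)$ is convex in $\eta$ (an $x$-integral of the convex maps $\eta \mapsto \exp\IP{\eta}{T(x)}$), hence finite and continuous on the open set where it is finite, and $A$ is continuous on $\intr \Xi$, so $\int g f_\eta = e^{-A(\eta)}\psi(\eta)$ is continuous. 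I will also record the domination this rests on: choosing a closed box $B = \prod_j [\eta_0^j - r,\, \eta_0^j + r] \subseteq \intr \Xi$, linearity of $\IP{\argdot}{T(x)}$ gives $\IP{\eta}{T(x)} \le \max_v \IP{v}{T(x)}$ for $\eta \in B$, the maximum ranging over the finitely many vertices $v$ of $B$; hence $\exp\IP{\eta}{T(x)} \le \sum_v \exp\IP{v}{T(x)}$ and, absorbing the continuous factor $e^{-A(\eta)}$ into a constant $C$ over the compact box, $f_\eta(x) \le C\sum_v f_v(x)$ uniformly in $\eta \in B$.

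To assemble the proof, fix $\eta_n \to \eta_0$ inside $B$ with corresponding parameters $\theta_n, \theta_0$. The integrand $\rho(x,\theta_n) f_{\eta_n}(x)$ converges pointwise to $\rho(x,\theta_0) f_{\eta_0}(x)$: the density converges by continuity of $\IP{\argdot}{T(x)}$ and $A$, and $\rho(x,\theta_n) \to \rho(x,\theta_0)$ provided the loss is continuous in its first argument (this mild regularity is genuinely needed, since a loss such as $\Loss(\theta,a)=\mathbf{1}[\theta\in\Rationals]$ yields finite but discontinuous risk; I read it as implicit in the hypotheses). For domination, combine the box bound with a uniform envelope $\rho(x,\theta_n) \le H(x)$ valid for all $\eta_n \in B$ and satisfying $\int H(x)\sum_v f_v(x)\,\nu(\dee x) < \infty$; then $\rho(x,\theta_n) f_{\eta_n}(x) \le C\,H(x)\sum_v f_v(x)$ is integrable and dominated convergence gives $\Risk(\theta_n,\delta)\to\Risk(\theta_0,\delta)$. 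As $\eta_0$ and the sequence were arbitrary, $\Risk(\argdot,\delta)$ is continuous.

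The main obstacle is precisely the construction of this uniform envelope $H$ for the averaged loss $\rho(\argdot,\theta)$ over the neighborhood $B$: the finite-risk hypothesis controls $\int \rho(x,\theta) f_\theta\,\nu(\dee x)$ for each fixed $\theta$, but dominated convergence demands a single majorant valid simultaneously for all nearby $\theta$. This is where loss regularity and exponential-family integrability must interact. For well-behaved losses the envelope is immediate — e.g.\ for squared error $\rho(x,\theta_n) \le 2\rho(x,\theta_0) + 2\|\theta_n-\theta_0\|^2 \le 2\rho(x,\theta_0) + 2r^2$, reducing the problem to integrability of $\rho(\argdot,\theta_0)$ against the family, i.e.\ to the finiteness of $\Risk(v,\delta)$ at the vertices, which the hypothesis supplies when $B$ lies in the realized parameter set (the full-family case being cleanest). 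A fully general treatment must either impose such structure on $\Loss$ or replace the crude box domination with the sharper local uniform integrability of $\exp\IP{\eta}{T}$ available on $\intr\Xi$.
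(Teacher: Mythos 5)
The paper does not actually prove this statement: it is quoted from Lehmann and Casella and invoked only in \cref{normallocprob}, where the loss is squared error, so there is no internal argument to measure yours against. On its own terms, your proposal correctly isolates the engine of the standard argument, namely that $\eta \mapsto \int g(x)\,h(x)\,e^{\IP{\eta}{T(x)}}\,\nu(\dee x)$ is convex, hence continuous on the interior of the open set where it is finite, for fixed measurable $g$ with $\int |g| f_\eta\,\dee\nu < \infty$ near $\eta_0$; your convexity justification and the vertex-domination bound $f_\eta \le C\sum_v f_v$ on a compact box are both sound.

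The proof is nevertheless incomplete, and you identify the holes yourself; they are genuine. First, the pointwise convergence $\rho(x,\theta_n)\to\rho(x,\theta_0)$ requires continuity of $\Loss(\argdot,a)$ in $\theta$, which is not among the stated hypotheses, and your example $\Loss(\theta,a)=\mathbf{1}[\theta\in\Rationals]$ shows the theorem as literally quoted is false: the risk is $\mathbf{1}[\theta\in\Rationals]$, finite everywhere and nowhere continuous. This is a defect of the paraphrased statement (the source imposes regularity on the loss), but a proof cannot ``read it as implicit''; it must be added as a hypothesis. Second, the uniform envelope $H$ with $\int H\sum_v f_v\,\dee\nu<\infty$ does not follow from the finite-risk hypothesis: finiteness of $\Risk(v,\delta)$ controls $\int\rho(\argdot,v)f_v\,\dee\nu$, not $\int \sup_{\theta\in B}\rho(\argdot,\theta)\,f_v\,\dee\nu$, and for a general $\theta$-dependent loss these are unrelated. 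The way to close the argument --- and the way the result is actually used in \cref{normallocprob} --- is to restrict to losses decomposing as $\Loss(\theta,a)=\sum_i g_i(\theta)\phi_i(a)$ with $g_i$ continuous (squared error being the prototype), so that $\Risk(\theta,\delta)=\sum_i g_i(\theta)\int\bigl[\int\phi_i(a)\,\delta(x,\dee a)\bigr]f_\theta(x)\,\nu(\dee x)$ and your key lemma applies term by term, finiteness of each integral being extracted from finiteness of the risk (via $a^2\le 2(a-\theta)^2+2\theta^2$ and Cauchy--Schwarz for the cross term); no dominated-convergence step over a $\theta$-dependent integrand is then needed, and the envelope problem disappears.
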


Thus \cref{assumptionrc} holds.
\cref{regadmissible} then implies that every Bayes estimator with respect to a prior with full support is admissible.
In particular, for every $k > 0$, the estimator $\delta^{B}_{k}(\bx) = \frac {k^2}{k^2+1} \bx$ is Bayes
with respect to the full-support prior $\pi_{k} = \Normal{0}{k^2 \Identity_d}$, hence admissible.

Consider now the maximum likelihood estimator $\delta^{M}(\bx) = \bx$ and let $K$ be an infinite natural number.
Then $\sdelta^M(\bx) \approx (\sdelta^{B})_{K}(\bx)$ for all $\bx \in \NS{\HReals^d}$, where $\sdelta^{B}$ is the extension of the function $k \mapsto \delta^{B}_{k}$.
The normal prior $(\NSE{\pi})_{K}$ is ``flat'' on $\Reals$ in the sense that, at every near-standard real number, the density is within an infinitesimal of $(2 \pi)^{-\frac 1 2} K^{-d}$.
These observations provide a nonstandard interpretation to the idea that the MLE estimator is a Bayes estimator given a ``uniform'' prior.

Since \cref{assumptionrc} holds,
\cref{eblythresult}
implies that every estimator
whose extension is \seBayes{\epsilon}{\sFiniteRiskEstimators}
with respect to an $\epsilon$-regular prior is \Admissible{\FiniteRiskEstimators}.
An easy calculation reveals that the Bayes risk of $(\sdelta^{B})_{K}$ with respect to $(\NSE{\pi})_{K}$ is $d \frac {K^2}{K^2+1}$,
while the Bayes risk of $\sdelta^M$ with respect to $(\NSE{\pi})_{K}$ is $d$.  Thus, $\sdelta^M$ is even \NSBayes{\sFRRE},
and in particular, $\sdelta^M$ is \seBayes{\epsilon}{\sFRRE} for $\epsilon = (K^2+1)^{-1}$.
From the density above, it is then straightforward to verify that, for $d=1$ and $d=2$, the prior $(\NSE{\pi})_{K}$ is $\epsilon$-regular, but that it fails to be for $d \ge 3$. Therefore, by \cref{eblythresult}, it follows that $\delta^M$ is \Admissible{\FiniteRiskEstimators}
for $d=1$ and $d=2$, as is well known. The theorem is silent in this case for $d \ge 3$.
Indeed, \citet{stein54} famously showed that $\delta^M$ is inadmissible for $d \ge 3$.
\end{example}

\begin{remark}\label{remarktopo}
Here we have used \cref{eblythresult} and the standard metric on $\Theta=\Reals^d$ in order to establish admissibility.  Note that the infinite-variance Gaussian prior is not $\epsilon$-regular with respect to the discrete metric on $\Theta$,
and so a different nonstandard prior would have been needed to establish admissibility via \cref{eBayesfullsupport}.
\end{remark}

In \cref{sec: CCs under compactness and RC},
we established that class of Bayes procedures coincides with the class of extended admissible estimators
under compactness of the parameter space and continuity of the risk.
The next example demonstrates that extended admissibility and Bayes optimality do not necessarily align
if we drop the risk continuity assumption, even when the parameter space is compact. We study a non-Bayes admissible estimator and characterize a nonstandard prior with respect to which it is nonstandard Bayes.

\begin{example} \label{bernoulliexample}
Let $X = \{0,1\}$ and $\Theta = [0,1]$, the latter viewed as a subset of Euclidean space.
Define $g : [0,1] \to [0,1]$ by $g(x) = x$ for $x > 0$ and $g(0) = 1$,
and let $P_t = \Bernoulli(g(t))$, for $t \in [0,1]$,
where $\Bernoulli(p)$ denotes the distribution on $\{0,1\}$ with mean $p \in [0,1]$.
Every nonrandomized decision procedure $\delta : \{0,1\} \to [0,1]$ thus corresponds
with a pair $(\delta(0),\delta(1)) \in [0,1]^2$, and so we will express
nonrandomized decision procedures as pairs.
Consider the loss function $\ell(x,y) = (g(x) - y)^2$.
(For every $x$, the map $y \mapsto \ell(x,y)$ is convex but merely lower semicontinuous on $[0,1]$.
It follows from \cref{FREconvexECC} that nonrandomized procedures form an essentially complete class.)
\end{example}

\begin{theorem}
In \cref{bernoulliexample}, $(0,0)$ is an admissible non-Bayes estimator.
\end{theorem}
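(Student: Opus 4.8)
The plan is to read off the risk function, dispose of the non-Bayes claim by a single quadratic minimization, and then prove admissibility by exploiting the discontinuity of $g$ (and hence of the risk) at $\theta = 0$. First I would record that, writing a nonrandomized procedure as the pair $(a,b) = (\delta(0),\delta(1)) \in [0,1]^2$,
\[
\Risk(\theta,(a,b)) = (1-g(\theta))\,(g(\theta)-a)^2 + g(\theta)\,(g(\theta)-b)^2 ,
\]
so that $\Risk(\theta,(0,0)) = g(\theta)^2$, which equals $\theta^2$ for $\theta > 0$ and jumps to $1$ at $\theta = 0$.

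For the non-Bayes claim, fix an arbitrary prior $\pi$ on $[0,1]$. The Bayes risk separates as $\Risk(\pi,(a,b)) = A(a) + B(b)$ with $B(b) = \int g\,(g-b)^2\,\dee\pi$ a quadratic in $b$ whose leading coefficient is $\int g\,\dee\pi$. Because $g > 0$ everywhere on $[0,1]$, both $\int g\,\dee\pi$ and $\int g^2\,\dee\pi$ are strictly positive, so $B$ is strictly convex with minimizer $b^\ast = \frac{\int g^2\,\dee\pi}{\int g\,\dee\pi} > 0$; moreover $b^\ast \le 1$ because $g \le 1$, so $(0,b^\ast)$ is a genuine procedure. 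Since $(0,0)$ and $(0,b^\ast)$ share the same first coordinate, comparing their Bayes risks reduces to comparing $B(0)$ with $B(b^\ast)$, and strict convexity gives $B(b^\ast) < B(0)$. Hence $\Risk(\pi,(0,b^\ast)) < \Risk(\pi,(0,0))$, and as $\pi$ was arbitrary, $(0,0)$ is not \Bayes{\FiniteRiskEstimators} (nor among any larger class).

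For admissibility, I would invoke \cref{FREconvexECC}: the loss is convex in the action, so the nonrandomized procedures are essentially complete, and any procedure dominating $(0,0)$ would yield a nonrandomized one dominating it. Hence it suffices to show that a nonrandomized $(a,b)$ with $\Risk(\theta,(a,b)) \le g(\theta)^2$ for all $\theta$ must equal $(0,0)$. Restricting to $\theta > 0$, where $g(\theta) = \theta$, gives $(1-\theta)(\theta-a)^2 + \theta(\theta-b)^2 \le \theta^2$; letting $\theta \downarrow 0$ sends the left side to $a^2$ and the right side to $0$, forcing $a = 0$. Substituting $a = 0$ and dividing by $\theta$ reduces the inequality to $(\theta-b)^2 \le \theta^2$, i.e.\ $0 \le b \le 2\theta$ for every $\theta > 0$, which forces $b = 0$. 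Thus the only weak dominator of $(0,0)$ is itself, so it admits no proper dominator and is \Admissible{\FiniteRiskEstimators}.

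The one delicate point—and the main obstacle—is precisely this admissibility step: the discontinuity of $g$ at $0$ is what protects $(0,0)$, since a competitor must track $\theta^2 \to 0$ as $\theta \downarrow 0$, and the limiting value $a^2$ of its risk forces $a = 0$, after which the same limit forces $b = 0$. The limits must be taken from the right and never at $\theta = 0$ itself, where the comparison is vacuous because $(1-b)^2 \le 1$ holds automatically; that is the single place where care is required.
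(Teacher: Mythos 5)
Your proof is correct. The admissibility half is in essence the paper's argument made explicit: the paper also works at $\theta = n^{-1}$ and lets $n \to \infty$, first reducing a competitor $(a,b)$ to the diagonal procedure $(c,c)$ with $c = \min\{a,b\}$ and then observing that $r(n^{-1},(d,d)) > r(n^{-1},(0,0))$ for $d>0$ and large $n$; your version, which shows directly that weak domination $\Risk(\theta,(a,b)) \le g(\theta)^2$ forces $a=0$ by the limit $\theta \downarrow 0$ and then $b \le 2\theta$ for all $\theta>0$, is cleaner and closes the case $\min\{a,b\}=0$, $(a,b)\neq(0,0)$ that the paper's sketch handles only implicitly. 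The non-Bayes half is where you genuinely diverge: the paper pushes the prior forward through $g$ to the standard problem of estimating a Bernoulli mean on $\Theta'=(0,1]$ under squared error and invokes uniqueness of the Bayes rule as the posterior mean, which necessarily lies in $(0,1]$; you instead decompose the Bayes risk as $A(a)+B(b)$ and exhibit the explicit strictly better competitor $(0,b^\ast)$ with $b^\ast = \int g^2\,\dee\pi / \int g\,\dee\pi > 0$. Your route is more elementary and self-contained (it avoids any appeal to well-definedness or uniqueness of posterior means, which is slightly delicate for degenerate priors), at the cost of being specific to this quadratic loss, whereas the paper's change-of-variables viewpoint is what motivates its subsequent remarks about generalized Bayes optimality in the transformed problem. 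Your appeal to \cref{FREconvexECC} to restrict attention to nonrandomized competitors is exactly the reduction the paper itself makes in setting up the example, so no gap there.
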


\begin{proof}
Let $(a,b) \in [0,1]^2$ and let $c = \min \{a,b\}$.
For every $n \in \Nats$, we have
\[
r(n^{-1},(a,b)) = (1-n^{-1}) \Loss(1/n,a) + n^{-1}\, \Loss(1/n,b)
\]
and so, for sufficiently large $n$, we have
$r(n^{-1},(a,b)) \ge r(n^{-1},(c,c))$.
But, for every $d > 0$ and sufficiently large $n$, it also holds that $r(n^{-1},(d,d)) > r(n^{-1},(0,0))$.
Hence, $(a,b)$ does not dominate $(0,0)$, hence $(0,0)$ is admissible.

To see that $(0,0)$ is not Bayes,
note that an estimator $(a,b)$ has the same Bayes risk under $\pi$ as it would
under the (pushforward) prior $\nu = \pi \circ g^{-1}$ in the
statistical decision problem with sample space ${X}$, parameter space $\Theta' = g(\Theta) = (0,1]$,
model $P'_t = \Bernoulli(t)$, and squared error loss $\ell'(x,y) = (x,y)$. However, in this case, the loss is strictly convex and so
 the Bayes optimal decision is unique and is the posterior mean, which is a value in $(0,1]$, hence $(0,0)$ cannot be Bayes optimal for any prior.
\end{proof}

The failure of $(0,0)$ to be Bayes optimal is due to the fact that the posterior mean cannot be $0$.
However, in the nonstandard universe, the posterior mean can be made to be infinitesimal, in which case the Bayes risk of $(0,0)$ is also infinitesimal.

\begin{theorem}
$\NSE{(0,0)}$ is nonstandard Bayes with respect to any prior concentrating on some infinitesimal $\epsilon > 0$.
\end{theorem}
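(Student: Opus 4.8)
The plan is to verify \cref{defnhbayes} directly by producing an explicit witnessing prior, which is tractable here because the sample space $X=\{0,1\}$ has only two points and every nonrandomized procedure is a pair in $[0,1]^2$. The witness I would use is the internal point mass $\pi$ with $\pi(\{\epsilon\})=1$ at a positive infinitesimal $\epsilon$. The conceptual reason this works is the discontinuity of $g$ at the origin: at $\theta=0$ we have $g(0)=1$, so $\NSE{(0,0)}$ is far off target, but at any strictly positive $\theta$ we have $g(\theta)=\theta$, so the target $g(\theta)$ is itself close to $0$. Concentrating the prior on an infinitesimal $\epsilon>0$ therefore makes $(0,0)$ nearly optimal, whereas no standard prior can concentrate near $0$ without charging the point $0$ at which the risk jumps. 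This is exactly the gap that the nonstandard slack will absorb.

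First I would evaluate the internal Bayes risk of $\NSE{(0,0)}$ under $\pi$. As $\pi$ is a point mass, this is simply $\sRisk(\epsilon,\NSE{(0,0)})$, and since $\epsilon>0$, transfer of the defining property of $g$ gives $\NSE{g}(\epsilon)=\epsilon$, so $\NSE{\Model}_{\epsilon}=\NSE{\Bernoulli}(\epsilon)$. Transferring the risk formula for a nonrandomized procedure used in the preceding proof then gives $\sRisk(\epsilon,\NSE{(0,0)})=(1-\epsilon)\,\epsilon^{2}+\epsilon\,\epsilon^{2}=\epsilon^{2}$, which is infinitesimal and in particular limited.

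Next I would observe that, because $\NSE{\Loss}\ge 0$, every internal procedure $\Delta\in\sFRRE$ satisfies $\sRisk(\epsilon,\Delta)\ge 0$ (indeed the procedure $(\epsilon,\epsilon)$ attains risk $0$). Combining the two facts yields $\sRisk(\epsilon,\NSE{(0,0)})=\epsilon^{2}\le \sRisk(\epsilon,\Delta)+\epsilon^{2}$ for every $\Delta\in\sFRRE$, so $\NSE{(0,0)}$ is \seBayesP{\epsilon^{2}}{\pi}{\sFRRE}; since $\epsilon^{2}$ is infinitesimal, $\NSE{(0,0)}$ is \NSBayesP{\pi}{\sFRRE}, as claimed.

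I do not expect any genuinely hard step. The only points requiring care are the correct use of transfer to evaluate $\NSE{g}(\epsilon)$ and the risk at the nonstandard parameter $\epsilon$, and---more conceptually---the recognition that $\NSE{(0,0)}$ is \emph{not} exactly (that is, $0$-)nonstandard Bayes: its internal Bayes risk is $\epsilon^{2}>0$ whereas the internal optimum is $0$. The role of the infinitesimal slack in \cref{defnhbayes} is precisely to accommodate this strictly positive but infinitesimal excess risk, mirroring the fact, established just above, that $(0,0)$ is admissible yet not standard Bayes.
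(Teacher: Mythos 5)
Your proposal is correct and follows essentially the same route as the paper: concentrate the prior on a positive infinitesimal $\epsilon$, compute the internal Bayes risk of $\NSE{(0,0)}$ as $(1-\epsilon)\epsilon^2+\epsilon\cdot\epsilon^2=\epsilon^2\approx 0$, and invoke nonnegativity of the loss to conclude that this infinitesimal risk is within an infinitesimal of the optimum. Your write-up is slightly more explicit than the paper's (naming the slack $\epsilon^2$, checking hyperfiniteness, and noting the exact optimum $0$ attained by $(\epsilon,\epsilon)$), but it is the same argument.
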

\begin{proof}
Pick any positive infinitesimal $\epsilon$
and consider the nonstandard prior $\pi$ concentrated on $\epsilon$.
The nonstandard Bayes risk of $(0,0)$ with respect to $\pi$ is
\[
\sRisk(\pi,(0,0))=\sRisk(\epsilon,(0,0))=\epsilon(\epsilon-0)^2+(1-\epsilon)(\epsilon-0)^2\approx 0
\]
Because the loss function in \cref{bernoulliexample} is nonnegative, $(0,0)$ must be a nonstandard Bayes estimator with respect to $\pi$.
\end{proof}

We close by observing that $(0,0)$ is a generalized Bayesian estimator.
In particular, the generalized Bayes risk with respect to the improper prior $\pi(\dee \theta) = \theta^{-2} \dee \theta$ is finite, whereas every other estimator has infinite Bayes risk.
The modified statistical decision problem with parameter space $\Theta' = (0,1]$ under the standard topology, model $\Model'$ and loss $\Loss'$ meets the hypotheses of \cref{berger}---
indeed, the modified problem is that of estimating the mean of an exponential family model---
hence every extended admissible procedure is generalized Bayes.
The original problem does not meet the hypotheses of \cref{berger}, since the loss is not jointly continuous.

\section{Miscellaneous remarks}
\label{sec: remarks}

\begin{enumerate}[label=(\roman*),wide, labelwidth=!, labelindent=1em]

\item \label{topodiscussion}
We have required $\Theta$ to be Hausdorff in order for the standard part map to be uniquely defined. Relaxing this assumption would require that we work with a standard part relation instead. At this moment, we see no roadblocks.

\item
Assume $\sdelta$ is \NSBayes{\Ext{\cC}}. Under what conditions can we conclude that $\sdelta$ is \NSBayes{\NSE{\cC}}? \seBayes{0}{\Ext{\cC}}? Among $\NSE{\cC}$?
In \cref{fsestronger}, we show that these conclusions follow for $\cC = \FRREFS$ when $\Theta$ is compact Hausdorff and \cref{assumptionrc} holds.
Can we weaken these conditions or find incomparable ones?
A related problem is to identify conditions under which $\delta$ is \spExtAdmissible{\Theta}{\NSE{\cC}}.
As a starting point, it is an open problem to find a procedure $\delta$ such that either
1) $\sdelta$ is \NSBayes{\sFRREFS} but $\sdelta$ is not \seBayes{0}{\Ext{\FRREFS}}, or
2) $\sdelta$ is \NSBayes{\Ext{\FRREFS}} but $\sdelta$ is not \NSBayes{\sFRREFS}.
Note that \cref{normallocprob} demonstrates that $\delta^{M}$ is \NSBayes{\sFRREFS} but
not \seBayes{0}{\sFRREFS}.

\item
We restricted our attention to decision procedures whose risk functions are everywhere finite.
However, if we do not make this restriction, it is possible for an admissible decision procedures to have infinite risk in some state $\theta \in \Theta$  \citep[][\S4A.13 Part (iv)]{Brown86}.
We make repeated use of the finite risk property and so it would be an interesting contribution to relax this assumption.
A related issue is our restriction to nonnegative real-valued loss functions. It would be straightforward to allow loss functions that are bounded below or above. Allowing arbitrary loss functions, however, raises the possibility that a decision procedure's risk could be undefined on some subset of the parameter space.

\item
It is worth searching for a converse to \cref{eBayesfullsupport}, perhaps with a view to identifying a nonstandard analogue of Stein's necessary and sufficient condition for admissibility \citep{MR0070929}, but one witnessed by a single (nonstandard) prior distribution.

\item
Our standard result,  \cref{stbayes},
is similar to \cref{bergercompact}
of \citet{Berger85} and \cref{waldcompact} of \citet{Wald47}.
Our theorem identifies the class of extended admissible procedures and the class of Bayes procedures,
and does so by assuming that risk functions are continuous and the parameter space is compact Hausdorff.
These are weaker assumptions than those of Berger, and more natural than those of Wald.
It would take some work to understand which assumptions of theirs are needed to show that the extended admissible procedures (equivalently, the Bayes procedures) form a complete class. In our opinion, it is preferable to understand conditions under which we can
identifying extended admissibility and Bayes optimality and then separately understand conditions under which the former is a complete class. (The classical textbook by Blackwell and Girschick \citep{BG54} adopts a similar aesthetic principle.)
We believe that the methods developed in this paper may allow us to remove or generalize regularity conditions in other existing results.

\item
It would be illuminating to uncover a complete characterization of the relationships between nonstandard Bayes procedures, extended Bayes procedures, limits of Bayes procedures, and generalized Bayes procedures. Some connections can be identified simply by transfer: e.g., we already know that extended Bayes procedures are nonstandard Bayes by a simple transfer argument.
Given our theorems connecting extended admissibility and nonstandard Bayes optimality, progress on this question immediately yields new connections between extended admissibility and these relaxed notions of Bayes optimality.

\item

Under compactness and risk continuity, extended admissible procedures are nonstandard Bayes among all internal decision procedures.  In general, however, an extended admissible procedure is nonstandard Bayes only among the nonstandard extensions of standard procedures.  Can we find an example witnessing the gap between these two results? In particular, can we identify a decision problem
for which the extension of some (all, most) (extended) admissible decision procedure(s) is (are) not nonstandard Bayes among all internal decision procedures?

\end{enumerate}

\section*{Acknowledgments}

The authors owe a debt of gratitude to
William Weiss
for detailed suggestions, as well as for his assistance with set theoretic and topological issues.
We  thank
Gintar\.e D\v{z}iugait\.e,
Cameron Freer, and
H.~Jerome Keisler
for early discussions and insights,
and thank
Nate Ackerman, 
Michael Evans,
Arno Pauly,
and
Aaron Smith
for feedback on drafts
and helpful discussions.  Finally, the authors
would like to thank Peter Hoff for his course notes, which served as our first introduction to the topic.
Work on this publication was made possible through 
an NSERC Discovery Grant, Connaught Award, and U.S. Air Force Office of
Scientific Research grant \#FA9550-15-1-0074.
This work was done in part while DMR was visiting the Simons Institute for the Theory of Computing at UC Berkeley.

\makebib

\appendix

\section{Nonstandard analysis --- Basic Notions and Key Results}\label{intronsa}

In this appendix, we give a brief introduction for those readers not familiar with nonstandard analysis.
The interested reader
can find a thorough introduction in \citep{AR65}. For modern applications of nonstandard analysis, one can read \citep{NSAA97} or \citep{NDV}.
Our following introduction of nonstandard analysis owes much to \citep{NSAA97}.

For a set $S$, let $\PowerSet{S}$ denote its power set.
Given any set $S$, define $\bV_{0}(S)=S$ and $\bV_{n+1}(S)=\bV_{n}(S)\cup\PowerSet{\bV_{n}(S)}$ for all $n\in \Naturals$. Then $\bV(S)=\bigcup_{n\in \Naturals}\bV_{n}(S)$ is called the \defn{superstructure} of $S$,
and $S$ is called the \defn{ground set} of the superstructure $\bV(S)$. We treat the members of $S$ as indivisible atomics.
The \defn{rank} of an object $a\in \bV(S)$ is the smallest $k$ for which $a\in \bV_{k}(S)$.
The members of $S$ have rank 0. The objects of rank no less than $1$ in $\bV(S)$ are precisely the sets in $\bV(S)$. The empty set $\emptyset$ and $S$ both have rank 1.

We now formally define the language $\Language(\bV(S))$:
For every element $c \in \bV(S)$, we introduce a constant symbol $\bar c$.
We also fix a countable collection of variable symbols $v_1, v_2, \dots$, distinct from the constants and the following reserved symbols: $=$, $\in$, $)$, $($, $\land$, $\lor$, $\neg$, $\forall$, and $\exists$.
The (bounded) \defn{formulas} in $\Language(\bV(S))$ are defined recursively:

\begin{itemize}
\item If $u$ and $v$ are constant/variable symbols, then $(u=v)$ and $(u \in v)$ are formulas.

\item If $\phi$ and $\psi$ are formulas, then $(\phi \land \psi),(\phi\lor \psi)$, and $(\neg\phi)$ are formulas.

\item If $\phi$ is a formula and $u$ is a variable, and $v$ is a constant or variable symbol distinct from $u$, 
then $(\forall u \in v)(\phi)$ and $(\exists u \in v)(\phi)$ are formulas.

\end{itemize}

A variable $x$ is \defn{free} in a formula $\phi$ if it is not within the scope of any quantifiers. 
More carefully:
$x$ is free in $(u=v)$ if either $u$ or $v$ is the variable $x$, and similarly for $(u \in v)$;
$x$ is free in $(\phi \land \psi)$ if it is free in either $\phi$ or $\psi$, and similarly for $\lor$ and $\neg$;
$x$ is free in $(\forall u \in v)(\phi)$ if $x$ is not $u$ and $x$ is $v$ or $x$ is free in $\phi$, and similarly for $(\forall u \in v)(\phi)$.  We will sometimes write $\phi(x_1,\dotsc,x_n)$ to mean that $x_1,\dotsc,x_n$ are exactly the free variables in $\phi$, and write $\phi(c_1,\dots,c_n)$ to mean the formula $\phi$ where we substitute every free occurrence of $x_i$ with the constant $c_i$, for every $i=1,\dots,n$. 
(See \citep[][]{MR1059055} for a careful description of substitution.)

A sentence is a formula with no free variables.  
Informally, sentences in $\Language(\bV(S))$ are either true or false statements about $\bV(S)$.  We formalize this here via statements of the form $\phi$ \defn{holds in} $\bV(S)$.
\begin{definition} 
Let $\phi,\psi$ be sentences in $\Language(\bV(S))$. 
\begin{itemize}
\item $(\bar c_1 = \bar c_2)$ holds in $\bV(S)$ if and only if $c_1 = c_2$;
\item $(\bar c_1 \in \bar c_2)$ holds in $\bV(S)$ if and only if $c_2$ has rank 1 or higher and $c_1\in c_2$;
\item $(\phi \land \psi)$ holds in $\bV(S)$ if and only if $\phi$ holds in $\bV(S)$ and $\psi$ holds in $\bV(S)$, and similarly for $\lor$ and $\neg$;
\item $(\forall u \in \bar c)(\phi(u))$ holds in $\bV(S)$ if and only if,
        for all $x \in c$, the sentence $\phi(c)$ holds in $\bV(S)$, and similarly for $(\exists u \in v)(\phi(u))$.
\end{itemize}
\end{definition}

We will also write that $\phi$ is true in $\bV(S)$ to mean $\phi$ holds in $\bV(S)$.  When the structure $\bV(S)$ is clear from context, we will simply write that $\phi$ holds or that $\phi$ is true.
We will use the following abbreviations: $(\phi\implies \psi)$ for $((\neg \phi)\lor (\psi))$ and $(\phi \Longleftrightarrow \psi)$ for $(\phi\implies \psi)\land (\psi\implies \phi)$.

It may seem necessary (or at least useful) to include function and relation symbols in our language for every function and relation in $C \subseteq \bV(S)$.
For example, one would expect $(\exists x \in \bar\Reals)\, (\bar{1} \bar{+} \bar{x} = \bar{3})$ 
to be a well-formed sentence in $\Language(\bV(\Reals))$. (Writing $\bar{}$ over every constant is cumbersome and unnecessary in practice, and so we will drop this symbol from now on.)
In fact, every relation and function symbol is, in effect, already available, because
there is a mechanical way to translate formulas with function and relation symbols into the above language (and in a way that commutes with taking ${}^{*}$-transfers as defined below).  Informally, the translation works as follows: First, function symbols are removed by rewriting them in terms of their graph relations using additional quantifiers.  Second, because every relation is an element in $\bV(S)$ and thus a constant in the language, relation symbols can be replaced with statements of the form $a \in b$, where $a$ is a tuple.  Finally, tuples can be mechanically encoded once elements of products spaces are encoded as sets of sets in a canonical way.
Thus our language is powerful enough to represent formula with relation and function symbols, and so these symbols will be used without comment.

Let $S$ and $\NSE{S}$ be a pair of ground sets,
and let $\NSE{} \colon \bV(S) \to \bV(\NSE{S})$ be a map between their superstructures that preserves rank. Relative to this map, an element $a \in \bV(\NSE{S})$ is \defn{internal} when
there exists $b\in \bV(S)$ such that $a\in \NSE{b}$, and $a$ is said to be \defn{external} otherwise. The language of $\bV(\NSE{S})$ is almost the same as $\Language$ except that we enlarge the set of constants to include every element in $\bV(\NSE{S})$. We denote the language of $\bV(\NSE(S))$ by $\Language(\bV(\NSE{S}))$.
If $\phi(x_1,\dotsc,x_n)$ is a formula in $\Language(\bV(S))$ with free variables $x_1,\dotsc,x_n$, then the $*$-transfer of $\phi$ is the formula in $\Language(\bV(\NSE{S}))$ obtained by changing every constant $a$ to $^*a$. Clearly, every constant in $\NSE{\phi(x_1,\dotsc,x_n)}$ is internal.

Let $\kappa$ be an uncountable cardinal number.
A \defn{$\kappa$-saturated nonstandard extension} of a superstructure $\bV(S)$ is a set $\NSE{S}$ and a map
${}^* \colon \bV(S) \to \bV(\NSE{S})$ satisfying: %
\begin{itemize}

\item \emph{extension}:
$\NSE{S}$ is a superset of $S$ and $\NSE{s}=s$ for all $s\in S$.

\item \emph{transfer}:
For every sentence $\phi$ in $\Language(\bV(S))$,
$\phi$ holds in $\bV(S)$ if and only if its $*$-transfer ${\NSE{\phi}}$ holds in $\bV(\NSE{S})$.
\item \emph{$\kappa$-saturation}:
For every family $\FSC=\{A_{i}:i\in I\}$ of internal sets
indexed by a set $I$ of cardinality less than $\kappa$,
if $\FSC$ has the finite intersection property, i.e., if every finite intersection of elements in $\FSC$ is nonempty,
then the total intersection of $\FSC$ is nonempty.

\end{itemize}

A $\aleph_{1}$-saturated model can be constructed via an ultrafilter, see \citep[][Thm.~1.7.13]{NSAA97}. However, saturation to any uncountable cardinal number is possible:

\begin{theorem}[{\citep{luxemberger}}]
For every superstructure $\bV(S)$ and uncountable cardinal number $\kappa$,
there exists a $\kappa$-saturated nonstandard extension of $\bV(S)$.
\end{theorem}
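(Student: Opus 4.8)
The plan is to build the $\kappa$-saturated extension by iterating the single-ultrapower construction that already yields $\aleph_{1}$-saturation (the cited \citep[][Thm.~1.7.13]{NSAA97}), assembling the result as a transfinite ultralimit of $\bV(S)$. The conceptual starting point is that, by the transfer principle, a map $*\colon\bV(S)\to\bV(\NSE{S})$ satisfying extension and transfer is precisely a bounded elementary embedding, and the saturation requirement is exactly $\kappa$-saturation in the model-theoretic sense restricted to families of internal sets. Hence it suffices to produce a bounded elementary extension of $\bV(S)$ in which every finitely satisfiable family of fewer than $\kappa$ internal sets acquires a common element.

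First I would fix a regular cardinal $\mu\ge\kappa$ (take $\mu=\kappa$ when $\kappa$ is regular and $\mu=\kappa^{+}$ otherwise) and construct a continuous $\prec$-chain of superstructures $\langle\bV(S_{\alpha}):\alpha\le\mu\rangle$ with $\bV(S_{0})=\bV(S)$. At a limit ordinal $\gamma$ I would set $\bV(S_{\gamma})$ to be the direct limit $\bigcup_{\alpha<\gamma}\bV(S_{\alpha})$, which is again a superstructure over the direct-limit ground set and a bounded elementary extension of each earlier stage by the usual elementary-chain lemma. At a successor $\alpha+1$ I would take $\bV(S_{\alpha+1})$ to be a bounded ultrapower of $\bV(S_{\alpha})$ by a countably incomplete ultrafilter chosen large enough that every finitely satisfiable family of fewer than $\kappa$ internal sets of $\bV(S_{\alpha})$ gains a common element, with \L{}o\'s's theorem supplying extension and transfer at each step. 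Setting $\NSE{S}=S_{\mu}$ and letting $*$ be the composite embedding then produces the candidate model.

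To verify $\kappa$-saturation I would take any family $\FSC=\{A_{i}:i\in I\}$ of internal sets of $\bV(S_{\mu})$ with $|I|<\kappa$ enjoying the finite intersection property. Each $A_{i}$ is internal, hence appears at some stage $\alpha_{i}<\mu$; since $|I|<\kappa\le\mathrm{cf}(\mu)$, all of them appear by some single stage $\alpha<\mu$. The finite intersection property then descends to $\bV(S_{\alpha})$: each instance involves only finitely many parameters $A_{i}$, all lying in $\bV(S_{\alpha})$, and $\bV(S_{\alpha})\prec\bV(S_{\mu})$ reflects the corresponding bounded sentence. By the successor construction, $\bV(S_{\alpha+1})$ realizes $\FSC$, so $\bigcap_{i}A_{i}\neq\emptyset$ already in $\bV(S_{\mu})$, which is exactly the saturation principle.

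The main obstacle is the successor step: producing over $\bV(S_{\alpha})$ a bounded elementary extension that simultaneously realizes all of the (possibly uncountably many) small finitely satisfiable families, since a single countably incomplete ultrapower only guarantees $\aleph_{1}$-saturation. This can be resolved either by taking the ultrapower by a $\kappa$-good ultrafilter—whose existence is the deep Keisler--Kunen theorem, and which in fact would let one bypass the chain entirely and set $\NSE{S}=S^{I}/U$ for a single $\kappa$-good $U$ on an index set of size $\ge\kappa$—or by a further internal sub-iteration realizing the families one at a time. A secondary point, which I expect to be routine but would treat with care, is confirming that the direct limit at limit stages is genuinely a superstructure and that internality and \L{}o\'s-style transfer cohere along the chain.
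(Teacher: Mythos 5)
The paper offers no proof of this statement; it is quoted as background and attributed to Luxemburg, so there is no internal argument to compare against. Your outline is essentially the standard construction (and essentially that of the cited source), and it is sound---but only because of the repairs you yourself flag. As first written, the successor step fails: a bounded ultrapower by an arbitrary countably incomplete ultrafilter is only countably saturated, so for $\kappa>\aleph_1$ a single such ultrapower does not realize all finitely satisfiable families of size $<\kappa$ of internal sets over $\bV(S_\alpha)$. Either of your proposed fixes closes this gap: a single bounded ultrapower by a $\kappa$-good countably incomplete ultrafilter (Keisler under GCH, Kunen in ZFC) is $\kappa$-saturated outright and makes the chain unnecessary, or one iterates along a chain of regular length at least $\kappa$ with a bookkeeping argument realizing one small finitely satisfiable family per step. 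The secondary point you mention is also genuinely delicate rather than purely routine: the stages cannot form a literal inclusion chain under true membership, since internal sets such as $\NSE{\Reals}$ acquire new elements at each stage, so the limit must be taken as a direct limit of structures carrying an abstract membership relation and then Mostowski-collapsed (using well-foundedness over the ground set, which bounded ultrapowers preserve) to a genuine superstructure over $S_\mu$. Granting that, your cofinality argument for pulling a family of fewer than $\kappa$ internal sets back to a single stage, and the reflection of the finite intersection property by bounded elementarity, go through as stated.
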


From this point on, we shall always assume that our nonstandard extension is at least $\aleph_1$ saturated.

It is easy to see that every element of $\NSE{S}$ is internal. It is also clear that $A\subset \NSE{S}$ is internal if and only if $A\in \NSE{\PowerSet{S}}$. Most sets are external subsets of $\NSE{S}$, and external subsets play an important role.
The main tool for constructing internal sets is the internal definition principle:

\begin{lemma}[Internal Definition Principle]
Let $\phi(x)$ be a formula in the language $\Language(\bV(\NSE{S}))$ with free variable $x$. Suppose that all constants
that occurs in $\phi$ are internal, then $\{x \in \bV(\NSE{S}): \text{$\phi(x)$ holds in $\bV(\NSE{S})$} \}$ is internal in $\bV(\NSE{S})$.
\end{lemma}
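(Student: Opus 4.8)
The plan is to realize the displayed collection as the value of an internal object produced by transfer, after replacing the internal constants of $\phi$ by universally quantified variables, and then instantiating those variables back at the given internal constants.

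First I would secure a bounding set. For the collection $C=\{x\in\bV(\NSE{S}):\phi(x)\text{ holds}\}$ to be an element of $\bV(\NSE{S})$ at all, the free variable must be confined to an internal set $B$ (typically $\phi$ carries a conjunct $x\in c$ with $c$ internal; an unbounded prescription such as $x=x$, or membership in an external set, defines either a proper class or an external set). Fixing such an internal $B$, internality supplies $b\in\bV(S)$ with $B\in\NSE{b}$, and since $\bigcup(\NSE{b})=\NSE{(\bigcup b)}$ by transfer, every element of $B$ lies in $\NSE{A}$ for $A=\bigcup b\in\bV(S)$. Thus $C\subseteq B\subseteq\NSE{A}$, and it suffices to prove $\{x\in\NSE{A}:\phi(x)\}$ is internal.

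Next I would abstract the parameters. Let $c_1,\dots,c_m$ be all the constants occurring in $\phi$; by hypothesis each is internal, so $c_i\in\NSE{b_i}$ for some $b_i\in\bV(S)$. Replacing every $c_i$ by a fresh variable $y_i$ yields a bounded formula $\psi(x,y_1,\dots,y_m)$ with no constants whatsoever, and with $\phi(x)$ literally equal to $\psi(x,c_1,\dots,c_m)$. Because bounded separation holds in $\bV(S)$, the bounded sentence
\[
(\forall y_1\in b_1)\cdots(\forall y_m\in b_m)(\exists z\in\PowerSet{A})(\forall x\in A)\bigl((x\in z)\iff\psi(x,y_1,\dots,y_m)\bigr)
\]
is true in $\bV(S)$, witnessed by $z=\{x\in A:\psi(x,y_1,\dots,y_m)\}$. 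Applying the transfer principle, its $*$-transfer holds in $\bV(\NSE{S})$; since $\psi$ is constant-free we have $\NSE{\psi}=\psi$, so the transferred matrix is the same formula $\psi$. Instantiating the outer quantifiers at $y_i:=c_i$ (legitimate because $c_i\in\NSE{b_i}$) produces an internal $z\in\NSE{\PowerSet{A}}$ with $x\in z\iff\psi(x,c_1,\dots,c_m)\iff\phi(x)$ for every $x\in\NSE{A}$. Together with $C\subseteq\NSE{A}$ this forces $z=C$, and $z\in\NSE{\PowerSet{A}}$ is internal, as required.

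The main obstacle is the first step---pinning down the bounding set $A$---since this is exactly where the boundedness of $\phi$ and the internality of its parameters are indispensable, encoding the implicit requirement that the prescription genuinely define a set rather than a proper class or an external object. By contrast, the transfer/substitution bookkeeping is made painless by abstracting \emph{all} constants at once, so that $\psi$ is constant-free and $\NSE{\psi}=\psi$; this sidesteps the usual lemma that $*$-transfer commutes with substitution of constants for variables. Finally, the existence of $z$ uses only bounded separation inside $\bV(S)$, so no appeal to choice is needed.
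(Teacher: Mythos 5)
The paper does not actually prove this lemma: it is stated as a background fact in the appendix's crash course on nonstandard analysis, with the reader referred to the standard literature (e.g.\ \citep{NSAA97}), so there is no in-paper argument to compare against. Your proof is the standard textbook derivation and is essentially correct: abstract the internal parameters $c_i\in\NSE{b_i}$ into variables, observe that the resulting formula $\psi$ is constant-free and hence fixed by $*$-transfer, transfer the (bounded-separation) sentence asserting the existence of the defined subset inside $\PowerSet{A}$, and instantiate at the $c_i$ to recover the set as an element of $\NSE{\PowerSet{A}}$. Two remarks. First, your opening repair is not optional pedantry but genuinely necessary: as literally stated the lemma is false (take $\phi(x)$ to be $x=x$; the resulting collection is all of $\bV(\NSE{S})$, which is not even an element of the superstructure), and the correct statement confines $x$ to an internal set, which is exactly how the paper uses the principle (e.g.\ for $\RS{D}\subseteq\DRiskSpace$ with $\DRiskSpace$ internal). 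Second, a tiny technicality in your bounding step: if $b$ has rank $1$ its elements are atoms and $\bigcup b$ is not meaningful, but in that case $B$ would be an atom rather than a set and the separation is vacuous, so one may assume $b$ consists of sets of rank at least $1$. Neither point affects the substance; the argument stands.
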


Saturation can be equivalently expressed in terms of the satisfiability of families of formulas.
The role of the finite intersection property is played by finite satisfiability:

\begin{definition}
Let $J$ be an index set and let $A \subseteq \bV(\NSE S)$.
A set of formulas $\{\phi_{j}(x) \mid j \in J\}$ in $\Language(\bV(\NSE S))$ is said to be
\defn{finitely satisfiable in $A$} when, for every finite subset $\alpha \subset J$,
there exists $c\in A$ such that $\phi_{j}(c)$ holds in $\bV(\NSE S)$ for all $j\in \alpha$.
\end{definition}

We provide the following alternative expression of $\kappa$-saturation:

\begin{theorem}[{\citep[][Thm.~1.7.2]{NSAA97}}]
Let $\bV(\NSE{S})$ be a $\kappa$-saturated nonstandard extension of the superstructure $\bV(S)$, where $\kappa$ is an uncountable cardinal number.
Let $J$ be an index set of cardinality less than $\kappa$.
Let $A$ be an internal set. %
For each $j\in J$, let $\phi_j(x)$ be a formula in $\Language(\bV(\NSE{S}))$
whose constants are internal objects.
Further, suppose that the set of formulas $\{\phi_j(x) \mid j\in J\}$ is finitely satisfiable in $A$.  Then there exists $c\in A$ such that $\phi_j(c)$ holds in $\NSE{\bV}(S)$ simultaneously for all $j\in J$.
\end{theorem}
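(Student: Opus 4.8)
The plan is to derive this satisfiability formulation directly from the finite-intersection-property formulation of $\kappa$-saturation supplied in the definition of a $\kappa$-saturated nonstandard extension. The one idea needed is to replace each formula $\phi_j$ by the internal set of its solutions inside $A$, thereby converting a statement about finite satisfiability of formulas into a statement about the finite intersection property of a family of internal sets, to which the saturation axiom applies verbatim.

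First I would fix, for each $j \in J$, the solution set
\[
A_j = \{\, x \in A : \phi_j(x) \text{ holds in } \bV(\NSE{S}) \,\}.
\]
Since $A$ is internal and every constant occurring in $\phi_j$ is internal by hypothesis, the formula expressing ``$x \in A$ and $\phi_j(x)$'' has only internal constants, so the Internal Definition Principle guarantees that each $A_j$ is an internal subset of $A$. This produces a family $\{A_j : j \in J\}$ of internal sets indexed by a set $J$ of cardinality less than $\kappa$, which is precisely the sort of family the $\kappa$-saturation axiom governs.

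Next I would check the finite intersection property. Given any finite $\alpha \subseteq J$, finite satisfiability in $A$ provides a point $c \in A$ for which $\phi_j(c)$ holds for every $j \in \alpha$; hence $c \in \bigcap_{j \in \alpha} A_j$, so every finite subintersection is nonempty. By $\kappa$-saturation the total intersection $\bigcap_{j \in J} A_j$ is then nonempty, and any $c$ in it lies in $A$ and satisfies $\phi_j(c)$ for all $j \in J$ simultaneously, which is the conclusion. The argument is essentially bookkeeping rather than a deep step; the only point warranting care is the appeal to the Internal Definition Principle, where one must confirm that bundling the side condition $x \in A$ with $\phi_j$ preserves internality of all constants, so that each $A_j$ is genuinely eligible for the saturation axiom. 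The sole degenerate case, $J = \emptyset$, is handled automatically, since finite satisfiability applied to $\alpha = \emptyset$ already forces $A \neq \emptyset$ and any $c \in A$ then works.
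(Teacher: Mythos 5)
Your argument is correct and is exactly the standard derivation of the formula-satisfiability form of saturation from the finite-intersection-property form: the paper itself gives no proof, citing the reference, and the cited proof proceeds by the same reduction you use (form the internal solution sets $A_j=\{x\in A:\phi_j(x)\}$ via the Internal Definition Principle, observe that finite satisfiability is the finite intersection property for $\{A_j\}_{j\in J}$, and apply the saturation axiom). Your handling of the internality of the constants and of the degenerate case $J=\emptyset$ is also accurate, so there is nothing to add.
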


\subsection{The hyperreals}\label{sec:hyreal}

Take $S=\Reals$.  Then any nonstandard extension ${\HReals}$ is a superset of $\Reals$ and $\NSE{x}=x$ for $x\in \Reals$.
For every $n \in \Nats$, let $A_n = \{ x \in \Reals: 0 < x < \frac 1 n \}$.  Then $\NSE{\!A_n}$ is internal by definition and, by the transfer principle, we have that $\NSE{\!A_n} = \{ x \in \NSE{\Reals} \st 0 \;{}^*\!\!\!< x \;{}^*\!\!\!< \frac 1 n \}$, which is clearly seen to be internal by the internal definition principle.  (For standard functions---like addition and multiplication---and relations---like the less-than relation---we will drop the $^*$ when the context is clear.)

Let $\FSC=\{\NSE{\!A_{n}}:n\in\Naturals\}$.
Clearly, $\FSC$ has the finite intersection property, which can be seen to hold by transfer, because each $A_n$ is nonempty and $A_n \supseteq A_{n+1}$.
By the saturation principle, the total intersection of $\FSC$ is nonempty.
Therefore, there exists $x\in {\HReals}$ such that $0<x<\frac{1}{n}$ for all $n\in \Naturals$.
Such a number is called an \defn{infinitesimal}.
It follows that there exists $y \in \NSE{\Reals}$ such that $y > n$ for all $n \in \Naturals$.
Such a number is called an \defn{infinite number}.
It follows that ${\HReals}$ is a proper superset of $\Reals$.

Write $x\approx y$ if $|x-y|$ is infinitesimal
and write $\mu(x) = \{y \in \NSE{\Reals}: x \approx y \}$ for the \defn{monad of $x$}.
An element $x\in {\HReals}$ is \defn{near-standard} if $x \approx a$ for some $a\in \Reals$.
An element $x\in {\HReals}$ is \defn{finite} if $|x|$ is bounded by some standard real number $a$.
By the compactness of closed intervals of $\Reals$ and the transfer principle,
one can show that
an element $x\in {\HReals}$ is finite if and only if $x$ is near-standard.

It is also straightforward to show that, if $x\in {\HReals}$ is near-standard, then there exists a unique real $a\in \Reals$, called the \defn{standard part of $x$}, such that $x\approx a$.
Let $\NS{\NSE{\Reals}}$ denote the collection of all near-standard points in ${\HReals}$,
and let $\ST\colon \NS{\NSE{\Reals}} \to \Reals$ denote the \defn{standard part map} taking a near-standard point $x$ to its standard part.
For a near-standard real $x$, we will write $\SP{x}$ instead of $\ST(x)$;
for a function $f$ into $\NS{\NSE{\Reals}}$, we will write $\SP{f}$ to denote the composition $x \mapsto \ST (f(x))$;
for a set $A \subseteq \NS{\NSE{\Reals}}$, we will write $\ST(A)$ to denote the set $\{x\in {\Reals}: (\exists a\in A) \, \ST(a) = x \}$;
and, for a set $B \subseteq \Reals$, we will write $\ST^{-1}(B)$ to denote the inverse image $\{ x\in {\NSE{\Reals}}: (\exists b\in B) \, x \approx b \}$.
For a standard real $x$, the set $\ST^{-1}(x)$ is the set of nonstandard numbers infinitesimally close to $x$, i.e., it is the monad $\mu(x)$ of $x$.

The following example is a classical example of external set:

\begin{example}
Let $\phi$ be the sentence: $\forall A\in \PowerSet{\Reals}$, if $A$ is bounded above, then $A$ has a least upper bound.\footnote{Formally, $\phi$ is the sentence
$
(\forall A\in \PowerSet{\Reals})
(( (\exists a \in \Reals) (\forall x \in A) (x < a))
  \implies
  ((\exists a \in \Reals)
     (((\forall x \in A) (x < a)) \land ((\forall b \in \Reals) ( ((\forall x \in A) (x < a)) \implies (a \le b) ) ))
     )
  )$, but we will prefer to work informally for clarity, although caution must be exercised:
the ``transfer'' of the sentence
``$\forall A\subset \Reals$ if $A$ is bounded above then $A$ has a least upper bound''
might lead one to conclude that all bounded sets have least upper bounds.
The error here is that $\subset$ is not in the first order language of set theory, and so the sentence is malformed.
}
By the transfer principle, it holds that, for all $A\in \NSE{\PowerSet{\Reals}}$, i.e., all internal subsets of ${\HReals}$,
if $A$ is bounded above, then $A$ has a least upper bound.
Suppose the monad $\mu(0)$ is internal.
Then there exists
$a_{0}\in \NSE{\Reals}$ such that $a_{0}$ is a least upper bound for $\mu(0)$.
Clearly $a_{0}>0$. Suppose $a_{0} \in \mu(0)$. Then $2a_{0} \in \mu(0)$ and $2a_{0}>a_{0}$, contradicting that $a_0$ is an upper bound, hence $a_0 \not\in\mu(0)$.
But then $\frac{a_{0}}{2} \not\in \mu(0)$ and $\frac{a_{0}}{2} < a_0$, contradicting that $a_0$ is the least upper bound. Hence, $\mu(0)$ is external.
\end{example}

The following lemma collects together two simple facts we use repeatedly:
\begin{lemma}\label{standardnums}
Let $a,b \in \NS{\HReals}$.
\begin{enumerate}
\item $a \approx b$ if and only if $\SP{a} = \SP{b}$.
\item $a \lessapprox b$ implies $\SP{a} \le \SP{b}$.
\end{enumerate}
\end{lemma}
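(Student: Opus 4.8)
The plan is to reduce both parts to elementary arithmetic of infinitesimals together with the existence and uniqueness of the standard part of a near-standard number, both established above. Throughout I would write $\alpha = \SP{a}$ and $\beta = \SP{b}$, so that $\alpha,\beta \in \Reals$ and, by definition of the standard part, both $a - \alpha$ and $b - \beta$ are infinitesimal. The two facts I would invoke repeatedly are that the infinitesimals are closed under addition and subtraction, and that the only standard real that is infinitesimal is $0$.

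For part (1), I would first handle the reverse implication: if $\alpha = \beta$, then $a - b = (a - \alpha) - (b - \beta)$ is a difference of two infinitesimals, hence infinitesimal, so $a \approx b$. For the forward implication, suppose $a \approx b$, i.e.\ $a - b$ is infinitesimal; then $\alpha - \beta = (\alpha - a) + (a - b) + (b - \beta)$ is a sum of three infinitesimals and is therefore itself infinitesimal. Since $\alpha - \beta$ is a standard real, it must equal $0$, giving $\alpha = \beta$.

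For part (2), I would argue by contraposition. Assuming $\SP{a} > \SP{b}$, set $\gamma = \alpha - \beta$, a standard positive real. Since $a \approx \alpha$ and $b \approx \beta$, the difference $a - b$ is infinitesimally close to $\gamma$, so in particular $a - b > \gamma/2 > 0$; that is, $a - b$ is a positive, appreciable (non-infinitesimal) hyperreal. This contradicts $a \lessapprox b$, which asserts precisely that $a - b$ is negative or infinitesimal. Hence $a \lessapprox b$ forces $\SP{a} \le \SP{b}$.

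I do not expect a genuine obstacle here: the statement is a routine consequence of the ordered-field structure of $\HReals$ and the defining property of $\SP{\argdot}$. The only points demanding care are to fix the meaning of $\lessapprox$ (namely that $a \lessapprox b$ holds iff $a - b$ is not a positive appreciable number, equivalently $a \le b + \epsilon$ for every standard $\epsilon \in \PosReals$) and to verify that every intermediate quantity remains near-standard, so that the standard parts and the relation $\approx$ stay well defined throughout the manipulations.
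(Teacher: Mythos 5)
Your proof is correct. The paper states this lemma without proof---it is introduced only as a collection of ``two simple facts we use repeatedly''---so there is no argument in the paper to compare against; your elementary derivation from closure of the infinitesimals under addition and subtraction, plus the fact that the only standard infinitesimal is $0$, is exactly the intended justification. Your one caveat is well placed: the paper never formally defines $\lessapprox$, and the reading you adopt ($a \lessapprox b$ iff $a - b$ is negative or infinitesimal, equivalently $a \le b + \epsilon$ for every standard $\epsilon \in \PosReals$) is the standard one and the one consistent with every use of the symbol elsewhere in the paper.
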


We conclude this section by introducing two useful properties derived from saturation:

\begin{theorem} [{\citep[][Prop.~2.8.2]{NSAA97}}]\label{spillprinciple}
Let $A\subset \NSE{\Reals}$ be an internal set.
\begin{enumerate}
\item (Overspill) If $A$ contains arbitrarily large finite positive numbers, then it also contains an infinite number.

\item (Underspill) If $A$ contains arbitrarily small positive infinite numbers, then it also contains a finite number.
\end{enumerate}
\end{theorem}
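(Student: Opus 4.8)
The plan is to prove both parts from a single principle: the transfer of the completeness of $\Reals$ to \emph{internal} subsets of $\HReals$. As illustrated in the external-set example preceding this theorem, the transfer principle guarantees that every nonempty internal subset of $\HReals$ that is bounded above (resp.\ below) possesses a least upper bound (resp.\ greatest lower bound) in $\HReals$. The qualitative hypotheses --- ``arbitrarily large finite'' and ``arbitrarily small positive infinite'' --- will be converted into quantitative information about the \emph{size} (finite versus infinite) of the relevant extremum, and a single application of the defining property of the supremum/infimum will then exhibit the desired witness.

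For Overspill, I would first replace $A$ by its positive part $A^{+} = \{x \in A : x > 0\}$, which is internal by the internal definition principle and nonempty by hypothesis. I then split into two cases. If $A^{+}$ is not bounded above in $\HReals$, then choosing any infinite $N$ there is $x \in A^{+}$ with $x > N$, so $x$ is infinite and we are done. If $A^{+}$ is bounded above, let $s = \sup A^{+}$, which exists by the transferred completeness property. Since $A^{+}$ contains, for each $n \in \Nats$, an element exceeding $n$, we have $s \ge n$ for every $n \in \Nats$, so $s$ is infinite; and by the definition of the supremum there is $x \in A^{+}$ with $x > s - 1$. As $s$ is infinite, so is $s-1$, hence $x$ is an infinite element of $A$.

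For Underspill I would argue dually on $A^{+}$. If $A^{+}$ already contains a finite number we are done, so suppose every element of $A^{+}$ is positive infinite. Then $A^{+}$ is bounded below (for instance by $1$), so $i = \inf A^{+}$ exists in $\HReals$ by transfer. The hypothesis that $A$ contains arbitrarily small positive infinite numbers means that for every positive infinite $N$ there is an element of $A^{+}$ below $N$, whence $i < N$ for every positive infinite $N$; this forces $i$ to be finite. By the definition of the infimum there is $x \in A^{+}$ with $x < i + 1$, and since $i$ is finite so is $i+1$, making $x$ a finite element of $A^{+}$, contradicting that every element of $A^{+}$ is infinite. Hence $A^{+}$, and so $A$, contains a finite number.

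The routine points --- internality of $A^{+}$ via the internal definition principle and the mere existence of $\sup$ and $\inf$ via transfer --- are immediate. The step requiring the most care, and the true crux, is the translation of the loose phrases in the hypotheses into the exact statements ``$s \ge n$ for all $n \in \Nats$'' (forcing $s$ infinite) and ``$i < N$ for all positive infinite $N$'' (forcing $i$ finite); getting these comparisons right is what makes the final ``within $1$ of the extremum'' witness land among the correct numbers, and it is also where one must be careful not to confound an internal extremum with the external collections of finite or infinite numbers.
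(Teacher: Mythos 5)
The paper does not actually prove this statement---it is quoted from \citep[Prop.~2.8.2]{NSAA97} and used as a black box---so there is no in-paper argument to compare against. Your proof is correct and self-contained. Both halves rest on the transferred least-upper-bound property for internal subsets of $\HReals$, which is legitimate here because the paper itself derives exactly that fact (in the example showing $\mu(0)$ is external), and because an internal subset of $\NSE{\Reals}$ is an element of $\NSE{\PowerSet{\Reals}}$, so the transferred completeness sentence applies to it. The case analysis in Overspill, the identification of $s=\sup A^{+}$ as infinite from $s\ge n$ for all $n\in\Nats$, and the extraction of a witness within $1$ of the extremum are all sound; in Underspill your contradiction is reached correctly (indeed slightly redundantly, since under the all-infinite assumption $i=\inf A^{+}$ would already have to exceed every standard $n$ and hence be infinite, directly clashing with your deduction that $i$ is finite---but taking the extra step through the witness $x<i+1$ is harmless and keeps the two halves parallel). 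For comparison, the proof most commonly found in the literature runs instead through the externality of $\Nats$: the internal set $B=\{n\in\NSE{\Nats}: (\exists x\in A)(x>n)\}$ contains every standard natural, and if it contained only standard naturals then $\Nats=B\cap\{n:n\le n_0\}\cup\dots$ would force $\Nats$ to be internal, a contradiction; so $B$ contains an infinite $N$ and the corresponding $x\in A$ is infinite. That route trades the supremum machinery for the single fact that $\Nats$ is external; yours trades that fact for transferred completeness. Both are standard and either would serve the paper's purposes.
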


An immediate consequence of this theorem is the following:

\begin{corollary} \label{spillover}
Let $A\subset \NSE{\Reals}$ be an internal set.
\begin{enumerate}
\item (Overspill) If $A$ contains arbitrarily large infinitesimals, then $A$ contains some non-infinitesimal element from $\NSE{\Reals}$.
\item (Underspill) If $A$ contains arbitrarily small positive non-infinitesimals, then $A$ contains some positive infinitesimal element from $\NSE{\Reals}$.
\end{enumerate}
\end{corollary}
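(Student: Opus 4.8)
The plan is to reduce each half of the corollary to the corresponding half of \cref{spillprinciple} by applying the order-reversing involution $a \mapsto 1/a$ on positive elements, which trades infinitesimals for infinite numbers (and fixes the appreciable regime). Throughout, the bridge from $A$ to the transformed set is the internal definition principle.

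First, for Overspill, I would form
\[
B = \{ y \in \NSE{\Reals} : (\exists a \in A)(a > 0 \land a\,y = 1) \},
\]
the set of reciprocals of the positive elements of $A$; this is internal by the internal definition principle since $A$ is internal. Because $a \mapsto 1/a$ sends a positive infinitesimal to a positive infinite number and is order-reversing, the hypothesis that $A$ contains arbitrarily large positive infinitesimals translates exactly into the statement that $B$ contains arbitrarily small positive infinite numbers: for every positive infinite $N \in \NSE{\Reals}$ the number $1/N$ is infinitesimal, so there is an infinitesimal $a \in A$ with $a > 1/N$, whence $1/a \in B$ and $1/a < N$. Applying the Underspill half of \cref{spillprinciple} to $B$ yields a finite $y_0 \in B$, say $y_0 = 1/a_0$ with $a_0 \in A$ positive. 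Since $y_0$ is finite, $a_0 = 1/y_0$ cannot be infinitesimal (a positive infinitesimal has infinite reciprocal), so $a_0$ is the desired non-infinitesimal element of $A$.

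Second, for Underspill, I would reuse the same internal set $B$. A positive non-infinitesimal $a$ is bounded below by a standard positive real, so $1/a$ is finite; and the hypothesis that $A$ contains arbitrarily small positive non-infinitesimals says that for each standard $r > 0$ there is a non-infinitesimal $a \in A$ with $0 < a < r$, hence $1/a \in B$ with $1/a > 1/r$. Thus $B$ contains arbitrarily large finite positive numbers, and the Overspill half of \cref{spillprinciple} produces an infinite $y_0 = 1/a_0 \in B$ with $a_0 \in A$ positive; since $y_0$ is infinite, $a_0 = 1/y_0$ is a positive infinitesimal, as required.

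The only point demanding care---and the reason the two parts cross over, with Overspill invoking Underspill and vice versa---is the bookkeeping involved in pushing ``arbitrarily large infinitesimals'' and ``arbitrarily small positive non-infinitesimals'' through the order-reversing reciprocal, together with verifying that $B$ is genuinely internal so that \cref{spillprinciple} applies. Both are routine: internality is immediate from the internal definition principle, and the dictionary between size regimes under $a \mapsto 1/a$ (infinitesimal $\leftrightarrow$ infinite, appreciable $\leftrightarrow$ appreciable) makes each translation a one-line verification. I expect no substantive obstacle beyond this correspondence, consistent with the corollary being an immediate consequence of the theorem.
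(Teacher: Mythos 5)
Your argument is correct: the reciprocal map $a \mapsto 1/a$, applied to the internal set of positive elements of $A$, cleanly exchanges the infinitesimal and infinite regimes, and your bookkeeping (including the crossover, with each part of the corollary invoking the opposite part of \cref{spillprinciple}) checks out. The paper gives no explicit proof---it simply declares the corollary an immediate consequence of \cref{spillprinciple}---and your reduction is the standard way of making that ``immediate'' precise, so there is nothing to add.
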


\subsection{Nonstandard extensions of more general spaces}\label{sec:hytopo}

The concepts from the previous section can be generalized to a general metric space $\gX$ by replacing the standard metric in $\Reals$ by the corresponding metric in the metric space.
Indeed, all the concepts can be defined for an arbitrary topological space $(\gX,T)$.
In this case, we will assume that our model is more saturated than the cardinality of $T$.
The monadic structure is determined by $T$ (although we will elide the topology in our notation as
it will be clear from context).  In particular, for $x \in \NSE{\gX}$, the monad of $x$ is
defined by $\mu(x) = \bigcap \{ \NSE{G} : x \in G \in T \}$.
In general $\mu(x)$ is external, however,
for every $x \in \NSE{\gX}$, one can use saturation to prove that there exists an (internal) $\NSE{}$open set $B \in \NSE{T}$
such that $ x \in B \subseteq \mu(x)$.
Given a set $\gY \subseteq \NSE{\gX}$,
the near-standard points are given by $\NS{\gY} = \{ y \in \gY : (\exists x \in \gX)\, y \in \mu(x) \}$.
In general, $\NS{\NSE{\gX}}$ is a proper subset of $\NSE{\gX}$. However, when $\gX$ is compact, we have $\NS{\NSE{\gX}}=\NSE{\gX}$. Indeed, this is the nonstandard way to characterize a compact space.

\begin{theorem}[{\citep[][Thm.~3.5.1]{NSAA97}}]\label{nscompact}
Let $\gX$ be a Hausdorff space. A set $A\subset \gX$ is compact if and only if $\,\NSE{A}=\NS{\NSE{A}}$.
\end{theorem}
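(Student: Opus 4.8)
The plan is to prove both implications of this classical Robinson-style characterization, deploying the transfer principle for the ``only if'' direction and the saturation principle for the ``if'' direction. Throughout, I read near-standardness as witnessed by a point of $A$ (equivalently, the standard part lies in $A$); since $\gX$ is Hausdorff this part is unique, and in the applications $A = \gX$ is the whole space so the distinction is moot.

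First I would show that compactness gives $\NSE{A} = \NS{\NSE{A}}$. The inclusion $\NS{\NSE{A}} \subseteq \NSE{A}$ is immediate, so only $\NSE{A} \subseteq \NS{\NSE{A}}$ needs argument. Fix $y \in \NSE{A}$ and suppose, for contradiction, that $y$ is not near-standard, i.e.\ $y \notin \mu(x)$ for every $x \in A$. Unwinding $\mu(x) = \bigcap\{\NSE{G} : x \in G \in T\}$, for each $x \in A$ there is an open $G_x \ni x$ with $y \notin \NSE{G_x}$. The family $\{G_x : x \in A\}$ covers $A$, so by compactness finitely many $x_1,\dots,x_n$ satisfy $A \subseteq G_{x_1} \cup \cdots \cup G_{x_n}$. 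Because $\NSE{}$ commutes with finite unions, transfer of this containment yields $\NSE{A} \subseteq \NSE{G_{x_1}} \cup \cdots \cup \NSE{G_{x_n}}$, so $y$ lies in some $\NSE{G_{x_i}}$, contradicting the choice of $G_{x_i}$. Hence every $y \in \NSE{A}$ is near-standard, and its standard part lies in $A$ because $A$, being compact in a Hausdorff space, is closed.

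For the converse I would argue by contraposition. Assuming $A$ is not compact, I would produce a point of $\NSE{A}$ that is not near-standard. Choose an open cover $\{G_i\}_{i \in I}$ of $A$ with no finite subcover. For each finite $\alpha \subseteq I$ the set $A \setminus \bigcup_{i \in \alpha} G_i$ is nonempty, so by transfer the internal set $\bigcap_{i \in \alpha}(\NSE{A} \setminus \NSE{G_i}) = \NSE{A} \setminus \bigcup_{i \in \alpha}\NSE{G_i}$ is nonempty as well. Thus the internal family $\{\NSE{A} \setminus \NSE{G_i} : i \in I\}$ has the finite intersection property. Since our model is assumed more saturated than the cardinality of $T$, and the $G_i$ may be indexed within $T$ so that $|I| \le |T| < \kappa$, the saturation principle furnishes a point $y$ in the total intersection: $y \in \NSE{A}$ with $y \notin \NSE{G_i}$ for all $i$. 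As $\{G_i\}$ covers $A$, each $x \in A$ lies in some $G_i$, whence $\mu(x) \subseteq \NSE{G_i}$ and therefore $y \notin \mu(x)$. So $y$ is not near-standard, giving $\NS{\NSE{A}} \ne \NSE{A}$, as required.

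The main obstacle will be the set-theoretic bookkeeping around transfer, rather than any deep idea: I must ensure the finite-union containment in the forward direction is a genuine first-order statement about the standard sets $A, G_{x_1},\dots,G_{x_n}$, so that transfer both applies and commutes with $\NSE{}$, and I must confirm in the reverse direction that the cardinality of the cover is dominated by the saturation cardinal. A secondary subtlety, handled above, is that near-standardness should be witnessed by a point of $A$: closedness places the standard part in $A$ in the compact direction, while in the non-compact direction it suffices to rule out witnesses in $A$ because the $G_i$ form a cover of $A$.
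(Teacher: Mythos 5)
Your proof is correct: it is the standard Robinson characterization argument (finite subcover plus transfer for the forward direction, finite intersection property plus saturation for the converse), and the paper itself offers no proof of this statement, citing it as Theorem~3.5.1 of \citep{NSAA97}. You were also right to flag and resolve the one genuine subtlety: the paper's general definition of $\NS{\argdot}$ takes the monad witness in the ambient space $\gX$, under which the equivalence would fail for non-closed sets such as $(0,1)\subset\Reals$, but the paper's own worked example (a positive infinitesimal lying in $\NSE{(0,1)}$ yet outside $\NS{\NSE{(0,1)}}$) confirms that the intended reading is a witness in $A$, which is exactly the convention your argument uses; the closedness remark at the end of your forward direction is superfluous, since the covering argument already produces the witness inside $A$.
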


It is worth presenting an example:
\begin{example}
Consider $\NSE{[0,1]}=\{x\in {\HReals}: 0\leq x\leq 1\}$ under its standard topology.
The closed interval $[0,1]$ is compact, and so ${\NSE{[0,1]}}=\mathrm{NS}(\NSE{[0,1]})$.
However, the open interval $(0,1)$ is not compact, hence $\NSE{(0,1)}\neq \NS{\NSE{(0,1)}}$.
Indeed, consider a positive infinitesimal $\epsilon$. Then $\epsilon\in \NSE{(0,1)}$ but $\epsilon\not\in \NS{\NSE{(0,1)}}$.
If we adopt the discrete topology on $[0,1]$, then $\mu(x) = \{x\}$ for every $x \in \NSE{[0,1]}$
and the set of near-standard points $\NS{\NSE{[0,1]}}$ is precisely the set of standard points $[0,1]$ itself.
\end{example}

Nonstandard analysis gives a very succinct characterization of continuity in terms of $S$-continuity (\cref{Scontinuous}).
Here we give a proof of the classical result relating continuity to S-continuity.  This proof is generalized from the proof of \citep[][Thm.~2.4.1]{NSAA97}.

\begin{proof}[Proof of \cref{contScont}]
Let $x_1,x_2$ be two near-standard points in $\NSE{D}$ such that $x_1\approx x_2$.  Let $x_0=\ST(x_1)=\ST(x_2)$.
Then $x_0\in D$.
By local compactness, there exists a compact neighborhood $K_0$ of $f(x_0)$. Let $U_0 \subseteq K_0$ be an open set containing the point $f(x_0)$.
It is clear that $x_1,x_2$ are elements of $\NSE{(f^{-1}(U_0))}$ hence $\NSE{f}(x_1)$ and $\NSE{f}(x_2)$ are elements of $\NSE{K_0}$ by transfer. As $K_0$ is compact, both $\NSE{f}(x_1)$ and $\NSE{f}(x_2)$ are near-standard.

Suppose $\NSE{f}(x_1)\not\approx\NSE{f}(x_2) $.  For every $y\in \gY$, let $\nu_{\gY}(y)$ denote the monad of $y$. Then $\NSE{f}(x_1)\in \nu_{\gY}(y_1)$ and $\NSE{f}(x_2)\in \nu_{\gY}(y_2)$ for distinct $y_1,y_2\in f(D)$.
Because $\gY$ is Hausdorff,
there exists an open set $U_1$ containing $y_1$ but not $y_2$. Then $\NSE{f}(x_2)\not\in \NSE{U_1}\cap \NSE{f(D)}$ hence $x_2\not\in \NSE{(f^{-1}(U_1))}\cap \NSE{D}$. However, $x_1\in \NSE{(f^{-1}(U_1))}\cap \NSE{D}$ and $f^{-1}(U_1)\cap D$ is open in $D$. This shows that $x_1\not\approx x_2$, a contradiction.
\end{proof}

\subsubsection{Vector spaces}
\label{vectorspacessec}

We conclude this section with a short discussion of vector spaces over the nonstandard field $\HReals$.
The notions of convexity of sets and functions have their ordinary abstract semantics:
If $X$ is a vector space over the field $\HReals$ (where addition and multiplication are the nonstandard extensions of ordinary addition and multiplication),
a (possibly external) subset $A$ of  $X$ is convex if for all $a\in \NSE{[0,1]}$ and $x_1,x_2\in A$,  we have
$ax_1+(1-a)x_2\in A$.
A (possibly external) function $f$ from $\gX$ to ${\HReals}$ is \defn{convex} if its graph is a convex set, i.e., for all $a\in \NSE{[0,1]}$ and $x_1,x_2\in \gX$, $f(ax_1+(1-a)x_2)\leq af(x_1)+(1-a)f(x_2)$. The function is \defn{strictly convex} if the inequality is strict.
We will make use of several properties that hold when $X$ is an internal set, i.e., when $X$ is an \defn{internal vector space} (over the field $\HReals$).
By transfer, the first order characterization of internal vector spaces over $\NSE{\Reals}$ is the same as that of standard vector spaces over $\Reals$.
An important class of internal convex spaces are the hyperfinite-dimensional Euclidean spaces $\IFuncs{\HReals}{N}$  for $N \in \NSE{\Nats}$.

\begin{theorem}
$\IFuncs{\HReals}{N}$ is an internal convex space for every $N\in \NSE{\Nats}$.
\end{theorem}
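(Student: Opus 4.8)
The plan is to derive the result from the transfer principle, after packaging the componentwise vector space structure on the spaces $\Reals^n$ into functions of the dimension $n$. First I would introduce the standard function $g \colon \Nats \to \bV(\Reals)$ defined by $g(n) = \Reals^n$. By the extension principle $\NSE{g}$ is an internal function on $\NSE{\Nats}$, and by construction $\IFuncs{\HReals}{N} = \NSE{g}(N)$ for every $N \in \NSE{\Nats}$. Internality of $\IFuncs{\HReals}{N}$ is then immediate, as it is the value of an internal function at the internal point $N$.

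To capture the algebraic structure uniformly in $n$, I would likewise define standard functions $\mathrm{add}, \mathrm{mul} \colon \Nats \to \bV(\Reals)$ sending $n$ to the componentwise addition map $\Reals^n \times \Reals^n \to \Reals^n$ and the scalar multiplication map $\Reals \times \Reals^n \to \Reals^n$, respectively. The vector space axioms can then be expressed as a single bounded sentence $\phi$ in $\Language(\bV(\Reals))$ whose outermost quantifier ranges over $n \in \Nats$; for each such $n$, the quantifiers over vectors are bounded by $g(n)$ and the quantifiers over scalars by $\Reals$. The operations enter through the function symbols $\mathrm{add}(n)$ and $\mathrm{mul}(n)$, which are removed in favour of their graph relations by the mechanical translation of function symbols discussed in the nonstandard analysis appendix. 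Since the bounded-formula syntax only permits quantifiers bounded by a constant or a variable, the dimension-dependent bound $g(n)$ must be introduced indirectly, e.g.\ by quantifying an auxiliary variable $w$ over the range set $\{g(m) : m \in \Nats\} \in \bV(\Reals)$ and conjoining the constraint $w = g(n)$. This bookkeeping is the only delicate point in the argument.

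With $\phi$ so arranged, its truth in $\bV(\Reals)$ is exactly the standard fact that every $\Reals^n$ is a real vector space under componentwise operations. Applying transfer, $\NSE{\phi}$ holds, and this asserts precisely that for every $N \in \NSE{\Nats}$ the internal set $\IFuncs{\HReals}{N} = \NSE{g}(N)$, equipped with the internal operations $\NSE{\mathrm{add}}(N)$ and $\NSE{\mathrm{mul}}(N)$, satisfies all the vector space axioms over $\HReals$; specializing to the given $N$ shows $\IFuncs{\HReals}{N}$ is an internal vector space over $\HReals$. Convexity is then automatic: the whole space is a convex subset of itself, since for any $a \in \NSE{[0,1]}$ and $x_1, x_2 \in \IFuncs{\HReals}{N}$ the combination $a x_1 + (1-a) x_2$ again lies in $\IFuncs{\HReals}{N}$ by closure under the internal operations. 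The main obstacle, as noted, is not any substantial mathematics but the careful encoding in step two, ensuring that $\phi$ is a genuine bounded first-order sentence to which transfer applies cleanly.
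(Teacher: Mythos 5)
Your proposal is correct and follows essentially the same route as the paper: the paper establishes internality by observing that $\IFuncs{\HReals}{N}$ is an element of $\NSE{\bigl(\bigcup_{n\in\Nats}F_n\bigr)}$, where $F_n$ is the set of functions from $\{1,\dotsc,n\}$ to $\Reals$ (your $\NSE{g}(N)$ is the same object viewed as the value of the extended function), and it dismisses convexity as straightforward. Your additional scaffolding---transferring the dimension-indexed vector space axioms and then noting the whole space is trivially convex in itself---is more detail than the paper supplies but is consistent with its argument.
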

\begin{proof}
Fix any $N\in \NSE{\Nats}$.
For every $n \in \Nats$, let $F_n$ denote the set of functions from $\{1,\dotsc,n\}$ to $\Reals$.
Then $\IFuncs{\HReals}{N}\in \NSE{\bigcup_{n\in \Nats}F_n}$ hence $\IFuncs{\HReals}{N}$ is internal.
It is straightforward to demonstrate convexity.
\end{proof}

We close with a proof of the hyperfinite separating hyperplane theorem, which demonstrates the use of transfer.

\begin{proof}[Proof of~\cref{hyperplanethm}]
We first restate the standard hyperplane separation theorem.
We shall view the set $\Reals^{\Naturals}$ as the set of functions from $\Naturals$ to $\Reals$.
For every element $x\in \Reals^{\Naturals}$, we use $x(k)$ to denote the value of the $k$-th coordinate of $x$ for any $k\in \Naturals$.
The standard hyperplane separation theorem is equivalent to:
\begin{quote}
For any two disjoint convex $S_{1},S_{2}\in \PowerSet{\Reals^{\Naturals}}$,
if $\exists k\in \Naturals$ such that
$\forall s\in S_{1}\cup S_{2}$
$\forall k'>k$
we have $s(k')=0$ then
$\exists a\in \Reals^{\Naturals} \setminus \{\vzero\}$
with $a(k')=0$ for all $k'>k$ such that
$\forall p_{1}\in S_{1},p_{2}\in S_{2}$
$((\forall k'>k, a(k')=0) \land (\IP{a}{p_{1}} \leq \IP{a}{p_{2}} ))$.
\end{quote}
By the transfer principle, we know that $\NSE{(\Reals^{\Nats})}$ denotes the set of all internal functions
from $\NSE{\Nats}$ to $\HReals$.
We shall view the inner product $\IP{\argdot}{\argdot}$ to be a function from
$\Reals^{\Naturals} \times \Reals^{\Naturals}$ to $\Reals$.
Note that $\forall p,s\in \Reals^{\Naturals}$
if $\exists k\in \Naturals$ such that $\forall k'>k$ we have
$s(k')=0$ then $\IP{p}{s}=\sum_{i=1}^{k}p(i)s(i)$.
Thus the nonstandard extension of $\IP{\argdot}{\argdot}$ is a function from
$\NSE{(\Reals^{\Naturals})} \times \NSE{(\Reals^{\Naturals})}$ to $\HReals$ satisfying the same property.

Now by the transfer principle we know that:
\begin{quote}
For any two disjoint convex sets $S_{1},S_{2}\in \NSE{\PowerSet{\Reals^{\Naturals}}}$.
If $\exists K\in \NSE{\Naturals}$ such that $\forall s\in S_{1}\cup S_{2}$ $\forall K'>K$ we have $s(K')=0$ then $\exists W\in \NSE{(\Reals^{\Naturals})} \setminus \{\vzero\}$ such that for all $ p_{1}\in S_{1},p_{2}\in S_{2}$ we have $((\forall K'>K, W(K')=0)\land\sum_{i=1}^{K}W(i)p_{1}(i)\leq \sum_{i=1}^{K}W(i)p_{2}(i))$.
\end{quote}

In this sentence, it is easy to see that we can view the projections of $S_{1},S_{2}$ as internal subsets of $\IFuncs{\HReals}{K}$ and the projection of $W$ as an element from $\IFuncs{\HReals}{K} \setminus \{\vzero\}$.
Hence we have that:
$\forall K\in \NSE{\Naturals}$,
if $S_{1},S_{2}$ are two disjoint internal convex subsets of $\IFuncs{\HReals}{K}$,
then there exists $W\in \IFuncs{\HReals}{K} \setminus \{\vzero\}$ such that
for any $P_1\in S_{1}$ and any $P_{2}\in S_{2}$,
$\sum_{i=1}^{K}W(i)P_{1}(i) \leq \sum_{i=1}^{K}W(i)P_{2}(i)$.
Thus we have the desired result.
\end{proof}

\section{Internal probability theory}\label{sec:inp}

In this section, we give a brief introduction to nonstandard probability theory.
The interested reader can consult \citep{NSAA97} and \citep{Keisler87} for more details.

Consider a $\sigma$-algebra $\GSA$ on a space $X$, and the space $\ProbMeasures{X,\GSA}$ of countably additive probability measures defined on $(X,\GSA)$.
By the transfer principle, $\NSE{(X,\GSA)}$ and the set $\NSE{\!\!\!\ProbMeasures{X,\GSA}}$ of internal $\NSE{}$countably
additive probability measures on $\sGSA$
satisfy (the transfer of) all the first-order properties of their standard counterparts.
Some care is required: e.g., $\NSE{}$countably additivity is defined by the behavior of a measure on \emph{internal} sequences in $\sGSA$, not arbitrary sequences.  In general, the transfer principle is the primary
means of relating internal integration/measure theory to its standard counterpart. Saturation can then be used to control the effect of ``small'' perturbations.

As an example of using transfer, consider one of the key structures in this paper:
that of an indexed family of probability measures on a common measurable space $(X,\GSA)$.
Let $\Theta$ be an index set,
and, for every $\theta \in \Theta$, let $\Model_{\theta}$ be a probability measure on $(X,\GSA)$.
Equivalently, we can think of $\Model$ as a function
$\Model:\Theta \times \GSA \to [0,1]$ defined by $\Model(\theta,A)=\Model_{\theta}(A)$ for $A \in \GSA$.
By the transfer principle,
we know that $\NSE{\Model}$ is then a function from $\NSE{\Theta} \times \sGSA$ to $\NSE{[0,1]}$
and $\NSE{\Model}(y,\argdot)$ is an internal ($\NSE{}$countably additive) probability measure on $(\NSE{X},\sGSA)$ for every $y\in \NSE{\Theta}$.
Note that, for each $\theta\in \Theta$, we can also take the nonstandard extension of the probability measure $\Model_{\theta}$.
These two different nonstandard extensions agree with each other for $\theta\in \Theta$.

\begin{lemma}\label{nsfamilyresult}
For every $\theta\in \Theta$ and $A\in \sGSA$,
$\NSE{\Model}(\theta,A)= \NSE{(\Model_\theta)}(A)$.
\end{lemma}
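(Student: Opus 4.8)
The plan is to reduce the claim to the elementary fact that the nonstandard extension of a function, evaluated at a \emph{standard} argument, coincides with the extension of its value; in other words, that currying commutes with $\NSE{}$ at standard points. No saturation is needed—extension and transfer suffice.

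First I would make the two descriptions of $\Model$ into explicit named objects. Let $m \colon \Theta \to \ProbMeasures{X,\GSA}$ be the map $m(\theta) = \Model_\theta$, and let $\mathrm{ev} \colon \ProbMeasures{X,\GSA} \times \GSA \to [0,1]$ be the evaluation map $\mathrm{ev}(\mu,A) = \mu(A)$. With this notation, the defining identity $\Model(\theta,A) = \Model_\theta(A)$ becomes the sentence
\[
(\forall \theta \in \Theta)(\forall A \in \GSA)\ \Model(\theta,A) = \mathrm{ev}(m(\theta),A),
\]
which holds in the standard superstructure because $\Model$, $m$, and $\mathrm{ev}$ are all constants in the language. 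Applying the transfer principle yields
\[
(\forall \theta \in \NSE{\Theta})(\forall A \in \sGSA)\ \NSE{\Model}(\theta,A) = \NSE{\mathrm{ev}}(\NSE{m}(\theta),A),
\]
where $\NSE{\mathrm{ev}}$ is the internal application sending an internal probability measure $\nu$ and internal set $A$ to $\nu(A)$.

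The crux is to evaluate $\NSE{m}(\theta)$ at a \emph{standard} $\theta \in \Theta$. Transferring the (trivially true) sentence $m(\theta) = \Model_\theta$—all of whose constants $m$, $\theta$, $\Model_\theta$ are standard, so that $\NSE{\theta} = \theta$ by extension—gives $\NSE{m}(\theta) = \NSE{(\Model_\theta)}$. Substituting this into the transferred identity produces, for every $A \in \sGSA$,
\[
\NSE{\Model}(\theta,A) = \NSE{\mathrm{ev}}(\NSE{(\Model_\theta)},A) = \NSE{(\Model_\theta)}(A),
\]
which is exactly the asserted equality.

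I expect the only delicate point—rather than a genuine obstacle—to be the bookkeeping of currying: one must keep the section $\NSE{\Model}(\theta,\argdot)$ of the extension of the two-argument function distinct from the extension $\NSE{(\Model_\theta)}$ of the section, and the purpose of $\mathrm{ev}$ (and of its extension as internal application) is precisely to make their comparison a single clean transfer. Everything else is routine, and the argument is purely a consequence of transfer applied at a standard index $\theta$.
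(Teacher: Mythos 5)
Your proposal is correct and follows essentially the same route as the paper: the paper fixes a standard $\theta_0$, forms the section $F(B)=\Model(\theta_0,B)$ as a standard object, and transfers the identity $(\forall B\in \GSA)(F(B)=\Model_{\theta_0}(B))$, which is exactly your observation that extension commutes with evaluation at a standard index. Your explicit currying map $m$ and evaluation map $\mathrm{ev}$ are just a more heavily notated bookkeeping of the same one-line transfer argument.
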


\begin{proof} %
Fix any $\theta_{0}\in \Theta$ and define $F$ to be a function from $\GSA$ to $[0,1]$ by $F(B)=\Model(\theta_{0},B)$.
Thus $\NSE{F}$ is an internal function from $\sGSA$ to $\NSE{[0,1]}$ given by $\NSE{\Model}(\theta_0, A)$ for all $A\in \sGSA$.
Consider the sentence $(\forall B\in \GSA)(F(B)=\Model_{\theta_{0}}(B))$.
By the transfer principle, we have $(\forall A\in \sGSA) ( \NSE{F}(A)=\NSE{(\Model_{\theta_0})}(A))$.
As our choice of $\theta_0$ is arbitrary, we have $\NSE{\Model}(\theta,A)=\NSE{\Model}_{\theta}(A)$ for every $\theta\in \Theta$ and $A\in \sGSA$.
\end{proof}

Thus, for every $y\in \NSE{\Theta}$, we shall write $\NSE{P}_{y}(\argdot)$ for $\NSE{P(y,\argdot)}$ and keep in mind that we can view $\NSE{P}_{y}$ as the $y$-th fibre of a function of two variables. The next lemma also follows from a transfer  (and extension) argument.
\begin{lemma}
Let $(X,\GSA)$ be a measurable space, let $\{P_y\}_{y\in Y}$ be a family of probability measures on $(X,\GSA)$,  and
suppose $F$ is $\GSA$-measurable and $P_{y}$-integrable for all $y\in Y$.
Define $r(y) = \int_{X}F(x)P_{y}(\dee x)$ for $y \in Y$.
Then $\NSE{F}$ is $\sGSA$-$\NSE{}$measurable and $\NSE{P}_{y}$-$\NSE{}$integrable for all $y\in \NSE{Y}$,
\[
\NSE{r}(y) = \sint{\NSE{X}}\NSE{F}(x)\NSE{P_{y}}(\dee x)
\]
for all $y \in \NSE{Y}$, and $r(y) = (\NSE{r})(y)$ for every $y\in Y$.
\end{lemma}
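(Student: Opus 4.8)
The plan is to mirror the proof of \cref{nsfamilyresult}: express all of the hypotheses as a single bounded first-order sentence $\phi$ in $\Language(\bV(S))$ and then invoke the transfer principle. First I would fix as constants the function $F$, the $\sigma$-algebra $\GSA$, the family $\Model$ regarded as a function $\Model : Y \times \GSA \to [0,1]$, the map $r : Y \to \Reals$, and the integration relation
\[
\mathrm{Int} = \bigl\{ (g,\mu,c) : \text{$g$ is $\GSA$-measurable, $\mu \in \ProbMeasures{X,\GSA}$, $g$ is $\mu$-integrable, } \int_X g\,\dee\mu = c \bigr\},
\]
which is an element of the superstructure $\bV(S)$ and hence nameable in $\Language(\bV(S))$. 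As explained in \cref{intronsa}, such relation (and function) symbols may be used freely, since they translate mechanically into the bounded language in a way that commutes with $*$-transfer.

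Next I would take $\phi$ to be the conjunction of the sentence asserting that $F$ is $\GSA$-measurable, i.e.\ $(\forall B \in \BorelSets{\Reals})(F^{-1}(B) \in \GSA)$, together with
\[
(\forall y \in Y)\,\bigl( \text{$F$ is $\Model_y$-integrable} \,\land\, (F,\Model_y,r(y)) \in \mathrm{Int} \bigr).
\]
Both hold in $\bV(S)$ by hypothesis, so by transfer $\NSE{\phi}$ holds in $\bV(\NSE{S})$. Reading off $\NSE{\phi}$ and using that $\NSE{F}$, $\sGSA$, and $\NSE{r}$ are the extensions of $F$, $\GSA$, and $r$, that $\NSE{\Model}_y = \NSE{\Model(y,\argdot)}$ for every $y \in \NSE{Y}$ (agreeing with $\NSE{(\Model_\theta)}$ on $Y$ by \cref{nsfamilyresult}), and that $\NSE{\mathrm{Int}}$ is the internal integral $\sint{\NSE{X}}$, I obtain that $\NSE{F}$ is $\sGSA$-$\NSE{}$measurable (the transfer of the measurability clause reads $(\forall B \in \NSE{\BorelSets{\Reals}})(\NSE{F}^{-1}(B) \in \sGSA)$) and $\NSE{\Model}_y$-$\NSE{}$integrable for all $y \in \NSE{Y}$, and that $\NSE{r}(y) = \sint{\NSE{X}}\NSE{F}(x)\,\NSE{\Model}_y(\dee x)$ for all $y \in \NSE{Y}$. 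Finally, for standard $y \in Y$ the equality $r(y) = \NSE{r}(y)$ is the extension principle that $\NSE{r}$ extends $r$, which follows by transferring, for each fixed standard $y$, the trivial sentence recording the value of $r$ at $y$ and using $\NSE{c} = c$ for standard reals $c$.

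I expect the main obstacle to be the verification that the $*$-transfer $\NSE{\mathrm{Int}}$ of the standard integration relation is genuinely the internal integral $\sint{}$ appearing in the statement, and likewise that the transferred measurability and integrability predicates coincide with their \emph{internal} counterparts, rather than any external notion (recall that internal countable additivity and internal integrability are governed only by the behavior of $\NSE{\Model}_y$ on \emph{internal} sequences in $\sGSA$). This is precisely the point at which the encoding of relation symbols from \cref{intronsa} and its commutation with $*$-transfer must be invoked; once that identification is granted, the entire conclusion is a direct reading of $\NSE{\phi}$.
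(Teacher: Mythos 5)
Your proposal is correct and is essentially the argument the paper intends: the paper gives no written proof of this lemma beyond the remark that it ``follows from a transfer (and extension) argument'' in the style of the proof of \cref{nsfamilyresult}, and your sentence $\phi$ (with $\Model$ as a two-variable constant, $r$ and the integration relation as constants, and a bounded quantifier over $Y$) is a faithful, more detailed elaboration of exactly that transfer. The point you flag as the main obstacle---that the $*$-transfer of the integration relation is by definition the internal integral $\sint{}$ and that the transferred predicates are the $\NSE{}$-notions rather than any external ones---is indeed the only thing to check, and it is settled by the paper's own conventions in \cref{intronsa,sec:inp}.
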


We will simply write $\int$ for $\NSE{\!\!\int}$, integrable for $\NSE{}$integrable, etc., when the context is clear.  Saturation and transfer allow us to study the effects of ``small'' perturbations:
\begin{lemma}
\label{intinfint}
Let $(\gX, \GSA, P)$ be an internal probability space
and let $F,F'$ be internal $P$-integrable functions such that $F \approx F'$ everywhere.
Then $\int F \dee P \approx \int F' \dee P$.
\end{lemma}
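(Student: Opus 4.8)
The plan is to reduce the claim to a uniform infinitesimal bound on the integrand. First I would set $G = |F - F'|$, which is internal and, being dominated by $|F| + |F'|$, is nonnegative and $P$-integrable. By the transfer of linearity of the integral and of the triangle inequality $|\int H \dee P| \le \int |H| \dee P$, one has $|\int F \dee P - \int F' \dee P| = |\int (F - F') \dee P| \le \int G \dee P$, so it suffices to prove $\int G \dee P \approx 0$.

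The crux is that the hypothesis $F \approx F'$ everywhere, though only a pointwise (and hence external) condition, forces the internal quantity $M = \sup_{x \in \gX} G(x)$ to be infinitesimal. This supremum exists in $\NSE{\Reals}$ because the image of the internal function $G$ is, by the internal definition principle, an internal subset of $\NSE{\Reals}$, and it is bounded above by $1$ (every value of $G$ is infinitesimal, hence below every standard positive real), so transfer of the least-upper-bound property applies. Now for each standard $n \in \Nats$, the infinitesimality of $G(x)$ yields $G(x) < 1/n$ for all $x \in \gX$, whence $M \le 1/n$; as this holds for every standard $n$, the nonnegative hyperreal $M$ is infinitesimal.

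Finally, since $P$ is an internal probability measure and $G \le M$ everywhere, transfer of the monotonicity of the integral gives $0 \le \int G \dee P \le M \cdot P(\gX) = M \approx 0$, so $\int G \dee P$ is infinitesimal and the result follows. I expect the main obstacle to be exactly the passage from the external statement ``$G(x) \approx 0$ for all $x$'' to the internal uniform bound ``$M \approx 0$'': the argument works because the image of $G$ is internal whereas the collection of infinitesimals is not, so an internal set all of whose elements are infinitesimal must have an infinitesimal supremum. The finiteness of the total mass, $P(\gX) = 1$, is what converts this uniform infinitesimal bound on $G$ into an infinitesimal bound on its integral.
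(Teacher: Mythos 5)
Your proof is correct. For the record, the paper states this lemma without proof (it is presented as a routine fact in the appendix on internal probability theory), so there is no argument of the paper's to compare against; your write-up supplies exactly the justification the paper leaves implicit. The decomposition is the right one, and the key step is handled properly: the image of the internal function $G=|F-F'|$ is an internal subset of $\HReals$ consisting entirely of infinitesimals, so by transfer of the least-upper-bound property it has a supremum $M$, and since every standard $1/n$ is an upper bound, $M$ is itself infinitesimal; monotonicity of the internal integral and $P(\gX)=1$ then finish the argument. One cosmetic remark: the same uniform bound can be extracted slightly more in the spirit of the paper's toolkit via underspill (\cref{spillover}), by noting that the internal set $\{\epsilon\in\NSE{\PosReals}: (\forall x\in\gX)\, G(x)\le\epsilon\}$ contains every standard positive real and hence contains a positive infinitesimal; this is logically equivalent to your supremum argument. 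Either way, the only hypotheses genuinely used are internality of $F,F'$ and finiteness of the total mass, exactly as you observe.
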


\subsection{Hyperfinitely additive probability measures and Loeb theory}
Due to saturation, the subset of internal $\NSE{}$finitely additive probability measures defined on
internal algebras plays a central role in nonstandard probability theory.
We can understand $\NSE{}$finite (also called \defn{hyperfinite}) structures by the transfer principle. In particular,
a set $A\in \bV(\NSE{S})$ is hyperfinite if and only if there exists an internal bijection between $A$ and $\{1,2,\dotsc,N\}$ for some $N\in \NSE{\Naturals}$.
If such a number, $N$, exists, then it is unique and called the \defn{internal cardinality} of $A$.

By definition, hyperfinite sets are themselves internal (otherwise the bijection would not be internal).
By transfer, hyperfinite sets are well behaved like their standard finite counterparts:
e.g., hyperfinite sums and products are always convergent.
Internal subsets of hyperfinite sets are hyperfinite. The converse holds as well:
a subset of a hyperfinite set is internal if and only if it is hyperfinite \citep[][Exercise~1.6.17]{NSAA97}.

The following definitions align with the transfer principle:
An \defn{internal algebra} $\GSA \subset \PowerSet{X}$ is an internal set containing $X$ and closed under complementation and hyperfinite unions/intersections.
A set function $P\colon \GSA \to \NSE{\Reals}$ is \defn{hyperfinitely additive}
when, for every $n\in \NSE{\Naturals}$ and pairwise disjoint family $A_{1},\dotsc,A_{n} \in \GSA$,
we have $P(\bigcup_{i\leq n}A_i)=\sum_{i\leq n} P(A_i)$.
An \defn{internal (hyperfinitely additive) probability space} is a triple $(\Omega,\GSA,P)$
composed of
an internal set $\Omega$;
an internal subalgebra $\GSA \subset \PowerSet{\Omega}$;
and an internal hyperfinitely additive probability measure $P\colon \GSA \to \NSE{[0,1]}$ on $(\Omega,\GSA)$,
i.e., a nonnegative hyperfinitely additive internal function such that $P(\Omega)=1$ and $P(\emptyset)=0$.
A \defn{hyperfinite probability space} is an internal probability space $(\Omega,\GSA,P)$
such that $\Omega$ is a hyperfinite set
and $\GSA=\InternalSubsets{\Omega}$,
where $\InternalSubsets{\Omega}$ denotes the collection of all internal subsets of $\Omega$.
Like finite probability space, we can specify an internal probability measure on $\InternalSubsets{\Omega}$ by defining the mass of each $\omega\in \Omega$.

One of the key theorems in modern nonstandard measure theory is due to \citet{Loeb75},
who showed that any internal probability space can be extended to a standard $\sigma$-additive probability space.

\begin{theorem}[{\citep{Loeb75}}]\label{loeb75} Let $(\Omega,\GSA,P)$ be an internal finitely additive probability space.
Then there is a standard $\sigma$-additive probability space $(\Omega,\LoebAlgebraX{\GSA}{P},\Loeb{P})$ such that:

\begin{enumerate}

\item $\LoebAlgebra{\GSA} = \LoebAlgebraX{\GSA}{P}$ is a $\sigma$-algebra with $\GSA\subset \LoebAlgebra{\GSA} \subset \PowerSet{\Omega}$.

\item $\Loeb{P}(A)={\SP{P(A)}}$ for every $A\in \GSA$.

\item For every $A \in \LoebAlgebra{\GSA}$ and standard $\epsilon>0$, there exists $A_i,A_o\in \GSA$ such that $A_i\subset A\subset A_o$ and $P(A_o\setminus A_i)<\epsilon$.

\item For every $A\in \LoebAlgebra{\GSA}$, there exists $B\in \GSA$ such that $\Loeb{P}(A \symdiff B)=0$.

\end{enumerate}

\end{theorem}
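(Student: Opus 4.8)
The plan is to build the measure by taking standard parts on $\GSA$ and extending. First I would define $P_0 \colon \GSA \to [0,1]$ by $P_0(A) = \SP{P(A)}$; this is well defined since $P(A) \in \NSE{[0,1]}$ is finite, and since $P$ is (hyper)finitely additive and the standard part map is additive on finite hyperreals, $P_0$ is a genuine finitely additive set function on the algebra $\GSA$ with $P_0(\Omega) = 1$ and $P_0(\emptyset) = 0$. The goal is to show $P_0$ is in fact a countably additive premeasure on $\GSA$, apply the Carathéodory extension theorem to obtain a countably additive measure on $\sigma(\GSA)$, and take $(\Omega, \LoebAlgebra{\GSA}, \Loeb{P})$ to be its completion.

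The crux, and the step I expect to be the main obstacle, is promoting finite additivity of $P_0$ to countable additivity on $\GSA$; this is exactly where $\aleph_1$-saturation is indispensable. Suppose $A = \bigcup_{n \in \Nats} A_n$ with $A, A_n \in \GSA$ pairwise disjoint. The sets $B_m = A \setminus \bigcup_{n \le m} A_n$ are internal and decreasing with $\bigcap_{m} B_m = \emptyset$. If every $B_m$ were nonempty, the countable family $\{B_m : m \in \Nats\}$ of internal sets would have the finite intersection property, so countable saturation would force $\bigcap_m B_m \ne \emptyset$, a contradiction. Hence $B_M = \emptyset$ for some standard $M$, i.e.\ all but finitely many $A_n$ are empty, and the purported countable sum collapses to the finite additivity already in hand. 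Thus $P_0$ is countably additive on $\GSA$.

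Granting this, Carathéodory extension yields a unique countably additive probability measure on $\sigma(\GSA)$, whose completion I take to be $\Loeb{P}$ on $\LoebAlgebra{\GSA}$. Property~(1) is then immediate, as $\GSA \subseteq \sigma(\GSA) \subseteq \LoebAlgebra{\GSA} \subseteq \PowerSet{\Omega}$ and the completion of a $\sigma$-algebra is a $\sigma$-algebra; Property~(2) holds by construction, since $\Loeb{P}$ restricts to $P_0 = \SP{P}$ on $\GSA$.

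For the regularity Property~(3) I would again lean on saturation. Given $A \in \LoebAlgebra{\GSA}$, a countable cover of $A$ by $\GSA$-sets witnessing its outer measure can be turned into an increasing sequence of algebra sets $D_1 \subseteq D_2 \subseteq \cdots$ with $A \subseteq \bigcup_m D_m$ and $\SP{P(D_m)}$ bounded by the outer measure plus any $\epsilon$; extending $(D_m)_{m \in \Nats}$ to an internal increasing sequence and applying overspill produces a single internal $A_o \supseteq A$ with $\SP{P(A_o)}$ within $\epsilon$ of $\Loeb{P}(A)$. Applying the same reasoning to the complement yields an internal $A_i \subseteq A$, and then $P(A_o \setminus A_i) < \epsilon$. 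Finally, Property~(4) follows from~(3): choosing for each $n$ nested approximants $A_i^n \subseteq A \subseteq A_o^n$ with $P(A_o^n \setminus A_i^n) < 1/n$, I would amalgamate them by countable saturation into a single internal $B \in \GSA$ (e.g.\ as an appropriately chosen term $A_i^N$ of the internal extension of the increasing sequence $A_i^1 \subseteq A_i^2 \subseteq \cdots$ at an infinite index $N$), for which $\Loeb{P}(A \symdiff B) \le 1/n$ for every standard $n$ and hence $\Loeb{P}(A \symdiff B) = 0$. Everything after the countable-additivity step is routine measure theory augmented by these short saturation and overspill arguments.
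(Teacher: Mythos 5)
The paper states this as a cited classical result of \citet{Loeb75} and supplies no proof of its own, so there is nothing internal to compare against; your argument is the standard one underlying that citation (standard parts give a finitely additive $P_0=\SP{P}$ on $\GSA$, $\aleph_1$-saturation forces any countable disjoint union lying in $\GSA$ to be essentially finite so that $P_0$ is vacuously a countably additive premeasure, and Carath\'eodory extension plus completion yields $\Loeb{P}$), and it is correct. Your saturation/overspill derivations of the inner--outer approximation properties (3) and (4) likewise match the explicit characterization of $\LoebAlgebraX{\GSA}{P}$ and $\Loeb{P}$ that the paper records immediately after the theorem statement.
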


The probability triple $(\Omega,\LoebAlgebra{\GSA},\Loeb{P})$ is called the \defn{Loeb space} of $(\Omega,\GSA,P)$. The $\sigma$-algebra $\Loeb{\GSA}$ and the probability measure $\Loeb{P}$ are called the Loeb extensions of $\GSA$ and $P$, respectively.
From Loeb's original proof, we can give the explicit form of $\Loeb{\GSA}$ and $\Loeb{P}$:

\begin{enumerate}

\item $A \in \Loeb{\GSA} \iff
\forall \epsilon\in \PosReals \, \exists A_i,A_o\in \GSA \, (A_i\subset A\subset A_o ) \land (P(A_o\setminus A_i)<\epsilon)$. \label{loebsets}

\item $(\forall A\in \Loeb{\GSA} ) \, \Loeb{P}(A)=\inf\{\Loeb{P}(A_o)|A\subset A_o \in \GSA\}=\sup\{\Loeb{P}(A_i)| A \supset A_i \in \GSA \}$.\label{loebmeasure}

\end{enumerate}

The next example demonstrates that $\Loeb{\GSA}$ may contain external sets.

\begin{example}[{\citep[][Exercise~2.2]{NSAA97}}]\label{representlebesgue}
Pick any $N\in \NSE{\Naturals} \setminus \Naturals$ and let $\delta t=\frac{1}{N}$. Then $\delta t$ is an infinitesimal.
Let $\Omega=\{0,\delta t,2 \delta t, \dotsc, 1\}$ and $\GSA=\InternalSubsets{\Omega}$.
Define $P$ on $\GSA$ by $P(\{\omega\})=\delta t$ for all $\omega\in \Omega$.
Then $(\Omega,\GSA,P)$ is a hyperfinite probability space. Let $(\Omega,\Loeb{\GSA},\Loeb{P})$ be the corresponding Loeb space, known as the
\defn{uniform hyperfinite Loeb space}.

\begin{claim}
$\mu(0)\cap \Omega\in \Loeb{\GSA}$.
\end{claim}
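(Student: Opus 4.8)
The plan is to verify membership directly from the explicit description of the Loeb $\sigma$-algebra recorded immediately after \cref{loeb75}: a set $A$ lies in $\Loeb{\GSA}$ precisely when, for every standard $\epsilon \in \PosReals$, there exist $A_i, A_o \in \GSA$ with $A_i \subseteq A \subseteq A_o$ and $P(A_o \setminus A_i) < \epsilon$. The key observation is that $\mu(0) \cap \Omega$ ought to carry Loeb measure zero---it is, after all, of the form $\ST^{-1}(\{0\}) \cap \Omega$, sitting above the Lebesgue-null singleton $\{0\}$---so the inner approximation can be taken to be empty, and the whole burden falls on producing a small internal outer approximation.

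Concretely, I would fix a standard $\epsilon > 0$ and set $A_i = \emptyset$ and $A_o = \{ x \in \Omega : x < \epsilon/2 \}$. The set $A_o$ is internal by the internal definition principle (its defining formula involves only the internal constants $\Omega$ and $\epsilon/2$), and being an internal subset of the hyperfinite set $\Omega$ it belongs to $\GSA = \InternalSubsets{\Omega}$. The inclusions $A_i \subseteq \mu(0) \cap \Omega \subseteq A_o$ hold because every positive infinitesimal is smaller than the standard real $\epsilon/2$, so each element of $\mu(0) \cap \Omega$ lies in $A_o$.

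It then remains to estimate $P(A_o)$. Writing $A_o = \{ k\, \delta t : 0 \le k \le N,\ k\, \delta t < \epsilon/2 \}$, its internal cardinality $M$ is the number of integers $k$ with $0 \le k < (\epsilon/2)\, N$, so that $M\, \delta t = M/N \approx \epsilon/2$. Hence $P(A_o) = M\, \delta t \approx \epsilon/2$, and since $\epsilon/2$ is bounded away from $\epsilon$ by a standard amount, we obtain $P(A_o \setminus A_i) = P(A_o) < \epsilon$. This exhibits the required sandwich for every standard $\epsilon$, whence $\mu(0) \cap \Omega \in \Loeb{\GSA}$ (and incidentally $\Loeb{P}(\mu(0)\cap\Omega)=0$).

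The only genuine subtlety---the point at which one must be careful rather than the point at which the argument is hard---is that $\mu(0) \cap \Omega$ is \emph{external}, so it cannot serve as its own approximation; the argument succeeds precisely because the set is Loeb-null, which lets me dispense with a nontrivial inner approximation. Should one wish to confirm this null-ness independently, an overspill argument shows that any internal $A \subseteq \mu(0) \cap \Omega$ has a largest element, which is necessarily infinitesimal, forcing $P(A)$ to be infinitesimal; thus no nonempty internal inner approximation could improve on $A_i = \emptyset$ by more than an infinitesimal, which is why the trivial choice loses nothing.
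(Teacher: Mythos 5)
Your proof is correct, but it takes a different route from the paper's. The paper's proof is a one-liner exploiting the fact that $\Loeb{\GSA}$ is a $\sigma$-algebra containing $\GSA$ (\cref{loeb75}(1)): it writes $\mu(0)\cap\Omega = \bigcap_{n\in\Nats} A_n$ with $A_n = \{\omega\in\Omega : \omega \le \tfrac 1 n\}$ internal, and closure under countable intersections does the rest; no measure computation is needed. You instead verify the explicit inner/outer approximation characterization of $\Loeb{\GSA}$ recorded after \cref{loeb75}, taking the inner approximant empty and the outer approximant $\{x\in\Omega : x < \epsilon/2\}$, whose internal counting measure is $\approx \epsilon/2 < \epsilon$. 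Your argument is slightly longer and leans on the specific uniform measure $P$, whereas the paper's argument is measure-free and would show $\mu(0)\cap\Omega$ is universally Loeb measurable (it lies in the $\sigma$-algebra generated by $\GSA$, independent of $P$). What your approach buys in exchange is the sharper conclusion $\Loeb{P}(\mu(0)\cap\Omega)=0$, which is exactly the fact needed for the subsequent assertion that the hyperfinite space represents Lebesgue measure; the paper would have to recover this separately (e.g., by continuity from above along its sets $A_n$, using $\Loeb{P}(A_n) = \SP{P(A_n)} \approx \tfrac 1 n$). Your closing overspill remark, showing any internal subset of $\mu(0)\cap\Omega$ has infinitesimal measure, is not needed for the claim but is sound.
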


\begin{proof}
$\mu(0)\cap \Omega$ consists of elements from $\Omega$ that are infinitesimally close to $0$. For $n \in \Nats$, let $A_{n}=\{\omega\in \Omega: \omega\leq \frac{1}{n}\}$, which is internal by the internal definition principle.
Thus $A_{n} \in \GSA$ and
$\Loeb{\GSA} \ni \bigcap_{n\in \Naturals}A_{n} = \mu(0)\cap \Omega$, completing the proof.
\end{proof}

Note that $\mu(0)\cap \Omega$ is an external set. This shows that $\Loeb{\GSA}$ contains external sets.

In fact,
letting $\nu$ denote Lebesgue measure,
one can show that, for every set $A \subseteq [0,1]$ that is $\nu$-measurable,
$\ST^{-1}(A)\cap \Omega\in \Loeb{\GSA}$
and  $\nu(A)=\Loeb{P}(\ST^{-1}(A)\cap \Omega)$.
Thus $(\Omega,\GSA,P)$ is a ``hyperfinite representation'' of Lebesgue measure on $[0,1]$.
\end{example}

From Loeb's proof, we know that $\LoebAlgebraX{\GSA}{P}$ is
the $\Loeb{P}$-completion of the $\sigma$-algebra generated by $\GSA$.
This suggests that $\LoebAlgebraX{\GSA}{P}$ will depend on $P$. However, some sets always appear in the Loeb $\sigma$-algebra:

\begin{definition}\label{defunloeb}
A set $A\subset \Omega$ is called \defn{universally Loeb measurable} if $A\in \LoebAlgebraX{\GSA}{P}$ for every internal probability measure $P$ on $(\Omega,\GSA)$.
\end{definition}

We denote the collection of all universally Loeb measurable sets on an internal algebra $\GSA$ by $\ULoeb{\GSA}$.
The following theorem characterizes the universal Loeb measurability of the set of near-standard points and Borel sets under sufficient regularity conditions:

\begin{theorem}[{\citep[][Cor.~3]{LR87}}]\label{LRunivLoeb}
Let $\gY$ be a Hausdorff space with Borel $\sigma$-algebra $\BorelSets \gY$
and assume our nonstandard model is more saturated than $\aleph_0$ and the cardinality of the topology on $\gY$.
Then:
\begin{enumerate}
\item $\NS{\NSE{\gY}} \in \ULoeb{\NSE{\BorelSets \gY}}$ for locally compact spaces, for $\sigma$-compact spaces, and for complete metric spaces;
\item $\ST^{-1}(B) \in \{A \cap \NS{\NSE{\gY}}: A\in \ULoeb{\NSE{\BorelSets \gY}} \}$, $B \in \BorelSets{\gY}$, for regular spaces.
\end{enumerate}
\end{theorem}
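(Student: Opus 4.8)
The plan is to fix an arbitrary internal probability measure $P$ on $(\NSE{\gY},\NSE{\BorelSets{\gY}})$ and verify membership in the Loeb $\sigma$-algebra $\LoebAlgebra{\NSE{\BorelSets{\gY}}}$ using the explicit description of $\LoebAlgebra{\NSE{\BorelSets{\gY}}}$ recorded after \cref{loeb75}: a set lies there as soon as its inner and outer Loeb measures agree, where these are computed from internal sub- and supersets in $\NSE{\BorelSets{\gY}}$. The engine throughout will be a single saturation move. Since the model is more saturated than $|T|=\kappa_0$ and the relevant families of internal sets are indexed by at most $|T|$ objects, I can use saturation to manufacture one internal witness obeying $|T|$-many containment constraints together with a lower bound on its $P$-mass.

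For Part 2 (regular spaces) I would first record the set-theoretic identity, for closed $C\subseteq\gY$, that $\ST^{-1}(C)=\NS{\NSE{\gY}}\cap A_C$ with $A_C:=\bigcap\{\NSE{U}:C\subseteq U\text{ open}\}$: the inclusion $\subseteq$ holds because $y\approx x\in C$ forces $y\in\mu(x)\subseteq\NSE{U}$, while the reverse uses regularity to separate a putative standard part $x\notin C$ from $C$ by disjoint opens $V\ni x$, $U\supseteq C$, contradicting $y\in\NSE{V}\cap A_C\subseteq\NSE{(U\cap V)}=\emptyset$. It then suffices to prove $A_C\in\ULoeb{\NSE{\BorelSets{\gY}}}$, since the family $\{B\in\BorelSets{\gY}:\ST^{-1}(B)=A\cap\NS{\NSE{\gY}}\text{ for some }A\in\ULoeb{\NSE{\BorelSets{\gY}}}\}$ is a $\sigma$-algebra (the pullback along $\ST$ of the trace $\sigma$-algebra on $\NS{\NSE{\gY}}$) containing every closed set, hence all of $\BorelSets{\gY}$. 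To see $A_C$ is universally Loeb measurable, fix $P$ and set $\alpha:=\inf\{\SP{P(\NSE{U})}:C\subseteq U\text{ open}\}$; as $A_C\subseteq\NSE{U}$ for each such $U$, the outer Loeb measure of $A_C$ is at most $\alpha$. For the matching inner bound, fix standard $\eta>0$ and impose on a variable internal set $L$ the internal conditions ``$L\subseteq\NSE{U}$'' for every open $U\supseteq C$ together with ``$P(L)\ge\alpha-\eta$''. Each finite subfamily is satisfied by $L=\NSE{U^\ast}$, where $U^\ast\supseteq C$ is the intersection of the finitely many $U$'s, because $\SP{P(\NSE{U^\ast})}\ge\alpha$; as there are at most $|T|<\kappa$ conditions, saturation yields a single internal $L\subseteq\bigcap_U\NSE{U}=A_C$ with $P(L)\ge\alpha-\eta$. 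Letting $\eta\downarrow0$ gives inner measure $=$ outer measure $=\alpha$, so $A_C\in\LoebAlgebra{\NSE{\BorelSets{\gY}}}$ for the arbitrary $P$.

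For Part 1 I would prove $\NS{\NSE{\gY}}$ itself universally Loeb measurable by the same two-sided saturation scheme. For locally compact $\gY$ one has $\NS{\NSE{\gY}}=\bigcup\{\NSE{K}:K\text{ compact}\}$ (each $\NSE{K}\subseteq\NS{\NSE{\gY}}$ by \cref{nscompact}, and every near-standard point lies in $\NSE{K}$ for a compact neighbourhood $K$ of its standard part). Saturation over the $\le|T|$ compacts produces an internal $L\subseteq\NS{\NSE{\gY}}$ with $\SP{P(L)}$ within $\eta$ of $\gamma:=\sup_K\SP{P(\NSE{K})}$ and, dually, an internal $L'\subseteq\NSE{\gY}\setminus\NS{\NSE{\gY}}=\bigcap_K\NSE{(\gY\setminus K)}$ with $\SP{P(L')}$ within $\eta$ of $1-\gamma$; since these two inner measures sum to $1$, $\NS{\NSE{\gY}}$ is Loeb measurable. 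For $\sigma$-compact $\gY$ a countable exhaustion $K_n$ makes $\NS{\NSE{\gY}}=\bigcup_n\NSE{K_n}$ a countable union of internal sets, so measurability is immediate from \cref{loeb75}; for complete metric $\gY$ I would write $\NS{\NSE{\gY}}=\bigcap_n\bigcup_{x\in\gY}\NSE{B(x,1/n)}$ (completeness identifies near-standard with pre-near-standard), apply the saturation scheme to each inner directed union, and intersect over $n$.

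The main obstacle, and the step deserving the most care, is the saturation construction of the internal witnesses $L$ and $L'$: one must check that ``$P(L)\ge\alpha-\eta$'' is a legitimate internal condition (here $\alpha$ is a standard part of internal quantities, so $\alpha-\eta$ is a genuine hyperreal threshold) and that the governing families have cardinality below the saturation level, so that $|T|<\kappa$ is exactly the hypothesis consumed. This is also where the topological assumptions do real work: regularity is precisely what collapses $A_C\cap\NS{\NSE{\gY}}$ to $\ST^{-1}(C)$, while local compactness, $\sigma$-compactness, and completeness are what guarantee that $\NS{\NSE{\gY}}$ itself (not merely its traces) is captured exactly by internal sets. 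Without such a hypothesis $\NS{\NSE{\gY}}$ can fail to be Loeb measurable, which is why Part 1 requires more than the bare regularity sufficient for Part 2.
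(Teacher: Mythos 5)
The paper does not actually prove \cref{LRunivLoeb}; it is imported verbatim from Landers and Rogge, so there is no in-paper argument to compare against. Measured against the cited source's strategy, your Part~2 and the locally compact and complete-metric clauses of Part~1 are essentially right: the identity $\ST^{-1}(C)=\NS{\NSE{\gY}}\cap\bigcap\{\NSE{U}:C\subseteq U\ \text{open}\}$ for closed $C$ uses regularity exactly where you say it does; the two-sided saturation scheme is legitimate (the constraint sets are internal by the internal definition principle, $\alpha-\eta$ is a standard real, and the index families have cardinality at most $|T|$ --- for the family of compacts because compact subsets of a Hausdorff space are closed and the closed sets are in bijection with $T$ via complementation); and the passage from closed sets to all Borel sets via the pullback of the trace $\sigma$-algebra under $\ST$ is sound.

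The genuine gap is the $\sigma$-compact clause. The identity $\NS{\NSE{\gY}}=\bigcup_n\NSE{K_n}$ is false for a general $\sigma$-compact Hausdorff space: to trap the whole monad $\mu(x)$ inside some $\NSE{K_n}$ you need $x$ to lie in the \emph{interior} of some $K_n$, and that is precisely local compactness. Concretely, take $\gY=\Rationals=\bigcup_n K_n$. Every compact $K_n\subseteq\Rationals$ is closed with empty interior, so $\Rationals\setminus K_n$ is open and dense; the countable family of internal sets $\NSE{\Rationals}\setminus\NSE{K_n}$ together with $\NSE{\bigl((0,1/k)\cap\Rationals\bigr)}$ has the finite intersection property (finite intersections of dense open subsets of $\Rationals$ are dense, then transfer), so $\aleph_1$-saturation yields a positive infinitesimal $y\in\NSE{\Rationals}$ lying in no $\NSE{K_n}$; yet $y\in\mu(0)$ is near-standard. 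Hence ``measurability is immediate'' does not follow. The repair is to write $\NS{\NSE{\gY}}=\bigcup_n\ST^{-1}(K_n)$ and to observe that for \emph{compact} $K$ one has $\ST^{-1}(K)=\bigcap\{\NSE{U}:K\subseteq U\ \text{open}\}$ with no regularity hypothesis (if $y$ belongs to the right-hand side but to no $\mu(x)$ with $x\in K$, pick for each $x\in K$ an open $V_x\ni x$ with $y\notin\NSE{V_x}$, extract a finite subcover of $K$, and contradict $y\in\NSE{\bigl(\bigcup_i V_{x_i}\bigr)}$); each such set is universally Loeb measurable by exactly your Part~2 saturation scheme, and a countable union of Loeb measurable sets is Loeb measurable.
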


\cref{LRunivLoeb} implies that, if $\gY$ is a $\sigma$-compact or locally compact Hausdorff space,
then $\ST^{-1}(B)\in \ULoeb{\GSA}$ for all $B\in \BorelSets \gY$.

\vfill

\end{document}